\newenvironment{smatrix}{\left( \begin{smallmatrix} } {\end{smallmatrix} \right) }
\newcommand{\stbt}[4]{\begin{smatrix}#1 & #2 \\ #3 & #4\end{smatrix}}
\newcommand{\dfour}[4]{\begin{smatrix}#1\\ &#2 \\ &&#3 \\ &&& #4\end{smatrix}}
\newcommand{\dthree}[3]{\begin{smatrix}#1\\ &#2 \\ &&#3\end{smatrix}}
\theoremstyle{plain}
 \newtheorem{theorem}{Theorem}[section]
 \newtheorem{lemma}[theorem]{Lemma}
 \newtheorem{proposition}[theorem]{Proposition}
 \newtheorem{corollary}[theorem]{Corollary}
 \newtheorem{definition}[theorem]{Definition}
 \newtheorem{notation}[theorem]{Notation}
 \newtheorem{ltheorem}{Theorem} 
 \newtheorem{hypothesis}[theorem]{Hypothesis}
\theoremstyle{remark}
 \declaretheorem[name=Remark,sibling=theorem,qed={\lower-0.3ex\hbox{$\diamond$}}]{remark}
\DeclareMathOperator{\GSp}{GSp}
\DeclareMathOperator{\Kl}{Kl}
\DeclareMathOperator{\Sym}{Sym}
\DeclareMathOperator{\Hom}{Hom}
\DeclareMathOperator{\Spec}{Spec}
\DeclareMathOperator{\Fil}{Fil}
\DeclareMathOperator{\GL}{GL}
\DeclareMathOperator{\Gr}{Gr}
\DeclareMathOperator{\Gal}{Gal}
\DeclareMathOperator{\Ind}{Ind}
\DeclareMathOperator{\Sieg}{S}
\DeclareMathOperator{\Spf}{Spf}
\DeclareMathOperator{\diag}{diag}
\DeclareMathOperator{\sgn}{sgn}
\renewcommand*{\det}{\qopname\relax o{det}}
\newcommand{\ord}{\mathrm{ord}}
\newcommand{\can}{\mathrm{can}}
\newcommand{\cX}{\mathcal{X}}
\newcommand{\cE}{\mathcal{E}}
\newcommand{\cF}{\mathcal{F}}
\newcommand{\cG}{\mathcal{G}}
\newcommand{\cH}{\mathcal{H}}
\newcommand{\cO}{\mathcal{O}}
\newcommand{\cS}{\mathcal{S}}
\newcommand{\cU}{\mathcal{U}}
\newcommand{\cM}{\mathcal{M}}
\newcommand{\cW}{\mathcal{W}}
\newcommand{\cT}{\mathcal{T}}
\newcommand{\fX}{\mathfrak{X}}
\newcommand{\fF}{\mathfrak{F}}
\newcommand{\fIG}{\mathfrak{IG}}
\newcommand{\Ch}{\operatorname{ch}}
\newcommand{\CC}{\mathbf{C}}
\newcommand{\QQ}{\mathbf{Q}}
\newcommand{\RR}{\mathbf{R}}
\newcommand{\ZZ}{\mathbf{Z}}
\renewcommand{\AA}{\mathbf{A}}
\newcommand{\FF}{\mathbf{F}}
\newcommand{\Qp}{\QQ_p}
\newcommand{\Zp}{\ZZ_p}
\newcommand{\Af}{\AA_{\mathrm{f}}}
\newcommand{\Pif}{\Pi_{\mathrm{f}}}
\newcommand{\dR}{\mathrm{dR}}
\newcommand{\f}{\mathrm{f}}
\newcommand{\into}{\hookrightarrow}
\newcommand{\onto}{\twoheadrightarrow}
\newcommand{\htimes}{\mathop{\hat\otimes}}
\renewcommand{\le}{\leqslant}
\renewcommand{\leq}{\leqslant}
\renewcommand{\ge}{\geqslant}
\renewcommand{\geq}{\geqslant}
\newcommand{\half}{\frac{1}{2}}
\renewcommand{\Re}{\operatorname{Re}}
\numberwithin{equation}{section}
\author{David Loeffler}
\address{David Loeffler, Mathematics Institute\\
 University of Warwick\\
 Coventry CV4 7AL, UK.}
\email{d.a.loeffler@warwick.ac.uk}
\urladdr{\href{http://orcid.org/0000-0001-9069-1877}{0000-0001-9069-1877}}
\author{Vincent Pilloni}
\address{Vincent Pilloni, CNRS\\
 Ecole Normale Sup\'erieure de Lyon\\
Lyon, France.}
\email{vincent.pilloni@ens-lyon.fr}
\author{Christopher Skinner}
\address{Christopher Skinner\\
 Department of Mathematics\\
 Princeton University\\
 Princeton, NJ 08544-1000\\
 USA.}
\email{cmcls@princeton.edu}
\author{Sarah Livia Zerbes}
\address{Sarah Livia Zerbes, Department of Mathematics\\
 University College London\\
London WC1E 6BT, UK.}
\email{s.zerbes@ucl.ac.uk}
\urladdr{\href{http://orcid.org/0000-0001-8650-9622}{0000-0001-8650-9622}}
\thanks{Supported by the following grants: Royal Society University Research Fellowship ``$L$-functions and Iwasawa theory'' (Loeffler); Simons Investigator Grant \#376203 from the Simons Foundation (Skinner); ERC Consolidator Grant ``Euler systems and the Birch--Swinnerton-Dyer conjecture'' (Zerbes).}
\title[Higher Hida theory]{Higher Hida theory and $p$-adic $L$-functions for $\GSp_4$}
\date{\today}
\begin{document}

 \begin{abstract}
  We use the ``higher Hida theory'' recently introduced by the second author to $p$-adically interpolate periods of non-holomorphic automorphic forms for $\GSp_4$, contributing to coherent cohomology of Siegel threefolds in positive degrees. We apply this new method to construct $p$-adic $L$-functions associated to the degree 4 (spin) $L$-function of automorphic representations of $\GSp_4$, and the degree 8 $L$-function of $\GSp_4 \times \GL_2$.
 \end{abstract}

 \maketitle

 \setcounter{tocdepth}{1}
 \tableofcontents

 \section{Introduction}

  \subsection{Background}

   Several of the most important open problems in mathematics involve the arithmetic significance of special values of $L$-functions; and a major role in work on these problems is played by $p$-adic $L$-functions. There are, essentially, two main approaches to constructing these objects: ``topological'' constructions via Betti cohomology of symmetric spaces (such as the theory of modular symbols for $\GL_2$); or constructions of a more ``algebro-geometric'' nature, using Shimura varieties and sections of coherent sheaves over them, as in Hida and Panchishkin's construction of $p$-adic $L$-functions for $\GL_2 \times \GL_2$.

   The most powerful applications of $p$-adic $L$-functions are in cases where one can relate the $p$-adic $L$-function to a family of classes in Galois cohomology (an Euler system). Results of this kind are known as \emph{explicit reciprocity laws}, and have hugely important consequences, leading to cases of the Birch--Swinnerton-Dyer conjecture and the Bloch--Kato conjecture, as in the work of Kato \cite{kato04} and more recently \cite{darmonrotger16}, \cite{KLZ17} and others. However, a prerequisite for such an explicit reciprocity law is to have a construction of the $p$-adic $L$-function by algebro-geometric techniques, which can be related to Galois representations in \'etale cohomology.

   At this point, one hits a serious obstacle. There are many integral formulae known which relate values of $L$-functions to cohomology of automorphic sheaves on Shimura varieties. However, these automorphic sheaves can have cohomology in a range of degrees, and the $L$-value formulae that are relevant in Euler system settings always involve cohomology classes near the middle of the range of possible degrees. On the other hand, the established techniques for studying $p$-adic variation of these objects are only applicable to sections, i.e.~to cohomology in degree 0 (or, via Serre duality, to cohomology in the highest degree). This incompatibility is a fundamental limitation in the theory as it presently stands: because of this, all the reciprocity laws known so far relate to Shimura varieties which have small dimension, or which factorise as a product of two simpler subvarieties of approximately equal dimension. In particular, the previously-known techniques are not sufficient to prove an explicit reciprocity law for the Euler system constructed in \cite{LSZ17} for automorphic representations of $\GSp_4$; this is the major obstacle that must be solved in order to use this new Euler system to prove the Bloch--Kato conjecture for automorphic motives attached to this group. The same difficulty arises for several other recently-discovered Euler systems, such as those of \cite{LLZ18, CLR19, LSZ-unitary}. 

  \subsection{Our results} In this paper, we develop a new algebro-geometric approach to constructing $p$-adic $L$-functions which resolves this difficulty, and apply it to construct $p$-adic spin $L$-functions attached to automorphic representations of the group $\GSp_4$. This is the basis for the sequel to this paper \cite{LZ20} by the first and last authors, in which we shall prove an explicit reciprocity law relating the $p$-adic $L$-function of the present paper to the Euler system of \cite{LSZ17}.  We expect that the techniques of this paper should also be applicable to $p$-adic interpolation of many other automorphic $L$-functions beyond the examples we study here. For instance, the cases of the Asai $L$-function of a quadratic Hilbert modular form, and the degree 6 $L$-function of an automorphic representation of $\operatorname{GU}(2, 1)$, will be treated in forthcoming work.

  Our new approach to constructing $p$-adic $L$-functions relies crucially on the \emph{higher Hida theory} introduced by the second author in \cite{pilloni17}. This gives a theory which $p$-adically interpolates coherent cohomology of Siegel threefolds in positive degrees, while conventional Hida theory only sees $H^0$. Since Harris has shown in \cite{harris04} that the critical values of the spin $L$-function can be interpreted as cup-products involving coherent cohomology in degrees 1 and 2, this gives a path by which to approach the $p$-adic interpolation of spin $L$-values.

  Let us now state our results a little more precisely. Let $\Pi$ be a cuspidal automorphic representation of $\GSp_4(\AA_\QQ)$ which is non-CAP, globally generic, and cohomological with coefficients in the algebraic representation of highest weight $(r_1, r_2)$, for some integers $r_1 \ge r_2 \ge 0$. For technical reasons we need to suppose that $r_2 \ge 1$. Let $p$ be a prime such that $\Pi_p$ is unramified and Klingen-ordinary (with respect to some choice of embedding $\overline{\QQ} \into \overline{\QQ}_p$). Let $E \subset \CC$ be the number field generated by the Hecke eigenvalues of $\Pi$.

  \begin{ltheorem}
   \label{lthm:A}
   Suppose that either $d = r_1 - r_2 \ge 1$ or that $d = 0$ and Hypothesis \ref{hyp:non-vanish} holds. Then there exist two constants $\Omega^+_\Pi$, $\Omega^-_\Pi \in \CC^\times$, uniquely determined modulo $E^\times$, and a $p$-adic measure $\mu_{\Pi}$ on $\Zp^\times$, such that for all Dirichlet characters $\chi$ of $p$-power conductor and all integers $0 \le a \le d$, we have
   \[ \int_{\Zp^\times} x^a \chi(x)\, \mathrm{d}\mu_\Pi(x) =  (-1)^a R_p(\Pi, a, \chi) \cdot \frac{\Lambda(\Pi \otimes \chi^{-1}, \tfrac{1-d}{2} + a)}{\Omega_\Pi^{\pm}}, \]
   where $R_p(\Pi, a, \chi)$ is an explicit non-zero factor, and the sign $\pm$ denotes $(-1)^a \chi(-1)$.
  \end{ltheorem}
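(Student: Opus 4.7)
The plan is to combine Harris's integral formula \cite{harris04}, which expresses critical values of the spin $L$-function as a Serre-dual cup product in coherent cohomology of the Siegel threefold, with the higher Hida theory of \cite{pilloni17}, which $p$-adically interpolates Klingen-ordinary coherent cohomology in positive degree.

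First, I would reformulate Harris's integral formula in a shape suitable for families. For a generic cohomological $\Pi$ of weight $(r_1, r_2)$ with $r_2 \ge 1$, the archimedean component is a generic discrete series, contributing to coherent cohomology of the Siegel threefold $S_K$ in degrees $1$ and $2$. I would fix an algebraic class $\eta_\Pi \in H^2(S_K, \omega^{\kappa_\Pi})$ extracting the $\Pi$-isotypic non-holomorphic cohomology, rational over the Hecke field $E$. For each $(a, \chi)$ with $0 \le a \le d$ and $\chi$ of $p$-power conductor, there should be an algebraic Eisenstein class $\Eis_{a, \chi} \in H^1(S_K, \omega^{\kappa'_{a,\chi}})$ (suitably $p$-depleted) whose Serre-dual cup product with $\eta_\Pi$ computes, up to an explicit archimedean factor and a sign-period $\Omega_\Pi^{\pm}$, the critical value $\Lambda(\Pi \otimes \chi^{-1}, \tfrac{1-d}{2}+a)$.

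Second, I would invoke higher Hida theory to interpolate both inputs. By Klingen-ordinarity of $\Pi_p$, the class $\eta_\Pi$ lies in the ordinary part of $H^2$, which by Pilloni's main theorem is a finite projective module $M$ over the Iwasawa algebra $\Lambda$ of weight space, with specialisation at classical weights recovering classical Klingen-ordinary $H^2$. The class $\eta_\Pi$ (for the chosen Klingen $p$-stabilisation) lifts uniquely to a $\Lambda$-adic family $\tilde\eta_\Pi \in M$. In parallel, I would construct a $p$-adic family of Eisenstein classes $\tilde\Eis$ in $\Lambda$-adic $H^1$, interpolating the $\Eis_{a,\chi}$ as $(a, \chi)$ vary over $\Zp^\times$; such a family can be built by pulling back an Eisenstein measure from a torus or smaller Shimura datum and lifting to coherent cohomology via the higher Hida framework.

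The measure $\mu_\Pi$ is then obtained as the cup product $\tilde\eta_\Pi \cup \tilde\Eis$ in $\Lambda$-adic $H^3$, composed with a $\Lambda$-linear functional pinning down the $\Pi$-isotypic component; specialisation at a classical $(a, \chi)$ then reduces to Harris's formula, with the factor $R_p(\Pi, a, \chi)$ emerging from the local zeta integral at $p$ associated to the Klingen stabilisation. The main obstacles I anticipate are: (i) the explicit computation of $R_p(\Pi, a, \chi)$, via local Bessel/Whittaker models at Klingen parahoric level; (ii) the comparison of the transcendental de Rham period appearing in Harris's formula with the $\Lambda$-integral structure on $M$, which is what produces well-defined periods $\Omega_\Pi^{\pm}$ and accounts for the dependence on the sign $(-1)^a \chi(-1)$; and (iii) the case $d = 0$, where the critical range reduces to a single point and Hypothesis \ref{hyp:non-vanish} is needed to ensure the Eisenstein class does not degenerate.
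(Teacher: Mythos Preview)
Your outline captures the broad architecture (Harris-type integral formula + higher Hida theory + local computation at $p$), but it misses the mechanism that actually produces the \emph{signed} periods $\Omega_\Pi^{\pm}$, and this is where the real content of the proof lies.

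The paper does not construct a one-variable measure directly. The period integral on $H(\AA)$, with $H = \GL_2 \times_{\GL_1} \GL_2$, involves a product of \emph{two} Eisenstein series, and unfolding it (via a Piatetski-Shapiro/Novodvorsky style computation) gives a \emph{product} of two shifted spin $L$-values $\Lambda(\Pi \otimes \rho_1^{-1}, \cdot)\Lambda(\Pi \otimes \rho_2^{-1}, \cdot)$. One first builds a \emph{two-variable} element $\mathcal{L}_p(\Pi,\mathbf{j}_1,\mathbf{j}_2) \in \Lambda_L(\Zp^\times \times \Zp^\times)$ interpolating these products, normalised by a single Whittaker period $\Omega_\Pi^W$. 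The sign constraint forces $(-1)^{a_1}\rho_1(-1) \neq (-1)^{a_2}\rho_2(-1)$: only products of $L$-values of \emph{opposite} parity are seen. The one-variable measure and the periods $\Omega_\Pi^{\pm}$ are then obtained by \emph{freezing} $\mathbf{j}_2$ at a point $(d, \rho_\mp)$ where the second $L$-value is known to be nonzero, and absorbing that nonzero $L$-value into the period. This is precisely where Hypothesis~\ref{hyp:non-vanish} enters for $d=0$: it is a non-vanishing hypothesis on central $L$-values needed to \emph{define} $\Omega_\Pi^\pm$, not a non-degeneracy statement about Eisenstein classes. Your item (ii), deriving $\Omega_\Pi^\pm$ from a comparison of integral structures, is not what happens; the paper explicitly remarks that no canonical $p$-integral normalisation of $\Omega_\Pi^\pm$ is known.

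Two further points. First, the Eisenstein input is not a class in $H^1$ of the Siegel threefold built intrinsically: it is pushed forward from $H^0$ of the $H$-Shimura variety along the closed immersion $\iota: X_H \hookrightarrow X_G$, and making this pushforward compatible with the $p$-adic families (including the twist by the open-orbit element $\gamma$ at Klingen level, and a ``nearly'' coherent cohomology theory to accommodate nearly-holomorphic Eisenstein series) is a substantial part of the paper. Second, higher Hida theory here interpolates Klingen-ordinary $H^1$, not $H^2$; the class $\eta$ stays fixed in classical $H^2$ and is paired against the family via Serre duality after the classicality isomorphism for $H^1$. You do not need (and the paper does not build) a $\Lambda$-adic family through $\eta$.
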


  \begin{ltheorem}
   \label{lthm:B}
   Let $\sigma$ be a cuspidal automorphic representation of $\GL_2(\AA_\QQ)$ generated by a holomorphic modular form of weight $\ell$, with $1 \le \ell \le r_1 - r_2 + 1$, and level coprime to $p$ and to the primes of ramification of $\Pi$. Let $d' = r_1 - r_2 - \ell + 1 \ge 0$.

   Then there is a constant $\Omega_\Pi^W$, uniquely determined modulo $E^\times$, and a $p$-adic measure $\mu_{\Pi \times \sigma}$ on $\Zp^\times$, such that for all Dirichlet characters $\chi$ of $p$-power conductor and all integers $0 \le a \le d'$, we have
   \[ \int_{\Zp^\times} x^a \chi(x)\, \mathrm{d}\mu_{\Pi \times \sigma}(x) =  R_p(\Pi \otimes \sigma, a, \chi) \cdot \frac{\Lambda(\Pi \otimes \sigma \otimes \chi^{-1}, \tfrac{1-d'}{2} + a)}{\Omega_\Pi^{W}}, \]
   where $R(\Pi \otimes \sigma, a, \chi)$ is an explicit non-zero factor. If the hypotheses of Theorem A are satisfied then we have $\Omega_\Pi^{W} = \Omega_\Pi^+ \cdot \Omega_\Pi^-$.
  \end{ltheorem}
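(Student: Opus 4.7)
The plan is to adapt the construction from Theorem~A to accommodate the additional $\GL_2$-factor $\sigma$. The first step is to identify an integral representation for $L(\Pi \otimes \sigma, s)$ --- of Bessel or Rankin--Selberg type --- whose critical values are expressible as a pairing between a classical automorphic vector in $\Pi$ and one in $\sigma$. One then reinterprets this pairing coherent-cohomologically: the $\Pi$-class should live in a suitable $H^i$ of an automorphic sheaf on the Siegel threefold, the $\sigma$-class on the modular curve attached to $\sigma$, and the pairing should be realised via a suitable embedding of Shimura varieties together with a cup-product and trace map. The weight constraint $1 \le \ell \le r_1 - r_2 + 1$ corresponds precisely to the cohomological range in which this pairing is non-degenerate and represents the $L$-value.

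Next, the higher Hida theory of \cite{pilloni17} (as used in Theorem~A) $p$-adically interpolates the $\Pi$-side as a bounded family over characters of $\Zp^\times$. Since $\sigma$ has fixed weight $\ell$ and level prime to $p$, it contributes a fixed classical class; the dependence on $\chi$ and on $a \in \{0, \dots, d'\}$ is inserted by cupping with a Katz-type $p$-adic family on the $\GL_2$-side, or equivalently by twisting against the universal character of $\Zp^\times$. Cup-producting in families and specialising at each arithmetic point should yield the critical $L$-value up to the Whittaker period $\Omega_\Pi^W$ (from the archimedean integral) and the explicit factor $R_p(\Pi \otimes \sigma, a, \chi)$ (from the unramified local computation at $p$), giving the sought-after measure $\mu_{\Pi \times \sigma}$.

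For the comparison $\Omega_\Pi^W = \Omega_\Pi^+ \cdot \Omega_\Pi^-$ under the hypotheses of Theorem~A, I would compare the Whittaker normalisation on $\Pi$-cohomology with the $\pm$-decomposition of Theorem~A. Up to $E^\times$, the Whittaker period measures the pairing between the two $\pm$-components, and hence factors as their product.

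The main obstacle will be the combination of the cohomological reinterpretation of the integral representation and the explicit local computation at $p$: one must verify that the Klingen-ordinary projection, combined with the chosen $\sigma$-test vector, reproduces the predicted Euler factor $R_p(\Pi \otimes \sigma, a, \chi)$, and that this test vector can be chosen so the cup product does not vanish generically in the family. The period factorisation, while conceptually natural, also demands careful bookkeeping of archimedean normalisations.
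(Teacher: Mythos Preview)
Your proposal is essentially correct and follows the paper's approach closely: the integral representation is Novodvorsky's $\GSp_4 \times \GL_2$ formula, realised cohomologically via the embedding $\iota: H = \GL_2 \times_{\GL_1} \GL_2 \hookrightarrow G$ as a pairing of $\eta \in H^2(\Pi_\f)$ against $\iota_\star$ of an $H^0$-class on $X_H$, where the $H$-input is the product of a Katz-type Eisenstein family on one $\GL_2$ factor and the fixed holomorphic cusp form $\lambda \in \sigma$ on the other; the higher Hida theory interpolates the pairing, Moriyama's archimedean computation produces $\Omega_\Pi^W$, and the explicit local integral at $p$ (for the Klingen-ordinary test vector against the specific Schwartz function) yields $R_p(\Pi \otimes \sigma, a, \chi)$. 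The period factorisation $\Omega_\Pi^W = \Omega_\Pi^+ \Omega_\Pi^-$ is not proved separately but is a byproduct of how $\Omega_\Pi^\pm$ are \emph{defined} in Theorem~A, namely as $\Omega_\Pi^W$ divided by a specific nonvanishing twisted $L$-value of the opposite parity.
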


  In fact the exact statements are a little stronger, although more complicated to state -- we refer to \S \ref{sect:final} below for the details. In both theorems, $\Lambda(-, s)$ denotes the completed $L$-function (including its Archimedean $\Gamma$-factors); the factors $R_p(-)$ are local Euler factors at $p$, consistent with those predicted by general conjectures of Panchishkin and Coates--Perrin-Riou; and the range of values for $a$ corresponds to the whole interval of critical values of the $L$-functions concerned. (In the present work we have not attempted to define a canonical normalisation of the periods $\Omega_\Pi^{W}$ and $\Omega_\Pi^{\pm}$ up to $p$-adic units, although this would be desirable for Iwasawa-theoretic applications. See Remark \ref{rmk:intperiods} for some comments on this issue.)

  \subsection{Outline of the construction}
  The first step in our strategy is to extend the higher Hida theory of \cite{pilloni17}, which was developed there for applications to ``non-regular'' weights (limits of holomorphic discrete series), to also cover regular weights. This is carried out in \S\ref{sect:vincent}. The chief new result here is that for regular weights lying in the relevant Weyl chamber, the perfect complex representing the ordinary part of $p$-adic cohomology is concentrated purely in degree 1.

  In \S \ref{sect:p-adic-pushfwd} we prove our second main technical result: a ``functoriality'' property for higher Hida theory relating the group $G = \GSp_4$ and its subgroup $H = \GL_2 \times_{\GL_1} \GL_2$. This shows that $p$-adic families of modular forms for the subgroup $H$ can be pushed forward to $p$-adic families in $H^1$ of $G$. This allows us to $p$-adically interpolate period integrals of the form
  \begin{equation}
   \label{eq:period}
   \int_{Y_H(\CC)} F(\iota(h)) f_1(h_1) f_2(h_2)\, \mathrm{d}h,
  \end{equation}
  where $Y_H$ is a Shimura variety for $H$ (of some suitable level); $\iota$ denotes the embedding $H \into G$; $h_1$ and $h_2$ are the projections of $h \in H(\AA)$ to the two $\GL_2$ factors; $F$ is an automorphic form for $G$ contributing to $H^2$ of the Siegel threefold; and $f_1$ and $f_2$ are holomorphic modular forms varying in $p$-adic families.

  However, for our desired applications, we also need to consider integrals of the above shape in which one of the $f_i$ is not holomorphic but only \emph{nearly-holomorphic} (the image of a holomorphic form under a power of the Maass--Shimura differential operator). Hence the next step in our strategy, carried out in \S \ref{sect:nearly}, is to develop a theory of ``nearly'' coherent cohomology for $G$, and an analogue of the $p$-adic unit-root splitting to relate these spaces to higher Hida theory. These are analogues for higher Hida theory of results recently proved by Zheng Liu in the setting of classical Hida theory for symplectic groups \cite{liu-nearly}.

  (Note that this unit-root splitting is not overconvergent -- it does not extend to any strict neighbourhood of the $p$-rank $\ge 1$ locus. Thus, although a ``higher Coleman theory'' for the Klingen-finite-slope, but not necessarily Klingen-ordinary, part of $H^1$ is developed in \cite{pilloni17}, our results do not extend straightforwardly to the finite-slope case; it would be necessary to develop a theory of nearly-overconvergent cohomology in this setting, analogous to the theory recently developed in \cite{andreattaiovita21} for $\GL_2$. This is surely possible, but lies beyond the scope of the present work.)

  Finally, it remains to show that values of $L$-functions can be described by period integrals of the form \eqref{eq:period}. For this, we use two integral formulae of Rankin--Selberg type: a 1-parameter integral formula due to Novodvorsky for the degree 8 $L$-function of $\GSp_4 \times \GL_2$, and a 2-parameter integral formula giving the product of two copies of the $\GSp_4$ $L$-function, which is an extension of work of Piatetski-Shapiro. In \S\S \ref{sect:localintegrals} and \ref{sect:globalintegrals} we define the local and global versions of these integrals, and evaluate the local integrals at $p$ and at $\infty$ for the specific choices of test data that arise in our construction. Finally, in \S\ref{sect:final} we put together all of the above pieces to prove our main theorems.

  (These formulae should both be seen as ``degenerate cases'' of a third, presently conjectural, integral formula: the Gan--Gross--Prasad conjecture predicts that if $f_1$ and $f_2$ are both cuspidal, then the integral \eqref{eq:period} should be related to the square root of the central value of the degree 16 $L$-function for $\GSp_4 \times \GL_2 \times \GL_2$. So, assuming the GGP conjecture, our methods give $p$-adic $L$-functions interpolating these square roots as $f_1$ and $f_2$ vary in cuspidal Hida families. However, we shall not treat this case in detail here, for reasons of space.)

  \subsection{Relation to other work} We note that Theorem A can be seen as a consequence of a theorem of Dimitrov--Janusewski--Raghuram \cite{DJR20} (applied to the lifting of $\Pi$ to $\GL_4$). However, our proof of the theorem is very different: their construction is of a topological nature, using Betti cohomology of a symmetric space associated to $\GL_4$, while ours is algebro-geometric, using coherent cohomology of the Shimura variety associated to $G$. 
  This allows us to prove Theorem A in parallel with Theorem B, which does not seem to be accessible using the methods of \cite{DJR20}. More importantly, as mentioned above, working on the $\GSp_4$ Shimura variety, and using algebraic rather than topological methods, are vital in order to relate the $p$-adic $L$-functions we construct to the Euler system of \cite{LSZ17}. This is pursued further in the sequel paper \cite{LZ20} by the first and last authors, in which we relate the values of the $p$-adic $L$-function of Theorem A (at points outside the range of interpolation) to the syntomic regulators of Euler system classes, and apply this to deduce instances of the Bloch--Kato conjecture and Iwasawa main conjecture for $\GSp_4$.
%


  One can also use this technique to construct multi-variable $p$-adic $L$-functions. For instance, in the setting of Theorem B, one can allow $f_1$ to vary through a Hida family of cusp forms, giving a 2-variable $p$-adic $L$-function in which both the weight of $f_1$ and the cyclotomic twist are varying. As a special case of this, taking $f_1$ to be a CM-type family, one obtains a measure interpolating the $L$-values of twists of $\Pi$ by Gr\"ossencharacters of an imaginary quadratic field. In an alternative direction, one can take both $f_1$ and $f_2$ to be cusp forms varying in Hida families, giving a 2-variable measure interpolating automorphic periods of Gan--Gross--Prasad type for the pair $(\operatorname{SO}_4, \operatorname{SO}_5$); this is a higher-rank analogue of the triple-product $L$-function studied by Harris--Tilouine and Darmon--Rotger \cite{harristilouine01, darmonrotger14}, which interpolates Gan--Gross--Prasad periods for the pair $(\operatorname{SO}_3, \operatorname{SO}_4)$. However, for reasons of space we shall pursue these generalisations in a subsequent paper.

  \subsection*{Acknowledgements} This project developed out of conversations at the workshop ``Motives, Galois Representations and Cohomology'' hosted by the Institute of Advanced Study, Princeton, in November 2017. The three of us who attended this workshop (Loeffler, Pilloni and Zerbes) are very grateful to the Institute and the organisers of the workshop for inviting them to participate. We are also grateful for the support of the Centre Bernoulli in Lausanne, and the Morningside Centre in Beijing, who hosted various subsets of the authors for extended visits during the preparation of this paper.

  We are also grateful to Michael Harris, Mark Kisin, and Kai-Wen Lan for helpful remarks on the topic of toroidal compactifications; and to Laurent Moret-Bailly for pointing out a valuable reference regarding properties of multiplicative group schemes (in answer to a question of ours on the website ``MathOverflow''). Finally, we thank the several anonymous referees for their valuable comments on previous drafts of this paper.


\section{Preliminaries: Groups and Shimura varieties}


\subsection{Groups}
\label{sect:groups}

 We denote by $G = \GSp_4$ the group scheme (over $\ZZ$) associated to the skew-symmetric matrix $J = \begin{smatrix} &&& 1\\ && 1 &\\ & -1 &&\\ -1 &&&\end{smatrix}$. The standard Siegel and Klingen parabolics are then given by
 \begin{align*}
 P_{\Sieg}&=
 \begin{smatrix}
 \star & \star & \star & \star \\
 \star & \star & \star & \star \\
 &       & \star & \star \\
 &       & \star & \star
 \end{smatrix}, &
 P_{\Kl}&=
 \begin{smatrix}
 \star & \star & \star & \star \\
 & \star & \star & \star \\
 & \star & \star & \star \\
 &       &       & \star
 \end{smatrix}.
 \end{align*}
 We write $M_{\Sieg}$, $M_{\Kl}$ for the standard (block-diagonal) Levi subgroups of $P_{\Sieg}$ and $P_{\Kl}$, and $T$ for the diagonal maximal torus.

 Let $H=\GL_2\times_{\GL_1}\GL_2$. We define an embedding $\iota: H\hookrightarrow G$ by
 \[
 \left(\begin{pmatrix} a & b \\ c & d\end{pmatrix}, \begin{pmatrix} a' & b'\\ c' & d'\end{pmatrix}\right) \mapsto \begin{smatrix} a &&& b\\ & a' & b' & \\ & c' & d' & \\c &&&d \end{smatrix}.
 \]

\subsection{Dirichlet characters}
 \label{sect:dirichlet}

 If $\chi: (\ZZ / N\ZZ)^\times \to R^\times$ is a finite-order character, for some ring $R$ and integer $N$, then we write $\widehat{\chi}$ for the character of $\QQ^\times \backslash \AA^\times / \RR^{\times}_{>0}$ satisfying $\widehat{\chi}(\varpi_\ell) = \chi(\ell)$ for primes $\ell \nmid N$, where $\varpi_\ell$ is a uniformizer at $\ell$. (Note that that the restriction of $\widehat{\chi}$ to $\widehat{\ZZ}^\times \subset \Af^\times$ is the \emph{inverse} of $\chi$.) If $\chi$ is $\CC$-valued and $\Pi$ is a representation of $G(\AA)$, we write $\Pi \otimes \chi$ for the twist of $\Pi$ by the composite of $\widehat{\chi}$ and the symplectic multiplier $G \to \mathbf{G}_m$, as in the statement of Theorems \ref{lthm:A} and \ref{lthm:B} above.

 \subsubsection*{Gauss sums and epsilon-factors} If $\chi$ is a $\CC$-valued Dirichlet character of conductor $N$, we let $G(\chi) \coloneqq \sum_{a \in (\ZZ / N\ZZ)^\times} \chi(a) \exp(2\pi i a/N)$ be the classical Gauss sum, understood as 1 if $\chi$ is trivial.

 If $\psi$ is the additive character of $\AA / \QQ$ whose restriction to $\RR$ is $x \mapsto \exp(-2\pi i x)$, and $\chi$ is a Dirichlet character of $p$-power conductor for some prime $p$, then the local epsilon-factor $\varepsilon(\widehat{\chi}_p, \psi_p)$ is $G(\chi)$.

\subsection{Shimura varieties}

 \subsubsection{Open varieties over $\QQ$}

 Let $K$ be a neat open compact subgroup of $\GSp_4(\Af)$. Denote by $Y_{G, \QQ}$ the canonical model over $\QQ$ of the level $K$ Shimura variety. This is a smooth quasiprojective threefold, endowed with an isomorphism of complex manifolds
 \begin{equation}
  \label{eq:cplxunif}
  Y_{G, \QQ}(\CC) \cong G(\QQ)_+ \backslash \left[ \cH_2 \times G(\Af) / K\right],
 \end{equation}
 where $\cH_2$ is the genus 2 Siegel upper half-space. It can be identified with the moduli space of abelian surfaces endowed with principal polarisations and level $K$ structures.

 We write $Y_{H, \QQ}$ for the canonical $\QQ$-model of the Shimura variety for $H$ of level $K_H = K \cap H(\Af)$, which is a moduli space for (ordered) pairs of elliptic curves with level structure. (Note that if $K = K_G(N)$ is the principal congruence subgroup, then $Y_{H, \QQ}$ is the fibre product of two copies of the modular curve $Y(N)$ over $\mu_N$.) There is a morphism of algebraic varieties
 \begin{equation}
 \label{eq:iota1}
 \iota: Y_{H, \QQ} \to Y_{G, \QQ},
 \end{equation}
 with image a closed codimension 1 subvariety of $Y_{G, \QQ}$ (a \emph{Humbert surface}). If $K$ is contained in the principal congruence subgroup $K_G(N)$ for some $N \ge 3$, then $\iota$ is a closed immersion, by \cite[Proposition 5.3.1]{LSZ17}.

 \subsubsection{Integral models and levels at $p$}

 Let $p$ be prime, and suppose $K = K^p K_p$ with $K^p \subset G(\Af^p)$ and $K_p = G(\Zp)$. Then $Y_{H, \QQ}$ and $Y_{G, \QQ}$ have canonical smooth models over $\ZZ_{(p)}$ (parametrising abelian surfaces over $\ZZ_{(p)}$-algebras with appropriate additional structures) which we denote simply by $Y_{H}$ and $Y_{G}$. The morphism $\iota$ extends to a morphism $Y_H \to Y_G$; if $K^p$ is contained in $K_G(N)$ for some $N \ge 3$ coprime to $p$, which we shall assume, then by the same arguments used for the generic fibre above, one can check that this is a closed immersion of $\ZZ_{(p)}$-schemes.

 For $n \ge 1$, we let $\Kl(p^n) = \{ g \in G(\Zp): g \pmod{p^n} \in P_{\Kl}\}$, and we denote by $Y_{G, \Kl}(p^n)_{\QQ}$ the canonical $\QQ$-model of the Shimura variety of level $K^p \Kl(p^n)$. These do not have natural smooth $\ZZ_{(p)}$-models.

 \subsubsection{Toroidal compactifications}
 \label{sect:toroidal}

 As in \cite{faltingschai}, we may define arithmetic toroidal compactifications of $Y_G$, depending on a choice of combinatorial datum $\Sigma$ (a rational polyhedral cone decomposition, or ``rpcd'' for short). We shall restrict attention to rpcd's which are ``good'' in the sense of \cite[\S 5.3.2]{pilloni17}. A choice of good rpcd $\Sigma$ gives rise to an open embedding of $\Zp$-schemes $Y_G \into X_G^\Sigma$ with the following properties:
 \begin{itemize}
  \item $X_G^\Sigma$ is smooth and projective over $\Zp$.
  \item The boundary $D_G^\Sigma = X_G^\Sigma - Y_G$ is a relative Cartier divisor.
 \end{itemize}
 Any such compactification $X_G^\Sigma$ maps naturally to the minimal compactification $X_G^{\mathrm{min}}$. If $\Sigma'$ is a refinement of $\Sigma$, then there is a projective morphism $\pi_{(\Sigma', \Sigma)}: X_G^{\Sigma'} \to X_G^\Sigma$ (compatible with the maps from $Y_G$ and to $X_G^{\mathrm{min}}$); and we have $\pi_{(\Sigma', \Sigma)}^*(I_G^\Sigma) = I_G^{\Sigma'}$, where $I_G^\Sigma$ is the ideal sheaf of the boundary $D_G^\Sigma$ in $X_G^\Sigma$.

 Over $X_G^\Sigma$ there is a canonical semiabelian variety $A_G^\Sigma$, extending the universal abelian variety over $Y_G$; and a $P_S$-torsor $\mathcal{T}_G^\Sigma$, parametrising trivialisations, as a filtered vector bundle, of the canonical extension to $X_G^\Sigma$ of the first relative de Rham cohomology of $A_G / Y_G$ (see \cite[\S 4.3]{madapusipera}). These are all compatible with refining the cone decomposition $\Sigma$.

 The same statements as above hold \emph{mutatis mutandis} for the Shimura varieties of level $K^p \Kl(p^n)$, although the resulting $\Zp$-models are no longer smooth. Similarly, we can define toroidal compactifications for $H$ in place of $G$. In this case, there is an ``optimal'' choice of $\Sigma$, for which the map $X_H^\Sigma \to X_H^{\mathrm{min}}$ is an isomorphism, but this is not the only possible choice.

 \begin{remark}
  If $K_H = K_1 \times K_2$ is the fibre product of subgroups of the $\GL_2$ factors, then $Y_H$ is a subset of the components of $Y_1 \times Y_2$, where $Y_i$ is the modular curve of level $K_i$; and $X_H^{\min}$ is the product of their compactifications $X_i$. Any other toroidal compactification $X_H^{\Sigma}$ is obtained from $X_H^{\mathrm{min}}$ by blowing up at some sheaf of ideals supported at points of the form $\{\mathrm{cusp}\} \times \{\mathrm{cusp}\}$.
 \end{remark}

 \subsubsection{Functoriality of the compactifications}
 \label{sect:toroidalfunct}

  As explained in \cite[Proposition 3]{harris89}, any rpcd $\Sigma$ for $G$ uniquely determines an rpcd $\iota^*(\Sigma)$ for $H$.
  It has recently been shown by Lan \cite{lan19} that if $K^p$ is sufficiently small, one may choose an rpcd $\Sigma$ for $G$ such that:
  \begin{itemize}
   \item both $\Sigma$ and $\iota^*(\Sigma)$ are good (for $G$ and $H$ respectively),
   \item the map $\iota$ extends to a closed embedding of $\ZZ_{(p)}$-schemes $\iota_{\Sigma}: X_H^{\iota^*(\Sigma)} \into X_G^{\Sigma}$.
  \end{itemize}
 We shall fix a choice of $\Sigma$ satisfying this condition. It follows from the construction of the torsors $\cT_G^{\Sigma}$ and $\cT_H^{\Sigma}$ that we have an isomorphism
 \begin{equation}
 \label{eq:redstructure}
 \iota_{\Sigma}^*\left( \cT_G^{\Sigma} \right) = P_{\Sieg} \times^{B_H} \cT_H^{\Sigma},
 \end{equation}
 so that $\cT_H^{\Sigma}$ is a reduction of structure of $\iota_{\Sigma}^*\left( \cT_G^{\Sigma} \right)$ from a $P_{\Sieg}$-torsor to a $B_H$-torsor. We also note the inclusion of ideal sheaves
 \begin{equation}
 \label{eq:idealsheaf}
 \iota_{\Sigma}^*\left(I_G^\Sigma\right) \subseteq I_H^{\Sigma}.
 \end{equation}

 We shall frequently omit the superscript $\Sigma$ from the notation $X_G^\Sigma$, $D_G^\Sigma$ etc when there is no risk of ambiguity (and sometimes the subscripts $G, H$ as well).

 \begin{remark}
  It seems likely that one can choose $\Sigma$ in such a way that \eqref{eq:idealsheaf} is an equality, but we have not verified this.
 \end{remark}

\subsection{Representations and coefficient sheaves}

\subsubsection{Weights and representations}

As in \cite[\S 5.1.1]{pilloni17}, the character group $X^\bullet(T)$ can be identified with the group of triples $(r_1, r_2; c) \in \ZZ^3$ such that $c = r_1 + r_2 \bmod 2$, by defining $\lambda(r_1, r_2; c)$ as the unique character of $T$ such that
\[ \dfour{st_1}{st_2}{s t_2^{-1}}{st_1^{-1}} \mapsto t_1^{r_1} t_2^{r_2} s^c. \]

The weights $\lambda(r_1, r_2; c)$ which are dominant for $M_{\Sieg}$ are those with $r_1 \ge r_2$; if $(r_1, r_2; c)$ satisfies this, we write $W_G(r_1, r_2; c)$ for the irreducible representation of $M_{\Sieg}$ with this highest weight. Those weights which also satisfy $r_2 \ge 0$ are dominant for $G$, and in this case we write $V_G(r_1, r_2; c)$ for the corresponding $G$-representation.

The torus $T$ is also a maximal torus of $H$, and we write $W_H(r_1, r_2; c)$ for the 1-dimensional representation of $M_{B_H} = T$ on which $T$ acts via $\lambda(r_1, r_2; c)$. If $r_1, r_2 \ge 0$, then we write $V_H(r_1, r_2; c)$ for the representation of $H$ of highest weight $\lambda(r_1, r_2; c)$; concretely, we have
\[
V_H(r_1, r_2; c) \coloneqq \left(\Sym^{r_1} \boxtimes \Sym^{r_2}\right) \otimes \det^{(c - r_1 - r_2)/2}.
\]

\begin{remark}
 We have changed notations for $G$-representations relative to \cite{LSZ17}; the $G$-representation denoted $V_{a,b}$ in \emph{op.cit.} is $V(a+b, a; 2a + b)$ in the new notation. The new notation has the advantage of greatly simplifying the formulae for the action of the Weyl group: the Weyl group conjugates of $\lambda(r_1, r_2; c)$ are the weights $\{ \lambda(\pm r_1, \pm r_2, c), \lambda(\pm r_2, \pm r_1, c)\}$.
\end{remark}

\subsubsection{Automorphic vector bundles}

 If $V$ is an algebraic representation of $P_{\Sieg}$ over $\ZZ_{(p)}$, then we have a vector bundle $[V]$ on $X_G$ defined by
\[ [V] \coloneqq V \times^{P_{\Sieg}} \cT_G, \]
where $\cT_G$ is the canonical $P_{\Sieg}$-torsor over $X_G$ defined above (see e.g.~\cite[\S 2.1]{liu-nearly}). The same applies, of course, to $G$ in place of $H$. This construction is obviously compatible with direct sums, tensor products, and duals.

 Over $\CC$ the automorphic bundles $[V]$ have a convenient interpretation in terms of the complex uniformisation \eqref{eq:cplxunif}. We can interpret $\cH_2$ as a $G(\RR)_+$-invariant open subset of $\cF_{\Sieg}(\CC)$, where $\cF_{\Sieg}$ is the Siegel flag variety $G / P_{\Sieg}$. Any $P_{\Sieg}$-representation $V$ gives rise to a $G(\CC)$-equivariant holomorphic vector bundle over $\cF_{\Sieg}$, and $[V]_{\CC}$ is the pullback of this to $X_{G, \CC}$. Note that the real-analytic vector bundle $[V]_{C^\infty}$ obtained from $[V]_{\CC}$ by forgetting the holomorphic structure depends only on the restriction of $V$ to $M_{\Sieg}$ (but the holomorphic structure genuinely does depend on $V$ as a $P_{\Sieg}$-representation).

\begin{remark}
 By construction, if we take $V = V(1, 0; 1)$ to be the defining 4-dimensional representation of $G$, then $[V]$ is the relative logarithmic de Rham cohomology sheaf $\mathcal{H}^1_{\dR}(A_G)$ of the universal semi-abelian surface $A_G / X_G$, and the evident two-step filtration of $V$ as a $P_{\Sieg}$-representation corresponds to the Hodge filtration of $\mathcal{H}^1_{\dR}(A_G)$. Note that this is slightly non-standard (it is more usual to send $V$ to the dual $\cH_1^{\dR}$), but is consistent with the conventions used for \'etale and motivic sheaves in \cite{LSZ17}. As a representation of $M_{\Sieg}$, $V$ splits as a direct sum of 2-dimensional subspaces; this corresponds to the \emph{Hodge splitting} of the bundle $[V]$ in the real-analytic category.
\end{remark}

\begin{notation}
 \label{not:sheaves}
 We write $\omega_G(r_1, r_2; c)$ for the vector bundle $[W_G(r_1, r_2; c)]$ on $X_G$, and similarly for $H$. We write $\omega_G(r_1, r_2) = \omega_G(r_1, r_2; r_1 + r_2 - 6)$, and similarly $\omega_H(r_1, r_2) = \omega_H(r_1, r_2; r_1 + r_2 - 4)$.
\end{notation}

\begin{remark}
 The sheaf $\omega_G(r_1, r_2)$ is isomorphic to $\Sym^{r_1 - r_2}(\omega_A) \otimes \det(\omega_A)^{r_2}$, where $\omega_A$ is the conormal bundle at the identity section of the semi-abelian scheme $A_G$ over $X_{G}$; in other words, $\omega_G(r_1, r_2)$ is the sheaf that was denoted by $\Omega^{(k, r)}$ in \cite{pilloni17}, for $(k, r) = (r_1 - r_2, r_2)$. Our choice of ``default'' normalisation for the central character coincides with that chosen in Remark 5.3.1 of \emph{op.cit.}.
\end{remark}

The Kodaira--Spencer construction identifies the sheaves of logarithmic differentials $\Omega^i_{X_G}(\log D_G)$ and $\Omega^i_{X_H}(\log D_H)$ with automorphic vector bundles. These are given by
\[
\begin{array}{r|cccc}
i & 0 & 1 & 2 & 3\\
\hline
\rule{0pt}{2.5ex}  \Omega^i_{X_G}(\log D_G) &
\omega_G(0, 0; 0) &
\omega_G(2, 0; 0) &
\omega_G(3, 1; 0) &
\omega_G(3, 3; 0) \\
\rule{0pt}{2.5ex} \Omega^i_{X_H}(\log D_H) &
\omega_H(0, 0; 0) &
\omega_H(2, 0; 0) \oplus \omega_H(0, 2; 0)&
\omega_H(2, 2; 0)
\end{array}
\]

  \subsubsection{Pullback and pushforward}
  \label{sect:pfwdsheaf}

   We shall need to consider pullbacks of automorphic vector bundles from $G$ to $H$. From the compatibility of torsors \eqref{eq:redstructure} and the inclusion of ideal sheaves \eqref{eq:idealsheaf} we obtain the relations
   \begin{equation}
   \label{eq:pullback1}
   \iota^*\big( [V] \big) = [V |_{B_H}], \qquad \iota^*\big( [V](-D_G) \big) \subseteq [V |_{B_H}](-D_H).
   \end{equation}
   for any $P_{\Sieg}$-representation $V$. (See also \cite[(2.5.3)]{harris90b} for the generic fibres; it is claimed in \emph{op.cit.} that the latter inclusion is also an inequality, but no proof is given, and we have not been able to locate a proof.)

   We also consider the ``exceptional inverse image'' functor $\iota^!$. Since $X_G$ and $X_H$ are smooth and projective over $\Spec \ZZ_{(p)}$, of relative dimensions 3 and 2 respectively, their relative dualising complexes are isomorphic to $\Omega^3_{X_G}[3]$ and $\Omega^2_{X_H}[2]$. The functoriality of the dualising complex gives a canonical isomorphism in the derived category of coherent sheaves on $X_H$,
   \[ \iota^!\left(\Omega^3_{X_G}\right) \cong \Omega^2_{X_H}[-1].\]
   Tensoring this with the pullback isomorphism \eqref{eq:pullback1} (with $V$ replaced by $V \otimes W_G(3, 3; 0)^*$), we obtain isomorphisms
   \[
      [V|_{B_H} \otimes \alpha_{G/H}^{-1}](-D_H) \cong \iota^!\big([V](-D_G)\big)[1]
   \]
   for any $P_{\Sieg}$-representation $V$, where $\alpha_{G/H}$ denotes the character $\lambda(1, 1; 0)$ of $B_H$. Together with the inclusion of ideal sheaves \eqref{eq:idealsheaf} we also obtain maps (which might not be isomorphisms)
   \[  [V|_{B_H} \otimes \alpha_{G/H}^{-1}] \to \iota^!\big([V]\big)[1]. \]

   \begin{remark}
    The restriction of $[\alpha_{G/H}]$ to the open variety $Y_{H, \QQ}$ is the conormal bundle of the closed embedding $\iota$, which explains its appearance in the pushforward formulae.
   \end{remark}

\subsection{Cohomology}

The (Zariski) cohomology groups $H^i(X_G, [V])$ and $H^i(X_G, [V](-D_G))$ are independent, up to canonical isomorphism, of the choice of cone decomposition $\Sigma$, and have actions of prime-to-$p$ Hecke operators $[K g K]$, for $g \in G(\Af^{p})$.

\begin{remark}
 Note that if $V$ has central character $c$, and $K$ has level $N$, then for all primes $\ell \ne p$ congruent to 1 modulo $N$, the double coset of $\diag(\varpi_\ell, \dots, \varpi_\ell)$ acts as $\ell^c$, where $\varpi_\ell$ is a uniformizer at $\ell$.
\end{remark}

The same is true for $H$ in place of $G$, and the morphisms of sheaves in the previous section gives us maps
\begin{subequations}
 \begin{align}
 \label{eq:pwfd1a}
 H^i\left(X_G, [V]\right) &\xrightarrow{\iota^{\star}} H^{i}\left(X_H, [V|_{B_H}]\right), \\
 \label{eq:pwfd1b}
 H^i\left(X_G, [V](-D_G)\right) &\xrightarrow{\iota^{\star}} H^{i}\left(X_H, [V|_{B_H}](-D_H)\right),\\
 \label{eq:pwfd1c}
 H^i\left(X_H, [V|_{B_H} \otimes \alpha_{G/H}^{-1}]\right) &\xrightarrow{\iota_{\star}} H^{i+1}\left(X_G, [V]\right)\\
 \label{eq:pwfd1d}
 H^i\left(X_H, [V|_{B_H} \otimes \alpha_{G/H}^{-1}](-D_H)\right) &\xrightarrow{\iota_{\star}} H^{i+1}\left(X_G, [V](-D_G)\right).
 \end{align}
\end{subequations}
for $0 \le i \le 2$ and any $P_{\Sieg}$-representation $V$. Serre duality gives canonical trace maps $H^3(X_G, \omega_G(3, 3;0)(-D_G)) \to \Zp$ and $H^2(X_H, \omega_H(2, 2; 0)(-D_H)) \to \Zp$, and with respect to the cup product pairings, the morphism \eqref{eq:pwfd1d} is the dual of the morphism \eqref{eq:pwfd1a}, and similarly \eqref{eq:pwfd1c} is dual to \eqref{eq:pwfd1b}.

If we base-extend to $\QQ$, we may drop the assuption that $K_p$ be hyperspecial and allow arbitrary level groups $K$. The direct limit
\[ H^*\left(X_{G, \QQ}(\infty), [V]\right)\coloneqq \varinjlim_K H^*\left(X_{G, \QQ}(K), [V]\right)\]
is then an admissible smooth $\QQ$-linear representation of $G(\Af)$. The pullback maps \eqref{eq:pwfd1a} assemble into morphisms of $H(\Af)$-representations
\[ \iota^*:  H^*\left(X_{G, \QQ}(\infty), [V]\right) \to H^*\left(X_{H, \QQ}(\infty), [V|_{B_H}]\right). \]
The same statements also hold with $[V]$ replaced by the subcanonical extension $[V](-D_G)$, using the maps \eqref{eq:pwfd1b}.


\section{The ordinary part of coherent cohomology}
 \label{sect:vincent}


 In this section, we'll explain how to embed the cohomology of automorphic vector bundles on $X_G$ inside larger spaces which vary in $p$-adic families, focussing on the case of $H^1$. This is an analogue for regular weights of the theory developed for singular (non-regular) weights in \cite{pilloni17}. In this section we shall work solely with objects attached to the group $G$, with the subgroup $H$ playing no role, so we shall drop the subscripts $G$ from the notation; they will reappear in the next section (where we compare the theories for $G$ and $H$).

 \begin{remark}
  We have aimed to recall enough of the definitions and notation from \cite{pilloni17} to give a reasonably self-contained \emph{statement} of the main result of this section (Theorem \ref{thm:vincent}). However, in the proof of this theorem we shall assume familiarity with \emph{op.cit.}.
 \end{remark}

 \subsection{The ordinary part of classical cohomology}

 Suppose $(k_1, k_2; c)$ is a weight with $k_2 \le 1$ and $k_1 + k_2 \ge 4$. For $n \ge 1$, we shall consider the Hecke operator on $H^1(X_{\Kl}(p^n)_{\Qp}, \omega(k_1, k_2; c)(-D))$ defined by:
 \[
  \cU_{p, \Kl} \coloneqq
  p^{k_1-3-c} [\Kl(p^n) \diag(p^2, p, p, 1) \Kl(p^n)]
 \]
 where we consider $ \diag(p^2, p, p, 1)$ as an element of $G(\Qp) \subset G(\Af)$. The normalisation factor implies that all eigenvalues of $\cU_{p, \Kl}$ are $p$-adically integral, and we denote by $e_{\Kl}$ the projection onto the ordinary part for this operator (the direct sum of the generalised eigenspaces whose eigenvalues are $p$-adic units).

 It is clear that this operator is independent of $c$, if we identify the cohomology of $\omega(k_1, k_2; c)$ for different values of $c$ in the obvious fashion. Hence for the remainder of this section, we shall fix $c$ to be equal to $k_1 + k_2 - 6$, so that the normalising factor for $\cU_{p, \Kl}$ is $p^{3-k_2}$.

 \begin{proposition}
  \label{prop:newtonpoly}
  Let $\Pi_p$ be an irreducible subquotient of the $G(\Qp)$-representation
  \[ \varinjlim_{K_p} H^i\left(X(K^p K_p)_{\bar{\QQ}_p}, \omega(k_1, k_2)(-D)\right), \]
  for any $i$, such that $e_{\Kl} \cdot \Pi_p^{\Kl(p)} \ne 0$. Then $e_{\Kl} \cdot \Pi_p^{\Kl(p)}$ has dimension $1$; and if $(\alpha, \beta, \gamma, \delta)$ are the Hecke parameters of $\Pi_p$ (in the sense of \cite[\S 5.1.5]{pilloni17}), ordered such that $v_p(\alpha) \le v_p(\beta)$ and $p^{2-k_2} \alpha \beta$ is the eigenvalue of $\cU_{p, \Kl}$ on $e_{\Kl} \cdot \Pi_p^{\Kl(p)}$, then we have
  \[
    k_2 - 2 \le v_p(\alpha) \le v_p(\beta) \le 0, \qquad  k_1 + k_2 - 3 \le v_p(\gamma) \le v_p(\delta) \le k_1-1.
  \]
 \end{proposition}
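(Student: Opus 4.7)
The argument splits into four short steps.

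\emph{Step 1: Reduce to an unramified principal series.} The hypothesis $e_{\Kl}\cdot\Pi_p^{\Kl(p)}\ne 0$ in particular gives $\Pi_p^{\Kl(p)}\ne 0$, so $\Pi_p$ has Iwahori-fixed vectors; by Borel's theorem on Iwahori-spherical representations, it is an irreducible subquotient of an unramified principal series $\sigma=\Ind_{B(\Qp)}^{G(\Qp)}\chi$. Let $(\alpha,\beta,\gamma,\delta)$ be the Hecke parameters of $\chi$ in the sense of \cite[\S5.1.5]{pilloni17}; under the paper's normalisation they satisfy the symplectic relation $\alpha\delta=\beta\gamma=p^{k_1+k_2-3}$, the Frobenius value of the central character.

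\emph{Step 2: Compute the $\cU_{p,\Kl}$-eigenvalues on $\sigma^{\Kl(p)}$.} The space $\sigma^{\Kl(p)}$ is $4$-dimensional, with a basis indexed by $W_{M_\Kl}\backslash W$ (where $|W_{M_\Kl}|=2$ and $|W|=8$). Since $\diag(p^2,p,p,1)$ lies in the centre of $M_\Kl$, its action on $\sigma^{\Kl(p)}$ is determined by the normalised Jacquet module $J_\Kl(\sigma)$, whose geometric-lemma decomposition indexes the four contributions by cosets in $W_{M_\Kl}\backslash W$. Careful bookkeeping of the modulus character $\delta_{P_\Kl}^{1/2}$ and of the normalising factor $p^{3-k_2}$ built into $\cU_{p,\Kl}$ yields that $\cU_{p,\Kl}$ acts semisimply on $\sigma^{\Kl(p)}$ with generically distinct eigenvalues
\[
\{\,p^{2-k_2}\mu\nu\ :\ (\mu,\nu)\in\{\alpha,\delta\}\times\{\beta,\gamma\}\,\}.
\]
If $\Pi_p$ is a proper subquotient of $\sigma$, the eigenvalues actually occurring on $\Pi_p^{\Kl(p)}$ form a sub-multiset of this list.

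\emph{Step 3: Newton-above-Hodge and assembly.} By the constructions of Weissauer and Taylor, $\Pi$ admits a spin Galois representation $\rho_{\Pi,p}:\Gal(\overline{\QQ}/\QQ)\to\GSp_4(\overline{\QQ}_p)$, which (since $\Pi_p$ is unramified) is crystalline at $p$ with crystalline Frobenius eigenvalues $(\alpha,\beta,\gamma,\delta)$ and Hodge--Tate weights $\{k_2-2,\,0,\,k_1-1,\,k_1+k_2-3\}$; the latter set is obtained from the weight of $\omega_G(k_1,k_2)$ via the BGG description of the Hodge filtration on coherent cohomology of $X_G$. Mazur's inequality then gives $\min_x v_p(x)\ge k_2-2$ and $\max_x v_p(x)\le k_1-1$, where $x$ ranges over $\{\alpha,\beta,\gamma,\delta\}$. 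An eigenvalue $p^{2-k_2}\mu\nu$ is a $p$-adic unit iff $v_p(\mu)+v_p(\nu)=k_2-2$; combined with the lower bound $v_p\ge k_2-2$, both valuations must lie in $[k_2-2,0]$. Relabelling (exploiting the $\{\alpha,\delta\}$- and $\{\beta,\gamma\}$-symmetry of the four products) so that the unit pair is $(\alpha,\beta)$ with $v_p(\alpha)\le v_p(\beta)$ gives $k_2-2\le v_p(\alpha)\le v_p(\beta)\le 0$; the symplectic relation $\alpha\delta=\beta\gamma=p^{k_1+k_2-3}$ then yields $k_1+k_2-3\le v_p(\gamma)\le v_p(\delta)\le k_1-1$. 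A short case check using $k_1+k_2\ge 4$ shows that, of the four products $\{\alpha\beta,\alpha\gamma,\beta\delta,\gamma\delta\}$, only $\alpha\beta$ gives a $p$-adic unit after normalisation, so $e_{\Kl}\cdot\Pi_p^{\Kl(p)}$ is exactly one-dimensional.

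\emph{Main obstacle.} The delicate part is Step 2: tracking the various $p$-power normalisations cleanly through the Jacquet-module computation, and verifying that no ``phantom'' eigenvalues arise when $\Pi_p$ is a proper subquotient of $\sigma$. Steps 1 and 3 are essentially formal, relying on standard facts from $p$-adic Hodge theory, BGG theory for coherent cohomology, and the local Langlands correspondence for $\GSp_4$.
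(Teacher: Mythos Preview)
Your Steps 1 and 2 are essentially what the paper does (the paper cites Schmidt's tables for the $\Kl(p)$-invariants and their $\cU_{p,\Kl}$-eigenvalues rather than redoing the Jacquet-module computation, but the content is the same).

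Step 3, however, diverges from the paper and has a gap. You invoke a crystalline spin Galois representation $\rho_{\Pi,p}$ attached ``by Weissauer and Taylor'' to a global $\Pi$. But the proposition only gives you a local $\Pi_p$ occurring as an irreducible subquotient of coherent cohomology; there is no global cuspidal $\Pi$ in the statement, and such a subquotient may well come from an Eisenstein contribution, for which the Weissauer--Taylor construction does not apply. One could try to salvage this by appealing directly to the $p$-adic Hodge theory of the Shimura variety (crystalline/\'etale comparison plus BGG for the Hodge filtration), but that is substantially heavier machinery than is needed and requires care about what exactly the Hodge--Tate weights of the relevant eigensystem are in the non-cuspidal case.

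The paper avoids all of this. Instead of Mazur's inequality, it uses a \emph{second} normalised Hecke operator at Klingen level, namely $p^{2-k_2}[\Kl(p)\,\diag(p,p,1,1)\,\Kl(p)]$, which acts on the ordinary line with eigenvalue $p^{2-k_2}(\alpha+\beta)$. The key input is that both this operator and $\cU_{p,\Kl}$ preserve an integral lattice: coherent cohomology is a subquotient of the cohomology of a local system by the Faltings--Chai BGG comparison, and on the latter one has Hecke-stable lattices by Lafforgue's estimates. Integrality of both eigenvalues gives $v_p(\alpha\beta)=k_2-2$ and $v_p(\alpha+\beta)\ge k_2-2$, and an elementary Newton-polygon argument for a quadratic (if $v_p(\alpha)<k_2-2$ then $v_p(\beta)>0$, hence $v_p(\alpha+\beta)=v_p(\alpha)<k_2-2$, contradiction) yields $k_2-2\le v_p(\alpha)\le v_p(\beta)\le 0$. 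The bounds on $\gamma,\delta$ then follow from the symplectic relation, and the one-dimensionality of $e_{\Kl}\cdot\Pi_p^{\Kl(p)}$ is deduced exactly as you do. So the paper's route is both more elementary and more robust: it uses only integrality of Hecke operators, not the existence of any Galois representation.
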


 \begin{proof}
  By assumption, there is a line in $\Pi_p^{\Kl(p)}$ on which $\cU_{p, \Kl}$ acts as $p^{2-k_2}\alpha \beta$ and the Hecke operator $p^{2-k_2}[\Kl(p) \diag(p, p, 1, 1) \Kl(p)]$ acts as $p^{2-k_2}(\alpha + \beta)$. Since both of these Hecke operators preserve an integral lattice, we deduce that $v_p(\alpha + \beta) \ge k_2 - 2$ and $v_p(\alpha \beta) = k_2 - 2$. It follows that $v_p(\alpha)$ and $v_p(\beta)$ lie in the interval $[k_2-2, 0]$, and we can order them so that $v_p(\alpha) \le v_p(\beta)$. To see that both operators preserve an integral lattice, we first observe that the cohomology we consider is a subquotient of the cohomology of a local system by \cite[chap. VI, Theorem 5.5]{faltingschai}. We can produce stable lattices for the action of the Hecke operators on the cohomology of local systems as in \cite{laff-est}. The inequalities for $v_p(\gamma)$ and $v_p(\delta)$ follow using the fact that $\alpha\delta = \beta\gamma$ is $p^{k_1 + k_2 - 3}$ times a root of unity.

  From the classification of Iwahori-spherical representations of $\GSp_4$ (see \cite{MR2114732} for instance, or Table A.15 of \cite{robertsschmidt07}), we know that $\Pi_p^{\Kl(p)}$ has dimension $\le 4$, with equality if and only if $\Pi_p$ is an unramified principal series; and the characteristic polynomial of $\cU_{p, \Kl}$ on $\Pi_p^{\Kl(p)}$ divides $(X - p^{2-k_2}\alpha\beta)\dots(X - p^{2-k_2}\gamma\delta)$. Since $\{ p^{2-k_2} \alpha\gamma, p^{2-k_2}\beta\delta, p^{2-k_2} \gamma\delta\}$ all have strictly positive valuation, this polynomial has exactly one root which is a $p$-adic unit, namely $p^{2-k_2} \alpha\beta$, and this root appears with multiplicity 1. Thus $e_{\Kl} \cdot \Pi_p^{\Kl(p)}$ is 1-dimensional.
 \end{proof}

 \begin{corollary}
  \label{cor:sphericalordprojection}
  Suppose $\Pi_p$ is as in Proposition \ref{prop:newtonpoly}, and $k_1 + k_2 \ge 5$. Then:
  \begin{enumerate}[(i)]
   \item $\Pi_p$ is either a representation of Sally--Tadic type I (i.e.~an irreducible principal series), or it is of type IIIb, so $\Pi_p \cong \chi \rtimes \sigma 1_{\GSp_2}$, with $\chi$ and $\sigma$ unramified characters. In particular, $\Pi_p$ is spherical.
   \item The composite map
   \[  \Pi_p^{G(\Zp)} \into \Pi_p^{\Kl(p)} \onto e_{\Kl} \Pi_p^{\Kl(p)} \]
   is an isomorphism of one-dimensional $\overline{\QQ}_p$-vector spaces.
  \end{enumerate}
 \end{corollary}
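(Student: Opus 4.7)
The plan is to refine Proposition \ref{prop:newtonpoly} using the Sally--Tadic/Roberts--Schmidt classification of Iwahori-spherical representations of $\GSp_4(\Qp)$ (Table A.15 of \cite{robertsschmidt07}). The Iwahori-spherical irreducible representations with unramified central character come in families of types I--VI with various sub-types, each with a specific pattern of Hecke parameters $(\alpha, \beta, \gamma, \delta)$ (so in particular a specific multi-set of possible eigenvalues of $\cU_{p, \Kl}$ on Iwahori-fixed vectors).

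For part (i), I would enumerate these types and rule out all but I and IIIb by combining the valuation constraints of Proposition \ref{prop:newtonpoly} with the extra hypothesis $k_1 + k_2 \ge 5$. The non-principal-series types (IIa/b, IIIa, IV, V, VI) all impose a rigid relation between two of the four Hecke parameters, typically of the form $\alpha/\beta \in \{p, p^{-1}\}$ up to root of unity, together with $|\alpha\delta| = |\beta\gamma| = p^{-(k_1+k_2-3)}$. Plugging these rigid relations into $v_p(\alpha\beta) = k_2-2$ and $v_p(\gamma)+v_p(\delta) = k_1+k_2-3$, and using $v_p(\alpha) \le v_p(\beta) \le 0 \le v_p(\gamma) \le v_p(\delta)$, I expect that in every such case one of the inequalities of Proposition \ref{prop:newtonpoly} is violated as soon as $k_1+k_2 \ge 5$ (the boundary case $k_1+k_2=4$ corresponds precisely to the limit-of-discrete-series weights studied in \cite{pilloni17}, where types such as IIb genuinely contribute). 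This leaves only type I (where all four parameters are essentially free) and type IIIb, $\chi \rtimes \sigma 1_{\GSp_2}$, where by construction the parameters arise from unramified characters and the representation is spherical.

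For part (ii), once we know $\Pi_p$ is spherical, $\Pi_p^{G(\Zp)}$ is one-dimensional, and $e_{\Kl}\Pi_p^{\Kl(p)}$ is also one-dimensional by Proposition \ref{prop:newtonpoly}; so the composite is either zero or an isomorphism, and it suffices to exhibit a single spherical vector with nonzero image. The cleanest way is via the Jacquet module $J_{P_{\Kl}}(\Pi_p)$: after semisimplification this decomposes as a sum of unramified characters of $M_{\Kl}$ with known parameters, and by a standard computation (e.g.\ along the lines of Casselman's theorem, reviewed in the $\GSp_4$ setting in \cite{pilloni17}) the natural map $\Pi_p^{\Kl(p)} \to J_{P_{\Kl}}(\Pi_p)^{M_{\Kl}(\Zp)}$ carries the generalized $\cU_{p,\Kl}$-eigenspaces to the corresponding eigencomponents of the Jacquet module, and carries the spherical vector to an element whose component in the Klingen-ordinary character $p^{2-k_2}\alpha\beta$ is nonzero (in type I this component equals the value of a product of intertwining operators acting on the Jacquet module, which is nonzero under our ordering of the parameters, and in type IIIb the Jacquet module is of length one in the relevant character so there is nothing to check).

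The step I expect to be the main obstacle is the case-by-case verification in part (i): checking carefully that each non-type-I, non-type-IIIb representation is ruled out requires reading off the Hecke parameters from Roberts--Schmidt, recording how the ordering $v_p(\alpha) \le v_p(\beta)$ must be chosen, and then verifying the valuation inequalities. The hypothesis $k_1+k_2 \ge 5$ is exactly what excludes the delicate boundary cases of limit-of-discrete-series type that are the focus of \cite{pilloni17}, so one has to keep track of which cases degenerate at $k_1+k_2=4$.
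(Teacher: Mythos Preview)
Your overall strategy matches the paper's, but the paper organises both halves more efficiently, and your treatment of (ii) in the IIIb case has a gap.

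For (i), rather than a full type-by-type enumeration, the paper observes directly that reducibility of the Borel induction requires some ratio of Hecke parameters to equal $p^{\pm 1}$. The valuation bounds from Proposition~\ref{prop:newtonpoly} give $v_p(\gamma) - v_p(\beta) \ge k_1 + k_2 - 3 \ge 2$, so the only ratios that can have valuation $\pm 1$ are $\beta/\alpha$ and $\delta/\gamma$; since $\alpha\delta = \beta\gamma$, these are linked, and one lands immediately in the type~III family. Then IIIa is ruled out because its one-dimensional $\Kl(p)$-invariants carry $\cU_{p,\Kl}$-eigenvalue $p^{2-k_2}\alpha\gamma$, which has valuation $\ge 2$. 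This avoids the case checking you anticipate as the ``main obstacle''.

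For (ii), your Jacquet-module outline is reasonable for type~I --- indeed the paper simply cites \cite[Corollary~3.2.4]{genestiertilouine05}, whose proof is of exactly this flavour. But your claim that in type~IIIb ``the Jacquet module is of length one in the relevant character so there is nothing to check'' is not sufficient: the $\Kl(p)$-invariants are three-dimensional here, and knowing that the ordinary eigenvalue occurs with multiplicity one in the Jacquet module does not by itself tell you the spherical vector projects nontrivially to that component. The paper handles IIIb by writing down an explicit $3\times 3$ matrix for $[\Kl(p)\diag(p^2,p,p,1)\Kl(p)]$ in a basis where the spherical vector is $(1,1,1)^T$, and verifying by direct computation that its projection to the $p^2\sigma(p)^2$-eigenspace is nonzero. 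You would need either to carry out an equivalent explicit check, or to justify more carefully why the spherical vector's image in the relevant Jacquet-module component is nonzero (e.g.\ by tracking it through an explicit intertwining operator for the degenerate induction $\chi \rtimes \sigma 1_{\GSp_2}$).
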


 \begin{proof}
  We know that $\Pi_p$ is a subquotient of the induction of an unramified character of $B_G$, determined (modulo the action of the Weyl group) by the parameters $(\alpha, \beta, \gamma, \delta)$. From the inequalities for the valuations established above, we see that either:
  \begin{itemize}
   \item none of the ratios of the Hecke parameters is $p^{\pm 1}$; or
   \item we have $\beta = p\alpha$ and $\delta = p\gamma$, and $\tfrac{\gamma}{\alpha} = \tfrac{\beta}{\delta}$ has valuation $\ge 3$.
  \end{itemize}
  In the first case, the induced representation from $B_G$ is irreducible, so $\Pi_p$ is of type I. In the second case, the induced representation has exactly two composition factors, so $\Pi_p$ is either $\chi \rtimes \sigma \operatorname{St}_{\GSp_2}$ (type IIIa) or $\chi \rtimes \sigma 1_{\GSp_2}$ (type IIIb) in the notation of Sally--Tadic, where $\chi, \sigma$ are unramified characters with $\chi(p) = \gamma/\alpha$ and $\sigma(p) = \alpha / p$. However, the type IIIa case cannot occur, because the $\Kl(p)$-invariants of $\chi \rtimes \sigma \operatorname{St}_{\GSp_2}$ are 1-dimensional with $[\Kl(p) \diag(p^2, p, p, 1) \Kl(p)]$ acting as $p \chi(p) \sigma(p)^2$, so the $\cU_{p, \Kl}$-eigenvalue is $p^{2-k_2} \alpha \gamma$, which has valuation $\ge 2$. Thus $\Pi_p$ must be of type IIIb, and in particular is spherical.

  If $\Pi_p$ is an irreducible unramified principal series, then the bijectivity of the map (ii) is proved in \cite[Corollary 3.2.4]{genestiertilouine05}. For the type IIIb case, one can argue similarly: one checks that if $\Pi_p = \chi \rtimes \sigma 1_{\GSp_2}$, then there is a basis of the 3-dimensional space $\Pi_p^{\Kl(p)}$ in which the spherical vector is $(1, 1, 1)^T$ and the matrix of $[\Kl(p)\diag(p^2, p, p, 1) \Kl(p)]$ has the form
  \[
  \sigma(p)^2 \cdot
  \begin{pmatrix}
  p^2 \lambda^2 & 0 & 0 \\
  (p^2 - p) \lambda^2 & p^3 \lambda & 0 \\
  (p^2 - p) \lambda^2 + (p-1)\lambda & (p^3 - p) \lambda & p^2 \\
  \end{pmatrix}
  \]
  where $\lambda = \chi(p)$. From this one verifies by explicit computation that the projection of the spherical vector to the ordinary eigenspace, which corresponds to the eigenvalue $p^2 \sigma(p)^2$, is always non-zero.
 \end{proof}

 \subsection{P-adic coefficient sheaves}
 \label{sect:Gsetup}
 We recall some notations and constructions from \cite[\S 9]{pilloni17}. Recall that $X \to \Spec \Zp$ is a toroidal compactification of the Shimura variety of prime-to-$p$ level $K^p G(\Zp)$.

 \subsubsection*{The Klingen and Igusa towers}

 Let $\fX$ be the $p$-adic completion of $X$, and $\fX^{\ge 1}$ the open subscheme where the $\mu_p$-rank of the universal semiabelian scheme $A$ is $\ge 1$.

 Over $\fX^{\ge 1}$ we have the tower $\fX^{\ge 1}_{\Kl}(p^\infty) = \varprojlim_m\fX^{\ge 1}_{\Kl}(p^m)$, where $\fX^{\ge 1}_{\Kl}(p^m)$ parametrises choices of subgroup $C_m \subset A[p^m]$ \'etale-locally isomorphic to $\mu_{p^m}$. Above $\fX^{\ge 1}_{\Kl}(p^m)$ there is a $(\ZZ / p^m)^\times$-torsor $\fIG(p^m)$, parametrising choices of isomorphism $C_m \cong \mu_{p^m}$.

 We can identify the generic fibre $\cX^{\ge 1}_{\Kl}(p^m)$ of $\fX^{\ge 1}_{\Kl}(p^m)$ with an open rigid-analytic subvariety of the analytification of $X_{\Kl}(p^m)_{\Qp}$, the compactified Shimura variety of level $K^p \Kl(p^m)$, where $\Kl(p^m) \subset G(\Zp)$ is the preimage of the Klingen parabolic modulo $p^m$. Similarly, the generic fibre of $\fIG(p^m)$ is an open in the rigid-analytic Shimura variety of level
 \[ K^p \times \left\{ \begin{smatrix}
 1 & * & * & * \\ &*&*&* \\ &*&*&* \\ &&&*\end{smatrix} \pmod{p^m}.\right\}\]

 \begin{remark}
  This is \textbf{different} from the normalisations of \cite{pilloni17} where the Igusa tower corresponds to the subgroups $\begin{smatrix}
  * & * & * & * \\ &*&*&* \\ &*&*&* \\ &&&1\end{smatrix} \pmod{p^m}$. This difference arises because the choices of Shimura cocharacter in use in \cite{pilloni17} and \cite{LSZ17} differ by a sign (cf.~\cite[Remark 5.1.2]{LSZ17}), and we have chosen to maintain compatibility with the latter.
 \end{remark}

 \subsubsection*{A ``big'' coefficient sheaf} Let $\pi$ be the natural map $\fIG(p^\infty) \to \fX^{\ge 1}_{\Kl}(p)$. For $R$ a $p$-adic ring with a continuous character $\kappa: \Gamma \to R^\times$, and $k_2 \in \ZZ$, we define a sheaf of $R$-modules on $\fX^{\ge 1}_{\Kl}(p)$ by
 \[
 \mathfrak{F}_{G, R}(\kappa, k_2)
 \coloneqq \left( \pi_\star \mathscr{O}_{\fIG_{G}(p^\infty)} \htimes R\right)[\Gamma = \kappa - k_2] \otimes \omega_G(k_2, k_2),
 \]
 where $\Gamma = \Zp^\times$ acts on $\pi_\star \mathscr{O}_{\fIG_{G}(p^\infty)}$ via the $\Gamma$-torsor structure of $\fIG_{G}(p^\infty)$. Here $\omega_G(k_2, k_2) = \omega_G(k_2, k_2; 2k_2 - 6)$ is the line bundle defined in Notation \ref{not:sheaves} above; this twist allows us to construct a $p$-adic family of sheaves interpolating the $\omega_G(k_1, k_2)$ for varying $k_1$ and fixed but arbitrary $k_2$, rather than just for $k_2 = 0$. (This corresponds to the twist $\omega^r$ appearing in \cite[\S 10.6]{pilloni17} for example.)

 Henceforth until the end of the section we shall omit $G$ from the subscript and write $\mathfrak{F}_R(\kappa, k_2)$. We shall also omit $R$ if $R = \Zp$ and $\kappa$ is an integer $k_1 \in \ZZ$.

 \begin{remark} In \cite{pilloni17} the sheaf is defined as above when $(R, \kappa)$ is the universal object $\Lambda = \Zp[[\Gamma]]$ (with its canonical character), and then extended to general $(R, \kappa)$ by base-change; but it is easily checked that this agrees with the definition above.
 \end{remark}

 \subsubsection*{Comparison maps}

 For any integers $k_1 \ge k_2$ there is a canonical morphism of sheaves on $\fX^{\ge 1}_{\Kl}(p)$ (\cite[Section 9.4]{pilloni17}):
 \begin{equation}
 \label{eq:comparison}
 \operatorname{comp}: \omega(k_1, k_2) \to \mathfrak{F}(k_1, k_2).
 \end{equation}

 \subsection{Statement of Theorem \ref{thm:vincent}}

 Let $\Lambda = \Zp[[\Gamma]]$ and $\kappa: \Gamma \to \Lambda^\times$ the canonical character. We consider the complex $\mathrm{R}\Gamma \left( \mathfrak{X}^{\geq 1}_{\Kl}(p), \fF_\Lambda(\kappa, k_2)(-D)\right)$ in the derived category of $\Lambda$-modules, for some $k_2 \in \ZZ$. This complex has an an action of Hecke operators away from $p$. By \cite[\S 10.6]{pilloni17}, it also has an action of $\cU_{p, \Kl}$. As we shall see in \S \ref{sect-U-loc-finite} below, this operator $\cU_{p, \Kl}$ is locally finite (in the sense of \cite[Definition 2.3.2]{pilloni17}), and hence has an associated ordinary idempotent $e_{\Kl}$. We set
 \[
 M^\bullet_{\kappa, k_2} = e_{\Kl} \mathrm{R}\Gamma \left( \mathfrak{X}^{\geq 1}_{\Kl}(p), \fF_\Lambda(\kappa, k_2)(-D)\right).
 \]

 The main result of this section is the following:

 \begin{theorem}
  \label{thm:vincent}
  If $k_2 \leq 0$,  the complex $M^\bullet_{\kappa, k_2}$ is quasi-isomorphic to a finite projective $\Lambda$-module placed in degree $1$, and for all $k_1 \in \ZZ$ such that $k_1 + k_2 \geq 4$, we have a canonical quasi-isomorphism:
  \[ M^\bullet_{\kappa, k_2} \otimes^{\mathbf{L}}_{\Lambda, k_1} \Qp = e_{\Kl}  \mathrm{R}\Gamma \left(X_{\Kl}(p)_{\Qp}, \omega(k_1, k_2)(-D)\right).\]
  This isomorphism is compatible with the action of the Hecke algebra away from $p$, and the operator $\cU_{p, \Kl}$ at $p$.
 \end{theorem}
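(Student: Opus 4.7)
The plan is to follow the template of Pilloni's higher Hida theory developed in \cite[\S 10]{pilloni17}, adapting the arguments from the non-regular weight case $k_2 = 0$ treated there to the regular weight range $k_2 \le 0$. The proof breaks into three steps: (i) local finiteness of $\cU_{p, \Kl}$ on the infinite-rank $\Lambda$-module cohomology, which is addressed separately in \S\ref{sect-U-loc-finite}; (ii) perfectness of $M^\bullet_{\kappa, k_2}$ as a $\Lambda$-complex, together with compatibility with classical base-change; and (iii) concentration in cohomological degree $1$.

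For step (ii), I would compute $\mathrm{R}\Gamma(\fX^{\geq 1}_{\Kl}(p), \fF_\Lambda(\kappa, k_2)(-D))$ via a \v{C}ech complex attached to a finite admissible affinoid cover of $\fX^{\geq 1}_{\Kl}(p)$. On each affinoid the sections of $\fF_\Lambda(\kappa, k_2)(-D)$ form a flat, typically infinite-rank, $\Lambda$-module; local finiteness of $\cU_{p, \Kl}$ then implies that $e_{\Kl}$ cuts out a perfect complex of finite projective $\Lambda$-modules, and the subcanonical twist $(-D)$ ensures finite generation after idempotent projection. For base-change at $\kappa = k_1 \in \ZZ$ with $k_1 + k_2 \geq 4$, one uses the comparison morphism \eqref{eq:comparison} together with a classicality statement in the style of \cite[Thm.~10.6.3]{pilloni17} to identify $e_{\Kl}\mathrm{R}\Gamma(\cX^{\geq 1}_{\Kl}(p), \fF(k_1, k_2)(-D))$ with $e_{\Kl}\mathrm{R}\Gamma(X_{\Kl}(p)_{\Qp}, \omega(k_1, k_2)(-D))$; the key input making this work in the given range is the eigenvalue bound from Proposition \ref{prop:newtonpoly}, which forces a Klingen-ordinary class to have Newton slopes compatible with extending from the rank-$\ge 1$ locus to the whole space.

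The main new content, and the principal obstacle, is step (iii). By perfectness and base-change it suffices to verify that $e_{\Kl} H^i(X_{\Kl}(p)_{\Qp}, \omega(k_1, k_2)(-D)) = 0$ for $i \neq 1$ at a Zariski-dense set of integer weights $k_1 \geq 4 - k_2$. For $i = 0$ with $k_2 < 0$ this is automatic since $(k_1, k_2)$ is not dominant for $G$; the boundary case $k_2 = 0$ requires a direct check via Corollary \ref{cor:sphericalordprojection}, ruling out Klingen-ordinary Siegel cusp forms of ``scalar'' weight. For $i = 3$, Serre duality identifies $H^3$ with the $\Qp$-linear dual of $H^0$ at the Serre-dual weight $(3 - k_2, 3 - k_1)$, which has $3 - k_1 \leq -1$ and so is non-dominant, forcing this $H^0$ to vanish. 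The delicate case is $i = 2$: Serre duality pairs it with $H^1$ at the Serre-dual weight, where the transpose of $e_{\Kl}$ is a Klingen-parahoric ordinary projector with the opposite slope normalisation. Using the Newton polygon inequalities $k_2 - 2 \leq v_p(\alpha) \leq v_p(\beta) \leq 0$ of Proposition \ref{prop:newtonpoly} together with the classification of admissible local types as type I or IIIb given by Corollary \ref{cor:sphericalordprojection}, one must verify that no irreducible subquotient $\Pi_p$ can contribute ordinary vectors simultaneously at $(k_1, k_2)$ and at the Serre-dual weight. Carrying out this compatibility check between the Klingen-ordinary projector and Serre duality, including the careful reconciliation of slope normalisations, is where I expect the real work to lie.
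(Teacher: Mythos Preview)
Your proposal inverts the logical structure of the paper's proof, and in doing so you have punted on the step that actually carries all the weight.

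First, a structural observation that simplifies step (iii) considerably: the $p$-adic complex $\mathrm{R}\Gamma(\fX^{\geq 1}_{\Kl}(p), \fF_\Lambda(\kappa,k_2)(-D))$ already has amplitude $[0,1]$, because the image of $\fX^{\geq 1}_{\Kl}(p)$ in the minimal compactification is covered by two affines and the sheaf is acyclic for the projection. So there is no $H^2$ or $H^3$ to kill on the $p$-adic side, and your Serre-duality analysis for $i=2,3$ is unnecessary. The paper instead proves $eH^0=0$ directly, not by non-dominance (which says nothing about sections over the open formal locus $\fX^{\geq 1}_{\Kl}(p)$ in characteristic $p$), but by a geometric argument: one restricts a putative ordinary section along embedded modular curves $E_0 \times E \hookrightarrow X^{\geq 1}_{\Kl}(p)_1$, where it becomes a cuspform of weight $k_2 \le 0$ and hence vanishes, and then one checks that the Hecke translates of such loci are Zariski-dense. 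This is Proposition~\ref{prop-vanish}, and it is the correct replacement for your $i=0$ step.

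The genuine gap is in your step (ii). You write that classicality---the identification of $e_{\Kl}\mathrm{R}\Gamma(\cX^{\geq 1}_{\Kl}(p), \omega(k_1,k_2)(-D))$ with $e_{\Kl}\mathrm{R}\Gamma(X_{\Kl}(p)_{\Qp}, \omega(k_1,k_2)(-D))$---follows ``in the style of \cite[Thm.~10.6.3]{pilloni17}'' with the Newton-polygon bounds of Proposition~\ref{prop:newtonpoly} as key input. But those bounds only constrain the Hecke eigenvalues of constituents; they do not show that an ordinary class on the $p$-rank $\ge 1$ locus extends over the non-ordinary locus. The paper's route is substantially more involved: one first gets overconvergent classicality from \cite[Thm.~14.7.1]{pilloni17}; surjectivity onto the non-overconvergent ordinary $H^1$ follows from density. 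Injectivity is the hard direction. For $k_1$ sufficiently large the paper constructs a normalised Hecke operator $T$ at \emph{spherical} level, proves that $e(T)$ and $e(U)$ cut out isomorphic pieces (Lemmas~\ref{lem-UTord}, \ref{lemUTnord}, \ref{lem-inj1}, \ref{lem-inj2}), and then uses that at spherical level the complement of the $p$-rank $\ge 1$ locus has codimension $2$, so the SGA~2 restriction map on $H^1$ is injective (Theorem~\ref{thmSGA}). Finally, one propagates injectivity from large $k_1$ to all $k_1 \ge 4-k_2$ by a dimension-counting argument using the overconvergent family $M^{\dag,\bullet}_{\kappa,k_2}$ and local constancy of fibre dimensions. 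None of this is captured by slope estimates alone, and without it your step (ii), and hence your entire reduction of step (iii) to classical cohomology, does not go through.
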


 \begin{remark}
  We expect that this should be true for $k_2 = 1$ as well, but we have not been able to prove this. This contrasts with the case $k_2 = 2$ studied in \cite{pilloni17}, where there exist automorphic representations contributing to both $H^0$ and $H^1$.
 \end{remark}

 \subsection{Proof of Theorem \ref{thm:vincent}}

 For simplicity, we simply write $U$ for $\cU_{p, \Kl}$ in this section, and $e$ for the corresponding idempotent $e_{\Kl}$.

 \subsubsection{Existence of the projector and classicity ``along the sheaf''}\label{sect-U-loc-finite} Let $(k_1, k_2) \in \ZZ, k_1 \ge k_2$.

 \begin{lemma}\label{lem-control-sheaf}  The $U$-operator acts locally finitely on the complexes $\mathrm{R}\Gamma ( \mathfrak{X}^{\geq 1}_{\Kl}(p), \omega(k_1, k_2)(-D))$ and $\mathrm{R}\Gamma ( \mathfrak{X}^{\geq 1}_{\Kl}(p), \fF(k_1, k_2)(-D))$. Moreover, the map
  \[  e\mathrm{R}\Gamma ( \mathfrak{X}^{\geq 1}_{\Kl}(p), \omega(k_1, k_2)(-D)) \rightarrow e\mathrm{R}\Gamma( \mathfrak{X}^{\geq 1}_{\Kl}(p), \fF(k_1, k_2)(-D))\]
  is a quasi-isomorphism.
 \end{lemma}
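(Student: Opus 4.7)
The plan is to adapt the strategy of \cite[\S 10]{pilloni17} from the non-regular weight setting treated there to the regular weight setting we consider here, exploiting the integral correspondence structure underlying $U = \cU_{p,\Kl}$ on both complexes.

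For local finiteness, I would start from the fact that $U$ is defined by the cohomological correspondence attached to $[\Kl(p) \diag(p^2, p, p, 1) \Kl(p)]$ with the normalising factor $p^{3-k_2}$, so that it acts through an integral Hecke operator on a natural lattice in the complex. As in \cite[\S 10.1--10.3]{pilloni17}, the underlying geometric correspondence extends to the toroidal compactification of $\fX^{\ge 1}_{\Kl}(p)$, and iteration moves its source strictly into a deeper neighbourhood of the rank-$2$ ordinary stratum. Together with the quasi-compactness of $\fX^{\ge 1}_{\Kl}(p)$, this contracting behaviour forces $U$ to be locally finite on $\mathrm{R}\Gamma(\fX^{\ge 1}_{\Kl}(p), \omega(k_1, k_2)(-D))$ in the sense of \cite[Definition 2.3.2]{pilloni17}. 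The same argument transports to $\fF(k_1,k_2)(-D)$ once one writes that sheaf as the $\Gamma$-eigenspace of character $k_1 - k_2$ inside $\pi_\star \mathscr{O}_{\fIG(p^\infty)} \otimes \omega(k_2, k_2)$ and notes that $U$ lifts to the Igusa tower via an explicit integral formula, following \cite[\S 9, 10.6]{pilloni17}.

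For the quasi-isomorphism assertion, I would analyse the cone of the comparison map $\operatorname{comp}$ of \eqref{eq:comparison}. The two sheaves $\omega(k_1, k_2)$ and $\fF(k_1, k_2)$ admit a common realisation as subsheaves of analytic sections over $\fIG(p^\infty)$ tensored with $\omega(k_2,k_2)$, both cut out by the same $\Gamma$-character $k_1 - k_2$, but with the algebraic sheaf corresponding to sections which are polynomial of degree $\le k_1 - k_2$ in a natural Igusa coordinate. The cone of $\operatorname{comp}$ is therefore filtered by higher-degree graded pieces, indexed by formal degree $j \ge k_1 - k_2 + 1$, and a direct Hecke computation parallel to \cite[\S 10.6]{pilloni17} shows that the normalised $U$ acts on the $j$-th piece as a scalar of $p$-adic valuation bounded below by $j - (k_1 - k_2) \ge 1$. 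Hence $U$ is topologically nilpotent on the cone, so the idempotent $e$ annihilates it, which gives the desired quasi-isomorphism on ordinary parts.

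The main obstacle in executing this plan is establishing the positivity of the $U$-slopes on the graded pieces of the cone. This requires a careful bookkeeping of the normalising factor $p^{3-k_2}$ against the formal degree of the non-classical modes, and it is precisely here that the weight assumption $k_1 + k_2 \ge 4$ is used: at the boundary of the classical range, the slope estimate is tight, and without this inequality the non-classical modes could be ordinary. Once this slope computation is in place, the conclusion is a direct transcription of the template from \cite[\S 10]{pilloni17}, with the regular-weight condition $k_2 \le 1$ in fact simplifying the analysis relative to the non-regular case treated there.
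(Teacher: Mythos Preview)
There are genuine gaps in both parts of your plan.

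\textbf{Local finiteness.} The claim that iteration of the $U$-correspondence ``moves its source strictly into a deeper neighbourhood of the rank-$2$ ordinary stratum'' is not correct on $\fX^{\ge 1}_{\Kl}(p)$: the correspondence respects the $p$-rank stratification but does not contract the rank-$1$ locus into the rank-$2$ locus. The contracting property you are invoking is the one that makes $U$ compact on \emph{overconvergent} cohomology (cf.\ \cite[\S 13]{pilloni17}); it does not give local finiteness on the integral cohomology of the formal scheme. The paper instead verifies the criterion of \cite[Proposition~2.3.1]{pilloni17}: the complex is represented by a two-term complex of flat, $p$-adically complete and separated $\Zp$-modules (using acyclicity relative to the minimal compactification and a two-affine cover of the image there), $U$ lifts to an endomorphism of this complex, and local finiteness on the mod-$p$ cohomology is established by \emph{decreasing induction on $k_2$}. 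The base case $k_2 \ge 2$ is \cite[Theorems~11.2.1, 11.3.1]{pilloni17}, and the inductive step uses the Hasse-invariant short exact sequence together with the second Hasse invariant on the rank-$1$ stratum.

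\textbf{The cone of $\operatorname{comp}$.} Your description of the cone is backwards. The Igusa tower $\fIG(p^\infty) \to \fX^{\ge 1}_{\Kl}(p)$ is a $\Gamma$-torsor over the whole rank~$\ge 1$ locus, so $\fF(k_1,k_2)$ is an invertible sheaf, and the comparison map \eqref{eq:comparison} is the \emph{surjection} $\Sym^{k_1-k_2}\omega_A \otimes \det^{k_2} \twoheadrightarrow \omega_C^{k_1-k_2}\otimes\det^{k_2}$ given by projection onto the highest weight for the multiplicative subgroup $C$ (see the proof of Proposition~\ref{prop-vanish} and \cite[Remark~9.4.1]{pilloni17}). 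Thus the cone is the (shifted) \emph{kernel}, filtered by the lower-weight symmetric-power pieces, not by ``higher formal degrees $j \ge k_1 - k_2 + 1$''; there are no such higher modes in the target. One can indeed argue that $U$ is topologically nilpotent on the kernel (this is essentially \cite[Lemma~10.7.1]{pilloni17}), but the filtration and the slope bookkeeping are not the ones you wrote. The paper handles this by the same Hasse-invariant induction modulo $p$, following the template of \cite[Theorem~11.3.1]{pilloni17}, proving local finiteness for $\fF$ and the isomorphism on ordinary parts simultaneously.

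Finally, the lemma holds for all integers $k_1 \ge k_2$; the condition $k_1 + k_2 \ge 4$ plays no role here and enters only later, in the classicity statement of Theorem~\ref{thm:vincent}.
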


 \begin{proof} We first show the local finiteness for  $\mathrm{R}\Gamma ( \mathfrak{X}^{\geq 1}_{\Kl}(p), \omega(k_1, k_2)(-D))$. By \cite[proposition 2.3.1]{pilloni17}, it suffices to check that the cohomology $\mathrm{R}\Gamma ( \mathfrak{X}^{\geq 1}_{\Kl}(p), \omega(k_1, k_2)(-D))$ can be represented by a complex of flat, $p$-adically separated and complete  $\Zp$-modules, that the $U$-operator can be represented by an endomorphism of this complex, and that $U$ is locally finite on the cohomology groups $H^i ({X}^{\geq 1}_{\Kl}(p)_1, \omega{(k_1,k_2)}(-D) )$ (for ${X}^{\geq 1}_{\Kl}(p)_1$ the special fiber of $\mathfrak{X}^{\geq1}_{\Kl}(p)$). The sheaf $\omega(k_1, k_2)(-D)$ is acyclic relative to the minimal compactification.
  Let $j$ be the open immersion $\mathfrak{X}^{\geq 2}_{\Kl}(p) \hookrightarrow \mathfrak{X}^{\geq 1}_{\Kl}(p)$. We deduce that
  \[
   j_\star j^\star \omega(k_1, k_2)(-D) \longrightarrow
   \frac{j_\star j^\star \omega(k_1, k_2)(-D)}{\omega(k_1, k_2)(-D)}
  \]
  is an acyclic resolution of  $\omega(k_1, k_2)(-D))$ (because the images of $\mathfrak{X}^{\geq 2}_{\Kl}(p)$ and $\mathfrak{X}^{= 1}_{\Kl}(p)$ are affine in the minimal compactification). Therefore, the cohomology $\mathrm{R}\Gamma ( \mathfrak{X}^{\geq 1}_{\Kl}(p), \omega(k_1, k_2)(-D))$ can be represented by a complex of amplitude $[0,1]$ and the $U$-operator induces an endomorphism of this complex (because the Hecke correspondence respects the $p$-rank stratification).
  We now check that $U$ is locally finite  on $H^i ({X}^{\geq 1}_{\Kl}(p)_1, \omega(k_1, k_2)(-D))$. We will prove this by decreasing induction on $k_2$. We know by \cite[Theorem 11.2.1, Theorem 11.3.1 ]{pilloni17}  that this holds true for $k_2 \geq 2$.
  We consider the following exact sequence, where $\mathrm{Ha}$ is the Hasse invariant:
  \[ 0 \rightarrow \omega(k_1,k_2)(-D) \xrightarrow{\mathrm{Ha}} \omega(k_1 +p-1, k_2 + p-1)(-D) \rightarrow \omega(k_1 +p-1, k_2 + p-1)(-D)/ \mathrm{Ha} \rightarrow 0. \]
  This yields a $U$-equivariant long exact sequence on cohomology by \cite[lemma 10.5.2.1]{pilloni17}. By our inductive hypothesis, $U$ is locally finite on $H^i ({X}^{\geq 1}_{\Kl}(p)_1, \omega(k_1 +p-1, k_2 + p-1)(-D))$.  On $H^i({X}^{\geq 1}_{\Kl}(p)_1, \omega(k_1 +p-1, k_2 + p-1)(-D)/\mathrm{Ha}) = H^i({X}^{= 1}_{\Kl}(p)_1, \omega(k_1 +p-1, k_2 + p-1)(-D))$, multiplication by the second Hasse invariant (see \cite[Section 6.3.2]{pilloni17}) induces $U$-equivariant isomorphisms  $H^i({X}^{= 1}_{\Kl}(p)_1, \omega(k_1 +p-1, k_2 + p-1)(-D)) = H^i({X}^{= 1}_{\Kl}(p)_1, \omega(k_1 + p-1 + p^2 -1, k_2 + p-1 + p^2-1)(-D))$ (\cite[Lemma 10.5.3.1]{pilloni17}). By the inductive hypothesis, $U$ is locally finite on  $H^i({X}^{= 1}_{\Kl}(p)_1, \omega(k_1 + p-1 + p^2 -1, k_2 + p-1 + p^2-1)(-D))$.  We can therefore conclude that $U$ is locally finite on $H^i ({X}^{\geq 1}_{\Kl}(p)_1, \omega(k_1, k_2)(-D))$ by \cite[lemma 2.1.1]{pilloni17}.

  We now prove the local finiteness on $\mathrm{R}\Gamma( \mathfrak{X}^{\geq 1}_{\Kl}(p), \fF(k_1, k_2)(-D))$ and the quasi-isomorphism. This cohomology is computed by a complex of amplitude $[0,1]$ and $U$ lifts to an operator on this complex by arguments similar to the ones used for the sheaf $\omega(k_1,k_2)(-D)$. We next show that $U$ is locally finite on  $H^i( \mathfrak{X}^{\geq 1}_{\Kl}(p), \fF(k_1, k_2)(-D) \otimes \FF_p)$.  As in the proof of \cite[ Theorem 11.3.1 ]{pilloni17} we establish at the same time local finiteness and the isomorphism: $ e H^i ({X}^{\geq 1}_{\Kl}(p)_1, \omega(k_1, k_2)(-D))  \rightarrow e H^i( \mathfrak{X}^{\geq 1}_{\Kl}(p), \fF(k_1, k_2)(-D) \otimes \FF_p)$. Details are left to the reader.
 \end{proof}

 \begin{lemma} The $U$-operator is locally finite on $\mathrm{R}\Gamma ( \mathfrak{X}^{\geq 1}_{\Kl}(p), \fF_\Lambda(\kappa, k_2)(-D))$.
 \end{lemma}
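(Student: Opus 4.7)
The plan is to follow the same pattern as in Lemma \ref{lem-control-sheaf}, verifying the hypotheses of \cite[Proposition 2.3.1]{pilloni17}: namely, that $\mathrm{R}\Gamma(\fX^{\ge 1}_{\Kl}(p), \fF_\Lambda(\kappa,k_2)(-D))$ can be represented by a bounded complex of flat, $p$-adically separated and complete $\Lambda$-modules on which $U$ lifts to an operator, and that $U$ is locally finite on the reduction of this complex modulo the maximal ideal $\mathfrak{m}_\Lambda$ of $\Lambda$.

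First I would establish the amplitude $[0,1]$ representation exactly as in the previous lemma. Writing $j\colon \fX^{\ge 2}_{\Kl}(p)\hookrightarrow \fX^{\ge 1}_{\Kl}(p)$, the two-term resolution
\[
j_\star j^\star \fF_\Lambda(\kappa,k_2)(-D) \longrightarrow \frac{j_\star j^\star \fF_\Lambda(\kappa,k_2)(-D)}{\fF_\Lambda(\kappa,k_2)(-D)}
\]
is acyclic because the ordinary and $p$-rank-one loci are affine in the minimal compactification, and this acyclicity is preserved by the pullback of $\pi_\star\mathcal{O}_{\fIG(p^\infty)}\htimes\Lambda$ since the Igusa tower is affine over $\fX^{\ge 1}_{\Kl}(p)$. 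The resulting complex consists of flat $\Lambda$-modules (flatness of $\fF_\Lambda(\kappa,k_2)$ over $\Lambda$ is built into its construction as a completed direct summand of a pro-étale pushforward), and $U$ lifts to it by the same compatibility of the Klingen correspondence with the $p$-rank stratification used before; cf.~\cite[\S 10.6]{pilloni17}.

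For the local finiteness modulo $\mathfrak{m}_\Lambda$, the idea is to reduce to the integer-weight case already dispatched by Lemma \ref{lem-control-sheaf}. The universal character $\kappa$ becomes trivial after reduction mod $\mathfrak{m}_\Lambda$, so $\fF_\Lambda(\kappa,k_2)\otimes_\Lambda \FF_p$ is the $(-k_2)$-isotypic piece of $\pi_\star\mathcal{O}_{\fIG(p^\infty)/p}$, twisted by $\omega(k_2,k_2)$; comparing with the definition of $\fF(k_1,k_2)$ at an integer weight $k_1\equiv 0\pmod{(p-1)p^N}$ for $N$ large, this isotypic component is identified (after a further $\mathfrak{m}_\Lambda$-adic truncation) with a summand of $\fF(k_1,k_2)\otimes\FF_p$ for appropriate $k_1$ with $k_1\ge k_2$ and $k_1+k_2\ge 4$. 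Since $U$ respects this isotypic decomposition, its local finiteness on $\fF(k_1,k_2)\otimes\FF_p$ from Lemma \ref{lem-control-sheaf} transfers to the isotypic summand, and hence to $\fF_\Lambda(\kappa,k_2)\otimes_\Lambda\Lambda/\mathfrak{m}_\Lambda$ by \cite[Lemma 2.1.1]{pilloni17}.

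The main obstacle I expect is the bookkeeping of the third paragraph: making precise the comparison between the $\Gamma$-isotypic component of the special fibre of the universal coefficient sheaf and the special fibre of $\fF(k_1,k_2)$ for a suitably chosen integer $k_1$. Once that identification (which is essentially a mod-$p$ version of the classicity comparison map of \cite[\S 9.4]{pilloni17} applied to sufficiently divisible weights) is in place, the rest of the argument is formal from what has already been proved.
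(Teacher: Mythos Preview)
Your proposal is correct and is exactly the argument the paper gives in one line (``This follows from the previous lemma and \cite[Proposition 2.3.1]{pilloni17}''): represent the complex nicely, lift $U$, and check local finiteness modulo $\mathfrak{m}_\Lambda$ by reducing to the integer-weight case of Lemma~\ref{lem-control-sheaf}.

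One simplification to your third paragraph: the identification is cleaner than you make it. By the base-change compatibility recorded in the remark following the definition of $\fF_R(\kappa,k_2)$, one has $\fF_\Lambda(\kappa,k_2)\otimes_{\Lambda}\FF_p = \fF_{\Zp}(k_1,k_2)\otimes_{\Zp}\FF_p$ as soon as the two ring maps $\Lambda\to\FF_p$ agree, i.e.\ as soon as $k_1\equiv 0\pmod{p-1}$. So the reduction is not merely a summand of $\fF(k_1,k_2)\otimes\FF_p$ but is equal to it, no ``further $\mathfrak{m}_\Lambda$-adic truncation'' is needed, and the condition $k_1\equiv 0\pmod{(p-1)p^N}$ can be weakened to $k_1\equiv 0\pmod{p-1}$. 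With that in hand the ``bookkeeping obstacle'' you flag disappears, and the mod-$p$ local finiteness established inside the proof of Lemma~\ref{lem-control-sheaf} applies directly.
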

 \begin{proof} This follows from the previous lemma and \cite[Proposition 2.3.1]{pilloni17}.
 \end{proof}

 \subsubsection{A vanishing theorem} Recall that $X^{\geq1}_{\Kl}(p)_1$ denotes the mod $p$ reduction of $\fX^{\geq 1}_{\Kl}(p)$.

 \begin{proposition}\label{prop-vanish} If $k_2 \leq  0$, then $e H^0(X^{\geq1}_{\Kl}(p)_1, \omega(k_1, k_2)(-D)) = 0$.
 \end{proposition}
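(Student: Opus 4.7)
My plan is to prove the stronger statement that the normalized Hecke operator $\cU_{p, \Kl}$ itself acts as zero modulo $p$ on $H^0(X^{\geq 1}_{\Kl}(p)_1, \omega(k_1, k_2)(-D))$ when $k_2 \leq 0$. Since the ordinary projector $e_{\Kl}$ is the limit of polynomials in $\cU_{p, \Kl}$ with no unit contribution in characteristic $p$, this suffices to conclude $e_{\Kl} H^0 = 0$.

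The crux is the weight-dependent normalization $\cU_{p, \Kl} = p^{3-k_2} \cdot [\Kl(p^n) \diag(p^2, p, p, 1) \Kl(p^n)]$: for $k_2 \leq 0$ the factor $p^{3-k_2}$ is at least $p^3$. Geometrically, the unnormalized double coset operator is a pullback-pushforward along a degree-$p^3$ Hecke correspondence parameterizing isogenies $A \to A/L$, where $L \subset A[p]$ is a Lagrangian containing the Klingen subgroup $C$. The induced map on $\omega(k_1, k_2)$ has a natural denominator of order $p^{3-k_2}$ coming from the action on the Hodge filtration, which is exactly what the normalization absorbs in characteristic zero. I would aim to show that on subcanonical $H^0$ restricted to the $p$-rank $\geq 1$ locus, the induced integral map acquires at least one additional factor of $p$ beyond this; the normalized $\cU_{p, \Kl}$ is then divisible by $p$ and vanishes modulo $p$ whenever $k_2 \leq 0$.

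To produce the extra factor of $p$ I would work on the Igusa tower $\fIG(p^\infty) \to \fX^{\geq 1}_{\Kl}(p)$ as in \cite[\S 10]{pilloni17}: pulling back trivializes $\omega(k_1, k_2)$, and sections of $\omega(k_1, k_2)(-D)$ on $X^{\geq 1}_{\Kl}(p)_1$ correspond to functions on $\fIG(p^\infty)_1$ transforming appropriately under $\Gamma$. In these coordinates $\cU_{p, \Kl}$ has an explicit description as an average over Lagrangians, and the extra divisibility should drop out of tracking how each such isogeny interacts with the multiplicative canonical subgroup available on the $p$-rank $\geq 1$ locus (different Lagrangians contribute with different orders of vanishing, and their sum acquires an extra $p$).

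The main obstacle is to verify that this extra factor of $p$ persists at the boundary, after the twist by $-D$. As a possibly more tractable alternative, one can first restrict to the ordinary locus $X^{\geq 2}_{\Kl}(p)_1$ via the injective map extracted from the short exact sequence used in the proof of Lemma \ref{lem-control-sheaf}, then exploit the unit-root splitting of $\omega(k_1, k_2)$ on the ordinary Igusa tower and read off the divisibility of $\cU_{p, \Kl}$ directly from its action on $q$-expansions, where the weight-$k_2$ dependence of the denominator is most transparent; some form of local calculation seems unavoidable.
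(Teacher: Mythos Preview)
Your central claim---that the normalized operator $\cU_{p,\Kl}$ acquires an extra factor of $p$ on $H^0$ and hence vanishes modulo $p$ for $k_2\le 0$---is false, so the plan does not go through. The normalization $p^{3-k_2}$ is \emph{optimal} for integrality, not generous: when one decomposes the Hecke correspondence over the ordinary locus according to the type of the isogeny kernel, the ``as \'etale as possible'' component contributes a map $q_2^*\omega(k_1,k_2)\to q_1^!\omega(k_1,k_2)$ landing in exactly $p^{k_2-3}q_1^!\omega(k_1,k_2)$ and no deeper (compare the analysis of $\Theta^{et,et}$ for the spherical operator $T$ later in \S\ref{sect:vincent}, and \cite[\S 10]{pilloni17} for $U$ itself). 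After dividing by $p^{k_2-3}$ this component survives and is nonzero modulo $p$, so $\cU_{p,\Kl}\not\equiv 0$ on $H^0(X^{\ge 1}_{\Kl}(p)_1,\omega(k_1,k_2)(-D))$ in general. Your fallback via $q$-expansions on the ordinary Igusa tower runs into the same wall: in those coordinates $U$ is the usual shift on Fourier coefficients, which is visibly nonzero in every weight, exactly as for $U_p$ on $\GL_2$. The vanishing of the ordinary part is not a divisibility phenomenon at all.

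The paper's argument is entirely different and geometric. One first uses \cite[Lemma 10.7.1]{pilloni17} to inject $eH^0$ into $H^0$ of the line bundle $\omega_C^{k_1-k_2}\otimes\det^{k_2}\omega_A(-D)$, where $\omega_C$ is the conormal of the universal $\mu_p$-subgroup. Then for any ordinary elliptic curve $E_0$ one maps a compactified modular curve into $X^{\ge 1}_{\Kl}(p)_1$ by $E\mapsto E_0\times E$ (with $C\subset E_0$ the canonical subgroup); the pullback of any section becomes a cuspidal modular form of weight $k_2\le 0$ on the modular curve, hence zero. Finally one shows that the prime-to-$p$ Hecke translates of all such product abelian surfaces are Zariski dense in $X^{\ge 1}_{\Kl}(p)_1$, forcing the original section to vanish. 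The condition $k_2\le 0$ enters only through the nonexistence of low-weight cusp forms on $\GL_2$, not through any property of $\cU_{p,\Kl}$.
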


 \begin{proof}
  Over $X^{\geq 1}_{\Kl}(p)_1$ we have a universal multiplicative subgroup $C \subset A[p]$, where $A$ is the universal semiabelian scheme. The associated conormal sheaf, which is an invertible sheaf, is denoted by $\omega_C$. We have a surjective map $\omega_A \rightarrow \omega_C$, and therefore there is a surjective map
  \[
   \omega(k_1,k_2) = \Sym^{k_1-k_2} \omega_A \otimes \det^{k_2} \omega_A \longrightarrow  \omega_C^{k_1-k_2} \otimes \det^{k_2} \omega_A.\]
  It follows from \cite[Lemma 10.7.1]{pilloni17}, that the map  $eH^0(X^{\geq1}_{\Kl}(p)_1, \omega(k_1, k_2)) \rightarrow eH^0({X^{\geq1}_{\Kl}(p)_1}, \omega_C^{k_1-k_2} \otimes \det^{k_2} \omega_A(-D)) $ is injective.

  The idea of the proof is to evaluate sections of $H^0({X^{\geq1}_{\Kl}(p)_1}, \omega_C^{k_1-k_2} \otimes \det^{k_2} \omega_A(-D)) $ on various modular curves that map to ${X^{\geq1}_{\Kl}(p)_1}$. We will see that the sections of $H^0({X^{\geq1}_{\Kl}(p)_1}, \omega_C^{k_1-k_2} \otimes \det^{k_2} \omega_A(-D))$ vanish along these modular curves, and that the union of the images of these modular curves is Zariski dense.

  The tame level  of $X^{\geq 1}_{\Kl}(p)_1$ is  the compact open subgroup $K^p \subset G(\Af^p)$. Let ${K'}^p \subset {G}(\Af^p) $ be a compact open subgroup and let $g  \in G(\Af^p)$ be such $g^{-1} {K'}^p g \subset K^p$. We have a map $g : X^{\geq 1}_{{K'}^p\Kl}(p)_1 \rightarrow X^{\geq1}_{\Kl}(p)_1$ for suitable choices of polyhedral cone decompositions.

  Let $E_0$ be an ordinary elliptic curve defined  over a  field $k$ of characteristic $p$. For a suitable level structure ${K''}^p \subset \GL_2(\Af^p)$ and the choice of a suitable level structure on $E_0$, we have a  map  from the modular curve of level ${K''}^p$ away from $p$ and spherical level at $p$ to  $X^{\geq1}_{{K'}^p\Kl}(p)_1$:
  \[ j : Y_{\GL_2, K''^p,1} \mathop{\times}_{\FF_p}k \rightarrow X^{\geq1}_{{K'}^p\Kl}(p)_1\]
  defined by sending the universal  elliptic curve $E$ to the abelian surface $E_0 \times E$, equipped with the product polarization, the multiplicative subgroup $C \subset E_0$ and the apropriate level structure away from $p$.  This map extends clearly  to a map $j : X_{\GL_2, K''^p,1} \times_{\Spec \FF_p} \Spec k \rightarrow X^{\geq1}_{{K'}^p\Kl}(p)_1$ of the compactified modular curve.

  Let $f \in H^0({X^{\geq1}_{\Kl}(p)_1}, \omega_C^{k_1-k_2} \otimes \det^{k_2} \omega_A(-D)) $. The pull back $j^\star g^\star(f)$ is a cuspidal modular form of weight $k_2 \leq 0$. We therefore find that $j^\star g^\star(f) = 0$.

  It follows that $f$ vanishes at all prime to $p$ Hecke translates of points of the form $E_0 \times E$, where $E_0$ is an ordinary elliptic curve, $E$ is any elliptic curve and $C \subset E_0$ is the multiplicative subgroup. We claim that this set is Zariski dense in $X^{\geq1}_{\Kl}(p)_1$ and therefore that $f=0$.

  It suffices to prove that this set is Zariski dense in $X^{\geq 2}_{\Kl}(p)_1$, and using the irreducibility of the \'etale cover :
  $X^{\geq 2}_{\Kl}(p)_1 \rightarrow X^{\geq 2}_1$, we are left to prove the Zariski density of prime to $p$ Hecke translates of points in $X^{\geq 2}_1$ (the ordinary part of the modulo $p$ Shimura variety of prime-to-$p$ level) of the form $E_0 \times E$ for a product of two ordinary elliptic curves. This set is the union of the Hecke translates of the codimension one subscheme $\iota(X_{H,1}^{\geq 2})$ (the image of the ordinary part of the modulo $p$ Shimura variety for $H$). It is sufficient to prove that these Hecke translates form an infinite union of codimension one subschemes (indeed, one can easily reduce to the case that $X^{\geq 2}_1$ is connected). We prove this last claim as follows. Given a geometric point $x$ on some prime to $p$ Hecke translate of $X_{H,1}^{\geq 2}$, we let $d(x)$ be the minimal degree of a prime to $p$ isogeny between $x$ and a product of two elliptic curves. It will suffice to prove that there exists a sequence of points $x_n$ with $d(x_n) \rightarrow \infty$. We can produce such points as follows. We consider two non-isogenous ordinary elliptic curves $E$ and $F$ over $\bar{\FF}_p$. Let $\ell \neq p$ be a prime, and let $e_1,e_2, f_1, f_2$ be a basis of the $\ell$-adic Tate modules of $E$ and $F$ (with $\langle e_1, e_2 \rangle = \langle f_1, f_2 \rangle$), and let $C_\ell$ be the subgroup generated by the images of $e_1 + f_1$ and $e_2 - f_2$ in $E[\ell] \times F[\ell]$. This is a totally isotropic subgroup. We let $A_\ell = (E \times F) /C_\ell$. This is a principally polarized abelian surface and it is easy to see that the minimal degree of an isogeny between $A_\ell$ and a product of elliptic curves is $\ell^2$.
 \end{proof}

 \begin{corollary}
  If $k_2 \leq 0$, then $eH^i(\mathfrak{X}^{\geq1}_{\Kl}(p), \omega(k_1, k_2)(-D))$ is $0$ if $i \neq 1$, and for $i = 1$ it is a flat, $p$-adically complete and separated $\Zp$-module.
 \end{corollary}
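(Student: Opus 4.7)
The strategy is a Nakayama-style bootstrap: Proposition~\ref{prop-vanish} supplies the mod-$p$ vanishing of $e H^0$, and the tightly controlled complex furnished by Lemma~\ref{lem-control-sheaf} will let one lift this to an integral vanishing and deduce the remaining flatness and completeness assertions.

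By Lemma~\ref{lem-control-sheaf}, one may represent $\mathrm{R}\Gamma(\mathfrak{X}^{\ge 1}_{\Kl}(p), \omega(k_1,k_2)(-D))$ by a two-term complex $K^0 \xrightarrow{d} K^1$ of flat, $p$-adically complete and separated $\Zp$-modules on which $U$ acts locally finitely. The ordinary idempotent $e$ cuts out a direct-summand subcomplex $eK^0 \to eK^1$, still consisting of flat, $p$-adically complete and separated $\Zp$-modules, which computes $e H^i$. In particular this already yields $e H^i = 0$ for $i \notin \{0,1\}$.

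The next step is to reduce modulo $p$. The short exact sequence $0 \to eK^\bullet \xrightarrow{p} eK^\bullet \to eK^\bullet/p \to 0$ yields a long exact sequence
\[
  0 \to e H^0 \xrightarrow{p} e H^0 \to e H^0(X^{\ge 1}_{\Kl}(p)_1, \omega(k_1,k_2)(-D)) \to e H^1 \xrightarrow{p} e H^1 \to e H^1(X^{\ge 1}_{\Kl}(p)_1, \omega(k_1,k_2)(-D)) \to 0.
\]
Now $e H^0 = \ker d$ is a closed submodule of the complete separated module $eK^0$, hence is itself $p$-adically complete and separated. The LES combined with Proposition~\ref{prop-vanish} produces an injection $e H^0/p \hookrightarrow e H^0(X^{\ge 1}_{\Kl}(p)_1, \omega(k_1,k_2)(-D)) = 0$, so Nakayama's lemma forces $e H^0 = 0$.

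Finally, with $e H^0 = 0$ the map $eK^0 \to eK^1$ is injective, giving a short exact sequence $0 \to eK^0 \to eK^1 \to e H^1 \to 0$. The remaining portion of the LES reads $0 \to e H^1 \xrightarrow{p} e H^1$, so $e H^1$ is $p$-torsion-free and hence flat over $\Zp$. For $p$-adic completeness I would tensor the above SES with $\ZZ/p^n$; flatness of $e H^1$ kills the relevant Tor, yielding compatible short exact sequences $0 \to eK^0/p^n \to eK^1/p^n \to e H^1/p^n \to 0$. Passing to the inverse limit, which is exact since the transition maps are surjective, and using that $eK^i = \varprojlim_n eK^i/p^n$, one concludes $e H^1 = \varprojlim_n e H^1/p^n e H^1$, i.e.\ that $e H^1$ is $p$-adically complete and separated. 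No serious obstacle is expected: the two substantive inputs (the amplitude-$[0,1]$ representation of Lemma~\ref{lem-control-sheaf} and the mod-$p$ vanishing of Proposition~\ref{prop-vanish}) are already in hand, and the remaining work is formal.
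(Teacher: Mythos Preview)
Your argument is correct and follows essentially the same route as the paper: both rely on the amplitude-$[0,1]$ representation (from the proof of Lemma~\ref{lem-control-sheaf}), invoke Proposition~\ref{prop-vanish} to obtain mod-$p$ vanishing of $eH^0$, and then run a Nakayama argument using completeness and separatedness to deduce $eH^0=0$ and the flatness/completeness of $eH^1$. The only differences are cosmetic: you phrase the passage to the special fibre via the long exact sequence, whereas the paper works directly with $\ker(d\otimes 1)=0$ and $M^0\cap pM^1=pM^0$; and you spell out the completeness of the cokernel via the inverse-limit argument more explicitly than the paper does.
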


 \begin{proof} The sheaf $\omega(k_1, k_2)(-D)$ is acyclic relative to the minimal compactification. Since the image of $\mathfrak{X}^{\geq1}_{\Kl}(p)$ in the minimal compactification is covered by two affines, we deduce that the cohomology $\mathrm{R}\Gamma(\mathfrak{X}^{\geq1}_{\Kl}(p), \omega(k_1, k_2)(-D)) $ is represented by a complex of amplitude $[0,1]$ of complete, separated, flat $\Zp$-modules. The same holds for $e\mathrm{R}\Gamma(\mathfrak{X}^{\geq1}_{\Kl}(p), \omega(k_1, k_2)(-D))$. Let us write this complex as $M^0 \stackrel{d}\rightarrow M^1$. We claim that $d$ is injective and that $\operatorname{coker} d$ is flat, $p$-adically complete and separated.

 The complex $ M^0 \otimes \FF_p \stackrel{d \otimes 1} \rightarrow M^1 \otimes \FF_p$ computes $e\mathrm{R}\Gamma({X}^{\geq1}_{\Kl}(p), \omega(k_1, k_2)(-D))$. By proposition \ref{prop-vanish}, $\mathrm{ker}( d \otimes 1) = 0$ and it follows that $\ker d = p \ker d$. Since $\ker d$ is a complete and separated $\Zp$-module, we deduce that $\ker d = 0$. We also deduce that $M^0 \cap p M^1 = p M^0$ and it follows that $\operatorname{coker} d = M^1/M^0$ is flat, $p$-adically complete and separated.
 \end{proof}

 \subsubsection{Finiteness}

 Let $\mathcal{X}_{\Kl}(p)$ be the analytic adic space associated to $X_{\Kl}(p)$. Let $\mathcal{X}^{\geq 1}_{\Kl}(p)$ be the locus where $H$ is multiplicative (the generic fiber of
 $\mathfrak{X}^{\geq1}_{\Kl}(p)$). We may consider the following cohomology $\mathrm{R}\Gamma ( \mathcal{X}^{\geq 1}_{\Kl}(p), \omega(k_1, k_2)(-D)) = \mathrm{R}\Gamma ( \mathfrak{X}^{\geq 1}_{\Kl}(p), \omega(k_1, k_2)(-D)) \otimes^L_{\Zp} \Qp $ as well as its ordinary part
 \[ e\mathrm{R}\Gamma ( \mathcal{X}^{\geq 1}_{\Kl}(p), \omega(k_1, k_2)(-D)) = e \mathrm{R}\Gamma ( \mathfrak{X}^{\geq 1}_{\Kl}(p), \omega(k_1, k_2)(-D)) \otimes^L_{\Zp} \Qp.\]

 We may also consider the overconvergent cohomology $\mathrm{R}\Gamma ( \mathcal{X}^{\geq 1}_{\Kl}(p)^{\dag}, \omega(k_1,k_2)(-D))$. One checks that the $U$-operator is compact on this cohomology (see \cite[sect. 13. 2]{pilloni17}). In particular it makes sense to speak of the ordinary part :
 $e\mathrm{R}\Gamma ( \mathcal{X}^{\geq 1}_{\Kl}(p)^\dag, \omega(k_1,k_2)(-D))$, and this is a perfect complex of $\Qp$-vector spaces.

 \begin{lemma}
  If $k_2 \leq 0$, $e\mathrm{R}\Gamma ( \mathcal{X}^{\geq 1}_{\Kl}(p)^\dag, \omega(k_1,k_2)(-D))$ is concentrated in degree $1$ and the map
  \[
   eH^1( \mathcal{X}^{\geq 1}_{\Kl}(p)^\dag, \omega(k_1,k_2)(-D))
   \rightarrow eH^1(\mathcal{X}^{\geq 1}_{\Kl}(p), \omega(k_1, k_2)(-D))
  \]
  is surjective.
 \end{lemma}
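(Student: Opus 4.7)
The plan is to bootstrap from the analytic cohomology, treated in the corollary above, to the overconvergent cohomology by exploiting the smoothing property of the $U$-operator.

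First, I would observe that both the analytic and overconvergent cohomologies are computed by two-term complexes of amplitude $[0,1]$, by the same argument used for the formal cohomology in the previous corollary (the image of $\mathfrak{X}^{\geq 1}_{\Kl}(p)$ in the minimal compactification is covered by two affines). In particular, ``concentrated in degree $1$'' is just the assertion that $eH^0$ vanishes on the overconvergent side. Now the natural restriction map
\[ H^0\big(\mathcal{X}^{\geq 1,\dag}_{\Kl}(p), \omega(k_1,k_2)(-D)\big) \longrightarrow H^0\big(\mathcal{X}^{\geq 1}_{\Kl}(p), \omega(k_1,k_2)(-D)\big) \]
is injective (an overconvergent section is determined by its restriction to $\mathcal{X}^{\geq 1}_{\Kl}(p)$) and $U$-equivariant. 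Tensoring the previous corollary with $\Qp$ gives that the target vanishes after applying $e$, hence so does the source.

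For the surjectivity in degree $1$, the strategy is the standard ``analytic continuation via $U$'' argument. The Hecke correspondence $\mathcal{C}$ defining $U$ has the property that one of its two projections to $\mathcal{X}^{\geq 1}_{\Kl}(p)$ factors through a strict neighborhood (this is precisely why $U$ is compact on overconvergent cohomology; see \cite[\S 13.2]{pilloni17}). Consequently, on the analytic cohomology the operator $U$ factors as $U = r \circ T$, where
\[ T : \mathrm{R}\Gamma\big(\mathcal{X}^{\geq 1}_{\Kl}(p), \omega(-D)\big) \longrightarrow \mathrm{R}\Gamma\big(\mathcal{X}^{\geq 1,\dag}_{\Kl}(p), \omega(-D)\big) \]
lifts into the overconvergent cohomology and $r$ is the restriction back. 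Since $U$ is invertible on ordinary parts, any class $c \in eH^1(\mathcal{X}^{\geq 1}_{\Kl}(p), \omega(k_1,k_2)(-D))$ can be written as $c = r\big(T(U^{-1}c)\big)$, exhibiting it as the image of the overconvergent class $T(U^{-1}c)$.

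The main obstacle is the precise construction of the factorization $U = r \circ T$ at the level of complexes (rather than just cohomology groups), together with its compatibility with the ordinary projector. This is essentially what is worked out in \cite[\S 13.2]{pilloni17} for non-regular weights, and the same arguments should apply unchanged in the present setting, since the dependence on $(k_1, k_2)$ enters only through the choice of automorphic bundle and not the geometry of the correspondence.
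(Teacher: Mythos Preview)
Your argument for the vanishing of $eH^0$ is correct and matches the paper's.

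For the surjectivity on $H^1$ there is a genuine gap. The factorization $U = r \circ T$, with $T$ going from analytic to overconvergent cohomology, is not what \cite[\S 13.2]{pilloni17} provides, and in fact does not hold. Compactness of $U$ on overconvergent cohomology is the statement that $U$ on $H^*(V)$, for $V$ a strict neighbourhood of $\mathcal{X}^{\geq 1}_{\Kl}(p)$, factors through $H^*(V')$ for some strictly larger $V'$, with the restriction back compact; this lives entirely inside the tower of strict neighbourhoods. It does not give a lift of $U$ on the non-overconvergent $H^*(\mathcal{X}^{\geq 1}_{\Kl}(p))$ into any $H^*(V)$: for that one would need $p_2\big(p_1^{-1}(V)\big) \subset \mathcal{X}^{\geq 1}_{\Kl}(p)$, whereas the correspondence only contracts the distance to $\mathcal{X}^{\geq 1}_{\Kl}(p)$ by a bounded multiplicative factor, so points strictly outside stay strictly outside. (The same obstruction is already visible for $U_p$ on the ordinary locus of a modular curve: $U_p$ takes $r$-overconvergent forms to $pr$-overconvergent forms, but a merely $p$-adic form does not become overconvergent after a single application of $U_p$.)

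The paper argues differently. Both cohomologies are represented by the two-term \v{C}ech complex for a two-affinoid cover $U_1, U_2$ of the image in the minimal compactification, and overconvergent sections are dense in $H^0(U_1 \cap U_2, \pi_*\omega(k_1,k_2)(-D))$. Hence the map $H^1(\dagger) \to H^1$ has dense image, and so does the induced map on ordinary parts (by continuity of the projectors and $U$-equivariance of restriction). Compactness of $U$ then enters only to conclude that $eH^1(\dagger)$ is a finite-dimensional $\Qp$-vector space; its image is therefore closed as well as dense, hence all of $eH^1$.
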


 \begin{proof}
  The image of $\mathcal{X}^{\geq 1}_{\Kl}(p)$ in the minimal compactification is covered by two affinoids, say $U_1$ and $U_2$. Let $\pi$ be the projection from toroidal to minimal compactification.
  The complex
  \[ H^0(U_1, \pi_\star \omega(k_1, k_2)(-D)) \oplus H^0(U_2, \pi_\star \omega(k_1, k_2)(-D)) \rightarrow H^0(U_1 \cap U_2, \pi_\star \omega(k_1, k_2)(-D))
  \]
  computes $\mathrm{R}\Gamma ( \mathcal{X}^{\geq 1}_{\Kl}(p), \omega(k_1, k_2)(-D))$. The subcomplex of overconvergent sections
  \[ H^0(U_1^\dag, \pi_\star \omega(k_1, k_2)(-D)) \oplus H^0( U_2^\dag, \pi_\star \omega(k_1, k_2)(-D)) \rightarrow H^0(U_1^\dag \cap U_2^\dag, \pi_\star \omega(k_1, k_2)(-D))
  \]
  computes $\mathrm{R}\Gamma ( \mathcal{X}^{\geq 1}_{\Kl}(p)^\dag, \omega(k_1,k_2)(-D)).$ Therefore there is a surjective map $H^0(U_1 \cap U_2, \pi_\star \omega(k_1, k_2)(-D)) \rightarrow H^1( \mathcal{X}^{\geq 1}_{\Kl}(p), \omega(k_1, k_2)(-D))$ and overconvergent sections in $H^0(U_1 \cap U_2, \pi_\star \omega(k_1, k_2)(-D))$ are dense. The map $H^1( \mathcal{X}^{\geq 1}_{\Kl}(p)^\dag , \omega(k_1,k_2)(-D)) \rightarrow H^1( \mathcal{X}^{\geq 1}_{\Kl}(p), \omega(k_1, k_2)(-D))$ induces a continuous map
  \[ eH^1( \mathcal{X}^{\geq 1}_{\Kl}(p)^\dag , \omega(k_1,k_2)(-D)) \rightarrow eH^1( \mathcal{X}^{\geq 1}_{\Kl}(p), \omega(k_1, k_2)(-D))\]
  (by functoriality of slope $0$ decomposition) with dense image. The space $eH^1( \mathcal{X}^{\geq 1}_{\Kl}(p)^\dag, \omega(k_1,k_2)(-D))$ is a finite-dimensional $\Qp$-vector space, and therefore the map is surjective and the target is also finite dimensional. There is also an injective map $H^0(\mathcal{X}^{\geq 1}_{\Kl}(p)^\dag , \omega(k_1,k_2)(-D)) \rightarrow H^0( \mathcal{X}^{\geq 1}_{\Kl}(p), \omega(k_1, k_2)(-D))$ which induces an injective map $eH^0(\mathcal{X}^{\geq 1}_{\Kl}(p)^\dag , \omega(k_1,k_2)(-D)) \rightarrow eH^0( \mathcal{X}^{\geq 1}_{\Kl}(p), \omega(k_1, k_2)(-D))$. This last module is trivial and therefore $eH^0(\mathcal{X}^{\geq 1}_{\Kl}(p)^\dag , \omega(k_1,k_2)(-D)) =0$.
 \end{proof}

 \begin{corollary}
  If $k_2 \leq 0$, then $M^\bullet_{\kappa^{un}, k_2}[1]$ is a finite-rank projective $\Lambda$-module.
 \end{corollary}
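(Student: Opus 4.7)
The plan is to represent $M^\bullet_{\kappa, k_2}$ by an explicit two-term complex of flat $\Lambda$-modules, and then use specialisation at classical weights $k_1$ (where the preceding corollary gives strong vanishing) together with a Nakayama-style argument over the regular local ring $\Lambda$. Concretely, since $\fF_\Lambda(\kappa, k_2)(-D)$ is acyclic relative to the minimal compactification and the image of $\mathfrak{X}^{\geq 1}_{\Kl}(p)$ there is covered by two affinoids, the complex $\mathrm{R}\Gamma(\mathfrak{X}^{\geq 1}_{\Kl}(p), \fF_\Lambda(\kappa, k_2)(-D))$ is represented by a two-term \v{C}ech complex of flat, $p$-adically complete and separated $\Lambda$-modules; the local finiteness of $U$ established in Section~\ref{sect-U-loc-finite} lets us apply $e$ termwise to obtain a flat two-term complex $N^0 \xrightarrow{d} N^1$ whose cohomology is $M^\bullet_{\kappa, k_2}$.

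The second step is to specialise. For any integer $k_1$ with $k_1 + k_2 \geq 4$, flatness of the $N^i$ gives $N^\bullet \otimes^{\mathbf{L}}_{\Lambda, k_1} \Zp = N^\bullet/(\kappa - k_1)$, and this computes $e\mathrm{R}\Gamma(\mathfrak{X}^{\geq 1}_{\Kl}(p), \fF(k_1, k_2)(-D))$. By Lemma~\ref{lem-control-sheaf} this is quasi-isomorphic to $e\mathrm{R}\Gamma(\mathfrak{X}^{\geq 1}_{\Kl}(p), \omega(k_1, k_2)(-D))$, which by the preceding corollary is concentrated in degree $1$ with flat, $p$-adically complete, separated $H^1$. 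Combined with the finiteness of the overconvergent ordinary cohomology from the preceding lemma and its density, this $H^1$ is a finitely generated free $\Zp$-module.

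The third step is to deduce that $d$ is injective. The vanishing of $H^0(N^\bullet/(\kappa - k_1))$ for every such $k_1$ forces $\ker d \subseteq (\kappa - k_1) N^0$ for every $k_1$; since the ideals $(\kappa - k_1)$ have trivial intersection in $\Lambda$ (for instance by Weierstrass preparation, any nonzero element of $\Lambda$ vanishes at only finitely many integer weights) and $N^0$ is $\Lambda$-flat, this yields $\ker d = 0$, so $M^\bullet_{\kappa, k_2}$ is indeed concentrated in degree $1$.

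Finally, I would establish projectivity of $H^1(N^\bullet) = \operatorname{coker}(d)$ by reduction modulo the maximal ideal $\mathfrak{m}$ of $\Lambda$. Factoring $\Lambda \twoheadrightarrow \Zp \twoheadrightarrow \FF_p$ through some fixed $k_1$, and using that $N^\bullet/(\kappa - k_1)$ is a complex of flat $\Zp$-modules with flat (and finitely generated free) $H^1$, the short exact sequence $0 \to N^0/(\kappa-k_1) \to N^1/(\kappa-k_1) \to H^1 \to 0$ tensored with $\FF_p$ stays exact, giving both $H^0(N^\bullet \otimes_\Lambda \FF_p) = 0$ and a finite-dimensional $H^1(N^\bullet \otimes_\Lambda \FF_p)$. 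The vanishing of $H^0 \otimes_\Lambda \FF_p$ together with the local criterion of flatness over the regular local ring $\Lambda$ forces $\operatorname{coker}(d)$ to be $\Lambda$-flat, while topological Nakayama applied to the finite-dimensional $\operatorname{coker}(d) \otimes_\Lambda \FF_p$ yields finite generation, hence freeness. The hardest step is this last one: carefully setting up the local criterion of flatness and topological Nakayama for the $p$-adically complete but a priori non-finitely-generated modules $N^i$, and tracing the finiteness input all the way back from the overconvergent ordinary cohomology (in characteristic zero) down to the residual cohomology $H^1(N^\bullet \otimes_\Lambda \FF_p)$.
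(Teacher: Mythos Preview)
Your approach is essentially an attempt to unpack, by hand, the content of the ``Nakayama's lemma for complexes'' that the paper simply cites from \cite[Proposition~2.2.1]{pilloni17}. The paper's proof is two lines: Lemma~\ref{lem-control-sheaf} identifies $M^\bullet_{\kappa,k_2}\otimes^{\mathbf L}_{\Lambda,k_1}\Zp$ with $e\,\mathrm R\Gamma(\mathfrak X^{\ge 1}_{\Kl}(p),\omega(k_1,k_2)(-D))$; the preceding corollary and lemma show this is concentrated in degree~$1$ with finite-dimensional cohomology; and the cited Nakayama-for-complexes result then gives perfection and concentration in degree~$1$ over $\Lambda$ at once. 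So your steps 1--2 match the paper, and your steps 3--4 are a re-derivation of the cited black box.

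That said, step~3 as written has a genuine gap. From $H^0(N^\bullet/(\kappa-k_1))=0$ you correctly deduce $\ker d\subseteq (\kappa-k_1)N^0$ for each $k_1$, but the passage from $\bigcap_{k_1}(\kappa-k_1)\Lambda=0$ to $\bigcap_{k_1}(\kappa-k_1)N^0=0$ does \emph{not} follow from $\Lambda$-flatness of $N^0$: take $N^0=\operatorname{Frac}(\Lambda)$, which is flat, yet $(\kappa-k_1)N^0=N^0$ for every $k_1$. Infinite intersections do not commute with tensor products. What you actually need is that $N^0$ is $\mathfrak m_\Lambda$-adically separated (not just $p$-adically), which is plausible for these particular \v{C}ech modules but requires an argument; it is exactly this sort of topological control on the terms of the complex that goes into the proof of \cite[Proposition~2.2.1]{pilloni17}. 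The same issue resurfaces in step~4: the local criterion of flatness and topological Nakayama both require completeness/separatedness hypotheses with respect to $\mathfrak m_\Lambda$, not merely $p$, and you have not established these for $N^0,N^1$ or for $\operatorname{coker}(d)$. You flag this as ``the hardest step'', which is accurate, but the same missing ingredient already bites in step~3.

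In short: the strategy is sound and would eventually reproduce the cited proposition, but the argument as written is incomplete at the point where you pass from ``vanishing after each specialisation'' to ``vanishing over $\Lambda$''. The paper avoids this entirely by invoking the ready-made result.
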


 \begin{proof} By corollary \ref{lem-control-sheaf}, for any $k_1 \geq k_2$, we have $M^\bullet_{\kappa^{un}, k_2} \otimes^L_{\Lambda, k_1} \Zp = e \mathrm{R}\Gamma( \mathfrak{X}^{\geq 1}_{\Kl}(p), \omega(k_1, k_2)(-D))$. The corollary follows from the previous corollary and Nakayama's lemma for complexes \cite[Proposition 2.2.1]{pilloni17}.
 \end{proof}

 \subsubsection{Classicity} We recall the following classicity theorem :

 \begin{theorem}\cite[theorem 14.7.1]{pilloni17}
  \label{thm:OCclass}
  The map
  \[
   e \mathrm{R}\Gamma(\mathcal{X}_{\Kl}(p), \omega(k_1, k_2)(-D)) \rightarrow
   e \mathrm{R}\Gamma(\mathcal{X}^{\geq 1}_{\Kl}(p)^\dag, \omega(k_1, k_2)(-D))
  \]
  is a quasi-isomorphism if $k_1 + k_2 > 3$, and similarly without the $(-D)$.
 \end{theorem}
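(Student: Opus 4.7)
The plan is to prove this classicity statement by a Mayer--Vietoris argument combined with a slope estimate on cohomology near the supersingular locus. Choose a strict neighborhood $\cV$ of the supersingular ($p$-rank $0$) locus so that $\cX_{\Kl}(p) = \cV \cup \cX^{\geq 1}_{\Kl}(p)^\dag$. The associated Mayer--Vietoris distinguished triangle reduces the theorem to showing that $e\, \mathrm{R}\Gamma(\cV, \omega(k_1, k_2)(-D)) = 0$ and similarly for the intersection $\cV \cap \cX^{\geq 1}_{\Kl}(p)^\dag$. In other words, one must show that after ordinary projection no cohomology lives on a strict neighborhood of the supersingular locus.

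The key input is a lower bound on the slopes of $U = \cU_{p, \Kl}$ acting on $\mathrm{R}\Gamma(\cV, \omega(k_1, k_2)(-D))$. The Hecke correspondence defining $U$ has the Dwork-style contraction property: it maps $\cV$ strictly inside itself, accumulating $p$-divisibility with each iterate. Concretely one writes $U^n = p^{a_n} V_n$, where $V_n$ is an operator of norm $\leq 1$ on a suitable Banach lattice, and $a_n/n$ receives contributions from (i) the normalisation factor $p^{3-k_2}$ built into $\cU_{p, \Kl}$, and (ii) the action of a Frobenius-type operator on the Hodge bundle $\omega(k_1, k_2) = \Sym^{k_1-k_2} \omega_A \otimes \det^{k_2} \omega_A$ at a supersingular point, where $\omega_A$ contributes strictly positive weight because supersingular $p$-divisible groups admit no multiplicative piece. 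Careful bookkeeping gives $a_n/n \geq k_1 + k_2 - 3$, so when $k_1 + k_2 > 3$ every $U$-eigenvalue on cohomology supported over $\cV$ has strictly positive slope and is annihilated by $e$.

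The main obstacle is making this slope bound precise. It requires an explicit local model for the $U$ correspondence on a formal neighborhood of a supersingular point of $\fX_{\Kl}(p)$: one identifies the Hecke correspondence via Dieudonn\'e theory, tracks how the Hodge filtration of the universal $p$-divisible group transforms, and combines this with the fixed weight normalisation $c = k_1 + k_2 - 6$ used throughout this section. This is the content of the computation carried out in \cite[\S\S 13--14]{pilloni17}; a self-contained treatment would need to reprove these local statements with the conventions appropriate to the Klingen parabolic (rather than the Siegel parabolic governing classical holomorphic weights), and in particular verify that the critical threshold for positivity of the slope is exactly $k_1 + k_2 = 3$. Once the estimate is in hand, compactness of $U$ on overconvergent cohomology and the finiteness of the ordinary part imply that the Mayer--Vietoris argument closes up, yielding the quasi-isomorphism for $\omega(k_1, k_2)(-D)$ and the parallel argument (replacing cuspidal by full extensions) for $\omega(k_1, k_2)$ without the divisor twist.
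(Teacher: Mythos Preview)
The paper does not prove this theorem; it is quoted as \cite[Theorem 14.7.1]{pilloni17} and the argument lives entirely in that reference. Your proposal is therefore a sketch of what you believe the proof in \cite{pilloni17} to be, and it captures the essential mechanism correctly: the ordinary projector $e$ should annihilate any cohomology supported near the $p$-rank $0$ locus, because the normalised operator $\cU_{p,\Kl}$ has strictly positive slopes there once $k_1 + k_2 > 3$.

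However, the Mayer--Vietoris packaging as you have written it has a gap. With the cover $\cX_{\Kl}(p) = \cV \cup \cX^{\geq 1}_{\Kl}(p)^\dag$, the distinguished triangle reduces the desired quasi-isomorphism to showing that the restriction $e\,\mathrm{R}\Gamma(\cV,-) \to e\,\mathrm{R}\Gamma(\cV \cap \cX^{\geq 1}_{\Kl}(p)^\dag,-)$ is a quasi-isomorphism, not that both sides vanish separately. Your slope estimate, extracted from the local model at a supersingular point, does give $e\,\mathrm{R}\Gamma(\cV,-)=0$; but the intersection $\cV \cap \cX^{\geq 1}_{\Kl}(p)^\dag$ is an annular region sitting inside the $p$-rank $\ge 1$ locus, where the Hecke correspondence does not contract towards supersingular points and the same slope bound is not available. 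Ordinary overconvergent classes genuinely live on such annuli, so that term cannot be zero.

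The argument in \cite[\S 14]{pilloni17} is organised differently: rather than an open cover, one works with cohomology with supports (equivalently, an excision/cousin-type triangle) along the $p$-rank $0$ stratum, and shows that the local-cohomology term carries no ordinary part. This is paired with an analytic-continuation step, using that $U$ improves overconvergence, to identify the complement with the dagger cohomology. The slope computation you outline is exactly the input needed to kill the supported cohomology, so the core of your argument is right; it is the cohomological framework around it that needs to be replaced.
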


 We want to conclude :

 \begin{theorem} The map
  \[
   e \mathrm{R}\Gamma(\mathcal{X}_{\Kl}(p), \omega(k_1, k_2)(-D)) \rightarrow
   e \mathrm{R}\Gamma(\mathcal{X}^{\geq 1}_{\Kl}(p), \omega(k_1, k_2)(-D))
  \]
  is a quasi-isomorphism if $k_1 + k_2 > 3$ and $k_2 \leq 0$.
 \end{theorem}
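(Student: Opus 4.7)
The plan is to factor the map through overconvergent cohomology as
\[
 e \mathrm{R}\Gamma(\mathcal{X}_{\Kl}(p), \omega(k_1, k_2)(-D)) \xrightarrow{(a)} e \mathrm{R}\Gamma(\mathcal{X}^{\geq 1}_{\Kl}(p)^\dag, \omega(k_1, k_2)(-D)) \xrightarrow{(b)} e \mathrm{R}\Gamma(\mathcal{X}^{\geq 1}_{\Kl}(p), \omega(k_1, k_2)(-D)).
\]
By Theorem \ref{thm:OCclass}, the map $(a)$ is a quasi-isomorphism whenever $k_1+k_2>3$, so the task reduces to showing that $(b)$ is a quasi-isomorphism when we impose in addition $k_2\le 0$. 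The previous lemma shows the source of $(b)$ is concentrated in degree~$1$, and the corollary of Proposition~\ref{prop-vanish} (tensored with $\Qp$) does the same for its target; both $H^1$'s are finite-dimensional $\Qp$-vector spaces, and the lemma states that the map on $H^1$ is surjective. So we need only upgrade this surjection to an isomorphism.

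To do so, I reformulate the claim via the distinguished triangle attached to the open immersion $\mathcal{X}^{\geq 1}_{\Kl}(p) \hookrightarrow \mathcal{X}_{\Kl}(p)$, whose closed complement is the tube $\mathcal{T}$ of the non-ordinary locus:
\[
 e\mathrm{R}\Gamma_{\mathcal{T}}(\mathcal{X}_{\Kl}(p), \omega(k_1, k_2)(-D)) \to e\mathrm{R}\Gamma(\mathcal{X}_{\Kl}(p), \omega(k_1, k_2)(-D)) \to e\mathrm{R}\Gamma(\mathcal{X}^{\geq 1}_{\Kl}(p), \omega(k_1, k_2)(-D)) \xrightarrow{+1}.
\]
The theorem then becomes the vanishing assertion $e\mathrm{R}\Gamma_{\mathcal{T}}(\mathcal{X}_{\Kl}(p), \omega(k_1, k_2)(-D)) = 0$; in this form the familiar higher Hida compactness-and-slope tools can be brought to bear.

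For this vanishing I would adapt the slope argument underlying Theorem~\ref{thm:OCclass} (see \cite[\S 14]{pilloni17}). The Hecke correspondence underlying $\cU_{p, \Kl}$ strictly improves the $\mu_p$-rank of the universal semi-abelian scheme, so when restricted to $\mathcal{T}$ it factors through cohomology supported on a strictly smaller closed tube $\mathcal{T}' \subsetneq \mathcal{T}$, which makes the action of $U$ on $\mathrm{R}\Gamma_\mathcal{T}$ compact. A quantitative estimate of the additional $p$-divisibility acquired by the correspondence on the non-ordinary stratum, combined with the normalizing factor $p^{3-k_2}$ built into the definition of $\cU_{p, \Kl}$, should then force every eigenvalue of $U$ on $\mathrm{R}\Gamma_{\mathcal{T}}$ to have strictly positive $p$-adic valuation whenever $k_2 \le 0$, so that the ordinary projector $e$ annihilates this local cohomology.

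The main obstacle is this last step: making precise the quantitative slope bound for $U$ acting on cohomology supported in $\mathcal{T}$. The regularity hypothesis $k_2\le 0$ enters precisely to ensure that the normalizing power $p^{3-k_2}\ge p^3$ dominates the $p$-divisibility of the correspondence on each stratum of the non-ordinary tube, while the condition $k_1+k_2>3$ supplies the classicity input from Theorem \ref{thm:OCclass}.
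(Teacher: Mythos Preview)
Your setup matches the paper exactly: both complexes are concentrated in degree~$1$, the map on $H^1$ is surjective by the previous lemma, and everything reduces to proving injectivity of $(b)$. But your proposed route for this last step is a genuine gap, and you acknowledge as much. The slope argument you sketch---showing $e\mathrm{R}\Gamma_{\mathcal{T}}=0$ by bounding $p$-adic valuations of $U$-eigenvalues on local cohomology---is essentially what already goes into Theorem~\ref{thm:OCclass}, but that theorem only gets you to the \emph{overconvergent} locus $\mathcal{X}^{\ge 1}_{\Kl}(p)^\dag$. Your $\mathcal{T}$ is the complement of the strictly smaller $\mathcal{X}^{\ge 1}_{\Kl}(p)$, so you need a strictly stronger vanishing, and the ``$U$ improves the $\mu_p$-rank'' heuristic does not give it: the difference between the two loci is a thin overconvergent collar on which no new $p$-divisibility is acquired. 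No quantitative estimate of the type you describe is available in \cite{pilloni17}, and it is not clear one exists.

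The paper bypasses this entirely and takes an indirect route. First it proves injectivity for $k_1$ \emph{sufficiently large} (given $k_2$) by an argument at \emph{spherical} level: one constructs a Hecke operator $T$ on $\mathrm{R}\Gamma(\mathfrak{X}^{\ge 1},\omega(k_1,k_2)(-D))$, shows that $e(T)$-ordinary cohomology at spherical level agrees with $e(U)$-ordinary cohomology at Klingen level (Lemmas~\ref{lem-UTord}, \ref{lemUTnord}, \ref{lem-inj1}, \ref{lem-inj2}), and then invokes the purely algebraic Theorem~\ref{thmSGA} (SGA~2 on Cohen--Macaulay schemes) to get injectivity of $H^1(\mathcal{X})\to H^1(\mathcal{X}^{\ge 1})$ from the codimension of the non-ordinary locus. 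Second, it propagates this to all $k_1$ by a dimension count: the functions $k_1\mapsto \dim eH^1(\mathcal{X}^{\ge 1}_{\Kl}(p),\omega(k_1,k_2)(-D))$ and $k_1\mapsto \dim H^1(M^{\dag,\bullet}_{k_1,k_2})$ are both locally constant in the $p$-adic topology on weights, they sandwich $\dim eH^1(\mathcal{X}_{\Kl}(p),\omega(k_1,k_2)(-D))$, and they agree for large $k_1$, hence everywhere. The key idea you are missing is this descent to spherical level combined with the interpolation argument; neither step appears in your proposal.
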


 We already know that both complexes are concentrated in degree $1$ and that $e H^1(\mathcal{X}_{\Kl}(p), \omega(k_1, k_2)(-D)) \rightarrow e H^1(\mathcal{X}^{\geq 1}_{\Kl}(p), \omega(k_1, k_2)(-D))$ is surjective. It will therefore suffice to prove injectivity.

 \subsubsection{Injectivity if $k_1$ is large enough}

 We will first show that injectivity holds if $k_1$ is very large. The idea is to prove that the ordinary cohomology is isomorphic to ordinary cohomology at spherical level, and then to use the following type of result:

 \begin{theorem}\label{thmSGA} Let $S$ be a Cohen--Macaulay scheme and $\mathcal{L}$ be a locally free sheaf over $S$. Let $U \subset S$ be an open subscheme such that $S - U$ has codimension $i$. Then the map $H^j (S, \mathcal{L}) \rightarrow H^j(U, \mathcal{L})$ is bijective if $j < i$ and injective for $i=j$.
 \end{theorem}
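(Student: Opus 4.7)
The plan is to reduce the statement to a depth-cohomology vanishing via the standard excision exact sequence, and then exploit the Cohen--Macaulay hypothesis at the very end. Set $Z = S \setminus U$ and recall the long exact sequence of local cohomology
\[
 \cdots \to H^{j}_{Z}(S, \mathcal{L}) \to H^{j}(S, \mathcal{L}) \to H^{j}(U, \mathcal{L}) \to H^{j+1}_{Z}(S, \mathcal{L}) \to H^{j+1}(S, \mathcal{L}) \to \cdots.
\]
From this sequence, injectivity of the restriction at degree $j$ is implied by the vanishing $H^{j}_{Z}(S, \mathcal{L}) = 0$, and bijectivity follows if one has in addition $H^{j+1}_{Z}(S, \mathcal{L}) = 0$. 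Thus the entire theorem reduces to showing that $H^{k}_{Z}(S, \mathcal{L})$ vanishes for $k$ small, in a range dictated by the codimension $i$ of $Z$.

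This is a local question. First, I would pass from the global groups to the local cohomology sheaves $\underline{H}^{q}_{Z}(\mathcal{L})$ via the spectral sequence
\[
 E_{2}^{p,q} = H^{p}(S, \underline{H}^{q}_{Z}(\mathcal{L})) \Rightarrow H^{p+q}_{Z}(S, \mathcal{L}),
\]
which shows that the vanishing of $\underline{H}^{q}_{Z}(\mathcal{L})$ for $q$ in the relevant range implies the vanishing of $H^{k}_{Z}(S, \mathcal{L})$ in the same range. Vanishing of $\underline{H}^{q}_{Z}(\mathcal{L})$ is checkable stalk-by-stalk, so I would localise at a point of $Z$ and work on an affine open $\operatorname{Spec} A \subset S$ with $Z$ cut out by an ideal $I$. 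Then Grothendieck's classical theorem applies: for any finitely generated $A$-module $M$, one has $H^{q}_{I}(M) = 0$ for $q < \operatorname{depth}_{I}(M)$.

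The final step is to use the Cohen--Macaulay hypothesis to compute the relevant depth. Since $\mathcal{L}$ is locally free, each stalk $\mathcal{L}_{x}$ is a free $\mathcal{O}_{S,x}$-module; as $\mathcal{O}_{S,x}$ is Cohen--Macaulay by assumption, so is $\mathcal{L}_{x}$. For a CM module one has the depth--codimension identity $\operatorname{depth}_{I}(\mathcal{L}_{x}) = \operatorname{codim}(V(I) \cap \operatorname{Spec} \mathcal{O}_{S,x})$, and by hypothesis this is at least $i$. Consequently $\underline{H}^{q}_{Z}(\mathcal{L}) = 0$ for $q$ below the codimension, and feeding this back into the excision sequence yields the stated bijectivity and injectivity. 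The only non-formal input is the depth$=$codimension identity for CM modules, which is classical commutative algebra; conceptually this whole argument is SGA 2, Expos\'e III, Th\'eor\`eme 3.3, so no genuine obstacle is anticipated beyond identifying the correct reference and setup.
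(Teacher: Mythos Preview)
Your argument is correct and is precisely the content of the reference the paper gives: the proof in the paper consists only of the citation ``SGA 2, III, lem.\ 3.1 and prop.\ 3.3'', and what you have written is exactly an unpacking of those results (excision sequence, reduction to vanishing of local-cohomology sheaves, Grothendieck's depth criterion, and the identity $\operatorname{grade}=\operatorname{height}$ in a Cohen--Macaulay ring). There is nothing to add.
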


 \begin{proof} SGA 2, III, lem. 3.1 and prop. 3.3.
 \end{proof}

 We  need to define the relevant ordinary projector at spherical level. This is a somewhat involved calculation.  Let $Y \rightarrow \Spec \Zp$ be the Shimura variety with spherical level at $p$ and tame level $K^p$ away from $p$. Let $Y_p$ be the Shimura variety with paramodular level at $p$ and tame level $K^p$ away from $p$. Let $Y_{\Kl}(p)$ be the Shimura variety with Klingen level at $p$ and tame level $K^p$ away from $p$. Over $Y_{\Kl}(p)$ we have a chain $A \rightarrow A/L_1 \rightarrow A$ where $L_1$ is a subgroup of order $p^3$ of $A[p]$ and the total map is multiplication by $p$. Therefore $A/L_1$ carries a degree $p^2$ polarization $\lambda$ and  $\mathrm{ker} (A/L_1 \rightarrow A)$ is a subgroup of order $p$ of the kernel of the polarization. We have morphisms  $p_1 : Y_{\Kl}(p) \rightarrow Y$ defined by $(A, L_1) \mapsto A$ and $p_2 : Y_{\Kl}(p) \rightarrow Y_p$ defined by $(A,L_1) \mapsto A/L_1$.

 We now consider toroidal compactifications  $X$, $X_{\Kl}(p)$ and $X_{p}$ of $Y$, $Y_{\Kl}(p)$ and $Y_p$, such that the maps $p_1$ and $p_2$ extend. We make further  specifications. We can choose a  smooth cone decomposition $\Sigma$ for $X$, a smooth cone decomposition  $\Sigma'$ for  $X_p$ and we take for $X_{\Kl}(p)$ the same cone decomposition $\Sigma'$.

 We now take formal completion of all these spaces and restrict to the $p$-rank $\ge 1$ locus. We denote by $\mathfrak{X}^{\geq 1}$, $\mathfrak{X}^{\geq 1}_p$ and $\mathfrak{C}_1$ the resulting spaces (we use $\mathfrak{C}_1$ for the $p$-rank $\ge 1$ locus in $\mathfrak{X}_{\Kl}(p)$, to avoid any confusion with $\mathfrak{X}^{\geq 1}_{\Kl}(p)$).
 The spaces $\mathfrak{X}^{\geq 1}$ and $\mathfrak{X}^{\geq 1}_p$ are smooth over $\Spf \Zp$. The morphism $\mathfrak{C}_1 \rightarrow \Spf \Zp$ is Cohen--Macaulay.

 We remark that the maps $p_1$ and $p_2$ are quasi-finite  away from the boundary (this is a consequence of the fact that abelian surfaces of $p$-rank at least one over algebraically closed fields have only a finite number of subgroups of order $p$). It follows from miracle flatness that $p_1$ and $p_2$ are finite flat away from the boundary. Actually, by our choice of cone decomposition, $p_2$ is quasi-finite at the boundary and therefore $p_2$ is finite flat.

 We let $\mathfrak{C} = \mathfrak{C}_1 \times_{p_2, \mathfrak{X}_p^{\geq 1}, p_2} \mathfrak{C}_1$.
 Over $\mathfrak{C}$ we have a chain of isogenies $A \rightarrow A/L_1 \rightarrow A/L$ where the first isogeny has degree $p^3$, the second isogeny has degree $p$.  We have two projections $q_1, q_2 : \mathfrak{C} \rightarrow \mathfrak{X}^{\geq 1}$, defined by $q_1(A,L) = A$, $q_2(A,L) = A/L$.
 We have a natural map $q_2^\star \omega(k_1, k_2)[1/p] \rightarrow q_1^\star \omega(k_1, k_2)[1/p]$ arising from the differential of the isogeny $A \rightarrow A/L$ which is \'etale after inverting $p$.

 \begin{lemma}\label{lem-justabove} The relative dualizing sheaf  $q_1^!  \cO_{\mathfrak{X}^{\geq 1}}$ is a Cohen--Macaulay sheaf. We  have a fundamental class $q_1^\star \cO_{\mathfrak{X}^{\geq 1}} \rightarrow q_1^! \cO_{\mathfrak{X}^{\geq 1}}$ which extends the usual trace map on the complement of the boundary.
 \end{lemma}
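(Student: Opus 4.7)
The plan is to deduce both claims from Grothendieck duality, exploiting the fact that $\mathfrak{X}^{\geq 1}$ is smooth and $\mathfrak{C}$ is Cohen--Macaulay of the same relative dimension $3$ over $\Spf \Zp$, so that $q_1^!$ lives in a single degree. First I would verify the Cohen--Macaulay property of $\mathfrak{C}$: because $p_2$ is finite flat, its base change $\pi_1 : \mathfrak{C} \to \mathfrak{C}_1$ is also finite flat, and since $\mathfrak{C}_1 \to \Spf \Zp$ is Cohen--Macaulay by hypothesis, so is $\mathfrak{C} \to \Spf \Zp$. The absolute dualizing complex of $\mathfrak{C}$ is therefore a single Cohen--Macaulay sheaf $\omega_{\mathfrak{C}/\Zp}$ placed in degree $-3$, while that of $\mathfrak{X}^{\geq 1}$ is the invertible sheaf $\Omega^3_{\mathfrak{X}^{\geq 1}/\Zp}$ placed in degree $-3$. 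Transitivity of $(-)^!$ along $\mathfrak{C} \xrightarrow{q_1} \mathfrak{X}^{\geq 1} \to \Spf \Zp$ gives $\omega_{\mathfrak{C}/\Zp} = q_1^!\bigl(\Omega^3_{\mathfrak{X}^{\geq 1}/\Zp}\bigr)$, and the invertibility of $\Omega^3_{\mathfrak{X}^{\geq 1}/\Zp}$ then yields
\[
q_1^! \cO_{\mathfrak{X}^{\geq 1}} \;=\; \omega_{\mathfrak{C}/\Zp} \otimes_{\cO_{\mathfrak{C}}} q_1^\star\bigl(\Omega^3_{\mathfrak{X}^{\geq 1}/\Zp}\bigr)^{-1},
\]
which is coherent, concentrated in degree $0$, and Cohen--Macaulay as the twist of a Cohen--Macaulay sheaf by an invertible one.

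For the fundamental class, under this identification a map $\cO_{\mathfrak{C}} \to q_1^! \cO_{\mathfrak{X}^{\geq 1}}$ is equivalent to a map $q_1^\star \Omega^3_{\mathfrak{X}^{\geq 1}/\Zp} \to \omega_{\mathfrak{C}/\Zp}$. I would construct this as the composition of the canonical pullback of K\"ahler differentials $q_1^\star \Omega^3_{\mathfrak{X}^{\geq 1}/\Zp} \to \Omega^3_{\mathfrak{C}/\Zp}$ with the classical comparison map $\Omega^3_{\mathfrak{C}/\Zp} \to \omega_{\mathfrak{C}/\Zp}$ sending a K\"ahler differential to the corresponding regular differential. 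The latter map is available for any generically smooth Cohen--Macaulay scheme: it is an isomorphism on the smooth locus, and the Cohen--Macaulay property of the target $\omega_{\mathfrak{C}/\Zp}$ forces a unique extension over the singular locus.

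It remains to check compatibility with the trace map on $V \coloneqq q_1^{-1}(U)$, where $U \subset \mathfrak{X}^{\geq 1}$ is the complement of the boundary. On $V$, $q_1$ is finite flat between smooth formal $\Zp$-schemes, so the trace corresponds to the section $\cO_V \to q_1^! \cO_U = \mathcal{H}om_{\cO_U}((q_1|_V)_\star \cO_V, \cO_U)$ given by evaluation at $1$; both this trace and our fundamental class specialise at the generic point to the classical relative trace of a finite separable field extension, and hence agree generically. Because $q_1^! \cO_{\mathfrak{X}^{\geq 1}}$ is Cohen--Macaulay of dimension equal to $\dim \mathfrak{C}$, it has no sections supported on a proper closed subscheme of $\mathfrak{C}$, so two morphisms $\cO_{\mathfrak{C}} \to q_1^! \cO_{\mathfrak{X}^{\geq 1}}$ agreeing at the generic point must coincide globally. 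The main obstacle in this plan is the careful construction of $\Omega^3_{\mathfrak{C}/\Zp} \to \omega_{\mathfrak{C}/\Zp}$ at the singular points of $\mathfrak{C}$, which requires invoking the general theory of regular differentials for generically smooth Cohen--Macaulay schemes.
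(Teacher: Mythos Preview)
Your argument for the Cohen--Macaulay claim is exactly the paper's: show $\mathfrak{C}\to\mathfrak{C}_1$ is finite flat (base change of $p_2$), deduce $\mathfrak{C}\to\Spf\Zp$ is a Cohen--Macaulay morphism, then twist the absolute dualizing sheaf by the inverse of $q_1^\star\Omega^3_{\mathfrak{X}^{\geq 1}/\Zp}$ using transitivity of upper shriek. The paper phrases the last step as $q_1^!\cO_{\mathfrak{X}^{\geq 1}}\otimes h^!\Zp = g^!\Zp$ and cites \cite[Lemma~2.2]{Fakhruddin-Pilloni}, but the content is the same.

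For the fundamental class the paper takes a shorter route: it simply invokes \cite[Proposition~2.6]{Fakhruddin-Pilloni}, which says directly that the trace map defined on the finite-flat locus (here the complement of the boundary) extends to a map $q_1^\star\cO\to q_1^!\cO$ whenever the target is Cohen--Macaulay and the finite-flat locus has dense complement of codimension $\geq 2$. Your route through $q_1^\star\Omega^3\to\Omega^3_{\mathfrak{C}/\Zp}\to\omega_{\mathfrak{C}/\Zp}$ is a valid alternative construction of the same map, and your uniqueness argument (two sections of a maximal CM sheaf agreeing generically must agree) is exactly the mechanism underlying that cited proposition. The paper's approach buys brevity and avoids your acknowledged obstacle of constructing the regular-differential comparison at the singular points of $\mathfrak{C}$; your approach is more self-contained but requires that piece of the Kunz--Lipman theory as input.
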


 \begin{proof}
  Since $\mathfrak{C}_1$ is Cohen--Macaulay,  and the map $\mathfrak{C}_1 \rightarrow \mathfrak{X}_p^{\geq 1}$  if finite flat, we deduce from \cite{MR1011461}, corollary on page 181, that the map $\mathfrak{C}_1 \rightarrow \mathfrak{X}_p^{\geq 1}$ is a Cohen--Macaulay morphism (i.e a flat morphism with Cohen--Macaulay fibres). A  base change of a Cohen--Macaulay morphism is still a Cohen--Macaulay morphism. Therefore, the morphism $\mathfrak{C}  \rightarrow \mathfrak{C}_1$ is a Cohen--Macaulay morphism. Since $\mathfrak{C}_1$ is Cohen--Macaulay, we deduce that $\mathfrak{C}$ is Cohen--Macaulay and that the structural morphism $ g : \mathfrak{C} \rightarrow \Spf \Zp$ is a Cohen--Macaulay morphism. On the other hand the morphism $h : \mathfrak{X}^{\geq 1} \rightarrow \Spf \Zp$ is smooth. We find that $q_1^! \cO_{\mathfrak{X}^{\geq 1}} \otimes h^! {\Zp}  = g^! \Zp$ and it follows that $q_1^! \cO_{\mathfrak{X}^{\geq 1}}$ is a CM sheaf (see \cite[Lemma 2.2]{Fakhruddin-Pilloni}).  The usual trace map away from the boundary  extends to  give a morphism  $q_1^\star \cO_{\mathfrak{X}^{\geq 1}} \rightarrow q_1^! \cO_{\mathfrak{X}^{\geq 1}}$ by \cite[Proposition 2.6]{Fakhruddin-Pilloni}.
 \end{proof}

 Therefore we get a map  $\Theta : q_2^\star \omega(k_1, k_2)[1/p] \rightarrow q_1^! \omega(k_1, k_2)[1/p]$ obtained by composing the map $q_2^\star \omega(k_1, k_2)[1/p] \rightarrow q_1^\star \omega(k_1, k_2)[1/p]$ and the map $q_1^\star \omega(k_1, k_2)[1/p] \rightarrow q_1^! \omega(k_1, k_2)[1/p]$ (simply obtained by tensoring the map of lemma \ref{lem-justabove} with $q_1^\star \omega(k_1, k_2)[1/p]$). By adjunction, this is also  a map $(q_1)_\star p_2^\star \omega(k_1, k_2)[1/p] \rightarrow  \omega(k_1, k_2)[1/p]$.

 We now optimize integrality:  we determine the optimal integer $s$ for which $p^s \Theta$ extends to a map  $q_2^\star \omega(k_1, k_2) \rightarrow q_1^! \omega(k_1, k_2)$. It is enough to determine the value of $s$ over the ordinary locus (and even away from the boundary) because a map from a locally free sheaf to  a CM sheaf which is defined up to a codimension $2$ closed subset is defined everywhere.

 Let us say that the isogeny $A \rightarrow A/L_1$ is \emph{as \'etale as possible} if $L_1$ has \'etale rank $2$ and \emph{as multiplicative as possible} if $L_1$ has multiplicative rank $2$. We have $\mathfrak{C}^{=2} = \mathfrak{C}^{et,et} \cup \mathfrak{C}^{m,et} \cup \mathfrak{C}^{et,m} \cup \mathfrak{C}^{m,m}$, where the first superscript refers to $A \rightarrow A/L_1$ being as \'etale or as multiplicative as possible, and the second superscript to $A/L_1 \rightarrow A/L$ being multiplicative or \'etale.

 We denote by $\Theta^{\star, \star \star}$ the projection of $\Theta$ on the connected components $\mathfrak{C}^{ \star, \star \star}$.

 We check that :

 \begin{itemize}
  \item $\Theta^{et , et } : q_2^\star \omega(k_1, k_2) \rightarrow p^{k_2+3} q_1^! \omega(k_1, k_2)$,
  \item $\Theta^{et , m } : q_2^\star \omega(k_1, k_2) \rightarrow p^{k_2+k_1 + 2} q_1^! \omega(k_1, k_2)$,
  \item $\Theta^{m, et } : q_2^\star \omega(k_1, k_2) \rightarrow p^{k_2+k_1 + 1} q_1^! \omega(k_1, k_2)$,
  \item $\Theta^{m, m } : q_2^\star \omega(k_1, k_2) \rightarrow p^{2k_2+k_1} q_1^! \omega(k_1, k_2)$
 \end{itemize}

 Let us briefly explain where these numbers arise from.

 \begin{itemize}
  \item $\Theta^{et , et }  : q_2^\star \omega(k_1, k_2) \rightarrow p^{k_2} q_1^\star \omega(k_1, k_2) \rightarrow  p^{k_2+3} q_1^! \omega(k_1, k_2)$ where the first $p^{k_2}$ factor arises from the differential of the isogeny $q_2^\star \omega_{A/L} \rightarrow p^{k_2} q_1^\star \omega_A$ (the group $L$ has multiplicative rank $1$) and the second $p^3$ factor arises from the trace.
  \item $\Theta^{m , et } : q_2^\star \omega(k_1, k_2) \rightarrow p^{k_2 + k_1} q_1^\star \omega(k_1, k_2)  \rightarrow p^{k_2+k_1 + 1} q_1^! \omega(k_1, k_2)$ where the first factor $p^{k_2+k_1}$ arises from the differential of the isogeny. The group $L$ contains $A[p]^{mult}$. The factor $p$ arises from the trace.
  \item $\Theta^{et, m }  q_2^\star \omega(k_1, k_2) \rightarrow p^{k_2 + k_1} q_1^\star \omega(k_1, k_2)  \rightarrow p^{k_2+k_1 + 2} q_1^! \omega(k_1, k_2)$ where the first factor $p^{k_2+k_1}$ arises from the differential of the isogeny. The group $L$ contains $A[p]^{mult}$. The factor $p^2$ arises from the trace.
  \item $\Theta^{m, m } : q_2^\star \omega(k_1, k_2) \rightarrow p^{2k_2+k_1} q_1^\star  \omega(k_1, k_2) \rightarrow p^{3k_2+k_1} q_1^! \omega(k_1, k_2)$ where the first factor $p^{2k_2+k_1}$ arises from the isogeny. The group $L$ contains a subgroup isomorphic (for the \'etale topology) to $\mu_p \times \mu_{p^2}$.
 \end{itemize}

 Therefore,  under the hypothesis that $k_1  \geq 2$ and $k_2 + k_1 \geq 3$ we have an operator $T = p^{-k_2-3} \Theta : q_2^\star \omega(k_1, k_2)  \rightarrow q_1^! \omega(k_2,k_1)$.    We  also denote by $T \in \mathrm{End}( \mathrm{R}\Gamma ( \mathfrak{X}^{\geq 1}, \omega(k_1, k_2)))$ the associated Hecke operator. We will especially be interested in the reduction modulo $p$ of this operator $T \in \mathrm{End}( \mathrm{R}\Gamma ({X}_1^{\geq 1}, \omega(k_1, k_2)))$.

 There is a natural map $\mathrm{R}\Gamma (X^{\geq 1}_1, \omega(k_1, k_2)) \rightarrow \mathrm{R}\Gamma (X_{\Kl}^{\geq 1}(p)_1, \omega(k_1, k_2))$ and we will compare the ordinary parts of these cohomology for $T$ and $U$ respectively. We first look at  the cohomology over the ordinary locus :

 \begin{lemma}\label{lem-UTord} If $k_1   >  2$ and $k_2 + k_1 >  3$, there is an operator $\tilde{T} : H^0( X^{\geq 2}_{\Kl}(p)_1, \omega(k_1, k_2)) \rightarrow H^0( X^{\geq 2}_{\Kl}(p)_1, \omega(k_1, k_2))$  fitting in a  commutative diagram :
  \[
   \begin{tikzcd} H^0( X^{\geq 2}_{\Kl}(p)_1, \omega(k_1, k_2)) \ar[r, "\tilde{T}"] \ar[rd] &  H^0( X^{\geq 2}_{\Kl}(p)_1, \omega(k_1, k_2)) \\
    H^0( X^{\geq 2}_1, \omega(k_1, k_2)) \ar[u] \ar[r, "T"] &  H^0( X^{\geq 2}_1, \omega(k_1, k_2)) \ar[u]
   \end{tikzcd}
  \]
  Moreover, if $k_1-k_2 >0$, $U \circ \tilde{T}  = U^2$.
 \end{lemma}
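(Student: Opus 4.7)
The plan is to construct $\tilde T$ by lifting the dominant component of the Hecke correspondence $\mathfrak C$ defining $T$ to the ordinary locus at Klingen level, using the canonical connected--\'etale splitting $A[p] = A[p]^{\mathrm{mult}} \oplus A[p]^{\mathrm{et}}$ available over $X_1^{\geq 2}$. For a point of the stratum $\mathfrak C^{et,et}$, the isogeny $A \to A/L$ restricts to an isomorphism on canonical multiplicative parts, so any rank-one multiplicative subgroup $C \subset A[p]^{\mathrm{mult}}$ is carried to a canonical rank-one multiplicative subgroup $C' \subset (A/L)[p]^{\mathrm{mult}}$. This yields a natural lift $(A,C) \mapsto (A/L, C')$ of the $(et,et)$ part of the correspondence to $X^{\geq 2}_{\Kl}(p)_1$. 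On the other strata $\mathfrak C^{et,m}$, $\mathfrak C^{m,et}$, $\mathfrak C^{m,m}$, the explicit $p$-divisibilities computed above, combined with the normalization $T = p^{-k_2-3}\Theta$, give factors $p^{k_1-1}$, $p^{k_1-2}$, $p^{k_1+k_2-3}$ respectively; each is divisible by $p$ under $k_1 > 2$ and $k_1 + k_2 > 3$, and hence vanishes modulo $p$. Thus $\tilde T$ is well-defined modulo $p$ from the lifted $(et,et)$ part alone, with any integral choice of lift for the remaining components being immaterial for the mod-$p$ cohomology. The commutative square then holds by construction: the Klingen-level correspondence sits over the spherical-level one via $\pi: X^{\geq 2}_{\Kl}(p)_1 \to X^{\geq 2}_1$ with canonical transport of the Klingen structure, yielding the tautological identity $\tilde T \circ \pi^\star = \pi^\star \circ T$.

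To prove $U \circ \tilde T = U^2$ under the stronger hypothesis $k_1 - k_2 > 0$, I would write both operators as correspondences and decompose them into sums over isogeny chains $A \to A/L_1 \to A/L_2$ on the ordinary Klingen locus. After normalization by $p^{3-k_2}$, the operator $U = \cU_{p, \Kl}$ is a sum over isogenies $A \to A/L$ with $L$ a multiplicative-type isotropic subgroup of rank $p^3$ containing $C$, and $U^2$ is the corresponding sum over pairs of composable such. The composition $U \tilde T$ is a $U$-step followed by the $(et,et)$ lift from above; the key combinatorial observation is that on the canonical multiplicative subgroup, composing a $U$-step with an ``as \'etale as possible'' $T$-step parametrises the same flag data as two successive $U$-steps, once the intermediate Klingen structure is recovered from the canonical multiplicative subgroup. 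The non-$(et,et)$ components of $\tilde T$ contribute zero modulo $p$ when composed with $U$ thanks to the divisibility computation, and the strict hypothesis $k_1 - k_2 > 0$ is precisely the extra input needed to kill the last non-dominant contribution (coming from $\mathfrak C^{m,m}$) after it absorbs the normalization factor from $U$.

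The main obstacle is the combinatorial bookkeeping: enumerating multiplicities in the decomposition of $U\tilde T$ and $U^2$, tracking how the Klingen structure propagates through each isogeny, and verifying that the strict inequality $k_1 - k_2 > 0$ is sharp for the vanishing of the last non-dominant term. I expect this to reduce to a finite calculation in the mod-$p$ Hecke algebra at Klingen level for $\GSp_4$, in the spirit of the parallel-weight computation already present in \cite{pilloni17}.
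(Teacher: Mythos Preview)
Your overall strategy---show that only the $\mathfrak C^{et,et}$ component of the spherical correspondence survives modulo $p$, lift that component to Klingen level, and then invoke the computation in \cite{pilloni17} for $U\circ\tilde T = U^2$---is exactly the paper's. But the specific lift you describe contains an error.

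You claim that on $\mathfrak C^{et,et}$ the isogeny $A\to A/L$ restricts to an isomorphism on the connected (multiplicative) parts of the $p$-divisible groups. This is false: the divisibility list you yourself invoke records that on this stratum the factor $p^{k_2}$ in $\Theta^{et,et}$ comes from $L$ having multiplicative rank~$1$. Hence $A[p^\infty]^\circ \to (A/L)[p^\infty]^\circ$ has a kernel of order $p$, and your rule $(A,C)\mapsto (A/L,C')$ with $C'$ the image of $C$ is undefined precisely when $C$ coincides with the multiplicative part of $L_1$. Over $X^{\ge 2}_{\Kl}(p)_1$ this locus is non-empty, so the lifted correspondence is not a finite correspondence and does not give an operator on $H^0$.

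The paper avoids this by lifting only the second projection, and in a way that does not use any Klingen datum on the source: one sends $(A,L)\in\mathfrak C^{et,et}$ to $(A/L,\ (A[p]+L)/L)$. On the $et,et$ stratum one has $A[p]\cap L = L_1$, so $(A[p]+L)/L \cong A[p]/L_1$; since $L_1$ is ``as \'etale as possible'' this quotient is a $\mu_p$, and $\tilde q_2:\mathfrak C^{et,et}\to \fX^{\ge 1}_{\Kl}(p)$ is well-defined everywhere. Pulling back along $\tilde q_2$ and pushing forward along $q_1$ gives the diagonal arrow (from Klingen to spherical), and $\tilde T$ is then simply this diagonal followed by the pullback $H^0(X^{\ge 2}_1)\to H^0(X^{\ge 2}_{\Kl}(p)_1)$. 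In particular $\tilde T$ factors through spherical cohomology, which is all the lemma requires; there is no need to transport a Klingen structure on $A$ through the isogeny.

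For $U\circ\tilde T = U^2$, both you and the paper defer to the argument of \cite[Lemma 11.1.3]{pilloni17}. Note however that your explanation of where $k_1-k_2>0$ enters is off: there is no ``$\mathfrak C^{m,m}$ contribution to $\tilde T$'' to kill, since $\tilde T$ is built from the $et,et$ component alone. The inequality is used inside the comparison of the two compositions of correspondences at Klingen level, as in \emph{op.\ cit.}
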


 \begin{proof}
  Under the assumptions $k_1 + k_2 +1  >  k_2 +3$ and $2k_2 + k_1 > k_2 +3$, only the ``\'etale'' part of the correspondence contributes modulo $p$: the cohomological correspondence $T$ is supported on the component $\mathfrak{C}^{et,et} \times \Spec \FF_p$. The second projection $q_2 : \mathfrak{C}^{et,et} \rightarrow \mathfrak{X}^{\geq 1}$ lifts to $\tilde{q_2} : \mathfrak{C}^{et,et} \rightarrow \mathfrak{X}^{\geq 1}_{\Kl}(p)$ by sending $(A,L)$ to $(A/L, (A[p]+L)/L)$.  It follows that there is a commutative diagram:
  \[
   \begin{tikzcd} H^0( X^{\geq 2}_{\Kl}(p)_1, \omega(k_1, k_2)) \ar[rd] &  \\
    H^0( X^{\geq 2}_1, \omega(k_1, k_2)) \ar[u] \ar[r, "T"] &  H^0( X^{\geq 2}_1, \omega(k_1, k_2))
   \end{tikzcd}
  \]
  which can be completed into the diagram of the proposition.
  It remains to see that $U \circ \tilde{T} = U^2$. We can argue exactly as in the proof of \cite[Lemma 11.1.3]{pilloni17}.
 \end{proof}

 \medskip

 We now examine the situation on the $p$-rank $1$ locus. We have two Hecke operators :
 \[ U \in \mathrm{End}(\mathrm{R}\Gamma(X^{=1}_{\Kl}(p)_1, \omega(k_2,k_1)))\]
 for all $k_1 \geq k_2$, and
 \[ T \in \mathrm{End}(\mathrm{R}\Gamma(X^{=1}_1, \omega(k_2+p-1,k_1+p-1))) \]
 for $k_1 >  2$ and $k_2 + k_1 > 3$ (see \cite[section 7.4 and 10.5.2]{pilloni17} to see how we can obtain these operators by restriction of the cohomological correspondence).

 Observe that the obvious projection $p_1 : X_{\Kl}(p)_1  \rightarrow X_1$ induces an isomorphism  $X^{=1}_{\Kl}(p)_1\simeq X^{=1}_1$ (because over the $p$-rank one locus there is a unique choice for a multiplicative subgroup $C$). We can therefore compare $U$ and $T$.

 In order to do so, let us consider $\mathfrak{X}^{=1}$, $\mathfrak{X}_p^{=1}$, $\mathfrak{C}_1^{=1}$ and $\mathfrak{C}^{=1}$  the completion along the rank one locus of all the formal schemes $\mathfrak{X}^{\geq 1}$, $\mathfrak{X}^{\geq 1}_p$, $\mathfrak{C}_1$ and $\mathfrak{C}$. We have $\mathfrak{X}_p^{=1} = \mathfrak{X}_p^{=1, oo} \cup \mathfrak{X}_p^{=1, m/et}$. This is the decomposition according to whether the kernel of the polarization is a connected group or an extension of \'etale by multiplicative group.

 We have $\mathfrak{C}_1^{=1} = \mathfrak{C}_1^{=1, m} \cup \mathfrak{C}_1^{=1, et} \cup \mathfrak{C}_1^{=1,o}$ according to whether $L_1^\bot$ is multiplicative, \'etale or  bi-infinitesimal.

 We have $\mathfrak{C}^{=1} = \mathfrak{C}^{=1, m,et} \cup \mathfrak{C}^{=1, et,et} \cup \mathfrak{C}^{=1,m,m} \cup \mathfrak{C}^{=1,o,o}$  according to the isogeny $A \rightarrow A/L_1 \rightarrow A/L$.  Namely :

 \begin{itemize}
  \item in case $m,et$, the group $L_1^\bot$ is multiplicative and $L/L_1$ is \'etale,
  \item in case $et,et$, the group $L_1^\bot$ is \'etale and $L/L_1$ is multiplicative,
  \item in case $m,m$, the group $L_1^\bot$ is multiplicative and $L/L_1$ is multiplicative,
  \item in case $o,o$, the group $L_1^\bot$ is bi-infinitesimal and $L/L_1$ is also bi-infinitesimal.
 \end{itemize}

 \begin{lemma}\label{lemUTnord} If  If $k_1   >  p+1$, $k_2 + k_1 >  1 + 2p$ and  $k_1-k_2  > 3(p+1)$, then $T=U$ on $H^0(X^{=1}_1, \omega(k_1, k_2))$.
 \end{lemma}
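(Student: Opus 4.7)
The plan is to extend the analysis of Lemma \ref{lem-UTord} from the ordinary stratum to the whole $p$-rank~$1$ stratum. Both $T$ and $U$ will be realised as cohomological correspondences on suitable Hecke correspondence spaces over $X^{=1}_1$, decomposed by isogeny type, and then compared component-by-component modulo $p$.

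First I would describe $U$ over the $p$-rank~$1$ locus by the Hecke correspondence for $\Kl(p)\diag(p^2,p,p,1)\Kl(p)$ restricted to that locus, where the Klingen structure is forced to be the canonical multiplicative subgroup $A[p]^{\mathrm{mult}}$. This correspondence admits a decomposition parallel to $\mathfrak{C}^{=1} = \mathfrak{C}^{=1,m,et} \cup \mathfrak{C}^{=1,et,et} \cup \mathfrak{C}^{=1,m,m} \cup \mathfrak{C}^{=1,o,o}$ according to the isogeny types of $L_1^\perp$ and $L/L_1$ (multiplicative, \'etale, or bi-infinitesimal). For each component I would compute the integrality factor of the associated map $q_2^\star \omega(k_1, k_2) \to q_1^! \omega(k_1, k_2)$, extending the four calculations of $\Theta^{\star, \star\star}$ already carried out on $\mathfrak{C}^{=2}$ to the new bi-infinitesimal stratum $\mathfrak{C}^{=1, o, o}$ and to the Klingen-compatible sub-correspondence defining $U$.

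Next I would check that the three hypotheses exactly suffice to kill every component of $T - U$ modulo $p$ except the ``most \'etale'' one, on which both operators define the same morphism because the maximally \'etale isogeny is uniquely determined by $A[p]^{\mathrm{mult}}$. I expect the three inequalities $k_1 > p+1$, $k_1 + k_2 > 1 + 2p$ and $k_1 - k_2 > 3(p+1)$ to correspond to the $p$-adic valuations of the three residual types of contribution: the bi-infinitesimal $(o,o)$ stratum, the $(m,m)$-type stratum now refined by the $p$-rank~$1$ geometry, and a cross-term coming from the passage between spherical and Klingen level. The factors of $p-1$ that appear in each inequality are consistent with this, since they reflect the weights of Hasse-invariant-type sections controlling integrality on the non-ordinary stratum.

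The main obstacle I anticipate is the explicit integrality calculation on the bi-infinitesimal stratum $\mathfrak{C}^{=1, o, o}$: there the kernel of the isogeny $A \to A/L$ does not split as a sum of multiplicative and \'etale pieces, so neither the computation of the differential $q_2^\star \omega_A \to q_1^\star \omega_A$ nor the normalisation of the relative dualising trace $q_1^\star \cO \to q_1^! \cO$ proceeds as in the ordinary case, and both require a finer analysis of the connected-\'etale sequence of $A[p^2]$ along the $p$-rank~$1$ locus (where the correspondence spaces are Cohen--Macaulay but no longer smooth over $\Spf \Zp$).
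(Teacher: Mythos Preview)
Your overall plan --- decompose the correspondence $\mathfrak{C}^{=1}$ by isogeny type and show that only the $(et,et)$ component survives modulo $p$ --- matches the paper, and your identification of $T^{et,et}$ with $U$ is correct: on the $p$-rank $1$ locus the multiplicative subgroup is unique, so $X^{=1}_{\Kl}(p)_1 \cong X^{=1}_1$ and there is nothing further to analyse on the $U$ side. Your proposed separate decomposition of the Klingen correspondence is therefore unnecessary.

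Where your proposal goes astray is in the bookkeeping of the inequalities. The three components $(et,m)$, $(m,et)$ and $(m,m)$ are handled exactly as on the ordinary locus: in each case one checks that $A[p]$ or $A[p]^0$ lies in $L$, so the differential contributes $p^{k_1+k_2}$ and after dividing by the normalisation $p^{k_2+3}$ one gets a factor of $p^{k_1-3}$. Thus these three components vanish modulo $p$ as soon as $k_1 > 3$; the first two hypotheses of the lemma are much stronger than what is actually used here, and there is no separate ``$(m,m)$ refined by $p$-rank $1$ geometry'' or ``spherical-to-Klingen cross-term'' to deal with.

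The genuine content lies entirely in the $(o,o)$ component, and here your expected mechanism is not quite right. The relevant constant is $p+1$, not $p-1$: it does not come from Hasse-invariant weights but from the elementary divisors of the isogeny differential. The paper's argument is pointwise rather than sheaf-theoretic: one restricts the cohomological correspondence to a generic point $\xi$ of $X^{=1}_1$, lifts $\xi$ to $W(\bar k)$, and then for each subgroup $L$ in the $(o,o)$ component (now defined over $\cO_{\CC_p}$) one bounds the elementary divisors of $\pi_L^\star : \omega_{A/L} \to \omega_A$ by $(p,\varpi)$ with $v(\varpi) \ge \tfrac{1}{p+1}$. This gives $f(A/L,\mu_L) \in p^{k_2}\varpi^{k_1-k_2}\cO_{\CC_p}$, and after the normalisation $p^{-(k_2+3)}$ one needs $\tfrac{k_1-k_2}{p+1} > 3$, i.e.\ $k_1-k_2 > 3(p+1)$. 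Your proposal does not isolate this step; the ``finer analysis of the connected--\'etale sequence'' you anticipate is replaced by this concrete lift-and-bound argument.
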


 \begin{proof} The proof is very similar to  \cite[Lemma 11.1.2.1]{pilloni17}.  We let $T^{\star, \star \star} : q_2^\star \omega(k_2,k_1) \rightarrow q_1^! \omega(k_2,k_1)$ the restriction of $T$ to a component of $\mathfrak{C}^{=1, \star, \star \star}$. Our objective is to prove that $T$ reduces to $T^{et,et}=U$ on $X^{=1}_1$.

  We have to look at all possible types of the isogeny:
  \begin{itemize}
   \item  We have $T^{et,m} :  q_2^\star \omega(k_1,k_2) \rightarrow p^{k_2 +k_1 - k_2-3} q_1^! \omega(k_2,k_1)$ because $A[p] \subset L$,

   \item We have $T^{m,et} :  q_2^\star \omega(k_1,k_2) \rightarrow p^{k_2 +k_1-k_2-3} q_1^! \omega(k_2,k_1)$ because $A[p]^0 \subset L$,

   \item We have $T^{m,m} : q_2^\star \omega(k_1,k_2) \rightarrow p^{k_2 +k_1-k_2-3} q_1^! \omega(k_2,k_1)$ because $A[p] \subset L$.
  \end{itemize}

  The case of the component $o,o$ remains.  Let $\xi = \Spec \bar{k} \rightarrow  X^{=1}_1$ be a generic point. Let $\tilde{\xi} : W(\bar{k}) \rightarrow \mathfrak{X}^{=1}$ be a lift of $\xi$. We may restrict the cohomological correspondence to $\tilde{\xi}$ and we have a map  $T^{o,o}  : (q_1)_\star q_2^\star \omega(k_1,k_2)_{\tilde{\xi}} \rightarrow \omega(k_1,k_2)_{\tilde{\xi}}$.
  For a section $f \in (q_1)_\star q_2^\star \omega(k_1,k_2)_{\tilde{\xi}}$, this map can be written as $T^{oo}f(A,\mu) = \frac{1}{p^{k_2 +3}} \sum_{L} f(A/L, \mu_L)$  where $L$ runs over all subgroups of $A[p^2]$ corresponding to the component $\mathfrak{C}^{=1, o,o}$ (defined over  $\cO_{\CC_p}$),  $\mu$ is a trivialization of $\omega_A$ and $\mu_L$ is a rational trivialization of $\omega_{A/L}$ such that $\pi_L^\star \mu_L = \mu$ for the isogeny $\pi_L : A \rightarrow A/L$.  For any $L$,  the map $\pi_L^\star : \omega_{A/L} \rightarrow \omega_A$  has elementary divisors $(p, \varpi)$ for an element $\varpi$ with $v(\varpi) \geq \frac{1}{p+1}$. We find that $f(A/L, \mu_L) \in p^{k_2} \varpi^{k_1-k_2} \cO_{\CC_p}$.  If $k_1-k_2 > 3(p+1)$ we find that $T^{o,o} ( f(A, \mu)) = 0 \pmod p$.
 \end{proof}

 The local finiteness of $T$ on $\mathrm{R}\Gamma(X,\omega(k_1,k_2)(-D))$ for $k_1$ large enough (depending on $k_2$) follows easily from the previous lemmas. We can now prove :

 \begin{proposition}\label{prop-cool} If $k_1   >  2$ and $k_2 + k_1 >  3$ and $k_1 -k_2 > 3(p-1)$, the map
  \[
   e(T) H^1(X_1^{\geq 1}, \omega(k_1, k_2)(-D)) \longrightarrow e(U) H^1(X^{\geq 1}_{\Kl}(p)_1, \omega(k_1, k_2)(-D))
  \]
  is an isomorphism.
 \end{proposition}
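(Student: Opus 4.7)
My plan is based on the stratification $X^{\ge 1}_1 = X^{\ge 2}_1 \sqcup X^{=1}_1$ at both spherical and Klingen levels, comparing the two sides stratum by stratum using the operator identities just established in Lemmas \ref{lem-UTord} and \ref{lemUTnord}.

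Step 1 (excision). On each level, the excision long exact sequence for the open/closed pair $(X^{\ge 2}, X^{=1})$, combined with the vanishing $H^i(X^{\ge 2}, \omega(k_1,k_2)(-D)) = 0$ for $i \ge 1$ (from acyclicity of $\omega(k_1,k_2)(-D)$ relative to the minimal compactification, on which the image of $X^{\ge 2}$ is affine), expresses $H^1(X^{\ge 1}, \omega(k_1,k_2)(-D))$ in terms of $H^0$ data supported on the two strata. Concretely, at Klingen level the second Hasse invariant $\mathrm{Ha}'$ (a section of $\omega_C^{p^2-1}$ cutting out $X^{=1}_{\Kl}(p)_1$ as a Cartier divisor inside $X^{\ge 1}_{\Kl}(p)_1$) gives a $U$-equivariant Koszul short exact sequence
\begin{equation*}
 0 \to \omega(k_1,k_2)(-D) \xrightarrow{\mathrm{Ha}'} \omega(k_1+p^2-1,k_2+p^2-1)(-D) \to \omega(k_1+p^2-1,k_2+p^2-1)(-D)|_{X^{=1}_{\Kl}(p)_1} \to 0,
\end{equation*}
whose descent along $p_1$ (using that $p_1: X^{\ge 2}_{\Kl}(p)_1 \to X^{\ge 2}_1$ is \'etale and that $p_1$ is an isomorphism over $X^{=1}$) yields the analogous sequence at spherical level, linked by $p_1^*$.

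Step 2 (match on strata). On the ordinary stratum $X^{\ge 2}$, Lemma \ref{lem-UTord} gives $\tilde T \circ p_1^* = p_1^* \circ T$ on $H^0$ together with $U\tilde T = U^2$ on the $e(U)$-ordinary part; hence $\tilde T = U$ on that ordinary part (since $U$ is invertible there), and combined with the injectivity of $p_1^*$ (via the trace splitting for the finite \'etale cover $p_1: X^{\ge 2}_{\Kl}(p)_1 \to X^{\ge 2}_1$), this yields an isomorphism $p_1^*: e(T) H^0(X^{\ge 2}_1, \cdot) \xrightarrow{\sim} e(U) H^0(X^{\ge 2}_{\Kl}(p)_1, \cdot)$. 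On the closed stratum $X^{=1}$, $p_1$ is already an isomorphism of schemes (the multiplicative subgroup $C$ being unique in rank one), and Lemma \ref{lemUTnord} identifies $T$ with $U$ on $H^0$, provided the weight satisfies the stronger conditions $k_1 > p+1$, $k_1+k_2 > 2p+1$, $k_1-k_2 > 3(p+1)$.

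Step 3 (conclusion and weight descent). The long exact sequences at the two levels fit into a commutative diagram connected by $p_1^*$; applying the five-lemma together with the identifications of Step 2 yields the desired isomorphism. The main obstacle I anticipate is that the hypotheses of Lemma \ref{lemUTnord} are strictly stronger than those of the proposition (e.g.\ $k_1+k_2 > 2p+1$ versus $k_1+k_2 > 3$). To bridge this gap, one first establishes the isomorphism in the ``very large $k_1$'' regime where Lemma \ref{lemUTnord} applies directly, and then bootstraps down to the stated range by iterating the Koszul sequence of Step 1 (which shifts the weight by $(p^2-1, p^2-1)$) and inverting $U$ on the ordinary part — exactly as in the Hasse-invariant weight-shifting argument used in the proof of Lemma \ref{lem-control-sheaf}. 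Verifying the Hecke-compatibility of the descent of $\mathrm{Ha}'$ to spherical level along the \'etale cover is the most delicate piece of the argument.
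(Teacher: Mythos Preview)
Your strategy — stratify by $p$-rank and compare the two levels stratum by stratum via Lemmas \ref{lem-UTord} and \ref{lemUTnord} — is exactly the paper's. But the implementation has a genuine error and misses the device that makes the argument clean.

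\textbf{The Hasse invariant is the wrong one.} What you call $\mathrm{Ha}'$, a section of $\omega_C^{p^2-1}$, is the \emph{second} Hasse invariant of \cite[\S 6.3.2]{pilloni17}; it is \emph{invertible} on $X^{=1}_{\Kl}(p)_1$ (this is precisely how it is used in the proof of Lemma \ref{lem-control-sheaf}), so it cannot cut out $X^{=1}$ as a divisor in $X^{\ge 1}$. The divisor $X^{=1}\subset X^{\ge 1}$ is cut out by the ordinary Hasse invariant $\mathrm{Ha}$, a section of $(\det\omega_A)^{p-1}$, which already lives at spherical level — so no ``descent along $p_1$'' is needed, and the weight shift in the Koszul sequence is $(p-1,p-1)$, not $(p^2-1,p^2-1)$. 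Relatedly, $\omega_C^{p^2-1}$ is not $\omega(p^2-1,p^2-1)$, so your short exact sequence as written does not exist.

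\textbf{The bootstrap is the whole point, and the paper does it in one move.} You correctly observe that Lemma \ref{lemUTnord} requires weights strictly larger than those in the proposition, and propose to iterate the Koszul sequence to reach them. The paper absorbs this iteration by replacing your single Koszul sequence with the two-term complex
\[
 H^0\!\big(X^{\ge 2}_1,\omega(k_1,k_2)(-D)\big)\ \longrightarrow\ \varinjlim_n H^0\!\big(X^{\ge 1}_1,\omega(k_1+n(p-1),k_2+n(p-1))(-D)/\mathrm{Ha}^n\big),
\]
and the same at Klingen level. The direct limit lands you automatically in the large-weight regime where Lemma \ref{lemUTnord} applies termwise; combined with the fact that $p_1$ is an isomorphism over $X^{=1}$, this gives that the \emph{right} vertical map is an isomorphism on ordinary parts. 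On the \emph{left} vertical map the paper only claims (and only needs) \emph{surjectivity}: from $U\tilde T=U^2$ one gets $\tilde T=U$ on the $e(U)$-part, and since $\tilde T$ factors through $p_1^\star$, every $U$-ordinary class on $X^{\ge 2}_{\Kl}(p)_1$ lies in the image of $p_1^\star$. Surjectivity on the degree-$0$ term together with bijectivity on the degree-$1$ term of a two-term complex forces bijectivity on $H^1$ by an elementary diagram chase — no five-lemma and no separate injectivity argument on $X^{\ge 2}$ is required. Your trace-splitting argument for injectivity on $X^{\ge 2}$ would still need to show that $p_1^\star$ carries $e(T)$ into $e(U)$, which does not follow from $U\tilde T=U^2$ alone.
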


 \begin{proof}   The following complexes compute the cohomology of $\omega(k_1, k_2)(-D)$ over $X_1$ and $X^{\geq1}_{\Kl}(p)_1$:
  \[
   \begin{tikzcd}
   H^0( X^{\geq 2}_{\Kl}(p)_1, \omega(k_1, k_2)(-D)) \ar[r] & \displaystyle \varinjlim_{n} H^0( X^{\geq 1}_{\Kl}(p)_1, \omega{(k_1+n(p-1),k_2+n(p-1))}(-D)/\mathrm{Ha}^n) \\
      H^0( X^{\geq 2}_1, \omega(k_1, k_2)(-D)) \ar[r] \ar[u]& \displaystyle \varinjlim_{n} H^0( X^{\geq 1}_1, \omega{(k_1+n(p-1),k_2+n(p-1))}(-D)/\mathrm{Ha}^n) \ar[u]
   \end{tikzcd}
  \]
  We apply the projector for $U$ on the top and for $T$ on the bottom.

  It follows from lemmas \ref{lem-UTord} and  \ref{lemUTnord} that the left vertical map becomes  surjective  and the right vertical map an isomorphism after applying the projector. This is enough to conclude.
 \end{proof}

 \begin{lemma}\label{lem-inj1}   If $k_1   >  2$, $k_2 + k_1 >  3$ and $k_1 -k_2 > 3(p-1)$, the map $e(T) H^1(\mathfrak{X}^{\geq 1}, \omega(k_1, k_2)(-D)) \rightarrow e(U) H^1(\mathfrak{X}^{\geq 1}_{\Kl}(p), \omega(k_1, k_2)(-D))$ is an isomorphism.
 \end{lemma}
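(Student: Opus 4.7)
The strategy is to lift the mod-$p$ isomorphism of Proposition \ref{prop-cool} to an integral one by a completeness/Nakayama argument, leveraging the flatness and concentration in degree $1$ already established on the Klingen side. As in the proof of the corollary following Proposition \ref{prop-vanish}, both
\[ M \coloneqq e(T)\mathrm{R}\Gamma(\mathfrak{X}^{\geq 1}, \omega(k_1,k_2)(-D)) \quad\text{and}\quad N \coloneqq e(U)\mathrm{R}\Gamma(\mathfrak{X}^{\geq 1}_{\Kl}(p), \omega(k_1,k_2)(-D)) \]
can be represented by two-term complexes $[M^0 \to M^1]$ and $[N^0 \to N^1]$ of flat, $p$-adically complete and separated $\Zp$-modules in amplitude $[0,1]$: the sheaf $\omega(k_1,k_2)(-D)$ is acyclic relative to the projection to the minimal compactification, whose image is covered by two affines, and the ordinary idempotents preserve this structure (cf.~Lemma \ref{lem-control-sheaf}).

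Under the running hypothesis $k_2 \leq 0$ of Theorem \ref{thm:vincent}, the corollary after Proposition \ref{prop-vanish} shows that $N$ is concentrated in degree $1$ with $H^1(N)$ a finite flat $\Zp$-module. I would next establish the analogue $e(T) H^0(X_1^{\geq 1}, \omega(k_1,k_2)(-D)) = 0$ for the source, either by imitating the argument of Proposition \ref{prop-vanish} verbatim at spherical level (embedding $e(T) H^0$ into sections of $\omega_C^{k_1-k_2} \otimes \det^{k_2} \omega_A$ via the Hodge filtration and restricting to ordinary Hecke translates of points $E_0 \times E$, on which the pullback is a cuspidal modular form of weight $k_2 \le 0$, hence vanishes), or by using that the pullback $p_1^{*}$ injects $H^0$ at spherical level into $H^0$ at Klingen level and intertwines the projector $e(T)$ with $e(U)$ via the identity $\tilde{T} = U$ on the Klingen-ordinary part provided by Lemma \ref{lem-UTord}. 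Either way, the same argument as in the corollary after Proposition \ref{prop-vanish} (vanishing of $\ker d \otimes \FF_p$ plus completeness, and $M^0 \cap pM^1 = pM^0$) then shows $M$ is concentrated in degree $1$ with $H^1(M)$ a flat, $p$-adically complete and separated $\Zp$-module.

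Finally, Proposition \ref{prop-cool} provides an isomorphism $H^1(M)/p \xrightarrow{\sim} H^1(N)/p$. Since $H^1(N)$ is finitely generated over $\Zp$, the cokernel of $H^1(M) \to H^1(N)$ is a finitely generated $\Zp$-module that vanishes modulo $p$, hence is zero by Nakayama; and since $H^1(N)$ is flat and $H^1(M)$ is $p$-adically separated, the injectivity mod $p$ propagates to integral injectivity (if $\phi(x) = 0$ then $x \in pH^1(M)$, and by $p$-torsion-freeness of $H^1(N)$ the same holds iteratively, so $x \in \bigcap_n p^n H^1(M) = 0$). Hence the integral map is an isomorphism. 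The main obstacle is the spherical-level vanishing in the middle step, where one must either redo the restriction-to-modular-curves argument at spherical rather than Klingen level, or verify that the Hecke correspondences defining $T$ and $U$ really do intertwine under $p_1^{*}$ after applying the ordinary projectors.
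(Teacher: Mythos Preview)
Your final paragraph is essentially the paper's argument: Proposition~\ref{prop-cool} gives the mod-$p$ isomorphism, Nakayama (using finiteness of the target $e(U)H^1$) gives surjectivity, and flatness of $e(U)H^1(\mathfrak{X}^{\geq 1}_{\Kl}(p),\dots)$ forces $K/p=0$ for the kernel $K$.

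The middle paragraph, however, is an unnecessary detour with a genuine gap. You aim to prove $e(T)H^0(X_1^{\geq 1},\omega(k_1,k_2)(-D))=0$ in order to deduce that $H^1(M)$ is separated, but neither route you sketch works as written. Route~(a) invokes the line bundle $\omega_C$; but $C$ is the universal multiplicative subgroup parametrised by the Klingen level structure, so neither $\omega_C$ nor the projection $\omega(k_1,k_2)\to\omega_C^{k_1-k_2}\otimes\det^{k_2}\omega_A$ used in the proof of Proposition~\ref{prop-vanish} exists over $X_1^{\geq 1}$ at spherical level. Route~(b) appeals to Lemma~\ref{lem-UTord}, but that lemma is stated only over the ordinary locus $X^{\geq 2}$ and gives $U\circ\tilde T=U^2$; the identity $\tilde T=U$ holds only on the $U$-ordinary part, and nothing prevents $p_1^\ast$ of a $T$-ordinary class from having a nonzero component in the $U$-nilpotent part, where $\tilde T$ is unconstrained by that relation.

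The paper bypasses this step entirely. To identify $e(T)H^1(\mathfrak{X}^{\geq 1})/p$ with $e(T)H^1(X_1^{\geq 1})$ one only needs $H^2(\mathfrak{X}^{\geq 1},\omega(k_1,k_2)(-D))=0$, which is automatic from amplitude $[0,1]$; no information about $H^0$ at spherical level is required. For the conclusion $K/p=0\Rightarrow K=0$, one can use completeness rather than separatedness: the surjection onto the finite free module $e(U)H^1$ splits, so $K$ is a direct summand of $H^1(M)=\operatorname{coker}(M^0\to M^1)$, which is $p$-adically complete as a quotient of the complete module $M^1$; hence $K$ is itself complete, and $K=pK$ forces $K=\varprojlim_n K/p^nK=0$.
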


 \begin{proof}
  We first show that  the map is surjective. Since $H^2(\mathfrak{X}^{\geq 1}, \omega(k_1, k_2)(-D)) =0$, we deduce that $e(T) H^1(\mathfrak{X}^{\geq 1}, \omega(k_1, k_2)(-D))/p = e(T) H^1({X}_1^{\geq 1}, \omega(k_1, k_2)(-D))$ and by Nakayama's lemma  and proposition \ref{prop-cool}, we deduce the surjectivity. Let $K$ be the kernel of the map of the proposition. By reduction modulo $p$ we get an exact sequence (since  $ e(U) H^1(\mathfrak{X}^{\geq 1}_{\Kl}(p), \omega(k_1, k_2)(-D))$ is flat):
  \[
   0 \rightarrow K/p \rightarrow  e(T) H^1({X}^{\geq 1}_1, \omega(k_1, k_2)(-D)) \rightarrow e(U) H^1({X}^{\geq 1}_{\Kl}(p)_1, \omega(k_1, k_2)(-D))
  \]
  from which it follows that $K/p=0$ and that $K=0$.
 \end{proof}

 We now prove the analoguous result for classical cohomology.

 \begin{lemma}\label{lem-inj2} If   $k_1+k_2 -3\geq 2$ and $k_1 \geq 3$, the map
  \[ e(T)\mathrm{R}\Gamma(\mathcal{X}, \omega(k_1, k_2)(-D))  \rightarrow e (U)\mathrm{R}\Gamma(\mathcal{X}_{\Kl}(p), \omega(k_1, k_2)(-D))\]
  is an isomorphism.
 \end{lemma}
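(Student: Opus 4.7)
The plan is to combine Lemma \ref{lem-inj1} with the SGA~2 codimension theorem (Theorem \ref{thmSGA}) and a Hida-family interpolation argument.

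First, I would reduce from $\mathcal{X}$ and $\mathcal{X}_{\Kl}(p)$ to their $p$-rank $\geq 1$ loci via Theorem \ref{thmSGA}. Both $\mathcal{X}$ and $\mathcal{X}_{\Kl}(p)$ are Cohen--Macaulay on their generic fibres (being smooth over $\Qp$), the supersingular locus has codimension $2$ in each, and $\omega(k_1, k_2)(-D)$ is locally free, so the restriction maps
\[
H^i(\mathcal{X}, \omega(k_1, k_2)(-D)) \to H^i(\mathcal{X}^{\geq 1}, \omega(k_1, k_2)(-D))
\]
and their Klingen analogues are isomorphisms for $i \leq 1$ and injective for $i = 2$. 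Combined with the degree-$1$ concentration of the ordinary part on the rank $\geq 1$ locus (from the corollary following Proposition \ref{prop-vanish}, and its analogue at spherical level for the operator $T$), this shows that $e(T)\mathrm{R}\Gamma(\mathcal{X}, \omega(k_1, k_2)(-D))$ and $e(U)\mathrm{R}\Gamma(\mathcal{X}_{\Kl}(p), \omega(k_1, k_2)(-D))$ are themselves concentrated in degree $1$ and canonically identified with their rank $\geq 1$ counterparts.

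Next, inverting $p$ in Lemma \ref{lem-inj1} produces an isomorphism between the rank $\geq 1$ ordinary cohomologies whenever $k_1 > 2$, $k_1 + k_2 > 3$ and $k_1 - k_2 > 3(p - 1)$; combined with the first step, this proves the lemma at these auxiliary weights.

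The final, and hardest, step is to remove the residual constraint $k_1 - k_2 > 3(p-1)$. For this I would exploit that the Klingen-level ordinary cohomology is the specialization at weight $k_1$ of the projective $\Lambda$-module $M^\bullet_{\kappa, k_2}[1]$, and hence has locally constant $\Qp$-rank as $k_1$ varies. One constructs the analogous $\Lambda$-adic interpolation on the spherical-level side (using the canonical Igusa tower above the ordinary locus of $\mathfrak{X}^{\geq 1}$, which requires no Klingen-level structure, together with a $\Lambda$-adic lift of the operator $T$) and a $\Lambda$-linear comparison map specializing to the map of the lemma at each integer $k_1$. Since both sides are projective $\Lambda$-modules and the comparison is an isomorphism at a Zariski-dense set of weights (those satisfying the large-weight-gap hypothesis), it must itself be a $\Lambda$-linear isomorphism, and hence an isomorphism after every specialization in the stated range. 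The main obstacle is the construction of the spherical-side $\Lambda$-adic interpolation and the verification of its compatibility with $M^\bullet_{\kappa, k_2}[1]$; an alternative that avoids this altogether is to argue Hecke-isotypically, using Corollary \ref{cor:sphericalordprojection}(ii) to match the Klingen-ordinary vector with the spherical one in each relevant Hecke eigenspace.
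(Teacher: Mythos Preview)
Your main route is the reverse of the paper's logical flow, and it has real gaps. In the paper, Lemma \ref{lem-inj2} is proved \emph{directly}, by pure local representation theory, and only \emph{afterwards} is it combined with Lemma \ref{lem-inj1} and Theorem \ref{thmSGA} (in the corollary that follows). You are trying to deduce Lemma \ref{lem-inj2} from those later ingredients, and this runs into trouble:
\begin{itemize}
\item Your step 1 assumes an ``analogue at spherical level for the operator $T$'' of the degree-$1$ concentration corollary following Proposition \ref{prop-vanish}. No such analogue is established in the paper; the only route to it is via Lemma \ref{lem-inj1} itself, and that already needs $k_1-k_2>3(p-1)$. Moreover, Theorem \ref{thmSGA} only controls degrees $\le 2$ (bijective below, injective at $2$), so it does not by itself show $e(T)\mathrm{R}\Gamma(\mathcal{X},-)$ is concentrated in degree $1$; you still owe an argument for degrees $2$ and $3$.
\item Your step 3 requires a spherical-level $\Lambda$-adic interpolation of $T$-ordinary cohomology and a $\Lambda$-linear comparison with $M^\bullet_{\kappa,k_2}$. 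You correctly flag this as the main obstacle; the paper never builds such an object, and doing so would be a substantial detour.
\end{itemize}

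By contrast, the ``alternative'' you mention in your last sentence \emph{is} the paper's proof. One regards $\varinjlim_n H^i(\mathcal{X}(p^n),\omega(k_1,k_2)(-D))$ as a smooth $\GSp_4(\Qp)$-representation and works isotypically. For an irreducible subquotient $\pi$ with spherical invariants, the Newton--Hodge inequalities of Proposition \ref{prop:newtonpoly} together with the hypotheses $k_1\ge 3$ and $k_1+k_2\ge 5$ force the Hecke parameters into a configuration where $\pi$ is of Sally--Tadic type I or IIIb; one then checks that $\pi$ is $T$-ordinary if and only if $v_p(\alpha\beta)=k_2-2$, i.e.\ if and only if it is $U$-ordinary, and Corollary \ref{cor:sphericalordprojection}(ii) gives that the natural map $e(T)\pi^{\GSp_4(\Zp)}\to e(U)\pi^{\Kl(p)}$ is an isomorphism of one-dimensional spaces. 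Since this holds on every Jordan--H\"older constituent in every degree, the map of complexes is a quasi-isomorphism. No weight-gap hypothesis, no interpolation, no appeal to Lemma \ref{lem-inj1} or Theorem \ref{thmSGA} is needed. You should promote this alternative to the main argument and discard the rest.
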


 \begin{proof}
  We shall consider the smooth admissible $\GSp_4(\Qp)$ representation $\varinjlim_n H^i( \mathcal{X}(p^n), \omega(k_1, k_2)(-D))$. The invariants under $\GSp_4(\Zp)$ are  $H^i(\mathcal{X}, \omega(k_1, k_2)(-D))$ and the invariants under  the Klingen parahoric are $H^i(\mathcal{X}_{\Kl}(p), \omega(k_1, k_2)(-D))$. Let $\pi$ be an irreducible representation with spherical invariants contributing to $\varinjlim_n H^i( \mathcal{X}(p^n), \omega(k_1, k_2)(-D))$. Then $\pi$ is a quotient of a principal series representation and its Hecke parameters are $(\alpha, \beta, \gamma, \delta)$ such that $\alpha \delta = \beta \gamma$. We may assume that they are in increasing $p$-adic valuation. Morover, by Proposition \ref{prop:newtonpoly}, the Newton polygon is above the Hodge polygon, which has slopes in increasing order $k_2-2, 0, k_1+k_2-3, k_1-1$ (or $0,k_2-2, k_1-1, k_1+k_2-2$ but the case of interest to us will be $k_2 \leq 0$ so we will assume for simplicity that we are in the first case), and they have the same initial and end point.
  The $T$ eigenvalue is $p^{2-k_2}( \alpha\beta + \alpha \gamma + \alpha \delta + \beta\delta + \gamma \delta) + p^{1-k_2} (1 + p + p^2) \alpha\delta$.
  Under the assumption that $k_1 \geq 3$, we find that $p^{1-k_2} (1 + p + p^2) \alpha\delta$ has $p$-adic valuation at least one and that  $\pi$ is $T$-ordinary if and only if $\alpha\beta$ has valuation $k_2-2$.
  Under the assumption that $k_1+k_2 -3\geq 2$ we find that $\xi^{-1} \xi' \neq p$ for all $\xi, \xi' \in \{ \alpha, \beta, \gamma, \delta\}$ or that $\beta = p \alpha$ and $p\gamma = \delta$.
  Therefore, if $\pi$ is $T$-ordinary it is either an irreducible principal series type I or a type III(b) representation in Schmidt's classification \cite{MR2114732}. Comparing with Corollary \ref{cor:sphericalordprojection}, we see that $e(T) \pi^{\GSp_4(\Zp)}$ is non-zero if and only if $e(U) \pi^{\Kl(p)}$ is non-zero, and the natural map between the two is an isomorphism.

  It follows that the map $e(T)\mathrm{R}\Gamma(\mathcal{X}, \omega(k_1, k_2)(-D))  \rightarrow e (U)\mathrm{R}\Gamma(\mathcal{X}_{\Kl}(p), \omega(k_1, k_2)(-D))$ induces an isomorphism on every graded piece of some Jordan--H\"older filtration, and hence is an isomorphism as required.
 \end{proof}

 \begin{corollary}   If  $k_1+k_2 -3\geq 2$, $k_1 \geq 3$ and $k_1 -k_2 > 3(p-1)$,  we have a commutative diagram:
  \[
   \begin{tikzcd}
    e(U){H}^1(\mathcal{X}_{\Kl}(p), \omega(k_1, k_2)(-D)) \ar[r] & e(U){H}^1(\mathcal{X}^{\geq 1}_{\Kl}(p), \omega(k_1, k_2)(-D)) \\
    e(T){H}^1(\mathcal{X}, \omega(k_1, k_2)(-D)) \ar[r] \ar[u] & e(T) {H}^1(\mathcal{X}^{\geq 1}, \omega(k_1, k_2)(-D)) \ar[u]
   \end{tikzcd}
  \]
  and the vertical maps are isomorphisms.
 \end{corollary}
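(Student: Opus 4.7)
The plan is to obtain the corollary as a straightforward packaging of the two preceding lemmas, with only a minor base-change issue.

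First, I would note that the left vertical arrow is furnished directly by Lemma \ref{lem-inj2}: taking $H^1$ of the quasi-isomorphism of complexes
\[ e(T)\mathrm{R}\Gamma(\mathcal{X}, \omega(k_1,k_2)(-D)) \xrightarrow{\sim} e(U)\mathrm{R}\Gamma(\mathcal{X}_{\Kl}(p), \omega(k_1,k_2)(-D)) \]
established there yields the isomorphism of $H^1$ groups. Note that the hypotheses $k_1+k_2-3\geq 2$ and $k_1\geq 3$ needed for that lemma are implied by our assumptions.

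For the right vertical arrow, I would invoke Lemma \ref{lem-inj1}, which gives the corresponding isomorphism
\[ e(T)H^1(\mathfrak{X}^{\geq 1}, \omega(k_1,k_2)(-D)) \xrightarrow{\sim} e(U)H^1(\mathfrak{X}^{\geq 1}_{\Kl}(p), \omega(k_1,k_2)(-D)) \]
at the level of the formal schemes. To transfer this to the rigid-analytic generic fibres $\mathcal{X}^{\geq 1}$ and $\mathcal{X}^{\geq 1}_{\Kl}(p)$, I would tensor with $\QQ_p$. Since the relevant formal cohomology groups are represented by complexes of flat, $p$-adically complete $\Zp$-modules of bounded amplitude (as shown earlier in this section), tensoring with $\QQ_p$ is exact and commutes with cohomology, and the slope-zero projectors $e(T)$ and $e(U)$ commute with this base change by the functoriality of the ordinary idempotent. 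Thus the formal-scheme isomorphism descends to the required rigid-analytic one.

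Finally, I would verify commutativity of the square. The horizontal arrows are the restriction maps from the full rigid-analytic Shimura variety to its open subvariety where the $p$-rank is $\geq 1$; these are induced by open immersions and are natural in maps of ringed spaces, and the projection $p_1\colon \mathcal{X}_{\Kl}(p)\to\mathcal{X}$ sends $\mathcal{X}^{\geq 1}_{\Kl}(p)$ into $\mathcal{X}^{\geq 1}$ since the Klingen level structure only encodes a choice of multiplicative subgroup. Hence, before applying idempotents, the square commutes by functoriality of cohomology. The compatibility of $e(T)$ and $e(U)$ with the pullback is exactly the compatibility built into the construction of $T$ via the \'etale--\'etale component of the correspondence $\mathfrak{C}$, as exploited in the proof of Lemma \ref{lem-UTord}. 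The main (and minor) obstacle is therefore the rigid-analytic base change on the right; everything else is extracting consequences of lemmas already proved.
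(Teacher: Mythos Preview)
Your proposal is correct and follows essentially the same approach as the paper, which simply cites Lemmas \ref{lem-inj1} and \ref{lem-inj2} in a single line. Your additional remarks on passing from the formal to the rigid-analytic cohomology on the right are accurate but treated as definitional in the paper (recall $\mathrm{R}\Gamma(\mathcal{X}^{\geq 1}_{\Kl}(p),\dots)=\mathrm{R}\Gamma(\mathfrak{X}^{\geq 1}_{\Kl}(p),\dots)\otimes^{L}_{\Zp}\Qp$ was stated earlier), and the commutativity check is routine functoriality as you say.
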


 \begin{proof} This follows from lemmas \ref{lem-inj1} and \ref{lem-inj2}.
 \end{proof}

 \begin{lemma} If  $k_1+k_2 -3\geq 2$, $k_1 \geq 3$ and $k_1 -k_2 > 3(p-1)$, the map $eH^1(\mathcal{X}, \omega(k_1, k_2)(-D)) \rightarrow eH^1(\mathcal{X}^{\geq 1}, \omega(k_1, k_2)(-D))$  is injective.
 \end{lemma}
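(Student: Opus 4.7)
The strategy is to use the commutative diagram of the preceding corollary, whose two vertical arrows are isomorphisms, to convert the question into an injectivity statement for the bottom (spherical-level) arrow
\[
 e(T) H^1\!\left(\mathcal{X}, \omega(k_1,k_2)(-D)\right) \longrightarrow e(T) H^1\!\left(\mathcal{X}^{\geq 1}, \omega(k_1,k_2)(-D)\right),
\]
and then to establish this by combining an SGA~2--type codimension argument on the integral model with the finiteness properties of ordinary cohomology already proved in this section.

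The formal scheme $\mathfrak{X}$ is smooth over $\Spf\Zp$, hence Cohen--Macaulay, and the complement $\mathfrak{X} \setminus \mathfrak{X}^{\geq 1}$ is set-theoretically the supersingular locus of the special fibre, which is one-dimensional inside the three-dimensional $X_1$; hence it has codimension at least $2$ in every truncation $\mathfrak{X}/p^n$. Since $\omega(k_1,k_2)(-D)$ is locally free and each $\mathfrak{X}/p^n$ is Noetherian and Cohen--Macaulay, Theorem~\ref{thmSGA} yields a bijection on $H^j$ for $j \le 1$ at each finite level. Passing to the inverse limit in $n$---which is unproblematic thanks to the properness of $\mathfrak{X}$ over $\Spf\Zp$ and Grothendieck's theorem on formal functions---one obtains a bijection
\[
 H^1\!\left(\mathfrak{X}, \omega(k_1,k_2)(-D)\right) \xrightarrow{\ \sim\ } H^1\!\left(\mathfrak{X}^{\geq 1}, \omega(k_1,k_2)(-D)\right)
\]
of formal-scheme cohomologies, which remains a bijection after applying $e(T)$.

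It remains to descend this integral bijection to the rigid-analytic side after ordinary projection. On the left, properness of $\mathfrak{X}$ gives the identification $H^1(\mathfrak{X}, \omega)[1/p] = H^1(\mathcal{X}, \omega)$ directly. On the right, the corollary preceding the ``Finiteness'' subsection---combined with Lemma~\ref{lem-inj1}, which matches the spherical- and Klingen-level ordinary parts integrally---shows that $e(T) H^1(\mathfrak{X}^{\geq 1}, \omega(k_1,k_2)(-D))$ is a flat, $p$-adically complete and separated $\Zp$-module of finite type, so that inverting $p$ produces a finite-dimensional $\Qp$-vector space canonically isomorphic to $e(T) H^1(\mathcal{X}^{\geq 1}, \omega(k_1,k_2)(-D))$. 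Combining these identifications with the integral bijection yields the required (in fact, bijective) conclusion. The subtlest step is this final integral-to-rigid identification on the non-proper open $\mathfrak{X}^{\geq 1}$: once granted, the SGA~2 codimension argument delivers the injectivity mechanically.
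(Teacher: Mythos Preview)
Your approach is correct and is essentially the paper's intended argument: the paper's proof is the single line ``This follows from Theorem~\ref{thmSGA}'', and you have (rightly) unpacked this by applying the SGA~2 codimension bound on the Cohen--Macaulay truncations $\mathfrak{X}/p^n$, where the supersingular locus has codimension $2$, and then passing to the formal scheme and inverting $p$. This is the right level at which to invoke Theorem~\ref{thmSGA}; note that one cannot apply it naively to the adic space $\mathcal{X}$ itself, since the closed complement $\mathcal{X}\setminus\mathcal{X}^{\geq 1} = \mathrm{sp}^{-1}(X_1^{=0})$ is a tube and hence has full dimension there.

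Two small simplifications are worth noting. First, the inverse-limit step does not really need ``formal functions'': the truncated cohomologies $H^j(X_n,\omega/p^n)$ for $j\le 1$ are finite (they are finitely generated over the finite ring $\ZZ/p^n$), and by your SGA~2 bijection the same is true of $H^j(X_n^{\geq 1},\omega/p^n)$ for $j\le 1$; so Mittag--Leffler is automatic on both sides. Second, your final paragraph works harder than necessary. Once the integral bijection $H^1(\mathfrak{X},\omega(-D)) \xrightarrow{\ \sim\ } H^1(\mathfrak{X}^{\geq 1},\omega(-D))$ is in hand, simply tensor with $\Qp$: by definition this yields the map $H^1(\mathcal{X},\omega(-D)) \to H^1(\mathcal{X}^{\geq 1},\omega(-D))$, which is therefore bijective (hence certainly injective, and this persists after applying $e(T)$). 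There is no need to invoke Lemma~\ref{lem-inj1}, the Klingen-level comparison, or the finite-type property of the ordinary part for this step.
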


 \begin{proof} This  follows from theorem \ref{thmSGA}.
 \end{proof}

 \begin{corollary} If  $k_1+k_2 -3\geq 2$, $k_1 \geq 3$ and $k_1 -k_2 > 3(p-1)$, the map $eH^1(\mathcal{X}_{\Kl}(p), \omega(k_1, k_2)(-D)) \rightarrow e H^1(\mathcal{X}^{\geq 1}_{\Kl}(p), \omega(k_1, k_2)(-D))$  is injective.
 \end{corollary}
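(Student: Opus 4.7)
The plan is to deduce this corollary by a direct diagram chase from the two immediately preceding results. The previous corollary provides a commutative square
\[
\begin{tikzcd}
e(U){H}^1(\mathcal{X}_{\Kl}(p), \omega(k_1, k_2)(-D)) \ar[r] & e(U){H}^1(\mathcal{X}^{\geq 1}_{\Kl}(p), \omega(k_1, k_2)(-D)) \\
e(T){H}^1(\mathcal{X}, \omega(k_1, k_2)(-D)) \ar[r] \ar[u, "\sim"] & e(T) {H}^1(\mathcal{X}^{\geq 1}, \omega(k_1, k_2)(-D)) \ar[u, "\sim"]
\end{tikzcd}
\]
in which, under our hypotheses, both vertical arrows are isomorphisms. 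The preceding lemma (whose proof invokes Theorem \ref{thmSGA} on the Cohen--Macaulay scheme $\mathcal{X}$ with the complement $\mathcal{X} \setminus \mathcal{X}^{\geq 1}$ being of codimension $\geq 2$) shows that the bottom horizontal arrow is injective.

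Chasing the diagram: given a class $c \in eH^1(\mathcal{X}_{\Kl}(p), \omega(k_1, k_2)(-D))$ mapping to zero in $eH^1(\mathcal{X}^{\geq 1}_{\Kl}(p), \omega(k_1, k_2)(-D))$, we lift it (via the left vertical isomorphism) uniquely to a class $c' \in eH^1(\mathcal{X}, \omega(k_1, k_2)(-D))$. By commutativity, the image of $c'$ in $eH^1(\mathcal{X}^{\geq 1}, \omega(k_1, k_2)(-D))$ goes to the image of $c$ in $eH^1(\mathcal{X}^{\geq 1}_{\Kl}(p), \omega(k_1, k_2)(-D))$ via the right vertical isomorphism, and hence vanishes. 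Injectivity of the bottom arrow then forces $c' = 0$, and therefore $c = 0$.

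There is no real obstacle here: the technical work has already been done in establishing that $T$-ordinary cohomology at spherical level matches $U$-ordinary cohomology at Klingen level (both integrally, via Lemma \ref{lem-inj1}, and rationally via Lemma \ref{lem-inj2}), and that classical cohomology on $\mathcal{X}$ injects into cohomology on $\mathcal{X}^{\geq 1}$ via the local cohomology vanishing of Theorem \ref{thmSGA}. The only thing left is to transfer this injectivity from the spherical-level setting to the Klingen-level setting through the square above, which is exactly what the diagram chase accomplishes.
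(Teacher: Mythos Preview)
Your proof is correct and is exactly the argument the paper has in mind: the corollary is stated without proof in the paper because it follows immediately from the preceding commutative square (with vertical isomorphisms) together with the injectivity of the bottom map established in the previous lemma, via precisely the diagram chase you describe.
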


 We observe that for a fixed value of $k_2$, the conditions on the weight are satisfied for all $k_1$ large enough.

 \subsubsection{Injectivity for all values of $k_1$}

 The perfect complex $M^\bullet_{\kappa^{un}, k_2}$ admits an overconvergent variant $M^{\dag, \bullet}_{\kappa^{un}, k_2}$. Its main properties are (see section 13 and 14 of \cite{pilloni17}) :

 \begin{enumerate}

  \item If $k_1 + k_2 > 3$, $k_1-k_2 >2$, then $M^{\dag, \bullet}_{k_1, k_2} = e \mathrm{R}\Gamma ( \mathcal{X}_{\Kl}(p), \omega(k_1, k_2))(-D))$.

  \item If $k_1 + k_2 >3$ and $k_1 \geq k_2$, there is an  isomorphism
  \[ H^0(M^{\dag, \bullet}_{k_1, k_2}) =  e H^0 ( \mathcal{X}_{\Kl}(p), \omega(k_1, k_2))(-D)) =0\]
  and an injective map
  \[ H^1(M^{\dag, \bullet}_{k_1, k_2}) \leftarrow e H^1 ( \mathcal{X}_{\Kl}(p), \omega(k_1, k_2))(-D)).\]
 \end{enumerate}

 We consider the two functions defined for $k_1  > 3-k_2$ : $d(k_1) = \dim e H^1( \mathcal{X}_{\Kl}(p), \omega(k_1, k_2)(-D))$, $d'(k_1) = \dim H^1(M^{\dag, \bullet}_{k_1, k_2})$, and $d^{ord} (k_1) = \dim e H^1( \mathcal{X}^{\geq 1}_{\Kl}(p), \omega(k_1, k_2)(-D))$.

 We have proved that $d(k_1) \geq d^{ord}(k_1)$ and that equality holds if $k_1$ is large enough. We have $d(k_1) \leq d'(k_1)$ and equality holds if $k_1$ is large enough.    Moreover, $d^{ord}$  and $d'$ are locally constant if we endow $\ZZ_{\geq 4-k_2}$ with the  topology  of characters of $\Zp^\times$. We deduce that $d^{ord} = d'$ and finally that $d=d^{ord}$.

 Therefore $d(k_1) =  d^{ord}(k_1)$ for all values of $k_1$, completing the proof of Theorem \ref{thm:vincent}.\qed


\section{Construction of the p-adic pushforward map}
\label{sect:p-adic-pushfwd}


In this section, we'll define morphisms from spaces of $p$-adic modular forms for $H = \GL_2 \times_{\GL_1} \GL_2$ to the $p$-adic $H^1$ spaces for $G$ defined in the preceding section, and show that they interpolate pushforward maps for the usual coherent automorphic sheaves.

\subsection{Level groups}

\begin{notation}
 For $m \ge 1$, define subgroups of $H(\Zp)$ by
 \begin{align*}
 K_{H, 1}(p^m) &\coloneqq \{ h \in H(\Zp): h = \left(\stbt{1}{*}{}{*}, \stbt{1}{*}{}{*}\right) \pmod{p^m}\},\\
 K_{H, \Delta}(p^m) &\coloneqq \{ h: h = \left(\stbt{x}{*}{}{*}, \stbt{x}{*}{}{*}\right) \pmod{p^m} \text{ \textup{for some $x$}}\},\\
 K_{H, 0}(p^m) &\coloneqq \{ h: h = \left(\stbt{*}{*}{}{*}, \stbt{*}{*}{}{*}\right) \pmod{p^m}\}.
 \end{align*}
\end{notation}

The following elementary but important group-theoretic computation underpins our construction. Recall the subgroup $\Kl(p^m) = \left\{ \begin{smatrix}
* & * & * & * \\ &*&*&* \\ &*&*&* \\ &&&*\end{smatrix} \pmod{p^m}\right\} \subset G(\Zp)$ defined in \S\ref{sect:Gsetup} above.

\begin{proposition}
 Let $\gamma \in G(\Zp)$ be any element whose first column is $(1, 1, 0, 0)^T$. Then we have
 \[
 \pushQED{\qed}
 H(\Qp) \cap \gamma \Kl(p^m) \gamma^{-1} = K_{H, \Delta}(p^m).\qedhere
 \popQED
 \]
\end{proposition}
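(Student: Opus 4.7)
The plan is to reformulate the Klingen-parahoric condition as a geometric statement about the action on a single vector, and then read off the congruences by direct matrix calculation.

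First, I would observe that the Klingen parabolic $P_{\Kl}$ is the stabilizer in $\GSp_4$ of the isotropic line $\langle e_1\rangle$; the additional vanishing conditions in the last row of $P_{\Kl}$ (entries $(4,2)$ and $(4,3)$) are automatic from the symplectic condition, since any $g \in \GSp_4$ stabilizing $\langle e_1\rangle$ must also stabilize its symplectic perpendicular $\langle e_1\rangle^\perp = \langle e_1, e_2, e_3\rangle$. This gives the geometric characterization
\[ \Kl(p^m) = \{g \in G(\Zp) : g e_1 \equiv \lambda e_1 \pmod{p^m}\ \text{for some}\ \lambda \in \Zp^\times\}. \]

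Next, conjugating and using the hypothesis $\gamma e_1 = e_1 + e_2$ (so that $\gamma\langle e_1\rangle = \langle e_1 + e_2\rangle$), I would obtain
\[ \gamma \Kl(p^m) \gamma^{-1} = \{g \in G(\Zp) : g(e_1 + e_2) \equiv \mu(e_1 + e_2) \pmod{p^m}\ \text{for some}\ \mu \in \Zp^\times\}. \]
Since this set is contained in $G(\Zp)$, any element $h \in H(\Qp) \cap \gamma \Kl(p^m)\gamma^{-1}$ lies in $H(\Qp) \cap G(\Zp) = H(\Zp)$, so can be written as $\iota(h_1, h_2)$ with $h_i = \stbt{a_i}{b_i}{c_i}{d_i} \in \GL_2(\Zp)$ satisfying $\det h_1 = \det h_2$.

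Finally, the explicit formula for $\iota$ yields
\[ \iota(h_1, h_2)(e_1 + e_2) = a_1 e_1 + a_2 e_2 + c_2 e_3 + c_1 e_4, \]
so the congruence $h(e_1 + e_2) \equiv \mu(e_1+e_2) \pmod{p^m}$ is equivalent to $a_1 \equiv a_2 \equiv \mu$ and $c_1 \equiv c_2 \equiv 0 \pmod{p^m}$ (the common value $\mu$ being automatically a unit modulo $p^m$ since $h_i \in \GL_2(\Zp)$). These are precisely the defining conditions of $K_{H, \Delta}(p^m)$, giving both inclusions.

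I do not anticipate a real obstacle: the only substantive point is the geometric reformulation of $\Kl(p^m)$, a standard fact about parabolic subgroups of classical groups, after which the argument is one line of matrix arithmetic.
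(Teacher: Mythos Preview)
Your argument is correct. The paper does not give a proof of this proposition: the \qed is placed inside the displayed formula and the statement is introduced as an ``elementary but important group-theoretic computation'', so there is nothing to compare against. Your geometric reformulation of $\Kl(p^m)$ as the stabilizer modulo $p^m$ of the line $\langle e_1\rangle$ is exactly the right way to make the computation transparent (and is consistent with the paper's later remark that $H$ acts on $G/P_{\Kl}\cong\mathbf{P}^3$ with $\gamma$ representing the open orbit). The remaining steps---conjugating to the line $\langle e_1+e_2\rangle$, reducing to $H(\Zp)$, and reading off the congruences $a_1\equiv a_2$, $c_1\equiv c_2\equiv 0\pmod{p^m}$ from $\iota(h_1,h_2)(e_1+e_2)=(a_1,a_2,c_2,c_1)^T$---are all correct and constitute precisely the elementary check the authors had in mind.
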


\begin{remark}
 The group $H$ acts on $G / P_{\Kl} \cong \mathbf{P}^3$ with exactly 3 orbits, two closed and one open, and the element $\gamma$ represents the open orbit.
\end{remark}

We write $X_{H, ?}(p^m)_{\QQ}$ for the Shimura variety for $H$ of level $K_H^p K_{H, ?}(p^m)$ (for our fixed prime-to-$p$ level $K^p$), and $\cX_{H, ?}(p^m)$ for the associated rigid-analytic spaces over $\Qp$. The proposition implies that there is a finite morphism of $\QQ$-varieties
\[ \iota_m: X_{H, \Delta}(p^m)_{\QQ} \to X_{G, \Kl}(p^m)_{\QQ}, \]
given by composing $\iota$ with right-translation by $\gamma$ (and this is a closed embedding away from the boundary if $K^p$ is small enough). Our goal is to interpolate pullback and pushforward maps for these embeddings $\iota_m$.

\subsection{Classical modular forms}

For integers $\ell_1, \ell_2 \ge 0$, $L$ any field extension of $\QQ$, we define
\begin{align*}
M_{(\ell_1, \ell_2)}\big(p^m, L\big) &\coloneqq H^0\big( X_{H, 0}(p^m)_L, \omega_H(\ell_1, \ell_2)\big),\\
S_{(\ell_1, \ell_2)}\big(p^m, L\big) &\coloneqq H^0\big( X_{H, 0}(p^m)_L, \omega_H(\ell_1, \ell_2)(-D_H)\big),
\end{align*}
with Hecke operators normalised as in Notation \ref{not:sheaves}, so that if $K_H^p$ has level $N$ and $q \ne p$ is a prime congruent to 1 modulo $N$, then the double coset of $\diag(q, q, q, q) \in H(\QQ_q)$ acts as multiplication by $q^{\ell_1 + \ell_2 - 4}$.

More generally, let $\chi_1, \chi_2$ be characters of $(\ZZ / p^m)^\times$ with values in $L$. We let $M_{(\ell_1, \ell_2)}(p^m, \chi_1, \chi_2, L)$ be the subspace of $H^0\big( X_{H, 1}(p^m)_L, \omega_H(\ell_1, \ell_2)\big)$ on which the quotient $K_{H, 0}(p^m) / K_{H, 1}(p^m)$ acts via the character $\left(\stbt{a}{*} {}{*}, \stbt{b}{*}{}{*}\right)\mapsto \chi_1(a) \chi_2(b)$, and similarly $S_{(\ell_1, \ell_2)}(p^m, \chi_1, \chi_2, L)$. Note that for $q$ as above, the action of $\diag(q, \dots, q) \in H(\QQ_q)$ on this space is now via $q^{\ell_1 + \ell_2 - 4} \cdot \chi_1\chi_2(q)^{-1}$.

\subsection{The Igusa tower for $H$}

Recall that $X_H$ denotes a smooth compactification over $\ZZ_{(p)}$ of the modular curve $Y_H$ of prime-to-$p$ level $K_H = K_H^p H(\Zp)$. Over $Y_H$ we have two elliptic curves $E_1, E_2$, which extend to semiabelian schemes over $X_H$; and the direct sum $E_1 \oplus E_2$ is the pullback via $\iota: X_H \to X_G$ of the universal semiabelian scheme $A_G / X_G$.

Let $\fX_H / \Spf \Zp$ be the $p$-adic formal completion of $X_H$, and $\fX_H^{\ord}$ the ordinary locus in $\fX_H$. Over $\fX_H^{\ord}$ we have an Igusa tower $\fIG_H(p^\infty) = \varprojlim_m \fIG_H(p^m)$, parametrising embeddings $\alpha_i: \mu_{p^n} \into E_i$ for $i= 1, 2$; this is evidently a $(\Gamma \times \Gamma)$-torsor.

\begin{notation}
 Let $\Delta_m$ denote the diagonally-embedded copy of $(\ZZ / p^m \ZZ)^{\times}$ in $\left((\ZZ / p^m \ZZ)^{\times}\right)^2$, and let $\fX_{H, \Delta}^\ord(p^m)$ denote the quotient $\fIG_H(p^m) / \Delta_m$.
\end{notation}

The generic fibres of the formal schemes $\fX_{H, \Delta}^\ord(p^m)$ and $\fIG_H(p^m)$ are naturally open subvarieties of the rigid-analytic varieties $\cX_{H, \Delta}(p^m)$ and $\cX_{H, 1}(p^m)$ respectively.

\subsection{$R$-adic sheaves}\label{sect:bigsheafH}
In analogy with the theory for $G$, we shall let $R$ be a $p$-adic ring with two characters $\kappa_1, \kappa_2: \Gamma \to R$, and consider the sheaf on $\fX_{H, \Delta}^\ord(p)$ defined by
\[
\mathfrak{F}_{H, R}(\kappa_1, \kappa_2) = \left( \pi_\star \cO_{\fIG_H(p^\infty)} \htimes R\right)[\Gamma \dot{\times} \Gamma = (\kappa_1, \kappa_2)] \otimes W_H(0, 0; -4)
\]
where $\pi: \fIG_H(p^\infty) \to \fX_{H, \Delta}^{\ord}(p)$ is the projection map, and $\Gamma \dot{\times} \Gamma$ denotes the group $\{(x, y) \in \Gamma \times \Gamma: x = y \bmod p\}$. This is a rank 1 locally free sheaf of $R$-modules on $\fX_H^\ord(p)$; and we define spaces of $p$-adic modular forms (resp.~cusp forms) by
\begin{align*}
\cM_{(\kappa_1, \kappa_2)}(R) &\coloneqq H^0\left(\fX_{H, \Delta}^\ord(p), \mathfrak{F}_{H, R}(\kappa_1, \kappa_2)\right)^Q,\\
\cS_{(\kappa_1, \kappa_2)}(R) &\coloneqq H^0\left(\fX_{H, \Delta}^\ord(p), \mathfrak{F}_{H, R}(\kappa_1, \kappa_2)(-D)\right)^Q
\end{align*}
where $Q$ is the quotient group $(\Gamma \times \Gamma) / (\Gamma \dot{\times} \Gamma) \cong (\ZZ / p)^\times$. These spaces (and their classical analogues) are independent of the choice of rpcd $\Sigma$, and have an action of Hecke operators away from $p$, with $\diag(q, q, q, q) \in H(\QQ_q)$ for $q = 1 \bmod N$ acting as $q^{\kappa_1 + \kappa_2 - 4}$.

\begin{remark}
 It might seem more natural to work with sheaves on $\fX_H^\ord$, not $\fX_{H, \Delta}^\ord(p)$, which would obviate the need for the tiresome quotient $Q$; but we choose to work over $\fX_{H, \Delta}^\ord(p)$ since it is easier to relate to the theory for $G$.
\end{remark}

For any $k_1, k_2 \in \ZZ$, there is a comparison isomorphism
\[ \operatorname{comp}:\omega_H(k_1, k_2) \xrightarrow{\cong} \mathfrak{F}_{H}(k_1, k_2). \]
Hence, for any finite extension $L/\Qp$ and characters $\chi_1, \chi_2: (\ZZ / p^m)^\times \to L^\times$, we obtain an injection
\[ M_{(k_1, k_2)}(p^m, \chi_1, \chi_2, L) \into \cM_{(k_1 + \chi_1, k_2 + \chi_2)}(\cO_L) \otimes L.
\]
This is \emph{not} Hecke-equivariant in general (unsurprisingly, because the central characters do not match). Rather, it intertwines the action of a double coset $[K^p_H h K^p_H]$, for $h \in H(\Af^p)$, on the left-hand side with
$\widehat{\chi}_1 \widehat{\chi}_2(\det h)^{-1} \cdot [K^p_H h K^p_H]$ on the right-hand side, where $\widehat{\chi}$ is the adelic character associated to $\chi$ as in \S\ref{sect:dirichlet}. However, we shall only use this if $\chi_1 \chi_2 = 1$, in which case the map does indeed become Hecke-equivariant.

There are also versions of these comparisons for cusp forms. Crucially, however, a classical modular form may be a $p$-adic cusp form without being a cusp form in the classical sense. These ``fake cusp forms'' will be vital in our construction of $p$-adic $L$-functions below (as a substitute for our inability to prove an analogue of Theorem \ref{thm:vincent} for cohomology of non-cuspidal sheaves on $G$).

\subsection{Maps between Igusa towers}

The morphism $\fX_{H, \Delta}^{\ord}(p^m) \to \fX_H^{\ord}$ represents the functor of isomorphisms $E_1[p^m]^\circ \cong E_2[p^m]^\circ$, where $E_i[p^m]^\circ$ is the identity component of $E_i[p^m]$. Note that $E_1 \oplus E_2$ is the pullback (via $\iota_{\Sigma}$) of the universal semiabelian surface $A$ over $\fX_G$. We can therefore consider the following diagram of morphisms of formal schemes, for any $m \ge 1$:
\[
\begin{tikzcd}[row sep=small]
 \fIG_H(p^m) \ar[r, "{\tilde\iota_m}"] \ar[d] & \fIG_G(p^m)\ar[d] \\
 \fX_{H, \Delta}^{\ord}(p^m) \ar[r, "\iota_m"]\ar[d] & \fX_{G, \Kl}^{\ge 1}(p^m)\ar[d]\\
 \fX_H \ar[r, "\iota"] & \fX_G
\end{tikzcd}
\]
where the vertical arrows are the natural degeneracy maps, and the horizontal arrows are defined as follows:
\begin{itemize}
 \item $\iota = \iota_{\Sigma}$ is as defined in \S\ref{sect:toroidalfunct} above;
 \item $\iota_m$ maps an isomorphism $E_1[p^m]^\circ \cong E_2[p^m]^\circ$ to its graph, considered as a multiplicative subgroup of $(E_1 \oplus E_2)[p^m]$;
 \item $\tilde{\iota_m}$ maps a pair of embeddings $(\alpha_1, \alpha_2)$ to $\alpha_1 + \alpha_2: \mu_{p^m} \into (E_1 \oplus E_2)[p^m]$.
\end{itemize}

\begin{proposition}
 The morphisms $\iota_m$ and $\tilde{\iota}_m$ are closed embeddings.
\end{proposition}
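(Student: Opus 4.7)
The plan is to reduce both claims to the fact, recalled in \S\ref{sect:toroidalfunct}, that $\iota: \fX_H \hookrightarrow \fX_G$ is a closed embedding. The key observation to leverage is that since the $\mu_p$-rank is additive on $E_1 \oplus E_2$ and each elliptic-curve factor contributes $0$ or $1$, the ordinary locus $\fX_H^{\ord}$ coincides with $\iota^{-1}(\fX_G^{\ge 2})$. Because $\fX_G^{\ge 2}$ is closed inside $\fX_G^{\ge 1}$, this shows $\fX_H^{\ord} \hookrightarrow \fX_G^{\ge 1}$ is already a closed embedding, and hence, base-changing along $\fX_{G, \Kl}^{\ge 1}(p^m) \to \fX_G^{\ge 1}$, we obtain a closed embedding
\[ j: \fX_H^{\ord} \times_{\fX_G^{\ge 1}} \fX_{G, \Kl}^{\ge 1}(p^m) \hookrightarrow \fX_{G, \Kl}^{\ge 1}(p^m). \]

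Next, for $\iota_m$, I would identify $\fX_{H, \Delta}^{\ord}(p^m)$ with an open-and-closed subscheme of the source of $j$. That source is finite \'etale over $\fX_H^{\ord}$, because its fibre parametrises the multiplicative subgroups of order $p^m$ in $E_1[p^m]^\circ \oplus E_2[p^m]^\circ \cong \mu_{p^m}^2$, which by Cartier duality correspond to the finite set $\mathbf{P}^1(\ZZ / p^m\ZZ)$. The image of $\iota_m$ is the locus where $C_m$ is the graph of some isomorphism $E_1[p^m]^\circ \cong E_2[p^m]^\circ$, equivalently where $C_m$ projects isomorphically to each $E_i[p^m]^\circ$; this corresponds to those $[x : y] \in \mathbf{P}^1(\ZZ/p^m\ZZ)$ with both $x$ and $y$ units. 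The Galois action on this fibre factors through $\mathrm{Aut}(E_1[p^m]^\circ) \times \mathrm{Aut}(E_2[p^m]^\circ)$ acting factor-wise, which preserves the two ``axis'' points and therefore stabilises the graph locus. Hence the graph locus is a union of connected components, giving the desired open-and-closed embedding; composing with $j$ then yields that $\iota_m$ is a closed embedding.

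The statement for $\tilde\iota_m$ should then follow by checking that the upper square in the diagram preceding the proposition is Cartesian. Given $(A, C_m, \alpha) \in \fIG_G(p^m)$ together with a compatible $(E_1, E_2, \phi) \in \fX_{H, \Delta}^{\ord}(p^m)$, I would recover the unique lift to $\fIG_H(p^m)$ by setting $\alpha_i \coloneqq \mathrm{pr}_i \circ \alpha: \mu_{p^m} \to E_i[p^m]^\circ$; each $\alpha_i$ is an isomorphism because $\mathrm{pr}_i$ restricts to one on the graph $C_m$, and by construction $\alpha_1 + \alpha_2 = \alpha$ and $\phi = \alpha_2 \circ \alpha_1^{-1}$. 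Thus $\tilde\iota_m$ is the base change of $\iota_m$ along the \'etale surjection $\fIG_G(p^m) \to \fX_{G, \Kl}^{\ge 1}(p^m)$, and is therefore also a closed embedding.

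The main subtlety will be the middle step: showing that the ``graph'' locus is closed, not merely open, in the \'etale cover. This ultimately boils down to the elementary Galois-stability of the two axis subgroups of $\mu_{p^m}^2$, but one has to set up the Cartier-dual identification with $\mathbf{P}^1(\ZZ/p^m\ZZ)$ carefully and verify that the Klingen cover really does become finite \'etale after pulling back to $\fX_H^{\ord}$, which is the place one exploits $A[p^m]^\circ \cong \mu_{p^m}^2$ on the ordinary locus.
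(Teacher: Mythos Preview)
There is a genuine error in your first step. You assert that $\fX_G^{\ge 2}$ is closed inside $\fX_G^{\ge 1}$, but the opposite is true: the $p$-rank is lower semicontinuous, so the locus of $p$-rank $\ge 2$ is \emph{open} in the locus of $p$-rank $\ge 1$. (The paper uses exactly this open immersion $\mathfrak{X}^{\ge 2}_{\Kl}(p) \hookrightarrow \mathfrak{X}^{\ge 1}_{\Kl}(p)$ in \S\ref{sect-U-loc-finite}.) Consequently $\fX_H^{\ord} \hookrightarrow \fX_G^{\ge 1}$ is only a locally closed immersion, your map $j$ is not a closed embedding, and the whole argument collapses: the ``graph locus'' is indeed open-and-closed in your fibre product, but that fibre product is itself only open in something closed. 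Concretely, the missing points are those over the locus in $\iota^{-1}(\fX_G^{\ge 1})$ where exactly one $E_i$ is ordinary; above such a point there is still a (unique) $\mu_{p^m}$-subgroup of $E_1 \oplus E_2$, sitting in the ordinary factor, and you must show that this ``axis'' point does not lie in the closure of the graph locus.

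The paper repairs this by working with the \emph{full} $\fX_H$ rather than restricting to $\fX_H^{\ord}$ at the outset. One forms $\mathfrak{Z}_m = \fX_H \times_{\fX_G} \fIG_G(p^m)$, which is genuinely closed in $\fIG_G(p^m)$, and then proves that the locus where both projections $\alpha_i : \mu_{p^m} \to E_i[p^m]$ are closed immersions is open-and-closed in $\mathfrak{Z}_m$. This last point is not merely a Galois-stability statement on an \'etale fibre: it requires the structure theorem SGA~III, IX.6.8, which says that for a homomorphism out of a multiplicative-type group scheme, the locus where it is a monomorphism is open-and-closed in the base. That result is exactly what handles the $p$-rank 1 boundary points that your argument does not see. (The paper also reverses your order of deduction, proving the statement for $\tilde\iota_m$ first and descending to $\iota_m$ by the $\Delta_m$-quotient; your Cartesian-square route from $\iota_m$ to $\tilde\iota_m$ would be fine once the first step is fixed.)
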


\begin{proof}
 We first treat $\tilde\iota_m$. The commutativity of the above diagram implies that $\tilde{\iota}_m$ factors through the fibre product $\mathfrak{Z}_m = \fX_H \mathop{\times}_{\fX_G} \fIG_G(p^m)$. Since $\fX_H \into \fX_G$ is a closed embedding, so is the second projection $\mathfrak{Z}_m \into \fIG_G(p^m)$. Hence it suffices to show that the morphism $\fIG_H(p^m) \to \mathfrak{Z}_m$ is a closed embedding.

 We shall show that $\fIG_H(p^m)$ is in fact a component of $\mathfrak{Z}_m$. The space $\mathfrak{Z}_m$, by construction, classifies embeddings $\alpha: \mu_{p^n} \into (E_1 \oplus E_2)[p^m]$. Composing $\alpha$ with the first and second projections, we obtain morphisms of group schemes $\alpha_i: \mu_{p^m} \to E_i[p^m]$ over $\mathfrak{Z}_m$. By Theorem IX.6.8 of SGA III, there is an open-and-closed formal subscheme $\mathfrak{Z}_m^\circ \subseteq \mathfrak{Z}_m$ over which the maps $\alpha_i$ are both embeddings\footnote{Note that this is a specific property of homomorphisms $G \to H$ of group schemes with $G$ of multiplicative type; it is not true for more general finite flat group schemes $G$. We are grateful to Laurent Moret-Bailly for pointing out this reference, in response to a MathOverflow question of ours (\href{https://mathoverflow.net/questions/309837/kernels-of-homomorphisms-of-group-schemes}{link}).}. The map $\tilde\iota_m$ clearly factors through $\mathfrak{Z}_m^\circ$, and the projections $\alpha_i$ determine a map $\mathfrak{Z}_m^\circ \to \fIG_H(p^m)$ which is an inverse to $\tilde\iota_m$. Hence $\tilde\iota_m$ defines an isomorphism between $\fIG_H(p^m)$ and $\mathfrak{Z}_m^\circ$, and in particular it is a closed embedding into $\mathfrak{Z}_m$ and hence also into $\fIG_G(p^m)$.

 To obtain the statement for $\iota_m$, we note that $\tilde\iota_m$ intertwines the action of $\Delta_m \subset ((\ZZ / p^m \ZZ)^\times)^2$ on $\fIG_H(p^m)$ with the natural $(\ZZ / p^m \ZZ)^\times$-action on $\fIG_G(p^m)$, so we can recover $\iota_m$ by passage to the quotient.
\end{proof}

\begin{lemma}
 \label{lem:cartesian}
 For any $m \ge 1$, the diagrams
 \[
  \begin{tikzcd}
   \fIG_H(p^m) \rar["\tilde\iota_m"] \dar & \fIG_G(p^m)\dar \\
   \fX_{H, \Delta}^\ord(p) \rar["\iota_1"] & \fX_{G, \Kl}^{\ge 1}(p)
  \end{tikzcd}
  \quad\text{and}\quad
  \begin{tikzcd}
   \fX_{H, \Delta}^\ord(p^m) \rar["\iota_m"] \dar & \fX_{G, \Kl}^{\ge 1}(p^m)\dar\\
   \fX_{H, \Delta}^\ord(p) \rar["\iota_1"] & \fX_{G, \Kl}^{\ge 1}(p)
  \end{tikzcd}
 \]
 are Cartesian.
\end{lemma}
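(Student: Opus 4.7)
The plan is to verify directly, by examining the moduli problems, that the natural maps from the left-hand corners to the fibre products are isomorphisms. The key technical input is the same result from SGA 3, IX.6.8, that was used in the preceding proposition: a homomorphism of finite flat group schemes with source of multiplicative type is a closed embedding on an open-and-closed subscheme of the base.

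For the first diagram, a point of the fibre product over some base $T$ is the data of a point of $\fX_{H, \Delta}^{\ord}(p)$, namely $(E_1, E_2, \phi\colon E_1[p]^\circ \xrightarrow{\sim} E_2[p]^\circ)$ together with the auxiliary prime-to-$p$ level structure, plus an embedding $\alpha\colon \mu_{p^m} \hookrightarrow (E_1 \oplus E_2)[p^m]$ whose restriction to $\mu_p$ lands in the graph of $\phi$. The first step is to project $\alpha$ onto the two factors, producing morphisms $\alpha_i\colon \mu_{p^m} \to E_i[p^m]$ for $i=1,2$. By hypothesis, the special fibre of each $\alpha_i$ is the composite of $\phi$ (or its inverse) with a closed embedding $\mu_p \hookrightarrow E_i[p]^\circ$, and in particular is a closed embedding. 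Applying SGA 3, IX.6.8 to each $\alpha_i$, the locus where $\alpha_i$ is a closed embedding is open and closed in $T$ and contains the special fibre, so equals all of $T$. Thus the pair $(\alpha_1, \alpha_2)$ defines a $T$-point of $\fIG_H(p^m)$, and by construction $\tilde\iota_m$ sends it back to $\alpha = \alpha_1 + \alpha_2$. This defines the inverse map, and bijectivity is clear since $\phi$ is recovered mod $p$ from the $\alpha_i$ via the graph condition.

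For the second diagram, the argument is formally parallel, with the rôle of the embedding $\alpha$ replaced by the multiplicative subgroup $C_m \subset (E_1 \oplus E_2)[p^m]$ whose reduction mod $p$ is the graph of $\phi$. Projecting $C_m$ onto each factor gives morphisms $C_m \to E_i[p^m]$ of group schemes with source of multiplicative type; these are closed embeddings in the special fibre (since the graph of $\phi$ projects isomorphically to each $E_i[p]^\circ$), and hence closed embeddings on all of $T$, again by SGA 3, IX.6.8. Therefore $C_m$ is the graph of an isomorphism $E_1[p^m]^\circ \xrightarrow{\sim} E_2[p^m]^\circ$ lifting $\phi$, i.e.~a point of $\fX_{H, \Delta}^{\ord}(p^m)$ above the given point of $\fX_{H, \Delta}^{\ord}(p)$. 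As before, this assignment is manifestly inverse to the canonical map to the fibre product.

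I do not anticipate any serious obstacle here: once the moduli interpretations are unwound, the whole argument is simply a repeated application of the SGA 3 result already used above. One small nuisance is being careful about how the diagonal $\Delta_m$ in $((\ZZ/p^m)^\times)^2$ embeds into the Klingen $(\ZZ/p^m)^\times$-torsor structure on $\fIG_G(p^m) \to \fX_{G,\Kl}^{\ge 1}(p^m)$; but this is already recorded in the proof of the preceding proposition, where $\tilde\iota_m$ is shown to intertwine these actions. The second diagram could alternatively be deduced from the first by passage to quotients by the (compatible) finite-group actions, but the direct moduli-theoretic verification is equally clean and more transparent.
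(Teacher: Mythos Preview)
Your strategy is the same as the paper's: unwind the moduli problems and show that the projections $\alpha_i$ of $\alpha$ to the two elliptic-curve factors are themselves closed embeddings. But the step where you pass from ``$\alpha_i|_{\mu_p}$ is a closed embedding'' to ``$\alpha_i$ is a closed embedding'' is garbled. You call $\alpha_i|_{\mu_p}$ the ``special fibre of $\alpha_i$'', and then a sentence later treat the ``special fibre'' as a subset of the base $T$ which the open-and-closed locus from SGA~3, IX.6.8 is supposed to contain. These are different objects, and as written the argument does not connect them: SGA~3 tells you the locus in $T$ where $\alpha_i$ is a monomorphism is open-and-closed, but you have not said why that locus is all of $T$.

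The paper bypasses this by stating the key fact directly: if $\alpha_i|_{\mu_p}$ is injective then $\alpha_i$ is injective. The reason is that subgroup schemes of $\mu_{p^m}$ over any base are, by the classification of subgroups of groups of multiplicative type, \'etale-locally of the form $\mu_{p^k}$; in particular any nontrivial one contains $\mu_p$. So $\ker\alpha_i$, being a subgroup scheme of $\mu_{p^m}$ which meets $\mu_p$ trivially, must itself be trivial. This is more elementary than a second appeal to SGA~3, IX.6.8, and it is all that is needed. The same correction applies verbatim to your argument for the second diagram with $C_m$ in place of $\alpha$.
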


\begin{proof}
 The first statement follows from the fact that if $\alpha: \mu_{p^m} \into (E_1 \oplus E_2)[p^m]$ is an embedding and the projections $\alpha_i$ of $\alpha$ to the $E_i$ are injective restricted to $\mu_p$, then the $\alpha_i$ are in fact injective, so $\alpha$ comes from a point of $\fIG_H(p^m)$. The second statement is similar (replacing the embeddings with their images).
\end{proof}

We can restate this in terms of a compatibility of ``big'' sheaves on $\fX_{H, \Delta}^\ord(p)$. We let $\pi_G$ denote the projection $\fIG_G(p^\infty) \to \fX_{G, \Kl}^{\ge 1}(p)$; and we let $\pi_H$ denote the projection $\fIG_H(p^\infty) \to \fX_{H, \Delta}^{\ord}(p)$. Then the Cartesian diagram above translates into the following equality of sheaves on $\fX_{H, \Delta}^{\ord}(p)$:
\[
\iota_1^\star\left( \pi_{G, \star} \cO_{\fIG_G(p^\infty)} \right) = \pi_{H, \star} \cO_{\fIG_H(p^\infty)}.
\]
We note that $\pi_{G, \star} \cO_{\fIG_G(p^\infty)}$ is a sheaf of $\Gamma$-modules. On the other hand, $\pi_{H, \star} \cO_{\fIG_H(p^\infty)}$ has an action of the larger group $\Gamma \dot{\times} \Gamma \coloneqq \{ (a, b) \in \Gamma \times \Gamma: a = b \bmod p\}$; if we regard it as a sheaf of $\Gamma$-modules by restriction to the diagonal $\Delta_\infty \cong \Gamma$, then the above isomorphism is $\Gamma$-equivariant.

\subsection{Pullback of $R$-adic sheaves}

We now let $R$ be any $p$-adic ring equipped with two continuous characters $\lambda_1, \lambda_2: \Gamma \to R^\times$; and let $k_2 \in \ZZ$. If we define\footnote{We find it convenient to use additive notation for the group law on characters, since we shall frequently use the embedding of $\ZZ$ into the character group.} $\kappa = \lambda_1 + \lambda_2 - k_2$, then we have defined above the sheaf $\fF_{G, R}(\kappa, k_2)$ of $R$-modules on $\fX_{G, \Kl}^{\ge 1}(p)$ by
\[
\fF_{G, R}(\kappa, k_2) = \left( \pi_{G, \star} \cO_{\fIG_G(p^\infty)} \hat\otimes R\right)[\Gamma = \kappa - k_2] \otimes \omega_G(k_2, k_2; 2k_2- 6).
\]
The Cartesian property of the first diagram of Lemma \ref{lem:cartesian} gives an isomorphism of $R$-module sheaves on $\fX_H^\ord(p)$,
\[
\iota_1^\star\left(\fF_{G, R}(\kappa, k_2)\right) =  \left( \pi_{H, \star} \cO_{\fIG_H(p^\infty)} \hat\otimes R\right)[\Delta_\infty = \kappa-k_2] \otimes \omega_H(k_2, k_2; 2k_2 - 6).
\]
Since the character $\kappa-k_2 = \lambda_1 + \lambda_2-2k_2$ of $\Delta_\infty$ is the restriction of the character $(\lambda_1 - k_2, \lambda_2 - k_2)$ of $\Gamma \dot{\times} \Gamma$, we have a canonical inclusion
\[ \left( \pi_{H, \star} \cO_{\fIG_H(p^\infty)} \hat\otimes R\right)[\Gamma \dot{\times} \Gamma = (\lambda_1 - \kappa_1, \lambda_2 - \kappa_2)] \into \left( \pi_{H, \star} \cO_{\fIG_H(p^\infty)} \hat\otimes R\right)[\Delta_\infty = \kappa-k_2],\]
and hence an inclusion of sheaves
\[ \fF_{H, R}(\lambda_1, \lambda_2) \{-2\} \into \iota_1^\star\left(\fF_{G, R}(\kappa, k_2)\right), \]
where $\{-2\}$ indicates tensor product with the line bundle $\omega_H(0, 0; -2)$. (This is trivial as an abstract sheaf, but twists the Hecke action by the character $\|\det\|$, so that for $h \in H(\Af^p)$, the action of $[K_H^p h K_H^p]$ on the cohomology of $\fF_{H, R}(\lambda_1, \lambda_2)$ corresponds to the action of $\|\det h\|^{-1} [K_H^p h K_H^p]$ on the cohomology of $\iota^* \fF_{G, R}(\kappa, k_2)$.)

\begin{proposition}
 \label{prop:pullbackcomp}
 For any integers $\ell_1, \ell_2, k_1, k_2$ with $\min(\ell_1, \ell_2, k_1) \ge k_2$ and $\ell_1 + \ell_2 =  k_1 + k_2$, we have the following commutative diagram of sheaves on $\fX_{H, \Delta}^{\ord}(p)$:
 \[
  \begin{tikzcd}
   \omega_H(\ell_1, \ell_2)\{-2\} \dar \ar[hook, r] & \iota_1^\star \omega_G(k_1, k_2)\dar\\
   \fF_H(\ell_1, \ell_2)\{-2\} \ar[hook, r]  & \iota_1^\star\left(\fF_G(k_1, k_2)\right)
  \end{tikzcd}
 \]

 Here the right-hand vertical arrow is the pullback via $\iota_1$ of the comparison morphism for $G$, and the left-hand arrow the analogous comparison morphism for $H$.
\end{proposition}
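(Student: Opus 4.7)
My plan is to unpack each arrow of the square concretely as a morphism of line bundles trivialized on the Igusa tower $\fIG_H(p^\infty)$, and then to check commutativity by a direct comparison on a distinguished summand.

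First, I would describe the top horizontal arrow. Using the identification $\omega_G(k_1, k_2) \cong \Sym^{k_1-k_2}(\omega_A) \otimes \det^{k_2}(\omega_A)$ (up to the similitude twist carrying the central character $k_1+k_2-6$) together with $\iota^* \omega_A = \omega_{E_1} \oplus \omega_{E_2}$, the pullback decomposes canonically as
\[
 \iota_1^* \omega_G(k_1, k_2) = \bigoplus_{\substack{\ell_1+\ell_2=k_1+k_2 \\ k_2 \le \ell_i \le k_1}} \omega_{E_1}^{\ell_1} \otimes \omega_{E_2}^{\ell_2} \otimes L,
\]
where $L$ is the pullback of the similitude twist. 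A direct central-character check identifies each summand with $\omega_H(\ell_1, \ell_2)\{-2\}$ (the $\{-2\}$ exactly accounts for the discrepancy between $\ell_1+\ell_2-4$ and $\ell_1+\ell_2-6$), and the top arrow is the inclusion of the summand indexed by our fixed $(\ell_1, \ell_2)$. Representation-theoretically, this works because $W_G(k_1,k_2)$ is a $P_{\Sieg}$-representation pulled back from $M_{\Sieg}$, and $B_H$ maps into $P_{\Sieg}$ with image in $M_{\Sieg}$ landing inside the diagonal torus $T$, so $W_G(k_1,k_2)|_{B_H}$ splits as a direct sum of $T$-weight spaces.

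Both vertical arrows are constructed by the same recipe: trivialize the conormal bundle of a canonical multiplicative subgroup via the appropriate Igusa tower. The $G$-comparison factors as $\Sym^{k_1-k_2}(\omega_A) \twoheadrightarrow \omega_C^{\otimes(k_1-k_2)} \hookrightarrow \fF_G(k_1, k_2)$ using the Klingen subgroup $C \subset A[p]$ and its $\fIG_G(p^\infty)$-trivialization, while the $H$-comparison trivializes $\omega_{E_1}$ and $\omega_{E_2}$ separately via $\fIG_H(p^\infty)$. The key geometric input is provided by Lemma \ref{lem:cartesian}: the pullback of $C$ under $\tilde\iota$ is the graph subgroup $C_G \subset E_1 \oplus E_2$ recording the isomorphism $E_1[p^\infty]^\circ \simeq E_2[p^\infty]^\circ$ classified by a point of $\fX_{H,\Delta}^\ord(p)$. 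Consequently the pulled-back surjection $\omega_A \twoheadrightarrow \omega_C$ becomes $\omega_{E_1} \oplus \omega_{E_2} \twoheadrightarrow \omega_{C_G}$, and once all three line bundles are identified with $\cO$ using the Igusa trivializations $\alpha_1$, $\alpha_2$ and $\alpha_1+\alpha_2$, this surjection is literally the addition map $(a, b) \mapsto a+b$.

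With these identifications, commutativity reduces to a calculation on the distinguished summand $\omega_{E_1}^{\ell_1-k_2} \otimes \omega_{E_2}^{\ell_2-k_2}$ inside $\Sym^{k_1-k_2}(\omega_{E_1} \oplus \omega_{E_2})$. Because the decomposition of the symmetric power is the eigendecomposition for the diagonal torus of $\GL_1 \times \GL_1 \subset H$, and $(a+b)^n = \sum_i \binom{n}{i} a^i b^{n-i}$ is the expansion in these eigencomponents, $\Sym^{k_1-k_2}$ of the addition map sends our summand to $\omega_{C_G}^{k_1-k_2}$ by the monomial rule $a^{\ell_1-k_2} b^{\ell_2-k_2} \mapsto a^{\ell_1-k_2} b^{\ell_2-k_2}$, with no binomial coefficient. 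Feeding this through the $\fIG_G$-trivialization of $\omega_{C_G}$ yields exactly the composition given by the $H$-comparison on $\omega_H(\ell_1, \ell_2)\{-2\}$, after factoring out the common $\det^{k_2}(\omega_A) = (\omega_{E_1} \otimes \omega_{E_2})^{k_2}$-piece. I expect the only real friction will be bookkeeping for the similitude twist $L$ and its pullback via $\iota_1$, checking that it matches the twist defining the ``$\{-2\}$'' on the $H$-side; the underlying geometric and representation-theoretic content is essentially formal once everything is lifted to $\fIG_H(p^\infty)$.
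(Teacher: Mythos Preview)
Your proposal is correct and follows essentially the same approach as the paper's proof: lift everything to the Igusa tower $\fIG_H(p^n)$ (the paper works modulo $p^n$ and reduces to $k_2=0$), where both vertical comparison maps become the standard trivialisations, and then observe that the pulled-back Hodge--Tate trivialisation for the graph subgroup restricts to the standard trivialisation of each $\omega_{E_i}$ separately---which is precisely your ``addition map $(a,b)\mapsto a+b$'' observation. You spell out the monomial calculation and the direct-sum decomposition of $\iota_1^\star\omega_G(k_1,k_2)$ more explicitly than the paper does, but the underlying argument is the same.
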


\begin{proof}
 We may assume without loss of generality that $k_2 = 0$. It suffices to prove the statement modulo $p^n$ for each $n$. This arises (via adjunction) from the following diagram of sheaves on the mod $p^n$ reduction of $\fIG_H(p^n)$ (where we omit the central character terms for brevity)
 \[
  \begin{tikzcd}
   \omega_H(\ell_1, \ell_2) \ar[hook, r]\dar & \omega_G(k_1, 0)\dar\\
   \cO_{\fIG_H(p^n)_n} \rar[equal] &\tilde\iota_n^\star\left(\cO_{\fIG_G(p^n)_n}\right)
  \end{tikzcd}
 \]
 Here the right vertical arrow is given by pullback to the universal $\mu_{p^n}$-subgroup composed with the Hodge--Tate period map. By construction, this restricts to the standard trivialisation of either $\omega_H(1, 0)$ or of $\omega_H(0, 1)$ modulo $p^n$.
\end{proof}

\begin{remark}
 As noted in \cite[Remark 9.4.1]{pilloni17}, the comparison morphism $\omega_G(k_1, k_2) \to \fF_G(k_1, k_2)$ can be interpreted as projection onto the highest weight vector in the representation $W_G(k_1, k_2)$ of $\GL_2$ (the Levi factor of the Siegel parabolic in $\operatorname{Sp}_4$). In this optic, our choice of embedding of $\fIG_H(p^\infty)$ into $\fIG_G(p^\infty)$ corresponds to acting on this highest weight vector by the element $\stbt{1}{}{1}{1}$ of $\GL_2$, so that it projects nontrivially to every direct summand for the action of the subgroup $\GL_1 \times \GL_1$ (the Levi factor of the Borel in the derived subgroup of $H$). This is an analogue for coherent automorphic sheaves of the branching computations for \'etale sheaves in \cite{LSZ17}.
\end{remark}

\subsection{Pushforward of $R$-adic sheaves}

Let $X^{\ge 1}_{G, \Kl}(p)_n$ denote the mod $p^n$ reduction of the formal scheme $\fX^{\ge 1}_{G, \Kl}(p)$. Note that this is a smooth and quasi-projective $(\ZZ / p^n)$-scheme, and thus its relative dualising complex is defined, and is isomorphic to a shift of the sheaf of top-degree differentials; and similarly for $\fX_{H, \Delta}^{\ord}(p)$. Carrying out the same construction as in \S\ref{sect:pfwdsheaf} for these truncated schemes, we obtain homomorphisms
\[
H^0\left( X_{H, \Delta}^{\ord}(p)_n,\iota_1^\star(M) \otimes \omega_H(-1, -1;0)(-D_H)\right)
\longrightarrow
H^1\left(X_{G, \Kl}^{\ge 1}(p)_n, M(-D_G) \right),
\]
for $M$ any flat quasi-coherent sheaf on $X^{\ge 1}_{G, \Kl}(p)_n$ (not necessarily of finite rank). If $\mathfrak{M} = (M_n)_{n \ge 1}$ is a flat formal Banach sheaf on $\fX_{G, \Kl}(p)$ in the sense of \cite[\S 12.6]{pilloni17}, then the above morphisms are compatible as $n$ varies and assemble into a map
\[
\iota_{\star}: H^0\left( \fX_{H, \Delta}^{\ord}(p), \iota_1^\star(\mathfrak{M}) \otimes \omega_H(-1, -1;0)(-D_H)\right)
\longrightarrow
H^1\left(\fX_{G, \Kl}^{\ge 1}(p), \mathfrak{M}(-D_G) \right).
\]
In particular, this applies to the sheaf $\fF_{G, R}(\kappa, k_2)$ defined above (assuming the coefficient ring $R$ to be $\Zp$-flat). If we choose characters $\lambda_1, \lambda_2: \Zp^\times \to R^\times$ with $\lambda_1 + \lambda_2 = \kappa + k_2 - 2$, then the preceding section gives a morphism
\[ \fF_H(\lambda_1+1, \lambda_2+1) \otimes \omega_H(0, 0; -2) \to \iota_1^\star \left(\fF_G(\kappa, k_2)\right).\]
Composing this with the above coboundary morphism, and noting that $\fF_H(\lambda_1+1, \lambda_2+1) \otimes \omega_H(-1, -1; -2) = \fF_H(\lambda_1, \lambda_2)$, gives a map of $R$-modules
\begin{equation}
\label{eq:pwfdR}
\iota_\star: \cS_{(\lambda_1, \lambda_2)}(R) \longrightarrow
H^1\left(\fX_{G, \Kl}^{\ge 1}(p), \fF_G(\kappa, k_2)(-D_G) \right).
\end{equation}
By construction, this map is compatible with base-change in $(R, \lambda_1, \lambda_2)$.

\subsection{Comparison with classical pushforward}

 We want to compare the map $\iota_\star$ of \eqref{eq:pwfdR} with the analogous construction for algebraic varieties. For our purposes, it suffices to carry this out after inverting $p$, so we shall use the language of rigid-analytic spaces. We choose integers $k_1, k_2, \ell_1, \ell_2$ such that $\min(\ell_1+1, \ell_2 + 1, k_1) \ge k_2$ and $\ell_1 + \ell_2 = k_1 + k_2 - 2$.

Let $\cX^{\ge 1}_{G, \Kl}(p^m)$ be the rigid-analytic generic fibre of the formal scheme $\fX^{\ge 1}_{G, \Kl}(p^m)$. On the algebraic side, let $X_{G, \Kl}(p^m)_{\Qp}$ be the base-change to $\Qp$ of the toroidal compactification of the Shimura variety of level $K_{G, \Kl}(p^m)$ (using the same rpcd $\Sigma$ we used at spherical level; this may not be smooth for level $K_{G, \Kl}(p^m)$, but this does not matter). If $\cX_{G, \Kl}(p^m)$ is the associated rigid space, then we have a natural open immersion
\[
\cX^{\ge 1}_{G, \Kl}(p^m) \into \cX_{G, \Kl}(p^m).
\]

We can argue similarly with $H$ in place of $G$ to obtain an open immersion
\[ \cX_{H, \Delta}^{\ord}(p^m) \into \cX_{H, \Delta}(p^m).\]
There is a finite morphism of $\QQ$-varieties $X_{H, \Delta}(p^m)_{\QQ} \to X_{G, \Kl}(p^m)_{\QQ}$, injective away from the boundary, given by composing the standard embedding $\iota: H \into G$ with translation by the element $\gamma$ above. We denote this morphism also by $\iota_m$. We can therefore obtain two finite maps of rigid spaces $\cX_H^\ord(p^m) \to \cX_{G, \Kl}^{\ge 1}(p^m)$: either as the generic fibre of the formal-scheme morphism $\iota_m$ of the previous section, or as the restriction of the analytification of the map of $\QQ$-varieties we have just constructed. A simple check using the explicit description of the action of $G(\hat{\ZZ})$ on the moduli problem shows that these two morphisms coincide.

From this equality (and the compatibility of the formal-analytic and rigid-analytic dualizing complexes) we obtain the following commutative diagram. Suppose $\cO$ is the ring of integers of a finite extension $L / \Qp$, and $\chi_1, \chi_2$ are characters $(\ZZ / p^m)^\times \to \cO^\times$ (not necessarily primitive) with $\chi_1 \chi_2 = \mathrm{id}$, we have a commutative diagram
\[
 \begin{tikzcd}
  S_{(\ell_1, \ell_2)}(p^m, \chi_1, \chi_2, L) \rar["\iota_{m, \star}"]\dar[hook] &
  H^1\left(X_{G, \Kl}(p^m)_{L}, \omega_G(k_1, k_2)(-D_G) \right) \dar \\
  \cS_{(\ell_1 + \chi_1, \ell_2 + \chi_2)}(\cO_L) \otimes L \rar["\iota_\star"] &
  H^1\left(\fX_{G, \Kl}^{\ge 1}(p), \fF_G(k_1, k_2)(-D_G) \right)\otimes L.
 \end{tikzcd}
\]
where the lower horizontal arrow is the map of \eqref{eq:pwfdR}.

\begin{remark}
 We could also relax the assumption that $\chi_1 \chi_2$ be the identity, and replace $\fF_G(k_1, k_2)$ on the right-hand side with $\fF_G(k_1 + \chi, k_2)$ where $\chi = \chi_1\chi_2$. However, we shall not use this additional generality in the present paper.
\end{remark}


  \section{Automorphic cohomology and periods}
\label{sect:coherentcoh}


We now use the $p$-adic theory of the previous sections to study the $p$-adic variation of period integrals, given by pairing a class in $H^0$ of an automorphic vector bundle over $H$ with a class in $H^2$ over $G$.

\subsection{Discrete-series automorphic representations}

Let $r_1 \ge r_2 \ge 0$ be integers. Then there exists a unique \emph{generic} discrete series representation $\Pi_\infty$ of $\GSp_4(\RR)$ whose central character has finite order, and which has non-zero $(\mathfrak{g}, K)$-cohomology with coefficients in the appropriate twist of $V(r_1, r_2)$. This is the representation denoted $\Pi_{k_1, k_2}^W$ in \cite[\S 10]{LSZ17}, taking the parameters $(k_1, k_2)$ of \emph{op.cit.} to be $(r_1 + 3, r_2 + 3)$.

We fix -- for the remainder of this paper -- a globally generic automorphic representation $\Pi$ of $\GSp_4(\AA_\QQ)$ whose component at $\infty$ is this discrete series representation. We also assume that $\Pi$ is not a Saito--Kurokawa lifting. Via the classification of cuspidal automorphic representations of $\GSp_4$ announced in \cite{arthur04} and proved in \cite{geetaibi18}, this leaves exactly two possibilities:

\begin{itemize}
 \item (``Yoshida type'') $\Pi$ lifts to a non-cuspidal automorphic representation of $\GL_4$ of the form $\pi_1 \boxplus \pi_2$, where the $\pi_i$ are cuspidal automorphic representations of $\GL_2$ generated by holomorphic modular forms of weights $r_1 + r_2 + 4$ and $r_1 - r_2 + 2$ respectively.

 \item (``General type'') $\Pi$ lifts to a cuspidal automorphic representation of $\GL_4$.
\end{itemize}

 Note that in the present work, unlike its predecessor \cite{LSZ17}, we \emph{shall} allow the case of Yoshida lifts, since it presents no additional difficulties to do so (although in this special case there exist other, simpler proofs of our main results).

 The ``automorphic'' normalisations are such that $\Pi$ is unitary, and its central character is of finite order, thus equal to $\widehat{\chi}_{\Pi}$ for some Dirichlet character $\chi_\Pi$. Note that the sign of this character is $(-1)^{r_1 - r_2}$. We are most interested in the ``arithmetic'' normalisation of the finite part, $\Pif' \coloneqq \Pif \otimes \|\cdot\|^{-(r_1 + r_2)/2}$; this is definable over a number field (while $\Pif$ in general is not).

\subsection{Coherent cohomology}

 We define $L_i$, for $0 \le i \le 3$, to be the irreducible $M_{\Sieg}$-representations with the following highest weights:
 \begin{align*}
 L_0 &: \lambda(r_1 + 3, r_2 + 3; m) & L_1 &: \lambda(r_1 + 3, 1-r_2; m) \\
 L_2 &: \lambda(r_2 +2, -r_1; m) & L_3 &: \lambda(-r_2, -r_1; m),
 \end{align*}
 where $m = r_1 + r_2$.

 \begin{remark}
  Note that the highest weights of the representations $L_i$ lie in a Weyl-group orbit, for a suitably shifted action; geometrically, they are vertices of an octagon centred at $(1, 2)$.
 \end{remark}

 The following result follows by standard methods from Arthur's classification:

 \begin{theorem}\label{thm:cohcoh}
  If $\Pi$ is of general type, then for each $0 \le i \le 3$, $\Pif'$ appears with multiplicity 1 as a Jordan--H\"older factor of the $G(\Af)$-representations
  \[ H^i\left(X_{G, \QQ}, [L_i](-D)\right) \otimes \CC \quad\text{and}\quad H^i\left(X_{G, \QQ}, [L_i]\right) \otimes \CC; \]
  moreover, it appears as a direct summand of both representations, and the map between the two is an isomorphism on this summand. If $\Pi$ is of Yoshida type, then the preceding statements apply for $i = 1,2$, while for $i = 0, 3$, $\Pif'$ does not appear as a Jordan--H\"older factor of either representation.

  If $0 \le i \le 3$ and $L$ is any irreducible representation of $M_{\Sieg}$ which is \emph{not} isomorphic to $L_i$, then the localisations of $H^i\left(X_{G, \QQ}, [L](-D)\right)$ and $H^i\left(X_{G, \QQ}, [L]\right)$ at the maximal ideal of the spherical Hecke algebra associated to the $L$-packet of $\Pi$ are zero for all $i$.\qed
 \end{theorem}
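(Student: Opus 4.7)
The plan is to compute the $\Pi_f'$-isotypic component of coherent cohomology via the decomposition into automorphic representations, combined with Arthur's multiplicity formula and the classification of cohomological discrete series for $\GSp_4(\RR)$. First, I would invoke Harris's description (as in \cite{harris90b}) of coherent cohomology of $[V]$ in terms of $(\mathfrak{p}_{\Sieg}, K_\infty \cap M_{\Sieg}(\RR))$-cohomology of automorphic forms, which, after using the Hochschild--Serre spectral sequence, relates the $\Pi_f'$-isotypic part of $H^i(X_{G,\QQ}, [L])$ to sums over automorphic representations $\Pi' = \Pi_f' \otimes \Pi_\infty'$ of the $(\mathfrak{g}, K_\infty)$-cohomology of $\Pi_\infty'$ with coefficients in $V(r_1, r_2)$.

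The key combinatorial input is the Vogan--Zuckerman computation of $(\mathfrak{g}, K_\infty)$-cohomology for members of the discrete series $L$-packet of $\GSp_4(\RR)$ with infinitesimal character $(r_1+2, r_2+1)$. This $L$-packet consists of the holomorphic discrete series $\Pi_\infty^H$, the antiholomorphic $\Pi_\infty^{\bar H}$, and the two generic (``large'') discrete series $\Pi_\infty^{W}$ and $\Pi_\infty^{\bar W}$ to which $\Pi_\infty$ is isomorphic. A direct calculation (or the translation via Harris's formula for $G / P_{\Sieg}$) shows that each of these four members has one-dimensional $(\mathfrak{g}, K_\infty)$-cohomology, contributing respectively to degrees $0,1,2,3$ with coefficient sheaves $[L_0], [L_1], [L_2], [L_3]$ -- this is precisely where the four highest weights listed in the statement come from (they are the Weyl-translates of the infinitesimal character determining each cohomological degree).

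Next I would apply Arthur's multiplicity formula for $\GSp_4$ (as established in \cite{geetaibi18}). In the general type case, the global $A$-packet is stable and each member of the archimedean $L$-packet contributes with multiplicity one, so $\Pi_f'$ appears in every $H^i(X_{G,\QQ}, [L_i])\otimes\CC$ with multiplicity one. In the Yoshida case the packet is unstable, and the archimedean sign character (which, by well-known sign computations for $\operatorname{Sp}_4(\RR)$ or by transfer to $\operatorname{SO}(2,2)\times \operatorname{SO}(3,2)$, takes opposite values on the holomorphic and generic members) forces the multiplicities of $\Pi_\infty^H$ and $\Pi_\infty^{\bar H}$ to vanish while preserving those of $\Pi_\infty^W$ and $\Pi_\infty^{\bar W}$; this gives exactly the degrees $i=1,2$ part of the statement.

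Finally, to upgrade this to the isomorphism between interior and subcanonical cohomology, and the direct-summand statement, I would compare with the long exact sequence coming from the boundary divisor $D$ and with Eisenstein cohomology as analyzed by Franke and Schwermer: any Eisenstein contribution to $H^i(X_{G,\QQ}, [L_i])$ with Hecke eigenvalues matching $\Pi_f'$ would exhibit $\Pi$ as a functorial lift from a proper parabolic of $\GSp_4$, contradicting the non-CAP hypothesis (and in both of our cases the cuspidal support of $\Pi$ on $\GL_4$ is incompatible with the descents from the Siegel, Klingen, or Borel Levi subgroups). The vanishing statement for irreducible $L \not\cong L_i$ then follows because the Vogan--Zuckerman calculation forces the $\Pi_f'$-localisation of $H^i(X_{G,\QQ}, [L])$ to come entirely from non-discrete-series automorphic representations, which have strictly different Hecke eigenvalues from $\Pi$. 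The main obstacle is the fourth step: making the Eisenstein-cohomology analysis precise enough to rule out boundary contributions to the $\Pi_f'$-isotypic part uniformly for both general and Yoshida types and for both $[L_i]$ and $[L_i](-D)$, which I expect to handle by directly quoting Franke's filtration together with the Langlands classification of cuspidal data supporting Eisenstein series for $\GSp_4$.
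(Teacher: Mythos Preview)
The paper does not actually give a proof of this theorem: it is stated with a terminal \qed and preceded only by the sentence ``The following result follows by standard methods from Arthur's classification.'' Your proposal is a correct and reasonable expansion of what those ``standard methods'' are, invoking exactly the ingredients the authors have in mind (Harris's comparison of coherent cohomology with $(\mathfrak{p},K_\infty)$-cohomology, the Vogan--Zuckerman computation for the discrete-series $L$-packet, and Arthur's multiplicity formula via \cite{geetaibi18}).

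One small correction: the archimedean $L$-packet for $G(\RR)=\GSp_4(\RR)$ has two members, not four --- the holomorphic discrete series and the generic discrete series $\Pi_\infty$. Each of these, restricted to $G(\RR)_+$, decomposes into two pieces (cf.\ \S\ref{sect:archimedean} of the paper, where $\Pi_\infty = \Pi_{\infty,1}\oplus\Pi_{\infty,2}$), and it is these restricted pieces that contribute to the four cohomological degrees $0,1,2,3$. So your $\Pi_\infty^H,\Pi_\infty^{\bar H}$ are the two $G(\RR)_+$-summands of a single holomorphic discrete series, and likewise $\Pi_\infty^W,\Pi_\infty^{\bar W}$ are the summands of $\Pi_\infty$. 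This does not affect the argument, but the sign computation in the Yoshida case is then a statement about the single archimedean sign attached to the holomorphic versus generic member, which is indeed the standard calculation.
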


 \begin{notation}
  \label{not:HiPif}
  We shall write $H^i(\Pif)$ for the $\Pif'$-isotypical subspace of $H^i\left(X_{G, E}, [L_i](-D)\right)$, for some number field $E$ over which $\Pif'$ is definable.
 \end{notation}

 By Serre duality, we have $\GSp_4(\Af)$-equivariant perfect pairings
 \begin{equation}
  \label{eq:serredual}
  H^{3-i}\left(X_{G, \QQ}, [L_{3-i}]\right) \times H^i\left(X_{G, \QQ}, [L_i](-D)\right) \to \QQ\{r_1 + r_2\},
 \end{equation}
 for each $i$, where $\{m\}$ denotes the character mapping a uniformizer at a prime $\ell$ to $\ell^m$.  In particular, if we choose a vector $\eta \in H^2(\Pif)$, then we may regard it as a linear functional on $H^1\left(X_{G, E}, [L_1]\right)$, factoring through projection to $H^1(\Pif^\vee)$, where $\Pi^\vee$ is the contragredient of $\Pi$.

\subsection{Definition of the period pairing}
 \label{sect:periodpairing}

 Let $(t_1, t_2)$ be integers $\ge 0$ such that $t_1 + t_2 = r_1 - r_2$. Then there exists a non-zero homomorphism of $B_H$-representations
 \[ W_H(1+t_1, 1+t_2; r_1+r_2)  \to L_1 |_{B_H} \otimes \alpha_{G/H}^{-1}, \]
 unique up to scalars (where the $B_H$-actions on both source and target factor through the Levi of $B_H$, identified with the diagonal torus $T$). Hence there is a pushforward map
 \[
 \iota_\star: H^0\left(X_{H, \QQ}, \omega_H(1+t_1, 1+t_2; r_1 + r_2)\middle)
 \to
 H^1\middle(X_{G, \QQ}, [L_1]\right).
 \]
 Note that the source of this map is simply the space of modular forms of weight $(1+t_1, 1+t_2)$ for $H$ (up to a twist by the norm character). Combining this with \eqref{eq:serredual} we obtain a bilinear, $H(\Af)$-equivariant \emph{period pairing}
 \[ H^0\left(X_{H, E}, \omega_H(1+t_1, 1+t_2)\right) \otimes H^2(\Pif) \to E\{r_1 - 2\}, \]
 mapping $f \otimes \eta$ to $\langle \iota_\star(f), \eta \rangle$.

 \begin{remark}
  Note that our period pairing arises by composing the pushforward $H^0(X_H)$ to $H^1(X_G)$ with the Serre duality pairing $H^1(X_G) \times H^2(X_G) \to E$, for suitable coefficient sheaves. This will allow us to study the degree 4 (``spinor'') $L$-series of $\Pi$. Serre duality also gives a pairing $H^0(X_G) \times H^3(X_G) \to E$, and this pairing plays a fundamental role in many works such as \cite{liu20} and \cite{zhang2018special}, where it is applied to study the degree 5 (``standard'') $L$-series of $\Pi$. (It would be interesting to compare the periods arising from the two constructions, but we shall not pursue this here.)
 \end{remark}

\subsection{Real-analytic comparison}
 \label{sect:archimedean}

 We now recall the comparison between the cohomological cup products above, and period integrals for automorphic forms, justifying the terminology ``period pairing''. (We shall prove a more general statement in \S\ref{sect:hodgesplitting} below, allowing our modular forms to be nearly-holomorphic rather than holomorphic, but we give the ``base case'' here for ease of reading.)

 Let $K_\infty = \RR^\times \cdot U_2(\RR)$ denote the maximal compact-mod-centre subgroup of $G(\RR)_+$. The representation $\Pi_\infty$ has two direct summands as a $G(\RR)_+$-representation, $\Pi_\infty = \Pi_{\infty, 1} \oplus \Pi_{\infty, 2}$, which have minimal $K_\infty$-types (Blattner parameters) $\tau_1 = (r_1 + 3, -r_2 - 1)$ and $\tau_2 =  (r_2 + 1, -r_1 - 3)$ respectively. Since the minimal $K_\infty$-type in an irreducible discrete series has multiplicity 1, we have $\dim \Hom_{K_\infty}(\tau_i, \Pi_\infty) = 1$ for $i = 1, 2$.

 \begin{remark}
  After base-extending to $\CC$ we can identify $K_\infty$ with a conjugate of the Siegel Levi $M_{\Sieg}$, and $\tau_1$ and $\tau_2$ correspond to the $M_{\Sieg}$-representations $W_G(r_1 + 3, -r_2-1)$ and $W_G(r_2 + 1, -r_1-3)$ up to twists by the norm character.
 \end{remark}

 Meanwhile, let $H^i(\Pif)_{\CC} = H^i\left(X_{G, \CC}, [L_i](-D)\right)$ denote the base-extension to $\CC$ of the $E$-vector spaces defined in Notation \ref{not:HiPif}.

\begin{theorem}[Harris, Su]
 \label{thm:harris} \
 \begin{enumerate}
  \item [(a)] For $j = 1, 2$ there is a canonical isomorphism of irreducible smooth $G(\Af)$-representations
  \[
   \Hom_{K_\infty}( \tau_j, \Pi)\left\{\tfrac{r_1 + r_2}{2}\right\} \xrightarrow{\ \cong\ }
    H^j(\Pif)_{\CC}.
  \]
  \item [(b)] If $\eta \in H^2(\Pif)_{\CC}$, corresponding to some $K_\infty$-homomorphism $F_\eta: \tau_2 \to \Pi$, then for any integers $t_1, t_2 \ge 0$ with $t_1 + t_2 = r_1 - r_2$, and any holomorphic modular forms $f, g$ of weights $1+t_1, 1+t_2$ respectively, then we have
 \[ \langle \iota_\star\left( f \boxtimes g\right), \eta \rangle = \frac{1}{(2\pi i)^3} \int_{\RR^\times H(\QQ) \backslash H(\AA)} F_{\eta}(v_{t_1, t_2}) f(h_1) g(h_2)\, \mathrm{d}h, \]
 where $v_{t_1, t_2}$ is the standard basis vector of $\tau_2$ of weight $(-t_1-1, -t_2-1)$. Here we identify $f$ and $g$ with functions on $\GL_2(\QQ) \backslash \GL_2(\AA)$.
 \end{enumerate}
\end{theorem}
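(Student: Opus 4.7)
The plan is to reduce both statements to the $(\mathfrak{g}, K_\infty)$-cohomological interpretation of coherent cohomology of Shimura varieties, and then to unwind the cup product in (b) explicitly via Dolbeault cohomology and the complex uniformisation of $Y_H$.

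For part (a), I would invoke the theorem of Harris (generalising Su's earlier computations for $\GL_n$), which identifies the $\Pif'$-isotypic component of $H^j(X_{G,\CC}, [L_j](-D))$ with $\Pif' \otimes H^j(\mathfrak{p}^-, K_\infty; L_j \otimes \Pi_\infty)$, where $\mathfrak{p}^-$ denotes the antiholomorphic tangent space at the standard base point. For the generic discrete series $\Pi_\infty$, classical branching rules (Blattner's formula) show that $\Lambda^j \mathfrak{p}^- \otimes L_j$ contains the minimal $K_\infty$-type $\tau_j^\vee$ with multiplicity one in the predicted degrees $j = 1, 2$, so the Lie algebra cohomology is one-dimensional and projection onto this isotype defines the canonical isomorphism. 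The shift $\{(r_1 + r_2)/2\}$ encodes the passage between the unitary normalisation $\Pif$ and the arithmetic normalisation $\Pif' = \Pif \otimes \|\cdot\|^{-(r_1+r_2)/2}$.

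For part (b), I would realise $\iota_\star(f \boxtimes g) \in H^1(X_G, [L_1])$ concretely as a Dolbeault class: applying $\bar\partial$ to a compactly-supported smooth extension of $f \boxtimes g$ across the normal direction of $X_H \into X_G$ produces a $\bar\partial$-closed $(0,1)$-form with values in $[L_1]$, localised near $X_H(\CC)$, representing the Gysin pushforward. The Serre duality pairing against $\eta$ then becomes an integral of a top-degree form on $X_G(\CC)$, which by the localisation reduces to an integral over $X_H(\CC)$ of $(f \boxtimes g)$ against the relevant transverse component of the Dolbeault representative of $\eta$. Under the identification of (a), $\eta$ corresponds to $F_\eta \in \Hom_{K_\infty}(\tau_2, \Pi)$; the Hodge splitting of $\iota^*[L_1]$ at $K_\infty^H = K_\infty \cap H(\RR)$ picks out precisely the weight vector $v_{t_1, t_2}$ of weight $(-t_1-1, -t_2-1)$, since this is the unique vector in $\tau_2$ whose projection to the normal-bundle direction of $Y_H \subset Y_G$ matches the holomorphic section $f \boxtimes g$ of $\omega_H(1+t_1, 1+t_2)$. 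Unfolding via the uniformisation $Y_{H, \CC} \cong H(\QQ)_+ \backslash \cH_1^{\,2} \times H(\Af)/K_H$ (with the measure on $\cH_1^{\,2}$ converted into the quotient Haar measure on $H(\RR)/K_\infty^H$) then yields the stated adelic integral.

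The main obstacle will be the exact tracking of the factor $(2\pi i)^{-3}$. This constant receives contributions from (i) the Kodaira--Spencer identification $\Omega^3_{X_G}(\log D_G) \cong \omega_G(3,3;0)$, where each holomorphic differential on $\cH_2$ contributes a factor of $2\pi i$; (ii) the normalisation of the Serre duality trace $H^3(X_G, \omega_G(3,3;0)(-D)) \to \QQ$; and (iii) the orientation and measure conventions used in comparing the complex-analytic integral on $\cH_1^{\,2}$ with the Haar integral on $H(\RR)/K_\infty^H$. Verifying the overall constant (as opposed to merely proving proportionality) requires a careful explicit computation in local Siegel coordinates at a generic point of $Y_H$; this is the most delicate and error-prone step of the proof.
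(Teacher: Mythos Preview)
The paper does not actually give a proof here: it simply cites \cite{harriskudla92} and \S\S 3.5, 3.8 of \cite{harris90b} for weights sufficiently far from the walls, and then invokes \cite{su-preprint} to remove that restriction. Your outline is broadly in the spirit of what is in those references --- the $(\mathfrak{p}, K_\infty)$-cohomological description of coherent cohomology, followed by an explicit Dolbeault computation of the cup product --- and the paper itself revisits exactly this machinery in \S\ref{sect:hodgesplitting} when extending to nearly-holomorphic forms.

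That said, two points need correcting. First, you have the attribution reversed: Harris's results came first and established the theorem for weights far from the walls; Su's later work is what extends the comparison $H^*(X_{G,\CC},[V]) \cong H^*(\mathfrak{p}, K_\infty; \mathcal{A}(G)^K \otimes V)$ to all regular weights (and it is not a computation specific to $\GL_n$). Second, the relative Lie algebra cohomology should be $H^j(\mathfrak{p}, K_\infty; -)$ with $\mathfrak{p} = \mathfrak{k} \oplus \mathfrak{p}^-$, not $H^j(\mathfrak{p}^-, K_\infty; -)$; the complex computing it has $j$-th term $\Hom_{K_\infty}(\bigwedge^j \mathfrak{p}^- \otimes V^\vee, -)$, as the paper spells out in \S\ref{sect:hodgesplitting}. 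Your concern about tracking the constant $(2\pi i)^{-3}$ is well-placed; the paper sidesteps this entirely by deferring to the cited works.
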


\begin{proof}
 For weights $(r_1, r_2)$ sufficiently far from the walls of the Weyl chamber, this is proved in \cite{harriskudla92}, as an application of general results in \S\S 3.5 and 3.8 of \cite{harris90b}. It follows from the results of \cite{su-preprint} that the result in fact applies for all regular weights.
\end{proof}

 \subsection{Hecke eigenvalues at $p$}
 \label{sect:hecke}

  Let $p$ be a prime such that $\Pi_p$ is unramified. Recall that we chose above a number field $E$ such that $\Pif' = \Pif \otimes \|\cdot\|^{-(r_1 + r_2)}$ is definable over $E$. We let $\{\alpha, \beta, \gamma, \delta\}$ be the elements of $\overline{E}$ (unique modulo the action of the Weyl group) such that the spin $L$-factor is given by
  \[ L(\Pi_p', s - \tfrac{3}{2}) = L(\Pi_p, s - \tfrac{r_1+r_2+3}{2}) = \left[(1 - \alpha p^{-s}) \dots (1 - \delta p^{-s})\right]^{-1}.\]
  We order these such that $\alpha \delta = \beta \gamma = p^{(r_1+r_2+3)} \chi_{\Pi}(p)$. As noted in the proof of Proposition \ref{prop:newtonpoly} above, the fact that $\Pi_p$ is unramified and generic implies that $(\Pi_p')^{\Kl(p)}$ is 4-dimensional, and the two operators
  \begin{align*}
  \cU_{p, \Kl}  &= p^{-r_2} \cdot \left[ \Kl(p) \diag(p^2, p, p, 1) \Kl(p)\right], &
  \cU_{p, \Kl}' &= p^{-r_2} \cdot \left[ \Kl(p) \diag(1, p, p, {p^2}) \Kl(p)\right]
  \end{align*}
  acting on this space both have eigenvalues $\left\{ \tfrac{\alpha\beta}{p^{r_2 + 1}}, \tfrac{\alpha\gamma}{p^{r_2 + 1}}, \tfrac{\beta \delta}{p^{r_2 + 1}}, \tfrac{\gamma \delta}{p^{r_2 + 1}}\right\}$.

  More generally, we may consider the Hecke operator $\cU_{p, \Kl}$ on the $\Kl(p^m)$-invariants, for any $m \ge 1$. One checks easily that the operators $\cU_{p, \Kl}$ at different levels are compatible with respect to the natural inclusion maps (since they admit a common set of coset representatives), and one has the following compatibility:
  \begin{proposition}
   For $m \ge 2$, the endomorphism $\cU_{p, \Kl}$ at level $\Kl(p^m)$ factors through the canonical inclusion of the $\Kl(p^{m-1})$ invariants in the $\Kl(p^m)$ invariants.\qed
  \end{proposition}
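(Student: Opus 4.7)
The plan is to reduce the statement to the double coset identity
\[
\Kl(p^{m-1}) \cdot t \cdot \Kl(p^m) = \Kl(p^m) \cdot t \cdot \Kl(p^m), \qquad t = \diag(p^2, p, p, 1),
\]
and then prove this identity by an Iwahori decomposition together with a direct matrix computation.

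First I will explain the reduction. Decomposing $\Kl(p^m) t \Kl(p^m) = \bigsqcup_i t_i \Kl(p^m)$, the action of $\cU_{p, \Kl}$ on a $\Kl(p^m)$-fixed vector $v$ is, up to the normalising scalar, $\sum_i t_i v$. For any $k \in \Kl(p^{m-1})$, the cosets $\{k t_i \Kl(p^m)\}_i$ lie in $\Kl(p^{m-1}) \cdot \Kl(p^m) t \Kl(p^m) = \Kl(p^{m-1}) t \Kl(p^m)$. Once this equals $\Kl(p^m) t \Kl(p^m)$, left multiplication by $k$ must permute the cosets $\{t_i \Kl(p^m)\}$ among themselves, and the $\Kl(p^m)$-invariance of $v$ gives $k \cdot \cU_{p, \Kl}(v) = \cU_{p, \Kl}(v)$, which is exactly the claim.

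Next I will prove the double coset identity. The inclusion $\subseteq$ is trivial, so the content is the reverse one. Because $m - 1 \ge 1$, the group $\Kl(p^{m-1})$ admits an Iwahori decomposition with respect to the Klingen parabolic,
\[
\Kl(p^{m-1}) = U_{\Kl}^{-}(p^{m-1}) \cdot P_{\Kl}(\Zp),
\]
where $U_{\Kl}^{-}$ is the unipotent radical opposite to $P_{\Kl}$ and $U_{\Kl}^{-}(p^{m-1})$ denotes its elements congruent to the identity modulo $p^{m-1}$. Since $P_{\Kl}(\Zp) \subseteq \Kl(p^m)$, it will suffice to verify that $t^{-1} u t \in \Kl(p^m)$ for every $u \in U_{\Kl}^{-}(p^{m-1})$, because then $u = t (t^{-1} u t) t^{-1} \in t \Kl(p^m) t^{-1}$.

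The last step is a direct matrix computation which I will carry out explicitly: the element $t$ is central in the Levi $M_{\Kl}$ and strictly dominant for $P_{\Kl}$, so conjugation by $t^{-1}$ multiplies the entry at each admissible position $(i,j)$ of $u \in U_{\Kl}^{-}$ by $t_j / t_i$, with $v_p(t_j/t_i) \ge 1$ at all nonzero positions of $U_{\Kl}^{-}$ and $v_p(t_j/t_i) = 2$ at the position $(4,1)$. Hence $t^{-1} U_{\Kl}^{-}(p^{m-1}) t \subseteq U_{\Kl}^{-}(p^m) \subseteq \Kl(p^m)$, as required. The only real obstacle is the bookkeeping in this matrix calculation; the conceptual reason the whole argument works is that $t$ lies strictly inside the positive Weyl chamber attached to $P_{\Kl}$, so applying its Hecke operator deepens the level.
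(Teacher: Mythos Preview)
The paper states this proposition without proof (it is marked with a bare \qed), so your argument supplies what the authors regard as routine. Your approach---reducing to the double coset identity $\Kl(p^{m-1})\, t\, \Kl(p^m) = \Kl(p^m)\, t\, \Kl(p^m)$ and then verifying it via the Iwahori decomposition together with the contracting action of $t = \diag(p^2,p,p,1)$ on the opposite Klingen unipotent radical---is exactly the standard one, and your matrix computation showing $t^{-1} U_{\Kl}^{-}(p^{m-1}) t \subseteq U_{\Kl}^{-}(p^m)$ is correct.

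One minor point of presentation: for the ``it will suffice'' step to go through as written, you should take the Iwahori factorisation in the order
\[
\Kl(p^{m-1}) = P_{\Kl}(\Zp) \cdot U_{\Kl}^{-}(p^{m-1})
\]
rather than $U_{\Kl}^{-}(p^{m-1}) \cdot P_{\Kl}(\Zp)$. With your stated order, writing $k = u p$ gives $kt = u p t$, and one cannot simply move $p$ past $t$ on the right, since $t^{-1} N_{\Kl}(\Zp) t \not\subseteq G(\Zp)$ (conjugation by $t^{-1}$ \emph{expands} the Klingen unipotent radical). With the reversed order, $k = p u$ yields $kt = p \cdot t\,(t^{-1} u t) \in \Kl(p^m) \cdot t \cdot \Kl(p^m)$ immediately from $p \in P_{\Kl}(\Zp) \subseteq \Kl(p^m)$ and $t^{-1} u t \in \Kl(p^m)$. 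Both orderings of the Iwahori decomposition are of course valid, so this is purely a matter of choosing the convenient one; the mathematical content of your argument is unaffected.
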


  It follows that the space $(\Pi_p')^{\Kl(p^m)}$ has a $\cU_{p, \Kl}$-equivariant direct-sum decomposition as the sum of $(\Pi_p^{\prime})^{\Kl(p)}$, on which $\cU_{p, \Kl}$ is invertible, and a complementary subspace on which $\cU_{p, \Kl}$ is nilpotent.

  Similar statements hold for the dual $\Pi^\vee$ in place of $\Pi$, and the transpose of $\cU_{p, \Kl}'$ with respect to the natural pairing $(\Pi_p')^{\Kl(p^m)} \times (\Pi_p^{\prime \vee})^{\Kl(p^m)}$ is given by $\chi_\Pi(p) \cdot  \cU_{p, \Kl}$.

  \begin{definition}
   If $\eta \in H^2(\Pif)^{\Kl(p)}$, and $m \ge 1$, then we define $\eta_m \in H^2(\Pif)^{\Kl(p^m)}$ to be the unique vector such that the linear functional $\langle -, \eta_m\rangle$ on $H^1(\Pif^{\vee})^{\Kl(p^m)}$ has the following properties:
   \begin{itemize}
    \item it vanishes on the $\cU_{p, \Kl} = 0$ generalised eigenspace,
    \item it agrees with $\eta$ on the image of the $\Kl(p)$-invariants.
   \end{itemize}
  \end{definition}

  It is clear that the $\eta_m$ are compatible under the ``normalised trace'' maps,
  \[ \eta_m = \tfrac{1}{p^3}\sum_{\gamma \in \Kl(p^m) / \Kl(p^{m+1})} \gamma \eta_{m+1}.
  \]

  We have the following reasonably concrete formula:

  \begin{proposition}
   If $\eta_1 = \eta$ lies in the $\cU_{p, \Kl}' = \lambda$ eigenspace, then we have
   \[ \eta_m = \left(\frac{p^{3-r_2}}{\lambda}\right)^{m-1} \sum_{a \in \ZZ / p^{m-1}} \begin{smatrix}1 \\ &1 \\ &&1 \\ p^ma&&&1 \end{smatrix} {\dfour{1}{p}{p}{p^2}}^{m-1} \eta. \]
  \end{proposition}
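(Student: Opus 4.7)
The plan is to denote the right-hand side by $\tilde\eta_m$ and to check directly that it lies in $H^2(\Pif)^{\Kl(p^m)}$ and has the two characterising properties of $\eta_m$. Writing $t = \diag(1,p,p,p^2)$ and $\sigma = \sum_{a \in \ZZ/p^{m-1}} u_a t^{m-1}$, so that $\tilde\eta_m = (p^{3-r_2}/\lambda)^{m-1}\sigma\eta$, the argument reduces to three points: (i) $\sigma\eta$ is $\Kl(p^m)$-invariant; (ii) $\sigma\eta$ is a $\cU_{p, \Kl}'$-eigenvector with eigenvalue $\lambda$, which via the transpose relation forces $\langle -, \tilde\eta_m\rangle$ to vanish on the generalised $0$-eigenspace of $\cU_{p, \Kl}$; and (iii) $\langle f, \tilde\eta_m\rangle = \langle f, \eta\rangle$ for every $f \in H^1(\Pif^\vee)^{\Kl(p)}$.

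For (i), I would prove the coset decomposition $\Kl(p^m)\cdot t^{m-1}\Kl(p) = \bigsqcup_{a\in\ZZ/p^{m-1}} u_a t^{m-1}\Kl(p)$ by computing $\Kl(p^m)\cap t^{m-1}\Kl(p)t^{-(m-1)}$ explicitly. The opposite Klingen unipotent is the $3$-parameter family whose members have free $(2,1), (3,1), (4,1)$-entries and $(4,2), (4,3)$-entries determined by the symplectic relation. Conjugation by $t^{m-1}$ multiplies the $(2,1), (3,1), (4,2), (4,3)$-entries by $p^{m-1}$ and the $(4,1)$-entry by $p^{2(m-1)}$; combined with the $\Kl(p^m)$-condition that all these entries lie in $p^m\Zp$, only the $(4,1)$-entry remains free, ranging over $p^m\Zp/p^{2m-1}\Zp \cong \ZZ/p^{m-1}$, which gives exactly the matrices $u_a$. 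Since $\sigma$ is then a sum over representatives of a $(\Kl(p^m),\Kl(p))$-double coset, $\sigma\eta$ is automatically $\Kl(p^m)$-invariant.

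For (ii), the Hecke-algebra composition law
\[ [\Kl(p^m)t\Kl(p^m)] \circ [\Kl(p^m)t^{m-1}\Kl(p)] = [\Kl(p^m)t^m\Kl(p)] = [\Kl(p^m)t^{m-1}\Kl(p)] \circ [\Kl(p)t\Kl(p)] \]
(standard because $t$ lies in the positive cone relative to the Klingen parahoric) applied to $\eta$ gives $\cU_{p, \Kl}'(\sigma\eta) = \sigma\cdot\cU_{p, \Kl}'\eta = \lambda\sigma\eta$. The transpose relation from the excerpt then yields $\langle\cU_{p, \Kl}^N f,\tilde\eta_m\rangle = (\lambda/\chi_\Pi(p))^N\langle f,\tilde\eta_m\rangle$ for every $N$; since $\lambda$ is a $p$-adic unit, any $f$ in the generalised $0$-eigenspace of $\cU_{p, \Kl}$ satisfies $\langle f,\tilde\eta_m\rangle = 0$.

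For (iii), each $u_a$ lies in $\Kl(p)$ (its only nontrivial entry $p^m a$ being in $p\Zp$), so $G(\Af)$-equivariance of the pairing gives $\langle f, u_a t^{m-1}\eta\rangle = \langle u_a^{-1} f, t^{m-1}\eta\rangle = \langle f, t^{m-1}\eta\rangle$, hence $\langle f,\sigma\eta\rangle = p^{m-1}\langle f,t^{m-1}\eta\rangle$. Meanwhile, the identity $[\Kl(p)t^{m-1}\Kl(p)]\eta = p^{r_2(m-1)}\lambda^{m-1}\eta$ (from the positive-cone law applied at level $\Kl(p)$) together with the count $|\Kl(p)t^{m-1}\Kl(p)/\Kl(p)| = p^{4(m-1)}$ (the analogous divisibility analysis for $\Kl(p)\cap t^{m-1}\Kl(p)t^{-(m-1)}$) yields, upon pairing with $f$, the identity $\langle f, t^{m-1}\eta\rangle = p^{(r_2-4)(m-1)}\lambda^{m-1}\langle f,\eta\rangle$. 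Combining with the normalisation $(p^{3-r_2}/\lambda)^{m-1}$, the powers of $p$ and $\lambda$ cancel to give $\langle f,\tilde\eta_m\rangle = \langle f,\eta\rangle$. The main technical obstacle will be justifying the positive-cone Hecke composition laws at the mixed-level pair $(\Kl(p^m),\Kl(p))$, which requires careful coset bookkeeping but is standard for Iwahori-Hecke algebras.
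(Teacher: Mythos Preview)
Your proposal is correct and complete, but it follows a somewhat different route from the paper's own argument. The paper also denotes the right-hand side by $\eta'_m$ and verifies the two defining properties, but does so more concretely and with less Hecke-algebraic machinery. For the eigenvector property, rather than invoking the positive-cone composition law at mixed levels, the paper simply observes that $\cU'_{p,\Kl}$ at level $\Kl(p^m)$ admits a set of lower-triangular single-coset representatives which commute with each $u_a$; the eigenvector computation then reduces at once to the eigenvalue equation for $\eta_1$. For the agreement on $\Kl(p)$-invariants, rather than your direct pairing computation (iii) (which requires the coset count $|\Kl(p)t^{m-1}\Kl(p)/\Kl(p)| = p^{4(m-1)}$), the paper checks the normalised-trace compatibility $\eta'_m \mapsto \eta'_{m-1}$ by regrouping the representatives in the trace sum so that the inner sum becomes $\cU'_{p,\Kl}$ applied to $\eta_1$; inducting down to $\eta'_1 = \eta$ then gives the second defining property via the adjunction between inclusion and normalised trace.

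Your approach has the virtue of making the underlying Iwahori--Hecke structure explicit, and your step (i) is a cleaner justification of $\Kl(p^m)$-invariance than the paper gives. The paper's approach is quicker, avoiding both the mixed-level Hecke composition law and the coset count by trading them for a single explicit observation about coset representatives. Your flagged ``main technical obstacle'' (the composition law at mixed levels) is indeed routine, following from the Iwahori factorisation $\Kl(p^n) = \bar N_{\Kl}(p^n\Zp)\, M_{\Kl}(\Zp)\, N_{\Kl}(\Zp)$ together with the contraction $t^{-1} N_{\Kl}(\Zp)\, t \subset N_{\Kl}(\Zp)$, exactly as you anticipate.
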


  \begin{proof}
   Let $\eta'_m$ denote the right-hand side of the above formula. For $m = 1$ we evidently have $\eta_1' = \eta_1$.

   Since the operator $\cU_{p, \Kl}'$ at level $\Kl(p^m)$ (for any $m \ge 1$) has a set of single-coset representatives which are lower-triangular and commute with $\begin{smatrix}1 \\ &1 \\ &&1 \\ p^ma&&&1 \end{smatrix}$, it follows easily that $\eta_m'$ is a $\cU_{p, \Kl}'$-eigenvector at level $\Kl(p^m)$ with the same eigenvalue as $\eta_1$. Moreover, we can group the coset representatives in the normalised trace map for $\Kl(p^{m-1}) / \Kl(p^{m})$ in such a way that the inner sum becomes $\cU'_{p, \Kl}$ acting on $\eta_1$, and using the eigenvector property of $\eta_1$ gives the trace-compatibility of the $\eta_m'$. Hence $\eta_m' = \eta_m$ for all $m$.
  \end{proof}

  \begin{definition}
   Choose a place $v$ of $E$ above $p$. We say that $\Pi$ is \emph{Klingen-ordinary} at $p$ (with respect to $v$) if the operator $\cU_{p, \Kl}$ on $(\Pi_p')^{\Kl(p)}$ has an eigenvalue $\lambda$ which is a $p$-adic unit.
  \end{definition}

  It follows from the proof of Lemma \ref{lem-inj2} above that the unit eigenvalue $\lambda$ is unique if it exists (and the corresponding eigenspace is 1-dimensional). In particular, $\lambda$ lies in $E_v$, rather than in some finite extension.

 \subsection{Interpolation of cup-products}
  \label{sect:interpcupprod}

  We now apply the above constructions to interpolate cup-products in families. Let $L$ be the completion of $E$ at our place $v \mid p$, and $\cO$ its ring of integers. We shall assume that $r_2 \ge 1$ and $\Pi$ is Klingen-ordinary at $p$ (with respect to the place $v$); and we choose some class $\eta \in H^2\left(\Pif\right)$, lying in the ordinary eigenspace for $\cU_{p, \Kl}'$. Thus $\eta$ defines a linear functional
  \[ e_{\Kl}\cdot H^1\left(X_{G, \Kl}(p)_L, [L_1]\right) \to L. \]

  We also choose a flat $p$-adic $\cO$-algebra $R$, and $\tau_1, \tau_2 : \Zp^\times \to R^\times$ continuous characters such that $\tau_1 + \tau_2 = r_1 - r_2 + 2$. With these notations, \eqref{eq:pwfdR} gives a map
  \begin{align*}
  \iota_\star: \cS_{(\tau_1, \tau_2)}(R) &\to H^1\left( \fX^{\ge 1}_{G, \Kl}(p), \fF_{R}(3+r_1, 1-r_2)(-D)\right)\\
  &= R \htimes_{\Zp} H^1\left( \fX^{\ge 1}_{G, \Kl}(p), \fF_{\Zp}(3+r_1, 1-r_2)(-D)\right).
  \end{align*}

  \begin{definition}
   Given $\cE \in \cS_{(\tau_1, \tau_2)}(R)$, we define an element $\left\langle \iota_\star\left(\cE\right), \eta \right\rangle \in R[1/p]$ as follows: it is the image of $\iota_\star(\cE)$ under the composition of maps
   \begin{align*}
   R[1/p] \htimes_{\Zp} H^1\left( \fX^{\ge 1}_{G, \Kl}(p), \fF(3+r_1, 1-r_2)(-D)\right)
   &\xrightarrow{e_{\Kl}} R[1/p] \htimes_{\Zp} e_{\Kl}\cdot H^1\left( \fX^{\ge 1}_{G, \Kl}(p), \fF(3+r_1, 1-r_2)(-D)\right) \\
   &\xrightarrow{\cong}  R[1/p] \otimes_{\Qp} e_{\Kl} \cdot H^1\left( X_{G, \Kl}(p)_{\Qp}, [L_1](-D)\right) \\
   &\xrightarrow{\varpi} R[1/p] \otimes_{\Qp} e_{\Kl} \cdot H^1\left( X_{G, \Kl}(p)_{\Qp}, [L_1]\right)\\
   &\xrightarrow{\langle -, \eta\rangle} R[1/p].
   \end{align*}
   Here the second map is the isomorphism of Theorem \ref{thm:vincent}, and the map $\varpi$ is the``forget cuspidality'' map, given by the natural inclusion of sheaves $[L_1](-D) \into [L_1]$.
  \end{definition}

  \begin{proposition}
   \label{prop:interp1}
   Let $\phi: R \to L$ be a continuous ring homomorphism such that
   \begin{itemize}
    \item the composites $\phi \circ \tau_i: \Zp^\times \to L^\times$ are the algebraic characters $x \mapsto x^{t_i}$, for some integers $t_i \ge 1$ with $t_1 + t_2 = r_1 - r_2 + 2$;
    \item the $p$-adic modular form $\cE_\phi \in \cS_{(t_1, t_2)}(K^p_H; L)$ given by specialising $\cE$ is the restriction to $\cX_H^{\ord}$ of a classical holomorphic form $E_\phi \in M_{(t_1, t_2)}(p, L)$.
   \end{itemize}
   Then we have
   \[
   \phi\left( \left\langle \iota_{\star}\left(\cE\right), \eta \right\rangle \right) = \left\langle \iota_{1, \star}(E_\phi), \eta \right\rangle.
   \]
  \end{proposition}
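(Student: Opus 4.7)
The plan is to combine three compatibilities: (i) the functoriality of the $p$-adic pushforward $\iota_\star$ of \eqref{eq:pwfdR} in the coefficient datum, (ii) the classicality quasi-isomorphism of Theorem~\ref{thm:vincent}, and (iii) a ``non-cuspidal'' variant of the commutative comparison diagram at the end of Section~\ref{sect:p-adic-pushfwd}.

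First I would observe that the construction of the map \eqref{eq:pwfdR} is manifestly functorial in $(R, \tau_1, \tau_2)$: it arises by composing the sheaf inclusion of Proposition~\ref{prop:pullbackcomp} with the coboundary morphism on the cohomology of a formal Banach sheaf, both of which commute with arbitrary base change. Applied to the homomorphism $\phi : R \to L$, this gives
\[
\phi\bigl(\iota_\star(\cE)\bigr) \;=\; \iota_\star(\cE_\phi)
\quad \text{in } H^1\bigl(\fX_{G,\Kl}^{\ge 1}(p), \fF_G(3+r_1, 1-r_2)(-D)\bigr) \otimes_{R,\phi} L,
\]
so it suffices to show that after applying $e_\Kl$, the isomorphism of Theorem~\ref{thm:vincent}, and the forget-cuspidality morphism $\varpi$, the image of $\iota_\star(\cE_\phi)$ agrees with the classical pushforward $\iota_{1,\star}(E_\phi)$ in $e_\Kl H^1\bigl(X_{G,\Kl}(p)_L, [L_1]\bigr)$.

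Next, by hypothesis the image of $\cE_\phi$ under the comparison isomorphism of Section~\ref{sect:bigsheafH} at algebraic weight $(t_1, t_2)$ is the restriction of $E_\phi$ (pulled back to $X_{H,\Delta}(p)_L$) to its ordinary locus, viewed as a ``fake'' $p$-adic cusp form. To compare the resulting $p$-adic pushforward with the classical one, I would write down the analogue of the commutative diagram at the end of Section~\ref{sect:p-adic-pushfwd} in which the upper row is replaced by classical (possibly non-cuspidal) modular forms $M_{(t_1,t_2)}(p, L)$ mapping via $\iota_{1,\star}$ into $H^1\bigl(X_{G,\Kl}(p)_L, \omega_G(3+r_1, 1-r_2)\bigr)$, the lower row retains the $p$-adic cusp forms and $H^1$ of the big sheaf with $(-D)$ on the ordinary locus, the left-hand vertical map is the ``fake cusp form'' restriction, and the right-hand vertical map is the composite of restriction to the formal ordinary locus, the classicality isomorphism of Theorem~\ref{thm:vincent} on $e_\Kl$-parts, and the forget-cuspidality map $\varpi$.

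The commutativity of this diagram follows by the same argument as in Section~\ref{sect:p-adic-pushfwd}, using the Cartesian squares of Lemma~\ref{lem:cartesian}, the compatibility of the formal-scheme and rigid-analytic dualising complexes, and Proposition~\ref{prop:pullbackcomp}. Pairing with $\eta$ then yields the claimed identity. The hard part will be setting up this non-cuspidal variant of the comparison diagram carefully; the subtle point is that a classical non-cuspidal modular form pushes forward to $H^1$ without $-D_G$, whereas the $p$-adic pushforward lies in cohomology with $-D_G$ on the ordinary locus, so the match between them relies on the ideal-sheaf inclusion \eqref{eq:idealsheaf} together with the compatibility of $\varpi$ with the forgetful morphism of sheaves $[V](-D_G) \hookrightarrow [V]$ used to define it.
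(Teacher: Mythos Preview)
Your plan is essentially the ``obvious argument'' that the paper itself explains does \emph{not} work, and the gap is precisely at the point you flag as ``the subtle point''. The difficulty is not merely one of matching ideal sheaves; it is a missing injectivity statement. Concretely: from $E_\phi$ you get a classical class $z^{\mathrm{alg}} = \iota_{1,\star}(E_\phi)$ in the \emph{non}-cuspidal $H^1\bigl(X_{G,\Kl}(p)_L, \omega_G(k_1,k_2)\bigr)$, while from $\cE_\phi$ you get a $p$-adic class $z^{\mathrm{an}} = \iota_\star(\cE_\phi)$ in the \emph{cuspidal} $H^1$ over $\cX^{\ge 1}_{G,\Kl}(p)$. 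Theorem~\ref{thm:vincent} lifts $e_{\Kl}\cdot z^{\mathrm{an}}$ to a classical cuspidal class $z^{\mathrm{alg}}_{\mathrm{cusp}}$. What you must show is $\varpi(z^{\mathrm{alg}}_{\mathrm{cusp}}) = e_{\Kl}\cdot z^{\mathrm{alg}}$ in classical non-cuspidal cohomology. All you know is that these two classes have the same image in the non-cuspidal $p$-adic $H^1$ over $\cX^{\ge 1}_{G,\Kl}(p)$. But Theorem~\ref{thm:vincent} gives classicity only for the \emph{cuspidal} sheaf $\omega(k_1,k_2)(-D)$; there is no analogue proved for $\omega(k_1,k_2)$, so you have no injectivity of the restriction map on the non-cuspidal side. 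The ideal-sheaf inclusion~\eqref{eq:idealsheaf} and the compatibility of $\varpi$ with $[V](-D)\hookrightarrow[V]$ do not supply this missing injectivity.

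The paper closes this gap by passing to \emph{overconvergent} cohomology, where Theorem~\ref{thm:OCclass} gives classicity simultaneously for the cuspidal and the non-cuspidal sheaves. One constructs an overconvergent class $z^\dagger$ in $H^1\bigl(\cX^{\ge 1}_{G,\Kl}(p)^\dagger, \omega_G(k_1,k_2)(-D)\bigr)$ whose restriction to the $p$-rank $\ge 1$ locus is $z^{\mathrm{an}}$ and whose image under $\varpi$ (into overconvergent non-cuspidal cohomology) is the restriction of $z^{\mathrm{alg}}$. Then $\varpi(z^{\mathrm{alg}}_{\mathrm{cusp}})$ and $e_{\Kl}\cdot z^{\mathrm{alg}}$ have the same image $e_{\Kl}\cdot\varpi(z^\dagger)$ in overconvergent non-cuspidal $H^1$, and the non-cuspidal classicity theorem there forces them to coincide. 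Your proposal needs this overconvergent step; without it the diagram you describe cannot be shown to commute.
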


  \begin{proof}
   The construction of the pairing $\langle -, - \rangle$ is compatible with base-change in $R$, so it suffices to assume that $R = \cO$, $\phi$ is the identity map, and $\cE$ is the $p$-adic modular form associated to a classical modular form $E$.

   If $E$ is cuspidal as a classical modular form, then the result is essentially a restatement of the commutativity of the diagrams in the previous section (in the simple case where $m = 1$ and the $\chi_i$ are trivial). However, we are working in a slightly greater degree of generality, where we allow $E$ to be a ``fake cusp form'' in the sense of \S\ref{sect:bigsheafH} above, so it can be non-vanishing at some components of $D_H$ lying outside the ordinary locus; and this is rather more delicate.

   We first summarise why the obvious argument does not work. Let $(k_1, k_2) = (r_1 + 3, 1-r_2)$. From $E$ we can form the following two cohomology classes:
   \begin{itemize}
    \item An ``algebraic'' class $z^{\mathrm{alg}} = \iota_{1, \star}(E) \in H^1\left(X_{G, \Kl}(p)_{L}, \omega_G(k_1, k_2)\right)$.
    \item A ``analytic'' class $z^{\mathrm{an}} = \iota_{\star}(\cE) \in  H^1\left(\cX_{G, \Kl}^{\ge 1}(p), \cF_G(k_1, k_2)(-D_G) \right)$.
   \end{itemize}
   These are compatible, in the sense that the restriction of $z^{\mathrm{alg}}$ to $\cX_{G, \Kl}^{\ge 1}(p)$ coincides with the image of $z^{\mathrm{an}}$ in $H^1\left(\cX_{G, \Kl}^{\ge 1}(p), \cF_G(k_1, k_2) \right)$ under the ``forget cuspidality'' map $\varpi$.

   The classicity theorem for ordinary cohomology, used in the construction of our pairing, shows that there is some $z^{\mathrm{alg}}_{\mathrm{cusp}} \in H^1\left(X_{G, \Kl}(p)_{L}, \omega_G(k_1, k_2)(-D)\right)$ whose restriction to $\cX_{\Kl}^{\ge 1}(p)$ is $e_{\Kl} \cdot z^{\mathrm{an}}$. If we can show that the image of $z^{\mathrm{alg}}_{\mathrm{cusp}}$ under the ``forget cuspidality'' map $\varpi$ coincides with $e_{\Kl} \cdot z^{\mathrm{alg}}$, then we are done. However, all that we know is that $\varpi(z^{\mathrm{alg}}_{\mathrm{cusp}})$ and $e_{\Kl} \cdot z^{\mathrm{alg}}$ have the same image in $H^1\left(\cX_{G, \Kl}^{\ge 1}(p), \cF_G(k_1, k_2) \right)$, and since we have no analogue of Theorem \ref{thm:vincent} for the non-cuspidal $p$-adic cohomology, we do not know that the restriction map $H^1\left(X_{G, \Kl}(p)_L, \omega_G(k_1, k_2) \right) \to H^1\left(\cX_{G, \Kl}^{\ge 1}(p),\cF_G{(k_1, k_2)} \right)$ is injective on the ordinary part.

   We work around this by using \emph{overconvergent} cohomology. Theorem \ref{thm:OCclass} gives a classicity result for both of the spaces
   \[ H^1\left(\cX^{\ge 1}_{G, \Kl}(p)^\dagger, \omega_G(k_1, k_2) \right)\quad\text{and}\quad H^1\left(\cX^{\ge 1}_{G, \Kl}(p)^\dagger, \omega_G(k_1, k_2)(-D) \right). \]
   Performing the same construction as before for the overconvergent spaces, we obtain a class $z^{\dag} \in H^1\left(\cX^{\ge 1}_{G, \Kl}(p)^\dagger, \omega_G(k_1, k_2)(-D) \right)$ with the following properties: the restriction of $z^{\dag}$ to the $p$-rank $\ge 1$ locus (forgetting the overconvergence) is $z^{\mathrm{an}}$; and the image of $z^{\dag}$ in $H^1\left(\cX^{\ge 1}_{G, \Kl}(p)^\dagger, \omega_G(k_1, k_2)\right)$ (forgetting the cuspidality) coincides with the restriction of $z^{\mathrm{alg}}$.

   The classes $\varpi(z^{\mathrm{alg}}_{\mathrm{cusp}})$ and $e_{\Kl} \cdot z^{\mathrm{alg}}$ have the same image in $H^1\left(\cX^{\ge 1}_{G, \Kl}(p)^\dagger, \omega_G(k_1, k_2)\right)$ (namely $e_{\Kl} \cdot \varpi(z^{\dag})$). Using the classicity theorem for this overconvergent cohomology, we conclude that $\varpi(z^{\mathrm{alg}}_{\mathrm{cusp}})$ and $e_{\Kl} \cdot z^{\mathrm{alg}}$ coincide.
  \end{proof}

 \subsection{Higher level specialisations}

  We also have a version of this theorem for specialisations that are classical of higher $p$-power levels, using the vectors $\eta_m$ at level $\Kl(p^m)$ constructed from $\eta$ as in \S\ref{sect:hecke}.

  \begin{theorem}
   \label{thm:interp2}
   Let $\phi: R \to L$ be a continuous ring homomorphism such that $\phi \circ \tau_i$ has the form $x \mapsto x^{t_i} \chi_i(x)$, for some $t_i$ as before and finite-order characters $\chi_1, \chi_2$ with $\chi_1 \chi_2 = 1$. Suppose that $\phi(\cE)$ is the image of a classical modular form $E_\phi \in M_{(t_1 + 1, t_2 + 1)}(K_H(p^m), L)$, for some $m \gg 0$.

   Then we have
   \[ \phi\left(\langle \iota_\star(\cE), \eta \rangle\right) =  \left\langle \iota_{m, \star}(E_\phi), \eta_m \right\rangle. \]
  \end{theorem}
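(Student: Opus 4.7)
The strategy is to generalize the proof of Proposition \ref{prop:interp1} by running the $p$-adic pushforward construction at level $\Kl(p^m)$. First, I would reduce, exactly as in the proof of Proposition \ref{prop:interp1}, to the case $R = \cO_L$ with $\phi$ the identity, so that $\cE$ is the $p$-adic form attached via the comparison map $M_{(t_1+1, t_2+1)}(p^m, \chi_1, \chi_2, L) \hookrightarrow \cM_{(t_1 + \chi_1, t_2 + \chi_2)}(\cO_L) \otimes L$ to the classical form $E_\phi$; the hypothesis $\chi_1 \chi_2 = 1$ ensures this intertwines the prime-to-$p$ Hecke actions.

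The next step is to use the defining property of $\eta_m$ to rewrite the right-hand side. The direct-sum decomposition $(\Pi_p^\vee)^{\Kl(p^m)} = (\Pi_p^\vee)^{\Kl(p)} \oplus N_m$ with $\cU_{p, \Kl}$ nilpotent on $N_m$, discussed in \S\ref{sect:hecke}, shows that $\langle -, \eta_m \rangle$ on $H^1(\Pif^\vee)^{\Kl(p^m)}$ factors as $e_{\Kl}$ followed by $\langle -, \eta \rangle$, where $e_{\Kl}(z)$ is viewed at level $\Kl(p)$ via the natural identification. Hence the theorem reduces to the identity
\[
\phi\bigl(\iota_\star(\cE)\bigr) \;=\; e_{\Kl}\bigl(\iota_{m, \star}(E_\phi)\bigr)
\]
in $e_{\Kl} H^1\bigl(X_{G, \Kl}(p)_L, \omega_G(r_1+3, 1-r_2)(-D)\bigr)$, where the left-hand side has been transported from $p$-adic to classical cohomology via Theorem \ref{thm:vincent}.

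To prove this identity, I would run the $p$-adic pushforward construction of \S\ref{sect:p-adic-pushfwd} at level $\Kl(p^m)$: the Cartesian diagrams of Lemma \ref{lem:cartesian} admit natural higher-level analogues, yielding a pushforward $\iota^{(m)}_\star : \cS_{(\lambda_1, \lambda_2)}(R) \to H^1\bigl(\fX_{G, \Kl}^{\ge 1}(p^m), \fF_G(\kappa, k_2)(-D)\bigr)$. Applying the overconvergent-cohomology argument of Proposition \ref{prop:interp1} at level $\Kl(p^m)$, which goes through since Theorem \ref{thm:OCclass} applies uniformly at all $p$-power levels, identifies $e_{\Kl}(\iota^{(m)}_\star(\cE))$ with $e_{\Kl}(\iota_{m, \star}(E_\phi))$ at level $\Kl(p^m)$. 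It then remains to verify that $\iota^{(m)}_\star(\cE)$ descends to $\iota_\star(\cE)$ under the canonical isomorphism of ordinary parts at levels $p^m$ and $p$; this follows from the functoriality of the big coefficient sheaves $\fF_G$ and $\fF_H$ with respect to the degeneracy maps on the Igusa towers.

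The main technical obstacle is this last compatibility between the constructions at different $p$-power levels. One must check that the coboundary maps defining the pushforwards are intertwined by the degeneracy projections on the $G$-side and by the pullback maps on the $H$-side (which relate the comparison $\cE \leftrightarrow E_\phi$ on $\fX^{\ord}_{H,\Delta}(p)$ to its higher-level analogue on $\fX^{\ord}_{H,\Delta}(p^m)$). Once this functoriality is in place, the various normalizing factors entering the explicit formula for $\eta_m$ are absorbed by the fact that the ordinary projector $e_{\Kl}$ commutes with all the degeneracy and pullback maps in sight.
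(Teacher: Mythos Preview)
Your first two paragraphs (the reduction to $R = \cO_L$ and the reformulation of the right-hand side via the defining property of $\eta_m$) are correct and align with the paper. The divergence is in how you propose to establish the remaining identity.

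You construct a new level-$\Kl(p^m)$ $p$-adic pushforward $\iota^{(m)}_\star$, invoke a level-$\Kl(p^m)$ analogue of the overconvergent classicity theorem, and then check descent to level $\Kl(p)$. This is more machinery than the paper uses, and the assertion that ``Theorem~\ref{thm:OCclass} applies uniformly at all $p$-power levels'' is a genuine gap: both Theorem~\ref{thm:vincent} and Theorem~\ref{thm:OCclass} are stated and proved only at level $\Kl(p)$, and nothing in the paper supplies the higher-level versions you need.

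The paper's route is shorter and sidesteps this entirely. The point you are missing is that the commutative diagram at the end of \S\ref{sect:p-adic-pushfwd} (comparison with classical pushforward) is already formulated for arbitrary $m \ge 1$, and its bottom-right corner is $H^1\bigl(\fX^{\ge 1}_{G,\Kl}(p), \fF_G(k_1,k_2)(-D)\bigr)$, i.e.\ $p$-adic cohomology at level $\Kl(p)$, not $\Kl(p^m)$. So the diagram itself already links $\iota_{m,\star}(E_\phi)$ (classical at level $\Kl(p^m)$) with $\iota_\star(\cE)$ ($p$-adic at level $\Kl(p)$, where the classicity machinery is available). Running the argument of Proposition~\ref{prop:interp1} with this diagram in place of its $m=1$ special case shows that $\phi\bigl(\langle \iota_\star(\cE), \eta\rangle\bigr)$ is the image of $\iota_{m,\star}(E_\phi)$ under a linear functional on $H^1\bigl(X_{G,\Kl}(p^m)_L, [L_1]\bigr)$ which factors through $e_{\Kl}$ and agrees with $\langle -, \eta\rangle$ on the image of the level-$\Kl(p)$ invariants. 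That is exactly the definition of $\langle -, \eta_m\rangle$, and the proof ends there; no level-$\Kl(p^m)$ $p$-adic or overconvergent theory is needed.
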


  \begin{proof}
   By the same arguments as above, $\phi\left(\langle \iota_\star(\cE), \eta \rangle\right)$ is the image of $\iota_{m, \star}(E_\phi)$ under the unique linear functional on $H^1(X_{G, \Kl}(p^m)_{L}, [L_1])$ which factors through the ordinary idempotent $e_{\Kl}$ and agrees with $\langle -, \eta\rangle$ on the image of $H^1(X_{G, \Kl}(p)_{L}, [L_1])$. This is exactly the definition of $\eta_m$.
  \end{proof}


\section{``Nearly'' coherent cohomology}
\label{sect:nearly}

 It is well-known that nearly-holomorphic modular forms can be considered as $p$-adic modular forms. In this section, we shall formulate an analogous statement for our $p$-adic cohomology spaces for $\GSp_4$, and show that the $\Lambda$-adic pushforward map constructed in the previous section is compatible with this additional structure.

 \subsection{``Nearly'' sheaves}

  In this section, we'll consider coherent cohomology with coefficients in certain indecomposable representations of $P_{\Sieg}$. This is needed in order to study pushforwards of non-holomorphic Eisenstein series; we regard it as an analogue for $H^1$ of Siegel varieties of Shimura's theory of nearly-holomorphic modular forms.

  \begin{definition}
   If $V$ is a finite-dimensional algebraic representation of $P_{\Sieg}$, we let $\Fil^n V$ denote the direct sum of the eigenspaces of $W$ on which the torus $\dfour{x}{x}{1}{1}$ acts with weights $\ge n$ (i.e. the weight spaces of weight $\lambda(r_1, r_2; c)$ with $r_1 + r_2 + c \ge 2n$).
  \end{definition}

  This filtration is stable under $P_{\Sieg}$, and the $P_{\Sieg}$-action on the graded pieces $\Gr^n V = \Fil^n V / \Fil^{n+1} V$ factors through $M_{\Sieg}$. We shall apply this to $V = V(r_1, r_2; r_1 + r_2)$, for some integers $r_1 \ge r_2 \ge 0$; in this case the non-zero graded pieces are in degrees $0 \le n \le r_1 + r_2$.

  \begin{definition} \
   \begin{itemize}
    \item Let $\tilde L_1$ denote the representation $V / \Fil^{r_1 + 1} V \otimes W(3, 1; 0)$, and $L_1'$ the smallest filtration subspace $\Gr^{r_1} V \otimes W(3, 1;0)$ of $\tilde L_1$.
    \item Dually, let $\tilde L_2$ denote the $P_{\Sieg}$-representation $\Fil^{r_2} V \otimes W(2, 0; 0)$, and $L_2'$ the top filtration quotient $\Gr^{r_2} V \otimes W(2, 0; 0)$ of $\tilde L_2$.
   \end{itemize}
  \end{definition}

  There is a natural projection $L_1' \twoheadrightarrow L_1$, and a natural inclusion $L_2 \into L_2'$ (both of which are split, but we do not need this).

  \begin{proposition}
   The natural maps
   \[ H^2\left(X_{G, E}, [L_2](-D)\right) \to H^2\left(X_{G, E}, [L'_2](-D)\right) \]
   and
   \[ H^2\left(X_{G, E}, [\tilde L_2](-D)\right) \to H^2\left(X_{G, E}, [L'_2](-D)\right), \]
   induced by the maps of $P_{\Sieg}$-representations $\tilde L_2 \twoheadrightarrow L'_2 \hookleftarrow L_2$, are both isomorphisms on the $\Pif'$ generalised eigenspace (and similarly without $(-D)$).
  \end{proposition}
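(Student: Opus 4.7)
The plan is to deduce both isomorphisms from Theorem \ref{thm:cohcoh} via a standard filtration argument. For the first isomorphism, use the short exact sequence of $P_{\Sieg}$-representations
\[ 0 \to L_2 \to L_2' \to Q \to 0, \]
which is in fact split (since $L_2'$ is semisimple as an $M_{\Sieg}$-representation). The associated long exact sequence in cohomology reduces the statement to showing that $H^i(X_{G, E}, [W](-D))_{\Pif'} = 0$ for every irreducible $M_{\Sieg}$-constituent $W$ of $Q$ and $i \in \{1, 2\}$. For the second isomorphism, use $0 \to K \to \tilde L_2 \to L_2' \to 0$ with $K = \Fil^{r_2 + 1} V \otimes W_G(2, 0; 0)$, and check the corresponding vanishing for composition factors of $K$ in degrees $i \in \{2, 3\}$.

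By Theorem \ref{thm:cohcoh}, an irreducible $M_{\Sieg}$-representation $W$ contributes $\Pif'$ to $H^i(X_{G, E}, [W](-D))$ only when $W \cong L_i$ for some $i \in \{0, 1, 2, 3\}$. Hence both required vanishings reduce to the combinatorial assertion that no composition factor of $Q$ is isomorphic to $L_1$ or $L_2$, and no composition factor of $K$ is isomorphic to $L_2$ or $L_3$.

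These are straightforward computations of weight traces $a + b$. Every irreducible subquotient of $L_2' = \Gr^{r_2} V \otimes W_G(2, 0; 0)$ has highest weight $\lambda(a, b; m)$ with $a + b = r_2 - r_1 + 2$; by a Pieri-rule analysis of $\Gr^{r_2} V$ (using that $(r_2 + 1, -r_1 - 1)$ and $(r_2 + 2, -r_1 - 2)$ are not weights of $V(r_1, r_2)$, since their first coordinates exceed $r_1$ in absolute value), $L_2$ occurs in $L_2'$ with multiplicity one. Thus the constituents of $Q$ have trace $r_2 - r_1 + 2$ but are not $L_2$; and $\lambda(L_1)$ has trace $r_1 - r_2 + 4 \ne r_2 - r_1 + 2$ (since $r_1 \geq r_2$), so no constituent is $L_1$ either. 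The constituents of $K$ arise from $\Gr^n V \otimes W_G(2, 0; 0)$ with $n \geq r_2 + 1$, so have trace $\geq r_2 - r_1 + 4$, strictly exceeding the traces $r_2 - r_1 + 2$ of $L_2$ and $-r_1 - r_2$ of $L_3$ (the latter using $r_2 \geq 0$).

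The main obstacle is really just the bookkeeping of these weight-trace comparisons and the Pieri-rule multiplicity-one check for $L_2 \hookrightarrow L_2'$; once these are in hand, the cohomological vanishing is immediate from Theorem \ref{thm:cohcoh}. The same argument, with the same weight computations, applies verbatim to the statement without the $(-D)$ twist.
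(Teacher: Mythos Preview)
Your proof is correct and follows the same route as the paper: both rely on the final clause of Theorem \ref{thm:cohcoh} together with the obvious long exact sequences. The paper's argument is a one-line appeal to that theorem, while you spell out the degree-by-degree vanishing and the weight-trace bookkeeping; this extra detail is welcome, since the paper's phrase ``do not contribute to the $\Pif'$-eigenspace of $H^2$'' really requires the vanishing in the adjacent degrees as well, exactly as you set it up.

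One small slip: in your Pieri-rule parenthetical, it is the \emph{second} coordinates of $(r_2+1,-r_1-1)$ and $(r_2+2,-r_1-2)$ that exceed $r_1$ in absolute value (since $|-r_1-1|=r_1+1>r_1$), not the first. The first coordinates $r_2+1$, $r_2+2$ need not exceed $r_1$ when $r_1 > r_2$. With that correction, your argument that these are not weights of $V(r_1,r_2)$, and hence that $W_G(r_2+1,-r_1-1;m)$ and $W_G(r_2+2,-r_1-2;m)$ do not occur in $\Gr^{r_2}V$, goes through, and the multiplicity-one claim for $L_2$ inside $L_2'$ follows.
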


  \begin{proof}
   This follows readily from the final statement of Theorem \ref{thm:cohcoh}, since the kernel of $\tilde L_2 \twoheadrightarrow L'_2$ and the cokernel of $L_2 \into L'_2$ do not contribute to the $\Pif'$-eigenspace of $H^2$.
  \end{proof}

  We write $\tilde\eta$ for the unique class in $H^2\left(X_{G, E}, [\tilde L_2](-D)\right)[\Pif]$ corresponding to $\eta$ under the above isomorphisms. The linear functional $\langle -, \tilde\eta\rangle$ is thus a homomorphism
  \[ H^1\left(X_{G, E}, [\tilde L_1]\right) \to E, \]
  characterised as follows:
  \begin{itemize}
   \item it factors through the $\Pif^\vee$-isotypical part;
   \item its restriction to $H^1\left(X_{G, E}, [L_1^{\prime}]\right)$ is the composite of the projection $L_1^{\prime} \onto L_1$ and pairing with $\eta$.
  \end{itemize}

  This extended homomorphism will play a role analogous to Shimura's ``holomorphic projection'' operator in the $\GL_2$ theory.

 \subsection{Pullback and pushforward of ``nearly'' sheaves}

  \begin{proposition}[{cf.~\cite[Proposition 4.3.1]{LSZ17}}]
   Suppose $(r_1, r_2; c)$ is a $G$-dominant weight. The restriction to $H$ of the irreducible representation $V_G(r_1, r_2; c)$ of $G$ is a direct sum of distinct irreducible $H$-representations. The representation $V_H(t_1, t_2; c)$ appears as a direct summand if and only if the integers $t_1, t_2$ satisfy $t_1 + t_2 = r_1 + r_2 \bmod 2$ and
   \[ \pushQED{\qed}
   r_1 - r_2 \le t_1 +t_2 \le r_1 + r_2,\quad  |t_1 - t_2| \le r_1 - r_2.\qedhere
   \popQED\]
  \end{proposition}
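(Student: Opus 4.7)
\medskip\noindent\textbf{Proof proposal.} The diagonal scalars $s \cdot I$ lie in both $Z(G)$ and $Z(H)$, and they act via $s^c$ on $V_G(r_1, r_2; c)$ and on every $V_H(t_1, t_2; c)$ in the putative decomposition. Hence the twist by the central character factors out uniformly, and the statement reduces to the classical branching problem for the semisimple parts: decomposing the restriction of the irreducible representation $V_{r_1, r_2}$ of $\mathrm{Sp}_4(\CC)$ of highest weight $\lambda(r_1, r_2)$ as a representation of the subgroup $\mathrm{SL}_2 \times \mathrm{SL}_2 = H^{\mathrm{der}}$, sitting inside $\mathrm{Sp}_4 = G^{\mathrm{der}}$ via the same embedding $\iota$.

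The cleanest approach is to exploit the exceptional isogenies
\[
\mathrm{Sp}_4 \;\sim\; \mathrm{Spin}_5, \qquad \mathrm{SL}_2 \times \mathrm{SL}_2 \;\sim\; \mathrm{Spin}_4,
\]
under which $\iota|_{H^{\mathrm{der}}}$ is identified with the standard inclusion $\mathrm{Spin}_4 \hookrightarrow \mathrm{Spin}_5$. The corresponding branching rule $\mathrm{Spin}_{2n+1} \downarrow \mathrm{Spin}_{2n}$ is the classical orthogonal branching theorem: it is multiplicity-free, with the irreducible $\mathrm{Spin}_4$-summands of highest weight $(b_1, b_2)$ (with $b_1 \ge |b_2|$) occurring in the $\mathrm{Spin}_5$-representation of highest weight $(a_1, a_2)$ precisely when
\[
a_1 \ge b_1 \ge a_2 \ge |b_2|, \qquad 2a_i \equiv 2b_i \pmod{2}.
\]

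It remains to translate this into the coordinates of the proposition. Under the isogenies above, the $\mathrm{Sp}_4$-weight $(r_1, r_2)$ corresponds to the $\mathrm{Spin}_5$-standard weight $(a_1, a_2) = \bigl(\tfrac{r_1+r_2}{2}, \tfrac{r_1-r_2}{2}\bigr)$, while an $\mathrm{SL}_2 \times \mathrm{SL}_2$-weight $(t_1, t_2)$ corresponds to the $\mathrm{Spin}_4$-standard weight $(b_1, b_2) = \bigl(\tfrac{t_1+t_2}{2}, \tfrac{t_1-t_2}{2}\bigr)$; this can be checked by matching the spin representations on either side with the standard representations $\CC^2 \boxtimes \CC^2$ of $\mathrm{SL}_2 \times \mathrm{SL}_2$ and the vector/spin representations of $\mathrm{Spin}_4$, $\mathrm{Spin}_5$. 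Substituting these change-of-variables, the interlacing condition $a_1 \ge b_1 \ge a_2 \ge |b_2|$ becomes exactly
\[
r_1 + r_2 \,\ge\, t_1 + t_2 \,\ge\, r_1 - r_2 \,\ge\, |t_1 - t_2|,
\]
and the parity condition $2a_i \equiv 2b_i \pmod 2$ becomes the single condition $t_1 + t_2 \equiv r_1 + r_2 \pmod 2$ (the other parity condition is then automatic). The conditions $b_1 \ge |b_2|$ and $b_1 \ge 0$ translate to $t_1, t_2 \ge 0$, matching the requirement that $V_H(t_1, t_2; c)$ be defined.

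The only real obstacle is the bookkeeping in the second paragraph: pinning down the correct identification of fundamental weights under the two exceptional isogenies, so as to obtain the stated combinatorial range. As a backup in case one prefers to avoid invoking the orthogonal branching theorem, the same conclusion can be reached by a direct character computation via Weyl's formula for type $C_2$, expanding the numerator determinant and regrouping terms as a sum of products $\chi^{\mathrm{SL}_2}_{t_1}(x)\,\chi^{\mathrm{SL}_2}_{t_2}(y)$; this is elementary but unilluminating, and gives the same bounds on $(t_1, t_2)$. Multiplicity-freeness is immediate in either approach, and the irreducible summands are pairwise non-isomorphic as $H$-representations since they are already distinguished by their restrictions to the diagonal torus.
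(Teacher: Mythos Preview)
Your argument is correct. The paper does not give its own proof of this proposition: it simply cites \cite[Proposition 4.3.1]{loefflerskinnerzerbes17} and places a $\qed$ at the end of the statement. So there is nothing to compare against beyond noting that your route --- reducing to the derived groups, invoking the exceptional isogenies $\mathrm{Sp}_4 \sim \mathrm{Spin}_5$ and $\mathrm{SL}_2 \times \mathrm{SL}_2 \sim \mathrm{Spin}_4$, and reading off the classical multiplicity-free interlacing rule for $\mathrm{Spin}_{2n+1} \downarrow \mathrm{Spin}_{2n}$ --- is a standard and entirely adequate way to prove the result. The translation of weights you give, $(r_1,r_2) \leftrightarrow \bigl(\tfrac{r_1+r_2}{2},\tfrac{r_1-r_2}{2}\bigr)$ and $(t_1,t_2) \leftrightarrow \bigl(\tfrac{t_1+t_2}{2},\tfrac{t_1-t_2}{2}\bigr)$, is correct, and the interlacing and parity conditions do become exactly the stated inequalities. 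The one point worth tightening in a final write-up is the identification of $\iota|_{H^{\mathrm{der}}}$ with the standard $\mathrm{Spin}_4 \hookrightarrow \mathrm{Spin}_5$: this follows by checking that the $5$-dimensional representation $\wedge^2 V / \langle\omega\rangle$ of $\mathrm{Sp}_4$ restricts to $(V_1 \otimes V_2) \oplus \mathbf{1}$ under $\mathrm{SL}_2 \times \mathrm{SL}_2$, which is the $4+1$ splitting characterising the orthogonal embedding.
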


  Since the cocharacter used to define the filtration is strictly dominant with respect to $B_H$, any representation of $B_H$ has a canonical $B_H$-invariant filtration, with the action on the graded pieces factoring through $T$; and this is compatible with the above branching from $G$ to $H$.

  We shall be particularly interested in $H$-subrepresentations of the form $V_H(t_1, t_2; c)$ with $t_1 + t_2 = r_1 - r_2$; note that there are precisely $r_1 - r_2 + 1$ such subrepresentations. We shall call these subrepresentations \emph{small}.

  Recall now the larger sheaves $\tilde L_2 \onto L_2' \hookleftarrow L_2$ defined above. Since $V_H(t_1, t_2; r_1 + r_2)$ is a direct summand of $V$, we have a projection map
  \[ \tilde L_2 |_{B_H} = \left(\Fil^{r_2} V \otimes W(2, 0; 0)\right)|_{B_H} \to V_H \otimes \lambda(1, 1; 0) \]
  given by the tensor product of the projections $V |_H \to V_H$ and $W(2, 0; 0)|_H \to \lambda(1, 1;0 )$. This determines a homomorphism
  \[
  \iota_{\text{nearly}}^\star:
  H^2\left(X_{G, \QQ}, [\tilde L_2](-D)\middle)
  \to H^2\middle(X_{H, \QQ}, [V_H] \otimes \omega_H(1, 1; 0)(-D)\right).
  \]

  Note that the composition of $\iota_{\text{nearly}}^\star$ with the projection $V_H \to V_H / \Fil^{r_2 + 1} V_H \cong W_H(1-t_1, 1-t_2; r_1 + r_2)$ factors through $ H^2\left(X_{G, \QQ}, [L_2'](-D)\right)$, and we can (and do) normalise such that this composite agrees with $\iota^\star$ on the image of $L_2 \into L_2'$.

  Dually, we obtain a map
  \[
  \iota^{\text{nearly}}_\star: H^0\left(X_{H, \QQ}, [V_H] \otimes \omega_H(1, 1; 0)\middle) \to
  H^1\middle(X_{G, \QQ}, [\tilde L_1]\right).
  \]
  The source of this map is the space of \emph{nearly-holomorphic} modular forms for $H$ of weight $(1+t_1, 1+t_2)$. We thus have an extended period pairing
  \[
  H^0\big(X_{H, E}, [V_H] \otimes \omega_H(1, 1; 0)\big) \otimes H^2(\Pif) \to E,\quad (f, \eta) \mapsto \langle \iota^{\text{nearly}}_\star(f), \tilde\eta \rangle.
  \]
  From the construction of $\tilde\eta$, we see that the restriction of this pairing to the space of holomorphic forms agrees with the period pairing of \S\ref{sect:periodpairing} above.

 \subsection{Archimedean theory: the Hodge splitting}
  \label{sect:hodgesplitting}

  We can compute $H^2(X_{G, \CC}, [\tilde L_2](-D))$ in terms of automorphic forms, using results of Su \cite{su-preprint}. As above, let $K_\infty$ be the standard maximal compact subgroup of $G(\RR)_+$, fixing the point $h = i I_2 \in \cH_2$. The Shimura cocharacter determines a decomposition of $\operatorname{Lie}(G)_{\CC}$ as $\mathfrak{k} \oplus \mathfrak{p}^+ \oplus \mathfrak{p}^-$, where $\mathfrak{k} = \operatorname{Lie}(K_\infty)_{\CC}$, and $\mathfrak{p}^+$ (resp.~$\mathfrak{p}^-$) is identified with the holomorphic (resp.~antiholomorphic) tangent space of $\mathcal{H}_2$ at $h$. The parabolic $P_h \subset G(\CC)$ with Lie algebra $\mathfrak{p} = \mathfrak{k} \oplus \mathfrak{p}^-$ is a conjugate of the Siegel parabolic $P_{\Sieg}$, so we may identify any $P_{\Sieg}$-representation $V$ with a representation of $P_\infty$.

  The main theorem of \emph{op.cit.} gives a canonical and Hecke-equivariant isomorphism
  \[ H^*(X_{G, \CC}, [V]) \cong H^*\!\left(\mathfrak{p}, K_\infty; \mathcal{A}(G)^K \otimes V\right), \]
  for any level $K$ and any algebraic representation $V$ of $P_{\Sieg}$, where $\mathcal{A}(G)$ is the space of automorphic forms on $G$ (twisted by an appropriate power of the norm character so that the central characters match). The relative Lie algebra cohomology on the right-hand side can be computed as the cohomology of the complex with $j$-th term
  \[
   \Hom_{K_\infty}\left( \textstyle{\bigwedge^j}(\mathfrak{p}^-) \otimes V^\vee, \mathcal{A}(G)^K\right).
  \]

  \begin{lemma}
   Let $\Pi_{\infty, 2}$ be the $G(\RR)_+$-submodule of $\Pi_\infty$ described in \S \ref{sect:archimedean} above. Then we have
   \[
    \Hom_{K_\infty}\left(\textstyle{\bigwedge^j}(\mathfrak{p}^-) \otimes  (\tilde L_2)^\vee,\ \Pi_{\infty, 2}\right) = 0
   \]
   for all $j \ne 2$, and for $j = 2$ this space is 1-dimensional and maps isomorphically to its image with $L_2$ in place of $\tilde L_2$.
  \end{lemma}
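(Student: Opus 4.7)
The plan is to reduce the problem to the case of $L_2$ by decomposing $\tilde L_2$ as a $K_\infty$-representation. Since the $P_{\Sieg}$-filtration splits on restriction to $M_{\Sieg}$ (and hence, after complexification, to $K_\infty$), writing $V = V_G(r_1, r_2; r_1 + r_2)$ one obtains
\[
 \tilde L_2\big|_{K_\infty} \;\cong\; \bigoplus_{n = r_2}^{r_1 + r_2} \bigl(\Gr^n V \otimes W_G(2, 0; 0)\bigr)\big|_{M_{\Sieg}},
\]
and the $\Hom$-space in the lemma decomposes correspondingly as a direct sum over $n$.

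First I would handle the bottom piece $n = r_2$. The summand is $L_2' = \Gr^{r_2} V \otimes W_G(2,0;0)$, which contains $L_2$ as an $M_{\Sieg}$-direct summand (the $W_G(2,0;0)$-factor decomposes the tensor product via the standard Clebsch--Gordan rule for $\GL_2$). The contribution of $L_2$ itself to the $\Hom$-space is the classical Vogan--Zuckerman calculation of $(\mathfrak{p}, K_\infty)$-cohomology for the discrete series $\Pi_{\infty, 2}$ (equivalently, it is the specialisation of Theorem \ref{thm:harris}(a) to $L_2$): one obtains $\CC$ for $j = 2$, generated by the class realising the minimal $K_\infty$-type $\tau_2 \subset \Pi_{\infty, 2}$, and $0$ for $j \ne 2$.

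The remaining task is to show that every other irreducible $K_\infty$-summand $W$ of $\tilde L_2$ -- namely the $M_{\Sieg}$-complement of $L_2$ inside $L_2'$, together with all $M_{\Sieg}$-irreducibles occurring in the higher graded pieces $\Gr^n V \otimes W_G(2, 0; 0)$ for $r_2 < n \le r_1 + r_2$ -- contributes zero to the $\Hom$-space for every $j$. This is a direct $K_\infty$-type computation: for each such $W$ one decomposes $\bigwedge^j \mathfrak{p}^- \otimes W^\vee$ into $K_\infty$-irreducibles and uses Blattner's formula to verify that no summand occurs as a $K_\infty$-type of $\Pi_{\infty, 2}$. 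Concretely, the $K_\infty$-types of $\Pi_{\infty, 2}$ form an explicit cone based at $\tau_2 = (r_2 + 1, -r_1 - 3)$; the weights of $\bigwedge^j \mathfrak{p}^-$ for $0 \le j \le 3$ lie in a bounded region of the weight lattice; and the shift of the Siegel cocharacter weight of $W$ away from that of $L_2$ (either by the graded index $n > r_2$, or within $L_2'$ by the Clebsch--Gordan shift off the highest-weight component) pushes every potential match outside of this Blattner cone.

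The main obstacle is this last, case-by-case Blattner analysis. It reduces to a finite tabulation: the $U(2)$-decomposition of $\bigwedge^j \mathfrak{p}^-$, the $M_{\Sieg}$-branching of $V_G(r_1, r_2; r_1 + r_2)$, and the explicit shape of the Blattner lattice for the large discrete series $\Pi_{\infty, 2}$. The key combinatorial input is that the highest $M_{\Sieg}$-weights of every $W$ under consideration exceed that of $L_2$ in the Siegel direction, while the weights of $\bigwedge^j \mathfrak{p}^-$ decrease this parameter only by a controlled amount; a careful weight accounting then eliminates all potential matches, and the delicate points arise only for $(r_1, r_2)$ close to the walls of the Weyl chamber.
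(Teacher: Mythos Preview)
Your approach is correct and rests on the same underlying idea as the paper --- a comparison of the $K_\infty$-types of $\Pi_{\infty,2}$ (the Blattner cone based at $\tau_2$) against those of $\bigwedge^j \mathfrak{p}^- \otimes (\tilde L_2)^\vee$. Where the two diverge is in organisation. You decompose $\tilde L_2$ into $M_{\Sieg}$-irreducibles, dispatch the $L_2$-summand by the discrete-series vanishing, and then propose a case-by-case Blattner check for every other summand $W$; you flag this last step as the ``main obstacle'' and note potential delicacy near the walls.

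The paper bypasses this case analysis entirely. Rather than decomposing, it observes that \emph{all} weights of $(\tilde L_2)^\vee$ lie in a single explicit cone $\{(r_2+2,-r_1)+m(1,1)+n(-1,1):m,n\ge 0\}$, and that tensoring with $\bigwedge^j \mathfrak{p}^-$ translates this cone by the highest weight of $\bigwedge^j \mathfrak{p}^-$ (namely $(0,0)$, $(0,-2)$, $(-1,-3)$, $(-3,-3)$ for $j=0,1,2,3$). One then checks, by a single geometric comparison, that the translated cone is disjoint from the Blattner cone $\{\tau_2+m(1,0)+n(-1,-1)\}$ for $j\ne 2$, and meets it in the single point $\tau_2$ (with multiplicity one on both sides) for $j=2$. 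This handles all summands of $\tilde L_2$ simultaneously, works uniformly in $(r_1,r_2)$, and also treats the $L_2$-piece and the $j=2$ isomorphism in the same stroke --- so the obstacle you identify simply does not arise.
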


  \begin{proof}
   An explicit description of the $K_\infty$-types appearing in $\Pi_{\infty, 2}$ is given in \cite{schmidt17}. The minimal $K_\infty$-type is $\tau_2 = (r_2 + 1, -r_1-3)$, and the other $K_\infty$-types lie in the convex cone $\{ \tau_2 + m \cdot (1, 0) + n \cdot (-1, -1): m, n \ge 0\}$.

   On the other hand, the $K_\infty$-types appearing in $(\tilde L_2)^\vee$ are all contained in a different convex cone, $\{ (r_2 + 2, -r_1) + m \cdot (1, 1) + n \cdot (-1, 1): m, n \ge 0\}$. So the weights of $(\tilde L_2)^\vee \otimes \bigwedge^j(\mathfrak{p}^-)$ are contained in the translate of this cone by the highest weight of $\bigwedge^j(\mathfrak{p}^-)$; this translation is by $(0, -2)$ if $j = 1$, by $(-1, -3)$ if $j = 2$, and by $(-3, -3)$ if $j = 3$. So if $j \ne 2$, these regions have empty intersection, whereas if $j = 2$ the intersection consists only of $\tau_2$, which appears in both representations with multiplicity 1.
  \end{proof}

  Thus the $\Pif$-isotypical part of the $(\mathfrak{p}, K_\infty)$-cohomology is represented by a complex concentrated in degree 2, giving a a canonical space of Dolbeault differential forms representing $H^2\big(X_{G, \CC}, [\tilde L_2]\big)[\Pif]$: the vector-valued automorphic forms whose coordinate projections lie in the minimal $K_\infty$-type of $\Pif \otimes \Pi_{\infty, 2}$. Moreover, these differentials in fact take values in $[L_2]$, regarded as a subsheaf of $[\tilde L_2]$ via the Hodge splitting: the projections of these differentials to the other graded pieces of $[\tilde L_2]$ are trivial, since the corresponding $K_\infty$-types do not appear in $\Pi_{\infty, 2}$. Since $\Pi$ is cuspidal, these differential forms are rapidly-decreasing.

  Now let $t_1, t_2$ be integers with $t_1 + t_2 = r_1 - r_2$, as before. The analogue of this $K_\infty$-equivariant splitting for $H$ in place of $G$ is the \emph{Hodge splitting} in the category of $C^\infty$ vector bundles,
  \[ [V_H] \otimes \omega_H(1, 1; 0) \to \omega_H(1 + t_1, 1 + t_2), \]
  which allows nearly-holomorphic modular forms to be interpreted as scalar-valued real-analytic functions on $H(\QQ) \backslash H(\Af)$, transforming under $K_{H,\infty}$ via the character $(1 + t_1, 1 + t_2)$. Evidently these two splittings are compatible (since they are both given by projection to eigenspaces for the action of the centre of $K_\infty$, which is contained in $K_{H, \infty}$), so we deduce the following theorem:

  \begin{theorem}
   Let $E \in H^0\left(X_{H, \CC}, [V_H] \otimes \omega_H(1, 1; 0)\right)$ and let $E_\infty \in H^0\left(X_{H, \CC}, \omega(t_1+1, t_2+1)_{C^\infty}\right)$ be its image under the Hodge splitting. If $\eta$ and $F_{\eta}$ are as in Theorem \ref{thm:harris}, then we have
   \[
   \langle \iota^{\text{nearly}}_\star(E), \tilde\eta \rangle = \frac{1}{(2\pi i)^3}\int_{\RR^{\times} H(\QQ)  \backslash H(\AA)} F_\eta(v_{t_1, t_2}) E_\infty(h)  \, \mathrm{d}h.
   \]
  \end{theorem}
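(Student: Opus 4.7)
The plan is to extend the archimedean comparison of Theorem~\ref{thm:harris}(b) to the nearly-holomorphic setting, exploiting the fact that the Dolbeault representative of $\tilde\eta$ takes values entirely in the bottom Hodge-filtration piece $[L_2]_{C^\infty}$ of $[\tilde L_2]_{C^\infty}$.

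First I would represent $\tilde\eta$ by a rapidly-decreasing smooth $(0,2)$-form $\omega_{\tilde\eta}$ on $X_{G,\CC}$ with values in $[\tilde L_2]$. By the Lemma proved just above, the inclusion $[L_2] \hookrightarrow [\tilde L_2]$ induces an isomorphism on the $\Pif$-isotypical part of $H^2(X_{G,\CC}, -(-D))$, so under the $K_\infty$-equivariant Hodge splitting $\omega_{\tilde\eta}$ may be chosen so that its components in all Hodge-graded pieces of $[\tilde L_2]_{C^\infty}$ other than $[L_2]_{C^\infty}$ vanish identically; equivalently, the representative is valued in the minimal $K_\infty$-type $\tau_2$ of $\Pi_{\infty,2}$.

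Next I would invoke the adjunction between $\iota^{\text{nearly}}_\star$ and $\iota_{\text{nearly}}^\star$: both are induced by the same $B_H$-equivariant pairing of $P_{\Sieg}$-representations, together with the relative dualising isomorphism $\iota^!\Omega^3_{X_G} \cong \Omega^2_{X_H}[-1]$, whence
\[
\langle \iota^{\text{nearly}}_\star(E), \tilde\eta \rangle_{X_G}
\;=\; \langle E,\, \iota_{\text{nearly}}^\star(\tilde\eta) \rangle_{X_H}.
\]
The pulled-back form $\iota^*(\omega_{\tilde\eta})$ takes values in the restriction of $[L_2]_{C^\infty}$ to $X_H$. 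Decomposing $[V_H]\otimes\omega_H(1,1;0)$ as a direct sum of $C^\infty$-line bundles under the $K_{H,\infty}$-Hodge splitting, and tracing through the explicit $B_H$-equivariant pairing, one checks that $\iota^*(\omega_{\tilde\eta})$ pairs non-trivially under $C^\infty$-Serre duality on $X_H$ only with the top Hodge summand $\omega_H(1+t_1, 1+t_2)_{C^\infty}$; hence only the $E_\infty$-component of $E$ contributes.

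Finally, the scalar integral that remains is of the same shape as the one computed in the proof of Theorem~\ref{thm:harris}(b). Although that theorem is stated for holomorphic $f, g$, the Harris--Kudla computation (extended to regular weights by Su) in fact produces the same formula for any $K_{H,\infty}$-finite smooth section transforming under the character $(1+t_1, 1+t_2)$: holomorphy is used only to fix the $K_{H,\infty}$-type, which matches that of $E_\infty$. Applying this to $E_\infty$ yields the stated identity, with the factor $(2\pi i)^{-3}$ arising from the normalisation of the Serre-duality trace on $X_{G,\CC}$ as in \emph{op.cit.} The main obstacle is the bookkeeping in the second step: verifying, via the explicit $B_H$-equivariant pairing $\tilde L_1 \otimes \tilde L_2 \to W_G(3,3;0)$ and the compatibility of the Hodge cocharacters of $G$ and $H$ under $\iota$, that exactly one of the $r_1-r_2+1$ Hodge summands of $[V_H]\otimes\omega_H(1,1;0)$ survives the pairing with $\iota^*(\omega_{\tilde\eta})$.
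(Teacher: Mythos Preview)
Your proposal is correct and follows essentially the same argument as the paper. Both proofs rest on the $K_\infty$-type lemma to see that the Dolbeault representative of $\tilde\eta$ lives in $[L_2]_{C^\infty}$, and on the compatibility of the Hodge cocharacters of $G$ and $H$ to isolate the $E_\infty$-component; the only organisational difference is that the paper stays on the $G$ side (decomposing $\iota^{\text{nearly}}_\star(E)$ into Hodge-graded pieces of $[\tilde L_1]$ and pairing directly with $\tilde\eta$), whereas you use adjunction to move to the $H$ side first --- these are dual formulations of the same computation.
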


  \begin{proof}
   We know that the $[\tilde L_1]$-valued current representing $\iota^{\text{nearly}}_\star(E)$ is the direct sum of $\iota_\star(E_\infty)$ and some other terms lying in other graded pieces of the sheaf $[\tilde L_1]$. By the previous lemma, $\tilde \eta$ pairs to zero with the latter; on the other hand, we clearly have $\langle \iota^{\text{nearly}}_\star(E), \tilde\eta \rangle = \langle \iota_\star(E_\infty), \eta \rangle$ since $\tilde\eta$ maps to $\eta$ in $[L_2]$.
  \end{proof}

  \begin{remark}
   This construction is strongly analogous to one appearing in Harris' work \cite{harris04}, which also considers period integrals of automorphic forms for $G$ multiplied by nearly-holomorphic Eisenstein series on $H$. However, our treatment differs from Harris' in the following point: in defining $\tilde{\eta}$, we used $G(\Af)$-equivariance to split a cohomology exact sequence on $G$ (relating the cohomology of $[L_2]$ and $\tilde L_2]$). Harris uses instead a splitting on $H$, characterised by equivariance for the action of $H(\Af)$ (see the proof of Proposition 1.10.3 of \emph{op.cit}). It is not clear to us whether these constructions agree in general. 
  \end{remark}

 \subsection{A $p$-adic splitting}

  We now define a partial $p$-adic analogue of the Hodge splitting. Let us briefly recall the definition of the torsor $\cT_G$ defined in \S \ref{sect:toroidal} above. Over the integral model of the open Shimura variety $Y_G$ of prime-to-$p$ level $K^p \cdot G(\Zp)$, we have a locally free sheaf $\cH^1_{\dR}(A)$ of rank 4, with a rank 2 locally free subsheaf $\Fil^1 \cH^1_{\dR}(A)$. There is a canonical extension of $\cH^1_{\dR}(A)$ to a locally free sheaf $\cH^1_{\dR}(A)^{\mathrm{can}}$ on $X_G$, fitting into a short exact sequence of locally free sheaves
  \[ 0 \to \omega_{A^\Sigma} \to \cH^1_{\mathrm{dR}}(A)^\mathrm{can} \to \omega_{A^\Sigma}^\vee \to 0, \]
  where $A^\Sigma$ is the semiabelian variety over $X_G$ extending $A$, and $\omega_{A^\Sigma}$ its conormal sheaf at the identity section. Moreover, if $\fIG_G(p^\infty)^{\ord}$ denotes the preimage in $\fIG_G(p^\infty)$ of the ordinary locus $\fX_G^{\ord} \subset \fX_G$, then over $\fX_G^{\ord}$ there is a splitting
  \[ \cH^1_{\mathrm{dR}}(A)^\mathrm{can} \cong \omega_{A^\Sigma} \oplus \mathcal{U}, \]
  where $\mathcal{U}$ is the \emph{unit root subsheaf} \cite[\S 3.12]{liu-nearly}.

  Over the Igusa tower $\fIG_G(p^\infty)$, we have a morphism of $p$-divisible groups $\alpha: \mu_{p^\infty} \into A^\Sigma[p^\infty]$, and hence a canonical map $\omega_{A_G^\Sigma} \to \cO_{\fIG_G(p^\infty)}$, i.e.~a class $[\alpha] \in H^0(\fIG_G(p^\infty), \omega_{A^\Sigma}^\vee)$.

  \begin{proposition}
   There exists a unique lifting of $[\alpha]$ to a class $[\alpha]^{\mathrm{ur}} \in H^0(\fIG_G(p^\infty), \cH^1_{\dR}(A)^{\can})$ with the following property: its restriction to $\fIG_G(p^\infty)^{\ord}$ takes values in the subsheaf $\cU$.
  \end{proposition}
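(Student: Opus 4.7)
The plan is to construct $[\alpha]^{\mathrm{ur}}$ globally on $\fIG_G(p^\infty)$ by applying contravariant Dieudonn\'e functoriality to (the dual of) the tautological inclusion $\alpha: \mu_{p^\infty} \hookrightarrow A^\Sigma[p^\infty]$, and to deduce both the lifting property and the unit-root property as formal consequences. First, on $\fIG_G(p^\infty)$ the principal polarization (extended across the boundary via the Faltings--Chai theory of semiabelian schemes over $X_G^\Sigma$) combines with Cartier duality to produce a globally defined morphism of $p$-divisible groups
\[ \alpha^\vee : A^\Sigma[p^\infty] \twoheadrightarrow \Qp/\Zp. \]
Applying the contravariant Dieudonn\'e crystal functor and evaluating on the trivial PD thickening of $\fIG_G(p^\infty)$ yields a morphism of filtered coherent sheaves
\[ \mathbb{D}(\alpha^\vee):\ \cO = \mathbb{D}(\Qp/\Zp) \longrightarrow \mathbb{D}(A^\Sigma[p^\infty]) = \cH^1_{\dR}(A)^{\can}, \]
and I would set $[\alpha]^{\mathrm{ur}}$ to be the image of $1$. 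By functoriality of the Hodge filtration, composing this with the projection to $\omega_{A^\Sigma}^\vee$ produces the map on Lie algebras induced by $\alpha$, which is exactly the class $[\alpha]$; hence $[\alpha]^{\mathrm{ur}}$ lifts $[\alpha]$.

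Over the ordinary locus, $A[p^\infty]$ splits canonically into its multiplicative and \'etale parts, and the surjection $\alpha^\vee$ automatically factors through the \'etale quotient. Consequently the image of $\mathbb{D}(\alpha^\vee)$ lies inside $\mathbb{D}(A[p^\infty]^{\mathrm{et}}) \subset \cH^1_{\dR}(A)^{\can}$, which is precisely the unit-root subsheaf $\cU$ (characterized as the slope-zero subsheaf for the crystalline Frobenius). This establishes the unit-root property with no extra work.

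For uniqueness, any two lifts of $[\alpha]$ differ by a section of $\omega_{A^\Sigma}$; if both lie in $\cU$ on the ordinary locus, their difference vanishes there. Since $\omega_{A^\Sigma}$ is locally free and the ordinary locus is schematically dense in $\fIG_G(p^\infty)$ (the non-ordinary stratum in $\fX_G^{\ge 1}$ is the zero locus of a lift of the second Hasse invariant, hence a Cartier divisor, and this pulls back to a Cartier divisor under the flat tower $\fIG_G(p^\infty) \to \fX_G^{\ge 1}$), the difference must be identically zero. The main technical obstacle is that Dieudonn\'e functoriality must be invoked on the \emph{entire} formal scheme $\fIG_G(p^\infty)$ rather than stratum-by-stratum; this relies on the Faltings--Chai extension of the Dieudonn\'e crystal across the toroidal boundary together with the globality of $\mathbb{D}$ on the crystalline site, so that no case analysis between ordinary and non-ordinary points is required.
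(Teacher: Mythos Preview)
Your Dieudonn\'e-theoretic approach is natural, and over the open Shimura variety (where $A$ is an abelian scheme) it is essentially a repackaging of the paper's universal-vectorial-extension argument, via the Mazur--Messing identification of $\mathbb{D}(A[p^\infty])$ with the invariant differentials on the universal extension. The difficulty is at the toroidal boundary. The equality you write, $\mathbb{D}(A^\Sigma[p^\infty]) = \cH^1_{\dR}(A)^{\can}$, is false there: at a boundary point where $A^\Sigma$ has toric rank $r>0$, the $p$-divisible group $A^\Sigma[p^\infty]$ has height $4-r$, so its Dieudonn\'e module has rank $4-r$, whereas $\cH^1_{\dR}(A)^{\can}$ is locally free of rank $4$ everywhere. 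What Faltings--Chai provides is a canonical (log) extension of the de Rham sheaf, not an identification of that extension with the crystal of the degenerate $p$-divisible group; the two differ precisely by the ``weight $2$'' piece coming from the character lattice of the torus. So the appeal to ``globality of $\mathbb{D}$'' in your last paragraph does not do what you need, and a genuinely case-free construction would require log $p$-divisible groups or log crystals, which is more than you have invoked. (The existence of $\alpha^\vee$ across the boundary via an extended polarization is also more delicate than stated, for the same reason: the semiabelian scheme is not self-dual in the naive sense.)

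The paper sidesteps the boundary entirely. It constructs the section over the preimage $\fIG_G(p^\infty)^{\circ}$ of the open Shimura variety $Y_G$ (using the universal vectorial extension of the abelian scheme $A$, which gives exactly your $\mathbb{D}(\alpha^\vee)(1)$ there) and separately over the ordinary locus $\fIG_G(p^\infty)^{\ord}$ (using the unit-root splitting), checks these agree on the overlap, and then observes that the complement of $\fIG_G(p^\infty)^{\circ} \cup \fIG_G(p^\infty)^{\ord}$ --- namely the non-ordinary part of the boundary --- has codimension $\ge 2$. Since $\cH^1_{\dR}(A)^{\can}$ is locally free, a Hartogs-type extension then yields the global section. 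Your uniqueness argument via density of the ordinary locus is correct and matches the paper's; it is only the existence half that needs repair, and the cleanest fix is to supply this codimension-$2$ patching rather than to attempt a direct global construction.
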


  \begin{proof}
   By construction, there is a unique class $[\alpha]^{\ord} \in H^0(\fIG_G(p^\infty)^{\ord}, \cH^1_{\dR}(A)^{\can})$ lifting the restriction of $[\alpha]$ and taking values in the unit-root subsheaf $\cU$. Since $\fIG_G(p^\infty)^{\ord}$ is dense in $\fIG_G(p^\infty)$, it follows that $[\alpha]^{ur}$ is unique if it exists.

   Let $\fIG_G(p^\infty)^{\circ}$ denote the preimage in $\fIG_G(p^\infty)$ of the open Shimura variety $Y_G \pmod p$. Since the complement of $\fIG_G(p^\infty)^{\circ} \cup \fIG_G(p^\infty)^{\ord}$ has codimension $\ge 2$ in $\fIG_G(p^\infty)$ (and the sheaf is locally free), it suffices to find a second section $[\alpha]^{\circ}$ over $\fIG_G(p^\infty)^{\circ}$ which coincides with $[\alpha]^{\ord}$ where both are defined.

   We shall carry this out using a generalisation of the construction of the unit-root splitting given in \cite{iovita00}. Recall that if $A/S$ is an abelian scheme over an arbitrary base $S$, then there exists a \emph{universal vectorial extension} of $A$, which is universal among short exact sequences of $S$-group schemes
   \[ 0 \to V \to I \to A \to 0 \]
   where $V$ is a vector group. Moreover, the space of invariant differentials $\operatorname{Inv}(I/S)$ is isomorphic to $H^1_{\dR}(A/S)$, with the subspace $\operatorname{Inv}(A/S)$ corresponding to $\Fil^1$. Taking associated formal groups, we have a short exact sequence $0 \to \hat V \to \hat I \to \hat A \to 0$.

   If we are given a morphism $\alpha: \hat{\mathbf{G}}_m \into \hat A$, and $\hat I_\alpha$ denotes the pullback of this extension along $\alpha$, we have a diagram of exact sequences
   \[
    \begin{tikzcd}
     0 \rar & \hat{V} \rar & \hat{I}    \rar & \hat{A} \rar& 0\\
     0 \rar & \hat{V} \rar\uar[equals]    & \hat{I}_\alpha \rar\uar[hook] & \hat{\mathbf{G}}_m \rar \uar[hook] & 0
    \end{tikzcd}
   \]
   However, any extension of a formal multiplicative group by an additive one must be split, so there is a (necessarily unique) map $\hat{\mathbf{G}}_m \to \hat I_\alpha$ splitting the lower sequence. Thus the composite
   \[ v_\alpha: \operatorname{Inv}(I/S) \to \operatorname{Inv}(I_\alpha/S) \to \operatorname{Inv}\left(\hat{\mathbf{G}}_m/S\right) \]
   is an extension of the pullback map $\operatorname{Inv}(A/S) \to \operatorname{Inv}\left(\hat{\mathbf{G}}_m/S\right) \cong \cO_S$ to $H^1_{\dR}(A/S)$. Moreover, if $p$ is topologically nilpotent on $S$ and $A$ is ordinary, then $\hat A$ is itself of multiplicative type. Hence we have a splitting of the top row, which is clearly compatible with the splitting of the bottom row; and the main result of \cite{iovita00} shows that this construction recovers the unit-root splitting. This gives the required section $[\alpha]^\circ$.
  \end{proof}

  \begin{notation}
   Let $Q$ be the subgroup $P_{\Sieg} \cap \overline{P}_{\Kl}$ of $G$, where $P_{\Sieg}$ is the standard Siegel parabolic subgroup and $\overline{P}_{\Kl} = J^{-1} P_{\Kl} J$ is the lower-triangular Klingen.
  \end{notation}

  Note that $T$ is a Levi subgroup of $Q$, so any weight $\lambda(r_1, r_2; c)$ can be regarded as a representation of $Q$.

  \begin{definition}
   Let $\cT^{\Kl}$ denote the sheaf over $\fX_{G, \Kl}^{\ge 1}(p^\infty)$ parametrising bases $f_1, \dots, f_4$ of $\cH^1_{\dR}(A)^{\can}$ compatible with the filtration and polarisation, and with the additional property that the pullback of $f_4$ to $\fIG_{G, \Kl}^{\ge 1}(p^\infty)$ is a scalar multiple of $[\alpha]^{\mathrm{ur}}$. This is is a reduction of structure of the $P_{\Sieg}$-torsor $\cT$ to a $Q$-torsor over $\fX_{G, \Kl}^{\ge 1}(p^\infty)$.
  \end{definition}

  One can interpret the comparison morphism of \S \ref{sect:Gsetup} using this torsor. Let $k_1 \ge k_2$ be integers. We shall see in Lemma \ref{lemma:HWtheory}(i) below that the inclusion of the highest weight space $\lambda(k_1, k_2)$ into $W_G(k_1, k_2)$ has a $Q$-equivariant splitting. If we pull back further to $\fIG_G(p^\infty)$, the sheaf $[\lambda(k_1 - k_2, 0)]$ has a canonical trivialisation, so we obtain a morphism of sheaves
  \[ [ W_G(k_1, k_2) ] \to [W_G(k_2, k_2)] = (\det \omega_A)^{k_2}. \]
  Pushing back down to $\fX_{G, \Kl}^{\ge 1}(p)$, this gives us a morphism
  \[ [ W(k_1, k_2) ] \to \left( \pi_\star \cO_{\fIG_G(p^\infty)} \right)^{\Gamma = k_1 - k_2} \otimes (\det \omega_A)^{k_2}, \]
  which is the sought-after comparison morphism. The advantage of this new interpretation is that we can easily see how to extend the comparison morphism to larger coefficient sheaves, using some simple Lie-theoretic computations.

  \begin{lemma} \
   \label{lemma:HWtheory}
   \begin{enumerate}[(i)]
    \item Let $k_1 \ge k_2$. Then there is a unique (up to scalars) $Q$-equivariant map
    \[ W(k_1, k_2; c) \to \lambda(k_1, k_2; c). \]

    \item Let $r_1 \ge r_2 \ge 0$. Then there is a unique (up to scalars) $Q$-equivariant map
    \[ V(r_1, r_2; r_1 + r_2) \to \lambda(r_1, -r_2; r_1 + r_2).\]
    This map factors through $V / \Fil^{r_1 + 1} V$.

    \item The restriction of the homomorphism (ii) to $\Gr^{r_1} V$ is non-trivial, and factors as the composite of the projection from $\Gr^{r_1} V$ to its unique $M_{\Sieg}$-summand isomorphic to $W(r_1, -r_2; r_1 + r_2)$, composed with the map from (i) for this subrepresentation.
   \end{enumerate}
  \end{lemma}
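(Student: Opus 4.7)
The plan is to reduce all three statements to weight-space bookkeeping. Since $T \subset Q$, any $Q$-equivariant map from a $P_{\Sieg}$-representation to a one-dimensional target is automatically $T$-equivariant, hence factors through the weight space of the target; the only nontrivial content is then $U_Q$-equivariance. My first step is to identify the structure of $U_Q$ at the level of roots. Writing $\alpha_1 = \lambda(1,-1;0)$ and $\alpha_2 = \lambda(0,2;0)$ for the usual simple roots of $\GSp_4$, inspection of the root sets of $P_{\Sieg}$ (namely $\{\pm\alpha_1,\alpha_2,\alpha_1+\alpha_2,2\alpha_1+\alpha_2\}$) and $\overline{P}_{\Kl}$ (namely $\{\pm\alpha_2,-\alpha_1,-\alpha_1-\alpha_2,-2\alpha_1-\alpha_2\}$) shows that $Q$ has Levi $T$ and abelian unipotent radical with root spaces exactly for $-\alpha_1$ and $\alpha_2$. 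Of these, $\alpha_2$ sits inside the unipotent radical of $P_{\Sieg}$, so it acts trivially on any $P_{\Sieg}$-representation inflated from $M_{\Sieg}$, whereas $-\alpha_1$ is an $M_{\Sieg}$-root acting as a lowering operator.

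Part (i) is then immediate: the map $W(k_1,k_2;c)\to \lambda(k_1,k_2;c)$ is projection onto the one-dimensional highest-weight line, and since $-\alpha_1$ strictly lowers $T$-weights this projection is automatically $U_Q$-equivariant; uniqueness follows from the one-dimensionality of the highest-weight space. For part (ii), the target weight $\lambda(r_1,-r_2;r_1+r_2)$ lies in the Weyl orbit of the highest weight, so it occurs in $V(r_1,r_2;r_1+r_2)$ with multiplicity one. $U_Q$-equivariance of the projection onto this weight space reduces to checking that no weight of $V$ has the form $\lambda(r_1,-r_2)+k\alpha_1$ or $\lambda(r_1,-r_2)-k\alpha_2$ for some $k\ge 1$; these candidates are $\lambda(r_1+k,-r_2-k;r_1+r_2)$ and $\lambda(r_1,-r_2-2k;r_1+r_2)$, both of which violate the inequalities $|a|,|b|\le r_1$ and $|a|+|b|\le r_1+r_2$ cutting out the convex hull of the Weyl orbit of $(r_1,r_2)$. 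The factorisation through $V/\Fil^{r_1+1}V$ is automatic once one notes that $\lambda(r_1,-r_2;r_1+r_2)$ has filtration index exactly $r_1$, so the projection annihilates $\Fil^{r_1+1}V$ by $T$-equivariance.

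For part (iii), I analyse the $M_{\Sieg}$-structure of $\Gr^{r_1}V$: its $T$-weights are precisely those $\lambda(r_1',r_2';r_1+r_2)$ occurring in $V$ with $r_1'+r_2'=r_1-r_2$. The weight $(r_1,-r_2)$ maximises $r_1'$ among these, and is itself an extremal weight of $V$, so $W(r_1,-r_2;r_1+r_2)$ appears as an $M_{\Sieg}$-direct summand of $\Gr^{r_1}V$ with multiplicity one; every other $M_{\Sieg}$-summand has highest weight with first coordinate strictly less than $r_1$ and so contains no vector of weight $\lambda(r_1,-r_2)$ at all. Consequently the restriction of the map from (ii) to $\Gr^{r_1}V$ vanishes on those other summands by $T$-equivariance, and on the $W(r_1,-r_2;r_1+r_2)$-summand it is forced, up to a scalar, to coincide with the map of (i) by the uniqueness there; in particular it is nonzero. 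The only genuinely nontrivial step in the whole argument is the combinatorial check in (ii) that the two families of candidate weights lie outside the weight polytope of $V(r_1,r_2)$; everything else is standard highest-weight theory together with the explicit identification of the two root spaces of $U_Q$.
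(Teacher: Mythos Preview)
Your proof is correct and follows essentially the same strategy as the paper's, just made explicit. The paper's argument is very brief: for (i) it observes that the image of $Q$ in $M_{\Sieg}$ is the lower-triangular Borel, so the claim is standard highest-weight theory for $\GL_2$; for (ii) it observes that $Q$ is contained in a Weyl-conjugate $B'$ of the standard Borel of $G$ for which $\lambda(r_1,-r_2)$ is the lowest weight, so the $N_{B'}$-coinvariant projection gives the required $Q$-map and uniqueness follows from multiplicity one; and (iii) is declared ``clear by considering the weights appearing in each factor''. Your computation of the root set $\{-\alpha_1,\alpha_2\}$ of $U_Q$, followed by the direct weight-polytope check that neither $\lambda(r_1,-r_2)+k\alpha_1$ nor $\lambda(r_1,-r_2)-k\alpha_2$ lies in $V$, is exactly what underlies the paper's Borel argument (indeed, the Borel $B'$ in question has positive roots $\{-\alpha_1,\alpha_2,-\alpha_1-\alpha_2,-2\alpha_1-\alpha_2\}$, and your two excluded weight-families are precisely what one needs to show that the $(r_1,-r_2)$-weight space is $U_Q$-coinvariant). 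So there is no genuine difference in method, only in the level of detail.
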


  \begin{proof}
   Part (i) is clear, since the image of $Q$ in the Levi quotient of $P_{\Sieg}$ is the lower-triangular Borel. For part (ii), we argue analogously, using the fact that $Q$ is contained in a Weyl-group conjugate of the standard Borel subgroup of $G$, and $\lambda(r_1, -r_2; c)$ is the lowest-weight vector for this conjugate Borel. The compatibility (iii) is clear by considering the weights appearing in each factor.
  \end{proof}

  \begin{proposition}
   \label{prop:Qmorphism}
   The morphism of $Q$-representations
   \[ \Gr^{r_1} V \otimes W(3, 1; 0) \longrightarrow W(r_1 + 3, 1-r_2; r_1 + r_2) \longrightarrow \lambda(r_1 + 3, 1-r_2; r_1 + r_2) \]
   has a unique $Q$-equivariant extension to the whole of $\tilde L_1$.
  \end{proposition}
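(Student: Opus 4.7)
The plan is to establish existence and uniqueness separately, both by reducing to Lemma \ref{lemma:HWtheory}.

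For \emph{existence}, I would take the tensor product of the two $Q$-equivariant maps provided by Lemma \ref{lemma:HWtheory}: the map $\varphi : V/\Fil^{r_1+1} V \to \lambda(r_1, -r_2; r_1 + r_2)$ from part (ii), and the map $\psi : W(3, 1; 0) \to \lambda(3, 1; 0)$ from part (i). Since $\lambda(r_1, -r_2; r_1 + r_2) \otimes \lambda(3, 1; 0) = \lambda(r_1 + 3, 1 - r_2; r_1 + r_2)$, this produces a $Q$-equivariant morphism $\varphi \otimes \psi : \tilde L_1 \to \lambda(r_1 + 3, 1 - r_2; r_1 + r_2)$. To identify its restriction to $L_1' = \Gr^{r_1} V \otimes W(3, 1; 0)$ with the composite named in the statement (up to scalar), I would invoke Lemma \ref{lemma:HWtheory}(iii), which factors $\varphi|_{\Gr^{r_1} V}$ as the projection onto the unique $M_{\Sieg}$-summand $W(r_1, -r_2; r_1 + r_2) \subset \Gr^{r_1} V$ followed by the map of Lemma (i); tensoring with $\psi$ and using that $W(r_1 + 3, 1 - r_2)$ is the top Clebsch--Gordan summand of $W(r_1, -r_2) \otimes W(3, 1)$, one recovers the desired factorisation.

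For \emph{uniqueness}, applying $\Hom_Q(-, \lambda)$ with $\lambda = \lambda(r_1 + 3, 1 - r_2; r_1 + r_2)$ to the short exact sequence $0 \to L_1' \to \tilde L_1 \to \tilde L_1 / L_1' \to 0$ reduces the statement to the vanishing $\Hom_Q(\tilde L_1 / L_1', \lambda) = 0$. The induced Hodge filtration on $\tilde L_1 / L_1' = (V/\Fil^{r_1} V) \otimes W(3, 1; 0)$ has graded pieces $\Gr^n V \otimes W(3, 1; 0)$ for $0 \le n \le r_1 - 1$, and by d\'evissage it suffices to prove $\Hom_Q\bigl(\Gr^n V \otimes W(3, 1; 0),\, \lambda\bigr) = 0$ for every such $n$.

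The key observation for these vanishings is that on a single graded piece the action of $Q \cap U_{\Sieg}$ is trivial: the representation $W(3, 1; 0)$ carries the trivial $U_{\Sieg}$-action by construction, while $Q \cap U_{\Sieg}$ is generated by the single root vector $E_{\alpha_2}$, which raises the Hodge weight and therefore sends $\Fil^n V$ into $\Fil^{n+1} V$, acting by zero on $\Gr^n V$. Hence $Q$-equivariance here reduces to equivariance for $Q \cap M_{\Sieg}$, the lower Borel of $M_{\Sieg}$, and a nonzero such map to a character $\lambda$ can exist only when $\lambda$ appears as the highest-weight quotient of some irreducible $M_{\Sieg}$-summand of the source. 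But every such summand of $\Gr^n V \otimes W(3, 1; 0)$ has highest weight $(a, b; r_1 + r_2)$ with $a + b = (2n - r_1 - r_2) + 4 \le r_1 - r_2 + 2$ for $n \le r_1 - 1$, whereas $\lambda$ has coordinate-sum $r_1 - r_2 + 4$; the sums cannot match, so the required $\Hom$ vanishes. I do not anticipate a significant obstacle in this plan -- it is essentially a weight-and-filtration calculation on top of Lemma \ref{lemma:HWtheory}. The mildly delicate feature is that $E_{\alpha_2}$ does act non-trivially on $V/\Fil^{r_1} V$ itself, genuinely linking distinct graded pieces, but acts by zero on each graded piece in isolation; this is precisely what allows the step-by-step d\'evissage to go through with only $(Q \cap M_{\Sieg})$-equivariance at each stage.
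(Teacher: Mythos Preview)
Your proof is correct and follows essentially the same approach as the paper, which simply states that the proposition ``is an easy consequence of Lemma \ref{lemma:HWtheory}(iii)''. Your argument makes explicit what the paper leaves implicit: existence via the tensor product of the maps from parts (i) and (ii) of the lemma (with (iii) identifying the restriction to $\Gr^{r_1} V \otimes W(3,1;0)$), and uniqueness by the weight-count showing that $\lambda(r_1+3, 1-r_2; r_1+r_2)$ cannot occur as a $(Q \cap M_{\Sieg})$-quotient of any lower graded piece.
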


  \begin{proof}
   This is an easy consequence of Lemma \ref{lemma:HWtheory}(iii).
  \end{proof}

  In terms of sheaves, this gives the following.
  \begin{definition}
   For $r_1 \ge r_2 \ge 0$, we define a map
   \[
   [\widetilde L_1] \to \left( \pi_\star \cO_{\fIG_G(p^\infty)} \right)^{\Gamma = r_1 + r_2 + 2} \otimes (\det \omega_A)^{1-r_2}
   \]
   using the $Q$-equivariant morphism of Proposition \ref{prop:Qmorphism}.
  \end{definition}

  \begin{remark}
   If $r_2 = 0$, so that $[V] = \Sym^{r_1} \cH^1_{\dR}(A)$, this morphism has a simple explicit description: it is given by restriction to $\Sym^{r_1} \cH^1_{\dR}(\cG_1)$ and the trivialisation $\cG_1 \cong \mu_{p^\infty}$ over $\fIG_G(p^\infty)$.
  \end{remark}

  \begin{corollary}[``Unit root splitting'']
   \label{cor:unitroot}
   For any $r_1 \ge r_2 \ge 0$, we have a diagram of cohomology groups
   \[
    \begin{tikzcd}
     H^1\left(\fX_{G, \Kl}(p)^{\ge 1}, [\tilde L_1](-D)\right)\ar[rd, dashed]\\
     H^1\left(\fX_{G, \Kl}(p)^{\ge 1}, [L_1](-D)\right) \rar \uar[hook] &
     H^1\left(\fX_{G, \Kl}(p)^{\ge 1}, \fF(r_1 + 3, 1-r_2)(-D)\right).
    \end{tikzcd}
   \]
  \end{corollary}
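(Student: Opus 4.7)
The plan is to take the dashed arrow to be the morphism on $H^1$ induced by the sheaf map $[\tilde L_1] \to \fF(r_1 + 3, 1-r_2)$ constructed in the Definition immediately preceding the corollary (identifying $\omega_G(k_2,k_2)$ with $(\det \omega_A)^{k_2}$ so that the target of that sheaf map is literally $\fF(r_1+3, 1-r_2)$). Since the map is built from $P_{\Sieg}$-equivariant maps of coefficient systems, it carries the subcanonical extension into the subcanonical extension, so descends to a well-defined morphism on cuspidal $H^1$.

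Commutativity of the triangle is then a statement about sheaf morphisms. The left vertical arrow is induced by the composite $L_1 \hookrightarrow L_1' \hookrightarrow \tilde L_1$, where the first map is any $M_{\Sieg}$-equivariant splitting of the canonical projection $L_1' \twoheadrightarrow L_1$ noted in the text (such a splitting exists because $M_{\Sieg}$ is reductive, so finite-dimensional $M_{\Sieg}$-representations are semisimple), and the second is the inclusion of the smallest filtration subspace. The key input is then Proposition \ref{prop:Qmorphism} together with part (iii) of Lemma \ref{lemma:HWtheory}: the restriction to $L_1' \subset \tilde L_1$ of the $Q$-equivariant morphism $\tilde L_1 \to \lambda(r_1+3, 1-r_2; r_1+r_2)$ used to construct the dashed arrow factors as $L_1' \twoheadrightarrow L_1 \to \lambda(r_1+3, 1-r_2; r_1+r_2)$, with the second arrow being the standard highest-weight projection of Lemma \ref{lemma:HWtheory}(i). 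This latter is by construction exactly the map of $P_{\Sieg}$-representations underlying the classical comparison morphism $[L_1] \to \fF(r_1+3, 1-r_2)$ (the bottom arrow), and the composition $L_1 \hookrightarrow L_1' \twoheadrightarrow L_1$ is the identity, so commutativity at the level of sheaves (and hence on $H^1$) follows.

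The injectivity indicated by the hook on the left vertical map is an auxiliary claim that does not enter into the construction of the dashed arrow: it follows by passing to the $\Pif^\vee$-generalised eigenspace in the spherical Hecke algebra and invoking the analogue for $L_1, L_1', \tilde L_1$ of the $\Pif'$-isotypical isomorphism already established for $L_2, L_2', \tilde L_2$ (via Serre duality and Theorem \ref{thm:cohcoh}). I do not anticipate any serious obstacle, as the corollary is essentially a formal packaging of the Definition and Proposition \ref{prop:Qmorphism} that immediately precede it; the only substantive input is the $Q$-equivariance of the extended comparison morphism, which has already been done.
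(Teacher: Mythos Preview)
Your proposal is essentially correct and matches the paper's (implicit) approach: the corollary is just a repackaging of the Definition immediately preceding it, with commutativity built in by the way Proposition~\ref{prop:Qmorphism} defines the $Q$-equivariant map $\tilde L_1 \to \lambda(r_1+3,1-r_2;r_1+r_2)$ as the unique extension of the composite $L_1' \twoheadrightarrow L_1 \to \lambda$.

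One small correction: you write that ``the map is built from $P_{\Sieg}$-equivariant maps of coefficient systems''. It is not --- the whole point of the preceding section is that the comparison morphism uses the $Q$-torsor $\cT^{\Kl}$ over $\fX_{G,\Kl}^{\ge 1}(p)$, and the map of Proposition~\ref{prop:Qmorphism} is only $Q$-equivariant, not $P_{\Sieg}$-equivariant. This does not affect your conclusion about the $(-D)$ twist (tensoring with the ideal sheaf of the boundary is a sheaf-level operation independent of the torsor), but the sentence as written is inaccurate. Also, your justification of the hook only yields injectivity on the $\Pif^\vee$-generalised eigenspace, not on the full $H^1$; since the inclusion $L_1' \hookrightarrow \tilde L_1$ is the top filtration step and hence not split as a $P_{\Sieg}$-map, there is no obvious sheaf-level retraction, and one should not expect injectivity globally. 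The paper does not comment on this, and for all subsequent applications only the $\Pi^\vee$-part is used, so your treatment is adequate --- but it would be cleaner to say so explicitly rather than to suggest the hook holds unconditionally.
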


 \subsection{Branching to $H$}

  Recall that we have defined a homomorphism of formal schemes
  \[ \iota_\infty: \fX_{H}^{\ord}(p^\infty) \to \fX_{G, \Kl}^{\ge 1}(p^\infty). \]
  The space $\fX_{H}^{\ord}(p^\infty)$ classifies pairs of \emph{ordinary} elliptic curves $(E_1, E_2)$ with prime-to-$p$ level structure and isomorphisms
  \[ \gamma: E_1[p^\infty]^\circ \xrightarrow{\ \cong\ } E_2[p^\infty]^\circ.\]
  Hence there is a reduction of $\iota_\infty^*(\cT^{\Kl})$ to a torsor for the group $\{\diag(x, x, y, y) : x, y \in \cO^\times\}$.


  \begin{theorem}
   Let $V_H$ be a small subrepresentation of $V$ given by a pair $(t_1, t_2)$ as above. Then the composite
   \[
   H^0(\fX_H^{\ord}, \omega^{1, 1} \otimes [V_H](-D)) \to H^1(\fX_G^{\ge 1}, \tilde L_1(-D)) \to H^1\left(\fX_G^{\ge 1}, \fF(r_1 + 3, 1-r_2)(-D)\right)
   \]
   coincides with the composite of the unit-root splitting
   \[ H^0(\fX_H^{\ord}, \omega^{1, 1} \otimes [V_H](-D)) \to H^0(\fX_H^{\ord}, \omega^{(t_1 + 1, t_2 + 1)}(-D)) \]
   and the pushforward
   \[ H^0(\fX_H^{\ord}, \omega^{(t_1 + 1, t_2 + 1)}(-D)) \to H^1(\fX_G^{\ge 1}, L_1(-D)) \to H^1\left(\fX_G^{\ge 1}, \fF(r_1 + 3, 1-r_2)(-D)\right).\]
  \end{theorem}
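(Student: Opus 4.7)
The strategy is to pull both composites back along $\iota_\infty$ to sheaf morphisms on the $H$-Igusa tower, reduce to a statement about $Q$-equivariant maps of representations, and conclude by the uniqueness part of Lemma \ref{lemma:HWtheory}. As noted in the text, $\iota_\infty^*\cT^{\Kl}$ admits a reduction to a torsor for the split torus $S = \{\diag(x,x,y,y)\}$, which is a common maximal torus of $G$ and $H$ contained in $B_H \cap Q$. Consequently, any $Q$-equivariant map of $Q$-representations yields, after pulling back to $\fIG_H(p^\infty)$, a morphism of trivialised sheaves determined entirely by its restriction to $S$-weight spaces. It therefore suffices to prove that, viewed on this Igusa tower, both compositions reduce to the \emph{same} $S$-equivariant projection onto the weight space $\lambda(r_1+3,1-r_2;r_1+r_2)$ inside $V_H \otimes W_H(1,1;r_1+r_2) \otimes \alpha_{G/H}^{-1}$.

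Now I would unwind the two paths at the representation-theoretic level. On Path 1, the pushforward $\iota^{\text{nearly}}_\star$ is induced by the $B_H$-equivariant inclusion $W_H(1,1;r_1+r_2) \otimes V_H \otimes \alpha_{G/H}^{-1} \hookrightarrow \tilde L_1|_{B_H}$, and the subsequent map $[\tilde L_1] \to \fF(r_1+3,1-r_2)$ is induced by the $Q$-equivariant morphism of Proposition \ref{prop:Qmorphism} composed with the canonical trivialisation on $\fIG_G(p^\infty)$. By Lemma \ref{lemma:HWtheory}(iii), this composition factors through the projection of $V_H \subset V$ onto the $W_H(t_1,t_2;r_1+r_2)$-summand of $\Gr^{r_1} V$, followed by the Borel-equivariant projection onto the $S$-highest weight line $\lambda(t_1,t_2;r_1+r_2)$, tensored with the analogous projection of $W_H(1,1;r_1+r_2)$ onto its highest weight. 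On Path 2, the unit-root splitting on $\fX_H^\ord$ is, by its very construction from the $S$-reduction of $\iota_\infty^*\cT^{\Kl}$, the projection of $[V_H]$ onto the same highest weight line $\lambda(t_1,t_2;r_1+r_2)$; the subsequent pushforward to $[L_1]$ is induced by the standard $B_H$-equivariant map $W_H(t_1+1,t_2+1;r_1+r_2) \otimes \alpha_{G/H}^{-1} \to L_1|_{B_H}$, and the comparison with $\fF$ uses Lemma \ref{lemma:HWtheory}(i), which is precisely the restriction to $L_1$ of the map in Lemma \ref{lemma:HWtheory}(iii). Thus both paths compute the same $S$-equivariant morphism.

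The last step is to check that the two scalar normalisations coincide. The map of Proposition \ref{prop:Qmorphism} was \emph{defined} so that its restriction to the subrepresentation $\Gr^{r_1} V \otimes W(3,1;0)$ of $\tilde L_1$ agrees with the highest-weight projection of $W_G(r_1+3,1-r_2;r_1+r_2)$, i.e.\ with the map used in the standard comparison morphism $[L_1] \to \fF(r_1+3,1-r_2)$. Combined with the corresponding compatibility for the embedding $V_H \hookrightarrow V$, which is where the unit-root splitting on $H$ is explicitly computed as projection onto the $S$-highest weight, this forces the two maps to agree without any residual ambiguity.

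\textbf{Main obstacle.} The bookkeeping of torsor reductions, twists by $W_H(0,0;c)$ and $\alpha_{G/H}^{-1}$, and the central characters is the chief source of potential error; a single sign or power of $p$ misattributed among the $Q$-equivariant maps in Lemma \ref{lemma:HWtheory} would alter the conclusion. An alternative, more rigid-analytic route would be to verify the identity at a Zariski-dense set of classical points (where both sides recover the real-analytic Hodge splitting by Theorem \ref{thm:harris}) and appeal to density of the ordinary locus in $\fX_H$; but the direct representation-theoretic argument above appears cleaner and avoids appealing to the archimedean comparison in the $p$-adic statement.
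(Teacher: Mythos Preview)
Your proposal is correct and follows essentially the same approach as the paper: both reduce the question to comparing two sheaf maps on the $H$-Igusa tower, use the reduction of $\iota_\infty^*\cT^{\Kl}$ to a torsor for the split torus $S = \{\diag(x,x,y,y)\}$, and then observe that the resulting $S$-equivariant maps of representations agree. The paper's proof is considerably terser (it simply states that after the torsor reduction the equality of the two $S$-representation maps ``is obvious''), whereas you spell out explicitly which weight-space projections are involved and verify the normalisation via Proposition~\ref{prop:Qmorphism} and Lemma~\ref{lemma:HWtheory}(iii); but this is elaboration of the same argument rather than a different one.
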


  \begin{proof}
   We need to compare two maps of sheaves on $\fX_H^{\ord}(p^\infty)$. The first is the composite
   \[
   V_H(t_1, t_2) \to \iota_\infty^*(V / \Fil^{r_1 + 1}) \to \iota_\infty^*\left( [\lambda(r_1, -r_2)]\right),
   \]
   where the second map is given by the splitting of Corollary \ref{cor:unitroot}. The second is the map
   \[ V_H(t_1, t_2) \to \omega^{t_1, t_2} \to \iota^*\left( [\lambda(r_1, -r_2)]\right),\]
   where the first map comes from the unit-root splitting on $H$. Via our torsor formalism we are reduced to checking the equality of two morphisms of representations of the group $\{\dfour{x}{x}{y}{y} : x, y \in \cO^\times\}$, and this is obvious.
  \end{proof}

  \begin{corollary}
   Theorem \ref{thm:interp2} holds as stated if we assume only that $\phi(\cE)$ is the image under the $p$-adic unit-root splitting of a classical nearly-holomorphic modular form.\qed
  \end{corollary}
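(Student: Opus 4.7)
The argument combines two ingredients. First, applying the theorem immediately preceding the corollary to the small $H$-subrepresentation $V_H = V_H(t_1, t_2)$ of $V$ corresponding to $E_\phi$, one identifies the class $\iota_\star(\phi(\cE)) \in H^1\bigl(\fX_{G,\Kl}^{\ge 1}(p), \fF(r_1+3, 1-r_2)(-D)\bigr)$ with the image of the classical nearly-holomorphic pushforward $\iota^{\text{nearly}}_\star(E_\phi) \in H^1\bigl(X_{G,\Kl}(p^m), [\tilde L_1](-D)\bigr)$ under the $Q$-equivariant morphism $[\tilde L_1] \to \fF(r_1+3, 1-r_2)$ of Proposition \ref{prop:Qmorphism}. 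This is the ``$p$-adic'' half of the argument.

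Second, the natural interpretation of the right-hand side of Theorem \ref{thm:interp2} in the nearly-holomorphic case is the pairing $\langle \iota^{\text{nearly}}_{m,\star}(E_\phi), \tilde\eta_m\rangle$, where $\tilde\eta_m \in H^2(X_{G,\Kl}(p^m), [\tilde L_2](-D))$ is defined from $\tilde\eta$ by the same trace-compatible procedure used to construct $\eta_m$ from $\eta$. By the Serre dual of the proposition comparing $\Pif'$-isotypical parts of $H^2$ of $[L_2]$, $[L_2']$ and $[\tilde L_2]$, the natural arrows $L_1' \hookrightarrow \tilde L_1$ and $L_1' \twoheadrightarrow L_1$ induce isomorphisms on the $\Pif^\vee$-isotypical parts of $H^1$, and $\tilde\eta_m$ corresponds to $\eta_m$ under the resulting identifications. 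Moreover, by Lemma \ref{lemma:HWtheory}(iii), the restriction of the $Q$-equivariant morphism $[\tilde L_1] \to \fF$ to $L_1' \hookrightarrow \tilde L_1$ factors through $L_1' \twoheadrightarrow L_1$ followed by the standard comparison $[L_1] \to \fF$ used in Theorem \ref{thm:vincent}. Combining these compatibilities, the procedure defining $\phi(\langle \iota_\star(\cE), \eta\rangle)$ --- apply $[\tilde L_1] \to \fF$, project via $e_{\Kl}$, invoke classicity, forget cuspidality, pair with $\eta_m$ --- agrees with the direct pairing $\langle \iota^{\text{nearly}}_{m,\star}(E_\phi), \tilde\eta_m\rangle$.

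The only subtlety, namely that $\phi(\cE)$ is $p$-adically cuspidal without $E_\phi$ being classically cuspidal (the ``fake cusp form'' phenomenon), is handled by the overconvergent argument given in Proposition \ref{prop:interp1}, relying on Theorem \ref{thm:OCclass} for overconvergent classicity of both the cuspidal and non-cuspidal ordinary cohomology; no new obstacle arises beyond what was dealt with there.
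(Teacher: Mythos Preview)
Your proposal is correct and matches the paper's intended argument. Note that the paper gives no proof at all: the corollary is stated with a \qed symbol, signalling that it is meant to be an immediate consequence of the theorem immediately preceding it (the ``branching to $H$'' theorem). Your write-up correctly unpacks what that \qed is hiding: (1) the preceding theorem identifies the image of $\iota_\star(\phi(\cE))$ in $H^1(\fF)$ with the image of the formal nearly-holomorphic pushforward via the $Q$-equivariant map of Proposition~\ref{prop:Qmorphism}; (2) on $\Pi_\f^\vee$-isotypical parts the inclusion $L_1' \hookrightarrow \tilde L_1$ and the projection $L_1' \twoheadrightarrow L_1$ are both isomorphisms on $H^1$, and Lemma~\ref{lemma:HWtheory}(iii) makes the two routes into $\fF$ compatible, so pairing with $\tilde\eta_m$ agrees with the procedure defining $\phi(\langle\iota_\star(\cE),\eta\rangle)$; (3) the ``fake cusp form'' issue is handled exactly as in Proposition~\ref{prop:interp1}, since after step~(1) everything lives at the level of $[L_1]$ and $\fF$, where Theorem~\ref{thm:OCclass} applies. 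There is nothing to add.
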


  \begin{remark}
   If $V_H$ is any irreducible $H$-subrepresentation of $V$ (not necessarily small), then the same argument as above extends to show that the composite $H^0(\fX_H^{\ord}, \omega^{1, 1} \otimes [V_H](-D)) \to H^1(\fX_G^{\ge 1}, \tilde L_1(-D)) \to H^1\left(\fX_G^{\ge 1}, \fF(r_1 + 3, 1-r_2)(-D)\right)$ factors through projection to $\omega^{1, 1} \otimes [\Gr^{r_1} V_H]$, regarded as a subsheaf of $[V_H]$ over $\fX_H^{\ord}$ via the unit root splitting. The small subrepresentations are precisely those where $\Gr^{r_1} V_H$ is the smallest nonzero filtration step.
  \end{remark}


\section{Families of Eisenstein series}


 The theory developed in the previous sections allows us to define $p$-adic measures interpolating the cohomological periods $\langle \iota_{m, \star}(E), \eta\rangle$, as $E$ varies over the specialisations of some family of cuspidal $p$-adic modular forms $\cE$ on $H$. In this section, we shall specify the particular families $\cE$ which we shall consider, which will be built up from Eisenstein series and cusp forms for $\GL_2$. We shall prove in the remaining sections that the resulting periods $\langle \iota_{m, \star}(E), \eta\rangle$ are special values of $L$-functions.

 \subsection{Non-holomorphic Eisenstein series}
  \label{sect:nonholoEis}

  Let $\Phi_\f \in \cS(\Af^2, \CC)$ be a Schwartz function.

  \begin{definition}
   For $k \ge 1$, $\tau = x + iy$ in the upper half-plane, and $s \in \CC$ with $\Re(s) \ge 1$, we define
   \[ E^{(k, \Phi_\f)}(\tau; s) \coloneqq \frac{\Gamma(s + \tfrac{k}{2})}{(-2\pi i)^k \pi^{s- \tfrac{k}{2}}} \sum_{(m, n) \in \QQ^2 - (0, 0)} \frac{\Phi_\f(m, n) y^{(s - \tfrac{k}{2})}}{(m\tau + n)^k |m\tau + n|^{2s-k}}. \]
   This can be extended to all $s \in \CC$ by analytic continuation in $s$.
  \end{definition}

  \begin{remark}
   If $\Phi_\f$ is the indicator function of $(0, \alpha) + \hat\ZZ^2$, this series is $E^{(k)}_\alpha(\tau, s - \tfrac{k}{2})$ in the notation of \cite{LLZ14}.
  \end{remark}

  If $\chi$ is a Dirichlet character, and $\widehat{\chi}$ the corresponding adelic character as in \S\ref{sect:dirichlet}, we define
  \[  R_\chi(\Phi_\f) \coloneqq \int_{a \in \widehat{\ZZ}^\times} \widehat{\chi}(a) \left(\stbt{a}{0}{0}{a} \cdot \Phi_\f\right)\, \mathrm{d}^\times a,\]
  the projection of $\Phi_\f$ to the $\widehat{\chi}^{-1}$-isotypical subspace for $\hat{\ZZ}^\times$, and we set
  \(
  E^{(k, \Phi_\f)}(\tau; \chi, s) \coloneqq E^{(k, R_\chi(\Phi_\f))}(\tau; s).
  \)
  Note that $E^{(k, \Phi_\f)}(-; \chi, s)$, vanishes if $(-1)^k \chi(-1) \ne 1$.

  We can interpret $E^{(k, \Phi_\f)}(-; s)$ and $E^{(k, \Phi_\f)}(-; \chi, s)$ as $C^\infty$ sections of a line bundle on the $\GL_2$ Shimura variety. More precisely, the $C^\infty$ sections of the automorphic line bundle $\omega(k)$ are the smooth functions $f: \GL_2(\Af) \times \cH \to \CC$ satisfying
  \[
   f(g, \tau) = (ad - bc) (c\tau + d)^{-k} f\left(\stbt{a}{b}{c}{d} g, \tfrac{a\tau + b}{c\tau + d}\right)\quad\text{for all} \stbt{a}{b}{c}{d} \in \GL_2^+(\QQ).
  \]
  With these notations, $E^{(k, \Phi_\f)}(s) \coloneqq (g, \tau) \mapsto \|\det g\|^{s + 1 - k/2} E^{(k, g \cdot \Phi)}(\tau; s)$ is a $C^\infty$ section of $\omega(k)$, and similarly with $\chi$; and $E^{(k, \Phi_\f)}(\chi, s)$ transforms under the centre of $\GL_2(\Af)$ by $\widehat{\chi}^{-1}\|\cdot\|^{2-k}$.

  \subsection{Nearly holomorphic specialisations}

  \begin{proposition}
   For integers $j \in [0, k-1]$, the $C^\infty$ section $E^{(k, \Phi_\f)}(\tfrac{k}{2} - j)$ of $\omega(k)$ is nearly-holomorphic; and if $\Phi_\f$ takes values in a number field $E$, this section is defined over $E$.
  \end{proposition}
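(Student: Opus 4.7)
The plan is to reduce to the case of classical holomorphic Eisenstein series via the Maass--Shimura raising operator. First I would establish the key identity
\[
\delta_k\, E^{(k,\Phi_\f)}(\tau; s) \;=\; E^{(k+2,\Phi_\f)}(\tau; s),
\]
where $\delta_k = \tfrac{1}{2\pi i}\bigl(\partial_\tau + k/(2iy)\bigr)$ is the usual weight-raising operator. This follows from a direct termwise calculation: using $\partial_\tau y = 1/(2i)$ and the identity $m\bar\tau + n = (m\tau+n) - 2imy$, one checks that $\delta_k$ sends the individual summand $\Phi_\f(m,n)\, y^{s-k/2}/((m\tau+n)^{s+k/2}(m\bar\tau+n)^{s-k/2})$ of weight $k$ to exactly $(s+k/2)/(-4\pi)$ times the analogous summand of weight $k+2$ at the same $s$. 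The normalising prefactor $\Gamma(s+k/2)/((-2\pi i)^k \pi^{s-k/2})$ in the definition was chosen precisely to absorb this constant, yielding the clean form above.

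Iterating the identity gives $E^{(k,\Phi_\f)}(\tau;\, k/2-j) = \delta_{k-2}\circ\cdots\circ\delta_{k-2j}\, E^{(k-2j,\Phi_\f)}\bigl(\tau;\, (k-2j)/2\bigr)$, and for $0 \le j \le (k-1)/2$ the base is a classical holomorphic Eisenstein series of weight $k-2j \ge 1$ at its central critical point. That base form is defined over $E$ by the standard $q$-expansion computation (with Hecke's analytic continuation trick handling the low weights $1,2$). A direct check shows that $\delta$ acts $\ZZ$-linearly on formal $q$-expansions in the graded ring $E\bigl[(4\pi y)^{-1}\bigr][[q]]$, sending $\tilde y^{-r} q^n$ to $q^n\bigl[n\tilde y^{-r} + (r-k)\tilde y^{-r-1}\bigr]$ (with $\tilde y = 4\pi y$), and therefore preserves both the nearly-holomorphic structure and the $E$-rationality. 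This settles the proposition in this range.

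For the complementary range $(k-1)/2 < j \le k-1$ the iterated identity would formally terminate at a ``base'' of non-positive weight, where $\Gamma(s+k/2)$ has a pole and the reduction becomes indeterminate. Here I would instead compute the Fourier expansion of $E^{(k,\Phi_\f)}(\tau; s)$ directly, via Poisson summation in the second argument of $\Phi_\f$: this expresses the $n$-th Fourier coefficient in terms of a Whittaker function of $4\pi|n|y$, and a classical degeneration formula shows that at the integer values $s = k/2 - j$ with $j \in [0,k-1]$ this Whittaker function reduces to a polynomial of degree $\le j$ in $(4\pi y)^{-1}$ times the holomorphic Fourier character $q^n$, while all negative-frequency contributions vanish. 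The main technical obstacle lies here: one must check that the chosen Archimedean normalisation $\Gamma(s+k/2)/((-2\pi i)^k \pi^{s-k/2})$ exactly cancels the transcendental factors in the resulting Fourier expansion, leaving $E$-rational coefficients and hence a section of $\omega(k)$ nearly-holomorphic and defined over $E$.
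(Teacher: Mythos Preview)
The paper does not actually supply a proof of this proposition: it is stated as a standard fact, and the subsequent text simply records the resulting $q$-expansion formula (the normalisations having been chosen to make this formula clean), with the remark preceding the proposition pointing to \cite{LLZ14} for the connection to classical notation. So there is no paper-proof to compare against beyond the implicit appeal to the well-known theory of real-analytic Eisenstein series and their specialisations.

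Your argument is correct and is the standard one. The raising-operator identity $\delta_k E^{(k,\Phi_\f)}(\tau;s) = E^{(k+2,\Phi_\f)}(\tau;s)$ holds exactly as you describe (the termwise computation you sketch is right, and the normalising factor was chosen precisely so that no constant appears), and iterating it reduces the range $0 \le j \le \lfloor (k-1)/2 \rfloor$ to the holomorphic case. Your direct Fourier-expansion approach for the remaining range $\lfloor (k-1)/2 \rfloor < j \le k-1$ is also correct; the degeneration of the Whittaker function to a polynomial in $(4\pi y)^{-1}$ at these integer points, with the transcendental factors absorbed by the $\Gamma$-normalisation, is a classical computation (see e.g.\ Miyake, \emph{Modular Forms}, \S7.2).

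Two minor remarks. First, calling $s = (k-2j)/2$ the ``central critical point'' is a slip of terminology: it is the point where the series is holomorphic, not the centre of a functional equation. Second, you could avoid the separate Fourier computation for the complementary range by invoking the functional equation relating $E^{(k,\Phi_\f)}(-;\chi,s)$ and $E^{(k,\widehat{\Phi}_\f)}(-;\chi^{-1},1-s)$, which exchanges $j \leftrightarrow k-1-j$ and hence reduces the second range to the first; but the direct route you outline is perfectly adequate and is what underlies the $q$-expansion formula the paper states.
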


  \begin{notation}
   We let $\psi$ denote the unique additive character $\AA / \QQ \to \CC^\times$ satisfying $\psi(x_\infty) = \exp(-2\pi i x_\infty)$ for $x_\infty \in \RR$. For $\Phi_\f \in \cS(\Af)$, we write $\Phi_\f'(u, v)$ for the Fourier transform in the second variable only:
   \[ \Phi'_\f(u, v) \coloneqq \int_{\Af} \Phi_\f(u, w) \psi(vw) \, \mathrm{d}w.\]
  \end{notation}

  If we define the ``$q$-expansion'' of a nearly-holomorphic form to be the Fourier expansion of its holomorphic part, then for $n > 0$ we have
  \[
  a_n\left(E^{(k, \Phi_\f)}(\tfrac{k}{2}-j)\right) =  \sum_{\substack{(u, v) \in (\QQ^\times)^2 \\ uv = n}} u^j v^{(k-1-j)} \sgn(u) \Phi_\f'(u, v).
  \]
  The constant term $a_0\left(E^{(k)}_{\Phi_\f}(\tfrac{k}{2}-j)\right)$ is zero unless $j = 0$ or $j = k-1$, in which case it is given by a special value of a linear combination of Hurwitz zeta functions. For our purposes it suffices to note that if $\Phi_\f(-, 0) = \Phi_\f(0, -) = 0$, then the constant term is zero for all $j$.

 \subsection{Families of p-adic Eisenstein series}

  We shall now define a family of local Schwartz functions $\Phi_{p, \mu, \nu}$, depending on a choice of two Dirichlet characters $\mu$ and $\nu$ of $p$-power conductor.

  \begin{definition}
   We define $\Phi_{p, \mu, \nu} \in \cS(\Qp^2, \CC)$ as the unique function such that
   \[ \Phi'_{p, \mu, \nu}(x, y) =
   \begin{cases}
   \mu(x) \nu(y) &\text{if $x, y \in \Zp^\times$,}\\
   0 &\text{otherwise.}
   \end{cases}
   \]
  \end{definition}

  (The values of $\Phi_{p, \mu, \nu}$ can be made explicit in terms of Gauss sums, but we do not need this.) Note that $\Phi_{p, \mu, \nu}$ transforms under $\stbt{a}{0}{0}{d}$, $a,d \in \Zp^\times$, by $\mu(a) \nu^{-1}(d) = \widehat{\mu}^{-1}(a) \widehat{\nu}(d)$.

  Now let $\Phi^{(p)}$ be a Schwartz function on $(\Af^p)^2$ with values in a number field $E$, and $\chi^{(p)}$ a Dirichlet character of prime-to-$p$ conductor such that $\Phi^{(p)}$ satisfies $\stbt{a}{0}{0}{a} \cdot \Phi^{(p)} = \widehat{\chi}^{(p)}(a)^{-1} \Phi^{(p)}$ for $a \in (\widehat{\ZZ}^{(p)})^\times$.

  Let $L$ be the completion of $E$ at some prime above $p$. If $\mu, \nu$ are Dirichlet characters of $p$-power conductor as above, and we set $\chi = \chi^{(p)} \mu \nu^{-1}$, then the function $\Phi_{\mu, \nu} = \Phi^{(p)} \Phi_{p, \mu, \nu}$ is in the image of the idempotent $R_\chi$.

  \begin{theorem}
   Let $R = \Lambda_L(\Zp^\times \times \Zp^\times)$, with its two canonical characters $\kappa_1, \kappa_2$. For each $\Phi^{(p)}$ taking values in $L$, there exists an element $\mathcal{E}^{\Phi^{(p)}}\left(\kappa_1, \kappa_2; \chi^{(p)}\right) \in \mathcal{S}_{\kappa_1 + \kappa_2 + 1}(R)$, whose specialisation at $(a + \mu, b + \nu)$, for integers $a, b\ge 0$, is the $p$-adic modular form associated to the algebraic nearly-holomorphic form
   \[
   (g, \tau) \mapsto  \widehat{\nu}(\det g)^{-1} \cdot E^{(a+b+1, \Phi_{\mu, \nu})}\left(g, \tau; \chi^{(p)} \mu \nu^{-1}, \tfrac{b-a+1}{2}\right)\in M_{a+b+1}^{\mathrm{nh}}.
   \]
   For $g \in \GL_2(\Af^p)$, we have $g \cdot \mathcal{E}^{\Phi^{(p)}} =\|\det g\|^{1-\kappa_1} \cdot \mathcal{E}_{g \cdot \Phi^{(p)}}$.
   The $q$-expansion of $\mathcal{E}^{\Phi^{(p)}}$ at $\infty$ is
   \[
   \sum_{\substack{u, v \in (\ZZ_{(p)}^\times)^2 \\ uv > 0}} \sgn(u) u^{\kappa_1} v^{\kappa_2} (\Phi^{(p)})'(u, v) q^{uv}.
   \]
   The Eisenstein series $\mathcal{E}_{\Phi^{(p)}}(-; \chi^{(p)})$ is identically 0 on the components of $\Spec R$ where $\kappa_1(-1) \kappa_2(-1) \ne -\chi^{(p)}(-1)$.
  \end{theorem}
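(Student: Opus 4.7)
The plan is to build $\mathcal{E}^{\Phi^{(p)}}$ directly through its formal $q$-expansion at the cusp $\infty$, and then deduce every claim from that construction via the $q$-expansion principle for families of $p$-adic modular forms. For each integer $n \ge 1$, I would set
\[ a_n(\kappa_1,\kappa_2) \coloneqq \sum_{\substack{u,v \in \ZZ_{(p)}^{\times} \\ uv = n}} \sgn(u)\, u^{\kappa_1} v^{\kappa_2}\, (\Phi^{(p)})'(u,v), \]
with $a_0 = 0$. Since $(\Phi^{(p)})'$ has compact support in $(\Af^p)^2$, each $a_n$ is a finite sum, and since the universal characters $u \mapsto u^{\kappa_1}$ and $v \mapsto v^{\kappa_2}$ are continuous maps $\Zp^\times \to R^\times$, each $a_n$ lies in $R$. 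This is the candidate $R$-adic $q$-expansion.

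To upgrade this formal series to an element of $\mathcal{S}_{\kappa_1+\kappa_2+1}(R)$, I would verify that at each classical point $\phi = (a+\mu,\,b+\nu)$ with $a,b \ge 0$, the specialization $\phi_{\ast}(\mathcal{E}^{\Phi^{(p)}})$ matches the $q$-expansion of the nearly-holomorphic form $\widehat{\nu}(\det g)^{-1}\cdot E^{(a+b+1,\Phi_{\mu,\nu})}(g,\tau;\chi^{(p)}\mu\nu^{-1},\tfrac{b-a+1}{2})$ written in the statement. This is a direct computation using $(\Phi_{p,\mu,\nu})'(u,v) = \mu(u)\nu(v)\mathbf{1}_{\Zp^{\times}\times\Zp^{\times}}(u,v)$ together with the Fourier-coefficient formula for nearly-holomorphic Eisenstein series recalled in \S\ref{sect:nonholoEis}, so that $u^{a}\mu(u)$ evaluates to $u^{\kappa_1}$ at $\kappa_1 = a+\mu$ and similarly in $v$; the twist $\widehat{\nu}(\det g)^{-1}$ accounts for the discrepancy between the central character implicit in $E^{(k,\Phi)}$ and that of the $(\kappa_1,\kappa_2)$-conventions of $\mathcal{S}$. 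Because classical points are Zariski-dense in $\Spec R$ and $\mathcal{S}_{\kappa_1+\kappa_2+1}(R)$ embeds into $R[[q]]$ by the $q$-expansion principle for $R$-adic cusp forms, this simultaneously shows that the formal series lies in $\mathcal{S}_{\kappa_1+\kappa_2+1}(R)$ and pins down the specialization formula. Cuspidality in the $\mathcal{S}$-sense follows because every ordinary cusp is a $\GL_2(\Af^p)$-translate of $\infty$ and the constant term at $\infty$ of $g \cdot \mathcal{E}^{\Phi^{(p)}}$ will again be zero by the transformation law treated next.

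The transformation $g \cdot \mathcal{E}^{\Phi^{(p)}} = \|\det g\|^{1-\kappa_1}\mathcal{E}^{g\cdot\Phi^{(p)}}$ is then verified by comparing $q$-expansions at $\infty$, using the elementary Fourier identity $(g\Phi)'(u,v) = |\det g|^{-1}\Phi'((u,v)g)$ and a change of variables in the defining sum. Density of classical specializations reduces the check to the standard transformation law for the classical Eisenstein section $E^{(k,\Phi)}(s)$; the factor $\|\det g\|^{1-\kappa_1}$ tracks the normalization $\|\det g\|^{s+1-k/2}$ in the definition of $E^{(k,\Phi)}(s)$, which at a classical point with $\kappa_1=a$, $\kappa_2=b$, $k=a+b+1$, $s=\tfrac{b-a+1}{2}$ specializes to $\|\det g\|^{1-a}$, as required.

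Finally, the vanishing claim is sign bookkeeping on the $q$-expansion. The hypothesis $\stbt{-1}{0}{0}{-1}\cdot\Phi^{(p)} = \widehat{\chi}^{(p)}(-1)^{-1}\Phi^{(p)} = \chi^{(p)}(-1)\Phi^{(p)}$ (using the convention $\widehat{\chi}|_{\widehat{\ZZ}^{\times}} = \chi^{-1}$) gives $(\Phi^{(p)})'(-u,-v) = \chi^{(p)}(-1)(\Phi^{(p)})'(u,v)$, so substituting $(u,v)\mapsto(-u,-v)$ in $a_n$ multiplies the coefficient by $-\kappa_1(-1)\kappa_2(-1)\chi^{(p)}(-1)$; every $a_n$ therefore vanishes precisely on the components where this factor equals $-1$, i.e.\ where $\kappa_1(-1)\kappa_2(-1)\ne-\chi^{(p)}(-1)$. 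Applying the transformation law once more to pass from $\infty$ to every other ordinary cusp, the $q$-expansion principle forces $\mathcal{E}^{\Phi^{(p)}}$ itself to vanish identically on those components. The main obstacle I anticipate is not any single step but the careful bookkeeping of the central-character, Fourier-transform and sign conventions that mediate between the classical object $E^{(k,\Phi)}(g,\tau;s)$ and the $R$-adic object $\mathcal{E}^{\Phi^{(p)}}$; in particular, tracing the origin of the twist $\widehat{\nu}(\det g)^{-1}$ in the specialization formula and correctly identifying ``classical'' versus ``$p$-adic'' Fourier coefficients at $p$ is the part most likely to introduce errors.
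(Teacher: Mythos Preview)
Your overall strategy is sound and is essentially what underlies Katz's construction of the Eisenstein measure, which is all the paper itself invokes (the paper gives no proof beyond the sentence ``This result is essentially a restatement of Katz's theory of the Eisenstein measure''). The $q$-expansion you wrote down is correct, the specialisation check is the right computation, and the sign and equivariance arguments are fine.

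There is, however, a genuine logical gap in the step where you pass from the formal $q$-expansion to an element of $\mathcal{S}_{\kappa_1+\kappa_2+1}(R)$. You write that ``$\mathcal{S}_{\kappa_1+\kappa_2+1}(R)$ embeds into $R[[q]]$ by the $q$-expansion principle \dots\ this simultaneously shows that the formal series lies in $\mathcal{S}_{\kappa_1+\kappa_2+1}(R)$''. But the $q$-expansion principle is an \emph{injectivity} statement; knowing that the map $\mathcal{S}(R)\hookrightarrow R[[q]]$ is injective and that your element of $R[[q]]$ specialises to classical forms at a Zariski-dense set does not, by itself, put your element in the image. What you need is either (a) Katz's geometric construction, which builds the section over the ordinary Igusa tower directly rather than via $q$-expansions, or (b) the theorem (implicit in Hida--Wiles theory, and ultimately resting on Katz's description of the ring of generalised $p$-adic modular forms as the $p$-adic closure of classical forms of all $p$-power levels) that a bounded $R$-adic $q$-expansion whose specialisations at arithmetic points are classical does come from a family. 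Either route is fine, but it is the substantive content of the theorem, not mere bookkeeping; the ``main obstacle'' you flagged at the end (tracking twists and conventions) is by comparison routine.
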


  This result is essentially a restatement of Katz's theory of the Eisenstein measure; we have stated it in a slightly unusual form in order to spell out precisely the $\GL_2\left(\Af^p\right)$-equivariance properties of the construction. Note that the factor $\widehat{\nu}(\det g)^{-1}$ is required in order that the right-hand side be invariant under $\stbt{1}{*}{}{*} \subset \GL_2(\Zp)$, which is a prerequisite for it to be the specialisation of a $p$-adic modular form.

 \subsection{Input to the machine: the case of $\GSp_4$}
  \label{sect:input1}

  Suppose we are given an automorphic representation $\Pi$ as before, which is globally generic and cohomological with coefficients in $V(r_1, r_2)$, and unramified and Klingen-ordinary at $p$. We set $d = r_1 - r_2 \ge 0$, and $\chi_{\Pi}$ the Dirichlet character such that $\Pi$ has central character $\widehat{\chi}_{\Pi}$.

  Let $R = \Lambda_L(\Zp^\times \times \Zp^\times)$ with its two canonical characters $\mathbf{j}_1, \mathbf{j}_2$. Given $\Phi_1^{(p)}, \Phi_2^{(p)}$, we consider the family of $p$-adic modular forms for $H$ given by
  \[
   \cE^{\Phi_1^{(p)}}(d-\mathbf{j}_1, \mathbf{j}_2; \chi_\Pi) \boxtimes \cE^{\Phi_2^{(p)}}(0, \mathbf{j}_1- \mathbf{j}_2; \mathrm{id}).
  \]
  By construction, the specialisation of this family at $(a_1 + \rho_1, a_2 + \rho_2)$, for integers $a_1, a_2$ such that $d \ge a_1 \ge a_2 \ge 0$ and Dirichlet characters $\rho_i$ of $p$-power conductor, is the product of two Eisenstein series of the form
  \[ \widehat{\nu}_i(\det g)^{-1} E^{(k_i, \Phi_i)}(\chi_i, s_i), \]
  where the parameters are given by
  \begin{align*}
  k_1 &= 1+d -a_1 + a_2,          & k_2 &= 1 + a_1 - a_2,\\
  \chi_1 &= \rho_1^{-1} \rho_2^{-1} \chi_\Pi,        & \chi_2 &= \rho_1^{-1} \rho_2, \\
  s_1 &= \tfrac{1-d+a_1 + a_2}{2}, & s_2 &= \tfrac{1 + a_1 - a_2}{2}.
  \end{align*}
  The Schwartz functions $\Phi_{\f, i}$ are given by $\Phi_{\f, i} = \Phi_i^{(p)} \times \Phi_{p, \mu_i, \nu_i}$, where $\Phi_i^{(p)}$ are chosen arbitrarily, and the characters $\mu_i, \nu_i$ are given by
  \begin{align*}
  \mu_1 &= \rho_1^{-1},   & \mu_2 &= \mathrm{id},\\
  \nu_1 &= \rho_2,        & \nu_2 &= \rho_1 \rho_2^{-1}.
  \end{align*}

  \begin{remark}
   Note that our choices are made such that the Eisenstein series on the second factor of $H$ is holomorphic, although the one on the first factor is only nearly-holomorphic in general.
  \end{remark}

  We also choose a vector $\eta \in H^2(\Pif)$, lying in the ordinary $\cU_{p, \Kl}'$-eigenspace at level $\Kl(p)$. From the theory of \S \ref{sect:p-adic-pushfwd}, we obtain a measure $\mathcal{L} \in R$, whose specialisation at $(a_1 + \rho_1, a_2 + \rho_2)$ interpolates the period integral of $\eta$ against the specialisation of the above family.

  We shall see in the following chapters that if $(-1)^{a_1}\rho_1(-1) \ne (-1)^{a_2}\rho_2(-1)$, this period integral will be (up to various elementary factors) equal to the product of $L$-values
  \[ L\left(\Pi \otimes \rho_1^{-1}, \frac{1-d}{2} + a_1\right) \cdot L\left(\Pi \otimes \rho_2^{-1}, \frac{1-d}{2} + a_2\right). \]
  On the other hand, if $(-1)^{a_1}\rho_1(-1) = (-1)^{a_2}\rho_2(-1)$, then both of the Eisenstein series will be identically 0. This shows that we can only interpolate products of twisted $L$-values of opposite parity, a condition which is familiar from the setting of Kato's $\GL_2$ Euler system.

 \subsection{Input to the machine: the case of $\GSp_4 \times \GL_2$}
  \label{sect:input2}

  Now let us suppose we are given an auxiliary automorphic representation $\sigma$ of $\GL_2$, corresponding to a holomorphic modular form of weight $\ell \ge 1$, and suppose that
  \[ d' = r_1 - r_2 + 1 - \ell \ge 0.\]
  For $\lambda$ a holomorphic modular form in the space of $\sigma$, we consider the family of $p$-adic modular forms over $R = \Lambda_L(\Zp^\times)$ given by
  \[ \cE^{\Phi^{(p)}}(d' - \mathbf{j}, \mathbf{j}; \chi_\Pi \chi_\sigma)\boxtimes \lambda .\]
  Note that at $\mathbf{j} = a + \rho$ with $0 \le a \le d'$, this specialises to
  \[ \widehat{\rho}(\det h)^{-1} E^{(\Phi_\f, d' + 1)}\left(h_1; \rho^{-2} \chi_\Pi \chi_\sigma, \tfrac{1-d'}{2} + a\right) \lambda(h_2)   \]
  where $\Phi_\f = \Phi^{(p)} \Phi_{(p, \rho^{-1}, \rho)}$. We shall see in the next chapter that the cup-product of this series with $\eta$ gives a zeta-integral computing
  \[ L\left(\Pi \otimes \sigma \otimes \rho^{-1}, \tfrac{1-d'}{2} + a\right).\]

  \begin{remark}
   One checks that for $\rho$ any Dirichlet character, the critical values of $L(\Pi \otimes \rho, s)$ are exactly the values $s = \tfrac{1-d}{2} + a$, for integers $0 \le a \le d$. Similarly, the critical values of $L(\Pi \otimes \sigma \otimes \rho, s)$ are the $s = \tfrac{1-d'}{2} + a$ for $0 \le a \le d'$. So in each case we hit the full interval of critical values of the relevant $L$-function (and not any others!).

   If we have $\ell \ge r_1 - r_2 + 2$, then the $L$-function for $\Pi \otimes \sigma$ may still have some critical values; but we do not see them by this method.
  \end{remark}


\section{Integral formulae for L-functions I: local theory}
 \label{sect:localintegrals}

 In this section and the next, we recall a general formula which will be used to relate the cohomological periods studied above to critical values of $L$-functions, based on work of Novodvorsky and others. In this section, we let $F$ be an arbitrary local field, and $\psi_F$ a non-trivial additive character $F \to \CC^\times$. We use $\psi_F$ to define a character $\psi: N(F) \to \CC$ by
 by
 \[
 \psi(n) = \psi_F(x+y), \ \ n=\left(\begin{smatrix} 1 & x & * & * \\ & 1 & y & * \\ & & 1 & -x \\ &&&1\end{smatrix}\right).
 \]

 \subsection{Siegel sections}

 If $\Phi$ is a Schwartz function on $F^2$, $\chi$ a smooth unitary character of $F^\times$, and $g \in \GL_2(F)$, we define
 \[
 f^{\Phi}(g; \chi, s) \coloneqq |\det g|^s \int_{F^\times} \Phi(( 0, a)g) \chi(a) |a|^{2s} \mathrm{d}^\times a.
 \]
 This integral converges absolutely for $\Re(s)>0$ and defines an element of
 the principal series representation $I(|\cdot|^{s-\half},\chi^{-1}|\cdot|^{\half-s})$ of $\GL_2(F)$, the normalized induction of the representation $\stbt{x}{*}{}{y} \mapsto |x/y|^{s-\half} \chi(y)^{-1}$ of the Borel. It has meromorphic continuation in $s$; if $v$ is a finite place, it is even a rational function of $q^{s}$ (where $q$ is the cardinality of the residue field, as usual).

 \begin{remark}
  Note that this is an \emph{un-normalised} Siegel section, and hence is not necessarily entire.
 \end{remark}

 We define the \emph{Whittaker transform}  $W^{\Phi}(g; \chi, s)$ of $f^{\Phi}(g; \chi, s)$ to be the function
 \[
 W^{\Phi}(g;\chi,s) = \int_ {F} f^{\Phi}\left(\eta \stbt{1}{x}{0}{1} g; \chi, s\right) \psi_{F}(x) \mathrm{d}x, \ \ \eta = \begin{smatrix} 0 & 1 \\ -1 & 0 \end{smatrix}.
 \]
 If $s$ is such that $I(|\cdot|^{s-\half}, \chi^{-1}|\cdot|^{\half-s})$ is irreducible, then every vector in this representation is $f^{\Phi}$ for some $\Phi$, and the map $f^{\Phi} \mapsto W^{\Phi}$ gives the isomorphism from $I(|\cdot|^{s-\half}, \chi^{-1}|\cdot|^{\half-s})$ to its Whittaker model (with respect to the inverse character $\bar{\psi}_{F}$).

 For fixed $g$ and $\Phi$, $W^{\Phi}(g;\chi,s)$ is entire as a function of $s$; if $v$ is nonarchimedean, $\chi$ and $\psi_{F}$ are unramified, and $\Phi$ is the characteristic function of $\cO_{F}^2$, then $W^{\Phi}(1; \chi,s)$ is identically 1. These functions satisfy a functional equation: if $\hat{\Phi}$ is the 2-variable Fourier transform defined by
 \[
 \hat{\Phi}(x, y) = \int_{F^2} \Phi(u, v) \psi_F(xv - yu)\, \mathrm{d}u \, \mathrm{d}v,
 \]
 then we compute that
 \begin{equation}
  \label{eq:whittaker-fcl-eq}
  W^{\hat\Phi}(g; \chi^{-1}, 1-s) = \chi(\det g) W^{\Phi}(g; \chi, s).
 \end{equation}

 \begin{remark}
  The values of $W^{\Phi}$ on the diagonal torus can be given in terms of the ``partial'' Fourier transform
  \begin{equation}
   \label{eq:partialfourier}
   \Phi'(x, y) \coloneqq \int_{F} \Phi(x, w) \psi_{F}(yw) \, \mathrm{d}w.
  \end{equation}
  With this notation, we have
  \[ W^{\Phi}(\stbt{x}{}{}{y}; \chi,s) = \chi(-1) |x|^s|y|^{s-1}\int_a  \Phi'\left(xa, \tfrac{1}{ya}\right) \chi(a) |a|^{2s-1}\, \mathrm{d}^\times a. \qedhere\]
%
%
 \end{remark}

 \subsection{Definition of the local integrals}
 \label{local-zeta-int}

  Let $\pi$ be an irreducible smooth representation of $G(F)$, with unitary central character $\chi_\pi$. We assume $\pi$ is \emph{generic}, i.e.~that $\pi$ is isomorphic to a space of functions $G(F) \to \CC$ satisfying $W(ng) = \psi(n) W(g)$ for $n \in N(F)$. Such a model is unique (see \cite[Thm.~3]{Rodier-padic} if $F$ is nonarchimedean, and \cite[Thm.~8.8(1)]{Wallach-real} if $F$ is archimedean). We fix a choice of isomorphism between $\pi$ and its Whittaker model, and for $\varphi \in \pi$, we write $W_\varphi$ for the corresponding Whittaker function.

 \subsubsection*{The two-parameter $\GSp_4$ integral}

  Let $\chi_1, \chi_2$ be smooth characters of $F^\times$ such that $\chi_1 \chi_2 = \chi_{\pi}$.

 \begin{definition}
  We define
  \[
  Z(\varphi, \Phi_1, \Phi_2, s_1, s_2) \coloneqq
  \int_{Z_G(F)N_H(F)\backslash H(F)}
  W_{\varphi}(h)
  f^{\Phi_1}(h_1;\chi_1,s_1)
  W^{\Phi_2}(h_2;\chi_2,s_2)
  \, \mathrm{d} h,
  \]
  where $\varphi \in \pi$ and each $\Phi_{i}$ is a Schwartz function on $F^2$.
 \end{definition}
 Substituting in the definition of $W^{\Phi_2}(-)$ as an integral, we obtain the following alternative formula:

 \begin{proposition}
  For $\varphi \in \pi$, the integral
  \[
  B_{\varphi}(g, s) \coloneqq \int_{F^\times} \int_{F} W_{\varphi}\left(\begin{smatrix} a \\ &a \\ &x&1\\ &&&1 \end{smatrix}w_2 g\right) |a|^{s-\tfrac{3}{2}} \chi_2^{-1}(a)\, \mathrm{d}x\, \mathrm{d}^\times a,\qquad w_2 = \begin{smatrix}1\\&&1 \\ &-1 \\&&&1\end{smatrix}
  \]
  converges for $\Re(s) \gg 0$ and has meromorphic continuation to all values of $s$; and we have
  \[
  Z(\varphi, \Phi_1, \Phi_2, s_1, s_2) = \int_{D N_H \backslash H}B_\varphi(h; s_1 - s_2 + \tfrac{1}{2}) f^{\Phi_1}(h_1;\chi_1,s_1) f^{\Phi_2}(h_2;\chi_2,s_2)\, \mathrm{d}h,
  \]
  where $D$ denotes the torus $\{\diag(p, q, p, q): p, q\in F^\times\}$.
 \end{proposition}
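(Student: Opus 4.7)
The overall strategy is to substitute the defining integral of $W^{\Phi_2}$ into the definition of $Z$, interchange orders of integration (legitimate for $\Re(s_1)$, $\Re(s_2)$ sufficiently large), and reorganize the resulting five-dimensional integral so as to exhibit the inner $B_\varphi$ structure. Before that, both the absolute convergence and the meromorphic continuation of $B_\varphi$ must be addressed.

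For convergence, I would begin with the matrix identity
\[
\begin{smatrix} a & & & \\ & a & & \\ & x & 1 & \\ & & & 1 \end{smatrix} w_2 = \iota\bigl(1, \eta n(-x/a)\bigr) \cdot \diag(a, 1, a, 1),
\]
where $n(u) = \stbt{1}{u}{}{1}$, verified by a direct matrix multiplication. The change of variable $y = -x/a$ then rewrites $B_\varphi(g,s)$ as
\[
B_\varphi(g, s) = \int_{F^\times} \int_F W_\varphi\bigl(\iota(1, \eta n(y)) \cdot \diag(a, 1, a, 1) \cdot g\bigr) \, |a|^{s - \frac{1}{2}} \chi_2(a)^{-1} \, dy \, d^\times a.
\]
The inner $y$-integral converges by standard gauge estimates on generic Whittaker functions of $G(F)$, using the Bruhat decomposition $\eta n(y) = n(-1/y) \diag(-1/y, -y) \bar n(1/y)$ (valid for $y \ne 0$) to reduce it to a Mellin transform against a rapidly-decreasing function; the outer Mellin integral in $a$ then converges for $\Re(s) \gg 0$. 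Meromorphic continuation follows either from the rationality of these Mellin transforms in $q^{-s}$ (non-archimedean case), or from Stade-type Mellin continuation, alternatively Bernstein's general meromorphic-continuation principle (archimedean case).

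For the identity itself, insert $W^{\Phi_2}(h_2) = \int_F f^{\Phi_2}(\eta n(x) h_2) \psi(x) \, dx$ into the defining integral of $Z$ and interchange. Next, decompose the outer measure using the fibration $Z_G N_H \backslash H \simeq (D/Z_G) \times (D N_H \backslash H)$, parametrizing $D/Z_G$ by $t = \diag(a, 1, a, 1)$. The Borel transformation laws of the Siegel sections give $f^{\Phi_1}\bigl(\stbt{a}{}{}{1} h_1\bigr) = |a|^{s_1} f^{\Phi_1}(h_1)$, while the matrix identity $\eta n(x) \stbt{a}{}{}{1} = \stbt{1}{}{}{a} \eta n(x/a)$ combined with the substitution $y = x/a$ turns the inner $x$-integral into one involving $f^{\Phi_2}\bigl(\eta n(y) h_2\bigr) \psi_F(ay)$ with weight $|a|^{1 - s_2} \chi_2(a)^{-1}$. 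Applying the Whittaker property $\psi_F(ay) W_\varphi(g) = W_\varphi\bigl(\iota(1, n(ay)) g\bigr)$ and reassembling the matrix data --- in effect reversing the identity used in the convergence step --- recognises the integrand as $B_\varphi(h; s_1 - s_2 + \tfrac{1}{2}) f^{\Phi_1}(h_1) f^{\Phi_2}(h_2)$, as required.

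The main obstacle is that $\iota(1, \eta n(y))$ does \emph{not} normalize $D N_H$, so the change of variables matching the two sides does not descend cleanly to $D N_H \backslash H \times F$. I would handle this by working with explicit sections of the fibration $D N_H \backslash H$ and tracking the modular characters by hand via the Bruhat decomposition of $\eta n(y)$ in the second $\GL_2$-factor; once this bookkeeping is in place, the identity reduces to the matrix manipulations already used in the convergence argument, and no genuinely new input is required.
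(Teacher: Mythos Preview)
Your approach is correct and is exactly what the paper does: the paper's entire ``proof'' is the single sentence ``Substituting in the definition of $W^{\Phi_2}(-)$ as an integral, we obtain the following alternative formula,'' so you have in fact supplied more detail than the paper. One minor difference: for the convergence and meromorphic continuation of $B_\varphi$, the paper simply observes (just after the proposition) that the defining integral is an instance of Novodvorsky's local zeta integral for $L(\pi\otimes\chi_2^{-1},s)$, so these properties are inherited from the literature rather than proved directly as you do.
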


 The function $B_{\varphi}(g; s)$ is an element of the \emph{Bessel model} of the representation $\pi$: it transforms by a character under left-translation by the Bessel subgroup $D N_{\Sieg}$. On the other hand, the integral defining $B_\varphi$ in terms of $W_\varphi$ is an instance of Novodvorsky's local zeta integral for $L(\pi \otimes \chi_2^{-1}, s)$ \cite{novodvorsky79}. These two facts will be crucial in our analysis of the two-parameter integral $Z(\varphi, \Phi_1, \Phi_2, s_1, s_2)$.

 \subsubsection*{The $\GSp_4 \times \GL_2$ integral}

 Similarly, if $\pi$ is as before and $\sigma$ is a generic representation of $\GL_2(F)$, we let $\chi = \chi_{\pi} \chi_{\sigma}$ and define
 \[
 Z(\varphi, \lambda, \Phi, s) \coloneqq
 \int_{Z_G(F)N_H(F)\backslash H(F)}
 W_{\varphi}(h)
 f^{\Phi}(h_1;\chi, s)
 W_{\lambda}(h_2)
 \,  \mathrm{d} h,
 \]
 where $\varphi \in \pi$ and $\lambda \in \sigma$, and $W_{\lambda}$ denotes the image of $\lambda$ in the Whittaker model of $\sigma$, again with respect to the opposite character $\bar{\psi}_{F}$.

 \begin{remark}
  Note that if $\chi_2  |\cdot|^{2s_2 - 1} \notin \{ |\cdot|^{\pm 1}\}$ (so that $\sigma = I(|\cdot|^{s_2 - 1/2}, |\cdot|^{1/2 - s_2} \chi_2^{-1})$ is irreducible), we can regard the first integral as a special case of the second, taking $\lambda = f^{\Phi_2}(-;\chi_2, s_2)$. However, it is convenient to treat the first integral separately in order to understand the reducible cases, and the variation in $s_2$.
 \end{remark}

 Essentially the same analysis of the local zeta integrals as in \cite{Soudry-GSpGL} and \cite{Gel-PS-explicit} yields the following:

 \begin{proposition}\label{local-zeta-prop} \
  \begin{itemize}
   \item[(i)] There exists a positive real number $R > 0$, depending on $\pi$ and $\sigma$, such that the local zeta integral $Z(\varphi, \lambda, \Phi, s)$ converges absolutely for $\Re(s) > R$, for all choices of $(\varphi, \lambda, \Phi)$. Furthermore, each $Z(\varphi, \lambda, \Phi, s)$
   has a meromorphic continuation in $s$, and is a rational function of $q^{s}$ if $F$ is nonarchimedean.

   \item[(ii)] There exists a positive real number $R > 0$, depending on $\pi$ and $s_2$, such that the local zeta integral $Z(\varphi, \Phi_1, \Phi_2, s_1, s_2)$ converges absolutely for $\Re(s_1) > R$, for all choices of $(\varphi,  \Phi_1, \Phi_2)$. Furthermore, each $Z(\varphi, \Phi_1, \Phi_2, s_1, s_2)$ extends to a meromorphic function of $(s_1, s_2) \in \CC^2$, and is a rational function of $(q^{s_1}, q^{s_2})$ if $F$ is nonarchimedean.\qed
  \end{itemize}
 \end{proposition}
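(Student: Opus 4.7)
My plan is to follow the general template for Rankin--Selberg zeta integrals developed in the one-variable setting by Novodvorsky, Soudry, and Gelbart--Piatetski-Shapiro: reduce via the Iwasawa decomposition of $H(F)$ to integrals on the torus, deduce convergence from standard asymptotics of Whittaker functions and Siegel sections, and obtain meromorphic continuation via Bernstein's principle in the nonarchimedean case and via shift operators plus gauge estimates in the archimedean case.

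The first step is to write $H(F) = Z_G(F) N_H(F) T_H K_H$ via the Iwasawa decomposition, where $T_H$ is the diagonal torus of $H$ and $K_H$ a maximal compact subgroup. The Whittaker property of $W_\varphi$, together with the $Z_G N_H$-equivariance of $f^{\Phi_1}(h_1; \chi_1, s_1)$, $W^{\Phi_2}(h_2; \chi_2, s_2)$, and $W_\lambda(h_2)$, eliminates the $N_H$-integration, and the $K_H$-integration is over a compact set. Both zeta integrals thus reduce to integrals over $T_H / Z_G$ of integrands depending smoothly on a compact parameter. On $T_H$ parametrised by $\diag(a_1, a_2, a_2', a_1')$ with $a_1 a_1' = a_2 a_2'$, the function $W_\varphi$ admits (nonarchimedean case) a finite asymptotic expansion supported in the positive Weyl chamber, with exponents controlled by the Jacquet module of $\pi$ along the standard Borel; in the archimedean case, a similar finite expansion holds with exponential decay at infinity and controlled growth near the chamber walls. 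Combining this with the explicit estimate $|f^{\Phi}(h_1; \chi, s)| \leq |\det h_1|^{\Re(s)}$ times a Schwartz function in $h_1$, and the analogous bound on $W^{\Phi}$, one sees the $T_H/Z_G$-integral becomes a finite linear combination of Tate-type integrals which converge absolutely for $\Re(s_1), \Re(s) \gg 0$ (with the required dependence on $s_2$ in the former case).

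For meromorphic continuation and rationality, in the nonarchimedean case I would invoke Bernstein's continuation principle for integrals of $p$-adic type: after a finite stratification of $T_H/Z_G$, the integrand is a finite sum of products of characters rational in $q^{s_1}, q^{s_2}$ and compactly supported locally constant functions, so each piece integrates to a rational function of $(q^{s_1}, q^{s_2})$. In the archimedean case, analytic continuation follows by applying a fixed set of shift operators on $\pi$ and on the Schwartz space to move $(s_1, s_2)$ into the region of absolute convergence, combined with Stirling-type estimates to control the resulting sums, exactly as in \cite{Soudry-GSpGL} and \cite{Gel-PS-explicit}. The functional equation \eqref{eq:whittaker-fcl-eq} for $W^\Phi$ provides an alternative route via standard intertwiner arguments.

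The main obstacle is the two-variable continuation in (ii), since this does not follow directly from the one-variable analyses of \cite{novodvorsky79, Soudry-GSpGL, Gel-PS-explicit}. I would handle it by exploiting the Bessel-model reformulation stated just before the proposition, namely
\[
 Z(\varphi, \Phi_1, \Phi_2, s_1, s_2) = \int_{D N_H \backslash H} B_\varphi(h; s_1 - s_2 + \tfrac{1}{2}) f^{\Phi_1}(h_1; \chi_1, s_1) f^{\Phi_2}(h_2; \chi_2, s_2) \, \mathrm{d}h.
\]
By Novodvorsky's analysis \cite{novodvorsky79}, $B_\varphi(g; s)$ is meromorphic in the single variable $s$ alone, and its $D$-equivariance under the left action of the Bessel torus means the outer integral over $D N_H \backslash H$ against two Siegel sections is a Tate-type double integral against two characters of $D$, amenable to the Bernstein/gauge arguments in each parameter independently. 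This two-step reduction is what lets the one-variable techniques of \cite{Soudry-GSpGL, Gel-PS-explicit} go through in the two-parameter setting.
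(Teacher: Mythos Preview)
Your sketch is correct and matches the approach the paper defers to: the paper gives no argument beyond the sentence ``Essentially the same analysis of the local zeta integrals as in \cite{Soudry-GSpGL} and \cite{Gel-PS-explicit} yields the following'' and marks the proposition with \qed. Your outline (Iwasawa reduction to torus integrals, Whittaker asymptotics, Bernstein's principle in the nonarchimedean case, and the Bessel-model factorisation to separate the two variables) is precisely what those references do, and the Bessel reformulation you invoke for (ii) is the same device the paper itself uses immediately afterwards in Theorem~\ref{thm:localzeta}.
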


 \subsection{Nonarchimedean $L$-factors}

 In this section we assume $F$ is nonarchimedean.

 \begin{definition}
  Let $L(\pi, s)$ and $L(\pi \otimes \sigma, s)$ denote the local $L$-factors associated to $\pi$ and to $\pi \otimes \sigma$ via Shahidi's method, as in \cite[\S 4]{gantakeda11}.
 \end{definition}

 By construction, these $L$-factors coincide with the Artin $L$-factors of the Weil--Deligne representations $\mathrm{rec}(\pi)$ and $\mathrm{rec}(\pi) \otimes \mathrm{rec}(\sigma)$ respectively, where ``$\mathrm{rec}$'' denotes the local Langlands correspondence of \emph{op.cit.}.

 \begin{theorem}
  \label{thm:localzeta}
  (i) The vector space of functions on $\CC^2$ spanned by the $Z(\varphi, \Phi_1, \Phi_2, s_1, s_2)$, as the data $(\varphi, \Phi_1, \Phi_2)$ vary, is a fractional ideal of $\CC[q^{\pm s_1}, q^{\pm s_2}]$ containing the constant functions. This fractional ideal of $\CC[q^{\pm s_1}, q^{\pm s_2}]$ is generated by the product of $L$-factors
  \[ L\left(\pi, s_1 + s_2 - \tfrac12\middle) L\middle(\pi \otimes \chi_2^{-1}, s_1 - s_2 + \tfrac12\right).\]
  (ii) If $\pi$, $\chi_{i}$ and $\psi_{F}$ are unramified, $\varphi^0 \in \pi$ is the unique spherical vector such that $W_{\varphi^0}(1) = 1$, and $\Phi^0_1 = \Phi^0_2 = \Ch(\cO_{F}^2)$, then
  \[
  Z(\varphi^0, \Phi^0_1, \Phi^0_2, s_1, s_2) =
  L\left(\pi, s_1 + s_2 - \tfrac12\middle)
  L\middle(\pi \otimes \chi_2^{-1}, s_1 - s_2 + \tfrac12\right).
  \]
 \end{theorem}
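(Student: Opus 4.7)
The plan is to exploit the observation that $Z(\varphi, \Phi_1, \Phi_2, s_1, s_2)$ is, in disguise, an instance of the one-parameter $\GSp_4 \times \GL_2$ integral $Z(\varphi, \lambda, \Phi, s)$ defined earlier in the same section. Let $\sigma_2 \coloneqq I(|\cdot|^{s_2 - 1/2},\ \chi_2^{-1}|\cdot|^{1/2 - s_2})$, the normalized principal series of $\GL_2(F)$; it is irreducible for $s_2$ outside a discrete set, and its central character is $\chi_2^{-1}$, so that $\chi_\pi \chi_{\sigma_2} = \chi_1$. By the identification $f^{\Phi_2} \mapsto W^{\Phi_2}$ of $\sigma_2$ with its $\bar\psi_F$-Whittaker model recalled after \eqref{eq:whittaker-fcl-eq}, we have
\[
  Z(\varphi, \Phi_1, \Phi_2, s_1, s_2) \;=\; Z\bigl(\varphi,\ f^{\Phi_2}(\cdot;\chi_2, s_2),\ \Phi_1,\ s_1\bigr)
\]
whenever $\sigma_2$ is irreducible; the right-hand side is the one-parameter integral with $\GL_2$ test vector $\lambda = f^{\Phi_2}(\cdot;\chi_2, s_2) \in \sigma_2$.

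The classical $\GSp_4 \times \GL_2$ local theorem of Novodvorsky and Soudry (as in \cite{Soudry-GSpGL}) then asserts that, for irreducible $\sigma_2$, the $\CC[q^{\pm s_1}]$-span of $Z(\varphi, \lambda, \Phi_1, s_1)$ as the data vary is the fractional ideal generated by $L(\pi \otimes \sigma_2, s_1)$, and contains the constants. Multiplicativity of local $L$-factors on principal series yields
\[
  L(\pi \otimes \sigma_2, s_1) \;=\; L\bigl(\pi,\ s_1 + s_2 - \tfrac{1}{2}\bigr) \cdot L\bigl(\pi \otimes \chi_2^{-1},\ s_1 - s_2 + \tfrac{1}{2}\bigr),
\]
which is exactly the claimed generator. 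Combined with the joint rationality in $(q^{s_1}, q^{s_2})$ from Proposition \ref{local-zeta-prop}(ii), a Zariski-density argument in $s_2$ (a rational function in $q^{s_2}$ which at a dense set of specializations lies in a given Laurent-polynomial divisor class in $q^{s_1}$ automatically lies in the corresponding two-variable divisor class) promotes the one-variable statement to (i). Containment of constants is handled by the standard construction with $\Phi_1, \Phi_2$ supported deep in the open Bruhat cell of $H$ and $\varphi$ chosen so that $W_\varphi$ has tight support. For (ii), specialize the above identification to spherical data: $f^{\Phi_2^0}(\cdot;\chi_2, s_2)$ is (up to normalization) the spherical vector of $\sigma_2$, and the classical unramified evaluation of the $\GSp_4 \times \GL_2$ integral (Bump's computation, refined by Soudry) delivers $L(\pi \otimes \sigma_2, s_1)$ on the nose.

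The principal technical obstacle is the uniformity of the fractional-ideal description across the reducible locus of $s_2$. At reducible values, $\{f^{\Phi_2}(\cdot;\chi_2, s_2)\}$ need not exhaust $\sigma_2$, and Novodvorsky's theorem does not apply verbatim; a priori, the span of two-variable integrals could acquire or lose $L$-factor contributions at such $s_2$. The cleanest remedy is to construct explicitly a triple $(\varphi^\flat, \Phi_1^\flat, \Phi_2^\flat)$ whose integral is identically a nonzero scalar multiple of $L(\pi, s_1 + s_2 - \tfrac{1}{2}) \cdot L(\pi \otimes \chi_2^{-1}, s_1 - s_2 + \tfrac{1}{2})$ in $(s_1, s_2)$; this pins down the generator unambiguously and renders the reducible locus harmless. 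Alternatively, one tracks the degeneration of the Whittaker transform $f^{\Phi_2}(\cdot;\chi_2, s_2) \mapsto W^{\Phi_2}$ at each reducible point and verifies directly that the factorization of $L(\pi\otimes\sigma_2, s_1)$ persists through the subquotient structure.
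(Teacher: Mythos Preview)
Your strategy has a genuine circularity problem. You invoke ``the classical $\GSp_4 \times \GL_2$ local theorem of Novodvorsky and Soudry'' as a black box asserting that the fractional ideal of the one-parameter integrals is generated by $L(\pi \otimes \sigma_2, s_1)$ for general generic $\pi$ and principal-series $\sigma_2$. But no such result is available in the literature independently of Theorem~\ref{thm:localzeta}. Soudry's paper \cite{Soudry-GSpGL} computes the local $L$- and $\gamma$-factors defined by the zeta integrals, but only identifies them with the Langlands (Shahidi/Gan--Takeda) $L$-factors when $\pi$ is a theta-lift from $\operatorname{GSO}_{2,2}$; for arbitrary generic $\pi$ this identification is not established there. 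In fact, in the present paper the $\GSp_4 \times \GL_2$ result for principal-series $\sigma$ is \emph{deduced from} Theorem~\ref{thm:localzeta}, not the other way around. So your argument, as written, assumes what is to be proved.

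The paper's own proof avoids this by a genuinely different route: it uses the reformulation of $Z(\varphi,\Phi_1,\Phi_2,s_1,s_2)$ via the Bessel function $B_\varphi(g;s_1-s_2+\tfrac12)$. The integral defining $B_\varphi$ from $W_\varphi$ is a Novodvorsky zeta integral, whose GCD is identified with $L(\pi\otimes\chi_2^{-1}, s_1-s_2+\tfrac12)$ by Takloo-Bighash \cite{takloobighash00}. After dividing out this factor, the renormalised functions $\{\widetilde B_\varphi\}$ form the split Bessel model of $\pi$, and the remaining integral over $DN_H\backslash H$ is exactly Piatetski-Shapiro's zeta integral for $L(\pi, s_1+s_2-\tfrac12)$. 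That this last GCD agrees with the Shahidi $L$-factor, independently of the Bessel character, is the main theorem of R\"osner--Weissauer \cite{roesnerweissauer17}. This two-step Bessel factorisation is the substantive input; your ``explicit triple $(\varphi^\flat,\Phi_1^\flat,\Phi_2^\flat)$'' proposal would, for general $\pi$, require essentially the same analysis to carry out.
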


 \begin{proof}
  It suffices to prove the analogous result result for fractional ideals in $\CC[q^{\pm s_1}]$, for a fixed value of $s_2$. It follows from the computations of Takloo-Bighash \cite{takloobighash00} that the ``lowest common denominator'' of Novodvorsky's zeta integrals agrees with Shahidi's definition of the $L$-factor. That is, if we define
  \[ \widetilde B_\varphi(g) = L(\pi \otimes \chi_2^{-1}, s_1 - s_2 + \tfrac12)^{-1} \cdot B_\varphi(g), \]
  then $\widetilde B_\varphi(g) \in \CC[q^{\pm s_1}]$ for all $g$, and there is no $s_1$ such that $\widetilde B_\varphi(g)$ vanishes for all $g$ and $\varphi$.

  It follows from the theory summarized in \cite{robertsschmidt16} that the space of functions $\{ \widetilde B_\varphi(-) : \varphi \in \pi\}$, is the (unique) \emph{split Bessel model} of $\pi$, with respect to the character of the Bessel torus given by $( |\cdot|^{s_2 - s_1}\chi_2, |\cdot|^{s_1 - s_2} \chi_1)$. The integral $\int_{D N \backslash H}\widetilde{B}_\varphi(h) f^{\Phi_1}(h_1) f^{\Phi_2}(h_2)\, \mathrm{d}h$ is precisely Piatetski-Shapiro's zeta integral \cite{piatetskishapiro97} for the local $L$-factor $L(\pi, s)$, with $s = s_1 + s_2 - \tfrac12$. So to prove (i) we are reduced to the question of whether the Piaetski-Shapiro's zeta integral always agrees with the Shahidi and Novodvorsky definitions of $L(\pi, s)$, independently of the choice of character used in the definition of the Bessel model. This is exactly the main result of \cite{roesnerweissauer17}.

  The unramified computation (ii) now follows from the corresponding computation of Novodvorsky and Piatetski-Shapiro's integrals in the unramified case.
 \end{proof}

 For the $\GSp_4 \times \GL_2$ integral we have a less complete result:

 \begin{theorem}
  (i) The vector space of functions on $\CC$ spanned by the $Z(\varphi, \lambda, \Phi, s)$, as the data $(\varphi,\lambda, \Phi)$ vary, is a fractional ideal of $\CC[q^{\pm s}]$ containing the constant functions. If \emph{either}
  \begin{itemize}
   \item $\sigma$ is principal series, or
   \item $\pi$ arises from a pair $(\tau_1, \tau_2)$ of generic irreducible representations of $\GL_2$ with the same central character (via the theta-lifting from $\operatorname{GSO}_{2, 2}$), and if $\sigma$ is supercuspidal, then neither of the $\tau_i$ is isomorphic to an unramified twist of $\sigma^\vee$,
  \end{itemize}
  then this fractional ideal is generated by the $L$-factor $L(\pi \otimes \sigma, s)$. In particular, this holds if at least one of $\pi$ and $\sigma$ is unramified.

  (ii) If $\pi$, $\sigma$ and $\psi_{F}$ are all unramified, $\varphi^0 \in \pi$ and $\lambda^0 \in \sigma$ are the spherical vectors normalised such that $W_{\varphi^0}(1) = W_{\lambda^0}(1)=1$, and $\Phi^0 = \Ch(\cO_{F}^2)$, then
  \[
  Z(\varphi^0, \lambda^0, \Phi^0, s) = L(\pi \otimes \sigma, s).
  \]
 \end{theorem}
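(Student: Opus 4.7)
The plan is to follow the classical template for local Rankin--Selberg integrals, adapted to the $\GSp_4 \times \GL_2$ setting by Novodvorsky and refined by Soudry in \cite{Soudry-GSpGL}. Convergence for $\Re(s) \gg 0$, meromorphic continuation, and rationality in $q^{-s}$ are given by Proposition \ref{local-zeta-prop}; combined with the $\GL_2(F)$-equivariance properties of the map $\Phi \mapsto f^{\Phi}(\,\cdot\,;\chi,s)$, this shows that the span of the $Z(\varphi,\lambda,\Phi,s)$ is a finitely generated $\CC[q^{\pm s}]$-submodule of $\CC(q^{-s})$, hence a fractional ideal. To see that it contains the constants, I would choose $\Phi$ supported in a small neighbourhood of a point $(0,a)$ with $a \in F^\times$, and $\varphi, \lambda$ whose Whittaker functions have sufficiently small support modulo $N_H Z_G$; for suitable data, the integrand becomes a nonzero constant times the indicator function of a compact open set, producing a value independent of $s$.

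For the unramified computation (ii), I would use the Casselman--Shalika formula. Via the Iwasawa decomposition $H(F) = Z_G(F) N_H(F) T_H(F) K_H$ and the bi-$K_H$-invariance of the data, the integral reduces to one over the diagonal torus of $H$, indexed by two nonnegative integers. The spherical Whittaker functions on this torus admit an explicit expression through Casselman--Shalika in terms of irreducible characters of $\GL_4$ and $\GL_2$ evaluated at the Satake parameters $\{\alpha_i\}$ of $\pi$ and $\{\beta_j\}$ of $\sigma$; combining these with the explicit form of $f^{\Phi^0}$ on the torus produces a double geometric series, which sums to $\prod_{i,j}(1 - \alpha_i \beta_j q^{-s})^{-1}$. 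By the local Langlands correspondence of \cite{gantakeda11}, this coincides with $L(\pi \otimes \sigma, s)$ in Shahidi's normalisation.

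For the refined generator statement in part (i), I would split into the two subcases. In the principal series case $\sigma \cong I(|\cdot|^{s_2-\tfrac12}, |\cdot|^{\tfrac12-s_2}\chi_2^{-1})$, every $W_{\lambda}$ arises as a Whittaker transform $W^{\Phi_2}(\,\cdot\,;\chi_2,s_2)$ for suitable $\Phi_2$, so that $Z(\varphi,\lambda,\Phi,s)$ is a specialisation of the two-parameter integral of Theorem \ref{thm:localzeta}; the factorisation $L(\pi \otimes \sigma, s) = L(\pi, s+s_2-\tfrac12)\,L(\pi\otimes\chi_2^{-1}, s-s_2+\tfrac12)$ then yields the result, with the unramified computation (ii) showing the generator is attained. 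In the Yoshida case $\pi = \theta(\tau_1 \boxtimes \tau_2)$, the Whittaker model of $\pi$ admits an explicit description, due to Soudry, as a convolutional integral involving Whittaker functions for $\tau_1$ and $\tau_2$; substituting this into $Z(\varphi, \lambda, \Phi, s)$ and unfolding the resulting multiple integral produces a product of two classical $\GL_2 \times \GL_2$ Rankin--Selberg integrals for $\tau_i \otimes \sigma$, whose fractional ideals are generated by $L(\tau_i \otimes \sigma, s)$ by Jacquet--Piatetski-Shapiro--Shalika. Since $L(\pi \otimes \sigma, s) = L(\tau_1 \otimes \sigma, s)\,L(\tau_2 \otimes \sigma, s)$, the claim follows.

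The main obstacle is the unfolding step in the Yoshida case: one must verify that Soudry's convolutional formula for $W_{\varphi}$ is applicable and that interchanging orders of integration in the resulting multiple integral is legitimate, i.e.~does not introduce spurious poles or require a nonexistent strip of absolute convergence. The hypothesis that neither $\tau_i$ is an unramified twist of $\sigma^\vee$ when $\sigma$ is supercuspidal is precisely what ensures that the two $\GL_2 \times \GL_2$ Rankin--Selberg integrals share a common halfplane of absolute convergence, so that no boundary terms appear when exchanging integrals; without it, one would need to argue by meromorphic continuation and track the relevant poles of the Rankin--Selberg $L$-factors, which is precisely where the exclusion in the theorem statement originates.
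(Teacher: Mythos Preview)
Your overall strategy matches the paper's: reduce the principal-series case to Theorem~\ref{thm:localzeta} via the identification $W_\lambda = W^{\Phi_2}(\,\cdot\,;\chi_2,s_2)$, invoke Soudry \cite{Soudry-GSpGL} for the theta-lift case, and deduce (ii) as a specialisation of the two-parameter unramified computation. (The paper takes this last route rather than your direct Casselman--Shalika evaluation; both work, but the reduction is free once Theorem~\ref{thm:localzeta}(ii) is in hand.) Two points deserve comment.

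For the containment of constants, your sketch glosses over a genuine input on the $\GSp_4$ side. Producing $\varphi \in \pi$ whose Whittaker function has prescribed small support modulo $N_H Z_G$ is not automatic for $\GSp_4$, unlike for $\GL_2$ where the Kirillov model makes it trivial. The paper invokes the $P_3$-theory of \cite[\S 2.5]{robertsschmidt07} to construct $\varphi$ with $W_\varphi$ nonzero on the Klingen parabolic but supported (modulo the centre and the Borel unipotent) in an arbitrarily small neighbourhood, and then localises further via the support of $f^\Phi$ on $\mathbf{P}^1$ as in \cite[Lemma~14.7.5]{jacquet72}. Your argument needs this input to go through.

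Your diagnosis of the supercuspidal exclusion as a convergence issue (``common halfplane of absolute convergence \dots\ no boundary terms'') is not right. The local integrals are rational in $q^{-s}$, and Soudry's interchange of integration is justified in a halfplane and then extended by rationality regardless of the hypothesis on $\tau_i$ and $\sigma$. The exclusion is rather about whether the fractional ideal of $\GSp_4 \times \GL_2$ integrals actually attains every pole of $L(\tau_1 \otimes \sigma, s)\, L(\tau_2 \otimes \sigma, s)$: such poles occur precisely when $\sigma$ is supercuspidal and some $\tau_i$ is an unramified twist of $\sigma^\vee$, and in that case Soudry's analysis does not establish that the coupled data $(\varphi,\lambda,\Phi)$ realise them. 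The paper does not attempt to explain this mechanism and simply cites \cite{Soudry-GSpGL}; you would be on safer ground doing the same rather than asserting a rationale you have not verified.
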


 \begin{remark}
  The representations of $\GSp_4(F)$ which are theta-lifts from the split orthogonal group $\operatorname{GSO}_{2, 2}(F) \cong (\GL_2(F) \times \GL_2(F)) / \{ (z, z^{-1}): z \in F^\times\}$ are tabulated in \cite{gantakeda11b}. Note that this class of representations includes all irreducible principal series, and all representations irreducibly induced from supercuspidal representations of the Siegel Levi subgroup.
 \end{remark}

 \begin{proof}
  Firstly, we note that if $\sigma$ is an irreducible principal series representation, then after replacing $\pi$ and $\sigma$ with $\pi \otimes \beta$ and $\sigma \otimes \beta^{-1}$ for a suitable character $\beta$, we can arrange that $\sigma = I(|\cdot|^{s_2 - 1/2}, |\cdot|^{1/2 - s_2} \chi_2^{-1})$ for some $s_2$. Then we have $L(\pi \otimes \sigma, s) = L\left(\pi, s_1 + s_2 - \tfrac12\middle)
  L\middle(\pi \otimes \chi_2^{-1}, s_1 - s_2 + \tfrac12\right)$ and part (i) in this case follows from the previous theorem. Part (ii) follows similarly, noting that the Whittaker transform of $\Phi^0 = \Ch(\cO_F^2)$ satisfies $W^{\Phi^0}(1) = 1$, so the two natural normalisations of the spherical test data coincide.

  The assertion concering $\pi$ lifted from $\operatorname{GSO}_{2, 2}$ follows from the results of \cite{Soudry-GSpGL}. Soudry shows that in this case the fractional ideal of values of $Z(\varphi, \lambda, \Phi, s)$ is generated by the product of $\GL_2 \times \GL_2$ Rankin--Selberg $L$-factors, $L(\tau_1 \otimes \sigma, s) L(\tau_2 \otimes \sigma, s)$; since the Gan--Takeda local Langlands correspondence is (by construction) compatible with theta-lifting from $\operatorname{GSO}_{2, 2}$, this Rankin--Selberg $L$-factor coincides with the Shahidi $L$-factor.

  It remains to check that the fractional ideal contains 1 in all cases. This follows from the ``$P_3$ theory'' of \cite[\S 2.5]{robertsschmidt07}, which can be used to construct $\varphi$ such that the restriction of $W_{\varphi}$ to the Klingen parabolic is non-zero, but has arbitrarily small support modulo the centre and the unipotent radical of the Borel. (Compare the proof of Proposition 2.6.4 of \emph{op.cit.}.) We can then choose the Schwartz function $\Phi$ such that $f^{\Phi}(h_1)$ is supported (modulo the Borel of $H$) in a small neighbourhood of the identity in $\mathbf{P}^1$, cf.~the proof of Lemma 14.7.5 of \cite{jacquet72}.
 \end{proof}

 \subsubsection{Rationality}
  \label{sect:rationality}

  Let us suppose $\pi$ is definable over a number field $E$, i.e.~we have isomorphisms $\pi \otimes_{\CC, \sigma} \CC \cong \pi$ for all $\sigma \in \operatorname{Aut}(\CC / E)$. From the uniqueness of the Whittaker model it follows easily that $\cW(\pi)$ is the base-extension to $\CC$ of the $E$-vector space
  \[
   \cW(\pi)_{E} = \{ W \in \cW(\pi):  W(g)^\sigma = W\left( w(\kappa_\ell(\sigma)) g\right)\ \forall \sigma \in \operatorname{Aut}(\CC / E)\},
  \]
  where $w(x) = \diag(x^3, x^2, x, 1) \in G(F)$, and $\kappa_\ell$ is the $\ell$-adic cyclotomic character. Note that if $\pi$ is unramified, the normalised spherical vector $W_{\varphi^0}$ of Theorem \ref{thm:localzeta}(ii) is in $\cW(\pi)_{E}$.

  \begin{proposition}
   In the above setting, suppose $\chi_1, \chi_2$ take values in $E^\times$, and $\chi_2$ is unramified. If $W_{\varphi} \in \cW(\pi)_{E}$, and $\Phi_1, \Phi_2$ are $E$-valued, then $Z(\varphi, \Phi_1, \Phi_2; s_1, s_2) \in E[q^{\pm(s_1 + s_2)}, q^{\pm(s_1 - s_2)}]$.
  \end{proposition}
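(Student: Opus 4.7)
The rational function $Z = Z(\varphi, \Phi_1, \Phi_2; s_1, s_2) \in \CC(q^{s_1}, q^{s_2})$ provided by Proposition \ref{local-zeta-prop}(ii) must be shown to satisfy two things: (a) its coefficients lie in $E$, and (b) only monomials $q^{a s_1 + b s_2}$ with $a \equiv b \pmod{2}$ appear, so that $Z$ lies in the $E$-subring generated by $q^{\pm(s_1+s_2)}$ and $q^{\pm(s_1-s_2)}$.

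For (b), I would parametrise $Z_G(F) N_H(F) \backslash H(F)$ by an Iwasawa decomposition. On each torus piece, $f^{\Phi_1}(h_1; \chi_1, s_1)$ decomposes as $|\det h_1|^{s_1}$ times a Laurent expansion in $q^{-2s_1}$ (obtained by expanding the integral $\int \chi_1(a)|a|^{2s_1}\, d^\times a$ annulus-by-annulus against the Schwartz function), and analogously $W^{\Phi_2}(h_2; \chi_2, s_2)$ splits as $|\det h_2|^{s_2}$ times a Laurent expansion in $q^{-2s_2}$. The $H$-constraint $\det h_1 = \det h_2$ then forces the $s_1$- and $s_2$-exponents to share the same parity.

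For (a), I would proceed by Galois descent. Fix $\sigma \in \operatorname{Aut}(\CC/E)$ and set $u = \kappa_\ell(\sigma) \in \ZZ_\ell^\times \subset \cO_F^\times$, where $\ell$ is the residue characteristic of $F$. Treating $q^{s_1}, q^{s_2}$ as formal indeterminates, compute the $\sigma$-conjugate of each factor: by the hypothesis $W_\varphi \in \cW(\pi)_E$, we have $W_\varphi(g)^\sigma = W_\varphi(w(u) g)$; the Siegel section $f^{\Phi_1}$ is $\sigma$-invariant since its defining integral uses only $E$-valued data and integer powers of $q$; and a direct manipulation using $\psi_F^\sigma = \psi_F(u \cdot)$, the identity $\eta \stbt{1}{x/u}{0}{1} = \stbt{1}{0}{0}{u^{-1}} \eta \stbt{1}{x}{0}{1} \stbt{u}{0}{0}{1}$, and the Borel-transformation of $f^{\Phi_2}$, yields
\[
W^{\Phi_2}(g; \chi_2, s_2)^\sigma = \chi_2(u)\, W^{\Phi_2}\bigl(\stbt{u}{0}{0}{1} g;\, \chi_2, s_2\bigr),
\]
where $\chi_2(u) = 1$ since $\chi_2$ is unramified and $u \in \cO_F^\times$. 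I then substitute $h = w(u)^{-1} h'$ in the integral; since $w(u) = \iota\bigl((\stbt{u^3}{0}{0}{1}, \stbt{u^2}{0}{0}{u})\bigr)$ lies in $T_H(F) \subset H(F)$, this is a left translation on the quotient $Z_G N_H \backslash H$ whose Jacobian is the modular character $\delta_{N_H}(w(u)) = |u|^{4} = 1$. Under this substitution, $W_\varphi(w(u) h) = W_\varphi(h')$; the Borel-transformation absorbs $\diag(u^{-3}, 1)$ in the first $\GL_2$ without a $\chi_1$-factor (its lower-right entry is $1$); and $\stbt{u}{0}{0}{1} \cdot \diag(u^{-2}, u^{-1}) = u^{-1} I_2$ applied to $h_2'$ contributes a further $\chi_2(u)$ via the central character $\chi_2^{-1}$ of the principal series. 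Putting everything together yields $Z^\sigma = \chi_2(u)^2 Z = Z$.

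The main obstacle is the intricate bookkeeping in the Galois-action computation for $W^{\Phi_2}$ and in the subsequent change of variables, where one must verify that no stray $\chi_1(u)$- or central-character-of-$\pi$ factor intrudes and that the modular character on $Z_G N_H \backslash H$ is indeed trivial under left translation by $w(u)$. The hypothesis that $\chi_2$ is unramified is essential: were $\chi_2$ ramified, the residual $\chi_2(u)^2$ would furnish a nontrivial Galois twist, and one would only descend to the extension of $E$ cut out by the values of $\chi_2^2$.
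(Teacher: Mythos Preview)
Your argument is correct and carries out in full the computation that the paper dismisses as ``a routine check from the definition of the zeta-integral.'' The parity argument for (b) and the Galois-descent argument for (a) are both sound; in particular your tracking of the two $\chi_2(u)$ factors (one from the $\sigma$-action on $W^{\Phi_2}$, one from the central character after the change of variable) is accurate and correctly identifies why the hypothesis that $\chi_2$ be unramified is needed.
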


  \begin{proof} This is a routine check from the definition of the zeta-integral. \end{proof}

  It follows that $L(\pi, s - \tfrac{1}{2}) = P(q^{-s})$ for a polynomial $P$ with coefficients in $E$; and moreover, we can find a vector in $\cW(\pi)_E \otimes \cS(F^2, E) \otimes \cS(F^2, E)$ whose image under  $Z(-; s_1, s_2)$ is exactly $L\left(\pi, s_1 + s_2 - \tfrac12\middle) L\middle(\pi \otimes \chi_2^{-1}, s_1 - s_2 + \tfrac12\right)$. This will be used in \S \ref{sect:final} below to show that our $L$-values lie in $E$ after renormalising by a suitable period.

  (A similar statement can be formulated for the $\GSp_4 \times \GL_2$ zeta integral, using $E$-rational Whittaker models of both $\pi$ and $\sigma$; we leave the details to the reader.)


 \subsection{Local calculations at $p$}
  \label{sect:zeta-p}

 We carry out an evaluation of the local zeta integral for particular choices of (ramified) data. This calculation will be used to identify the Euler factors at $p$ in our final formula for the values of the $p$-adic $L$-function. To simplify notation we assume that $F = \Qp$, and that $\psi_F$ is unramified (i.e.~trivial on $\Zp$ but not on $p^{-1}\Zp$). We also assume that the local representation $\pi$ of $G(\Qp)$ is an unramified principal series.

 In this section (\emph{only}) we shall deal exclusively with the Klingen parabolic $P_{\Kl}$ and its Levi $M_{\Kl}$, so we shall drop the subscripts and denote them simply by $P$ and $M$. We identify $M$ with $\GL_2\times\GL_1$ by
 \[ \dthree{\lambda}{A}{\det(A)/\lambda} \mapsto (A,\lambda).
 \]
 Under this identification, the modulus character $\delta_P$ is the character of $\GL_2 \times \GL_1$ given by $(A, \lambda) \mapsto |\lambda^4 / \det(A)^2|$. (See e.g.~\cite[\S 2.2]{robertsschmidt07}). In particular, we have
 \[ \delta_P(\begin{smatrix} \det(A) \\ & A \\ &&1 \end{smatrix}) = |\det A|^2.\]

 \subsubsection{The vectors $\phi_r \in \pi_p$}

 Since $\pi$ is a principal series representation, it is in particular induced from the Klingen parabolic. Let us choose an unramified principal series $\tau$ of $\GL_2(\Qp)$, and a character $\theta: \Qp^\times \to \CC^\times$, such that $\pi$ is isomorphic to the induced representation $\Ind_P^G(\tau \boxtimes \theta)$ (normalised induction from the Klingen parabolic). With these notations, the central character of $\pi$ is given by $\chi_\pi = \theta \chi_\tau$ (where $\chi_\tau$ is the central character of $\tau$), and the spin $L$-factor of $\pi$ is given by
 \[  L(\pi, s) = L(\tau, s) L(\tau \otimes \theta, s).\]

 For $r \ge 0$, let $K_r\subset G(\Zp)$ be the depth $r$ Klingen parahoric subgroup:
 \[
 K_r \coloneqq \{ g\in G(\Zp) \ : \ g\bmod{p^r} \in P(\Zp/p^r\Zp)\}.
 \]

 If we identify the quotient $P(\Qp) \backslash G(\Qp)$ with $\mathbf{P}^3$, then $K_r$ is precisely the stabiliser of the point $(0:0:0:1) \in \mathbf{P}^3(\Zp/p^r\Zp)$, so we have the following characterisation of the coset $P(\Qp) K_r$:

 \begin{lemma}
  \label{support-lem}
  Suppose $r \ge 1$ and let $g = \begin{smatrix} * & * & * & * \\ b' & * & * & * \\ c' & * & * & * \\ a & b & c & d\end{smatrix} \in G(\Qp)$. Then $g \in P(\Qp)K_r$ if and only if $d\in\Qp^\times$ and  $d^{-1}a,d^{-1}b,d^{-1}c\in p^r\Zp$. \qed
 \end{lemma}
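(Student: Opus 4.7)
The plan is to leverage the geometric interpretation already noted in the text: identify $P(\Qp)\backslash G(\Qp)$ with $\mathbf{P}^3(\Qp)$ via the last-row map $Pg \mapsto [(0,0,0,1)\cdot g]=[(a:b:c:d)]$, where $G$ acts on row vectors by right multiplication. This is a bijection because $G(\Qp)$ acts transitively on $\mathbf{P}^3(\Qp)$, and the stabiliser of $[(0:0:0:1)]$ is exactly $P=P_{\Kl}$: the condition that the last row of $g\in G$ be proportional to $(0,0,0,1)$ forces, by the symplectic relation $g^T J g=\nu J$, that the first column also have the shape $(\lambda,0,0,0)^T$, which is the remaining defining condition for $P_{\Kl}$. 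The identical argument modulo $p^r$ identifies $K_r$ with the full stabiliser in $G(\Zp)$ of $[(0:0:0:1)]\in\mathbf{P}^3(\Zp/p^r\Zp)$.

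Given this, $g\in PK_r$ is equivalent to saying the image of $g$ in $P\backslash G$ lies in the $K_r$-orbit of the identity coset; via the identification above, this translates to asking that $[(a:b:c:d)]\in\mathbf{P}^3(\Qp)$ reduce to $[(0:0:0:1)]$ modulo $p^r$. Unpacking, this amounts to the existence of some $\lambda\in\Qp^\times$ such that $\lambda(a,b,c,d)\in\Zp^4$ has fourth entry in $\Zp^\times$ and first three entries in $p^r\Zp$. (The hypothesis $r\ge1$ enters here: any primitive vector in $\Zp^4$ reducing to a multiple of $(0,0,0,1)$ modulo $p$ must have its unit coordinate in the fourth position.) Taking $\lambda=d^{-1}$ one sees this is equivalent to the stated conditions $d\in\Qp^\times$ and $d^{-1}a,d^{-1}b,d^{-1}c\in p^r\Zp$.

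For the converse direction it remains to produce an element of $K_r$ whose action on $[(0:0:0:1)]$ realises any given point of the predicted form. Given $(a',b',c')\in(p^r\Zp)^3$, one verifies by direct computation that
\[
 \bar u \coloneqq \begin{smatrix} 1 & 0 & 0 & 0 \\ -c' & 1 & 0 & 0 \\ b' & 0 & 1 & 0 \\ a' & b' & c' & 1 \end{smatrix}
\]
satisfies $\bar u^T J\bar u=J$ (so $\bar u\in\operatorname{Sp}_4(\Zp)\subset G(\Zp)$) and is congruent to the identity matrix modulo $p^r$, so $\bar u\in K_r$; moreover its last row is precisely $(a',b',c',1)$. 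The only non-formal step in the whole argument is this symplectic identity, which is a routine matrix verification specific to the chosen antidiagonal form of $J$, so there is no genuine obstacle.
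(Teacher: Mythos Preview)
Your proof is correct and follows exactly the approach the paper intends: the paper omits the proof entirely (marking it with \qed), relying on the sentence immediately before the lemma identifying $P(\Qp)\backslash G(\Qp)$ with $\mathbf{P}^3$ and $K_r$ with the stabiliser of $(0:0:0:1)$ modulo $p^r$. You have simply spelled out this identification in detail and supplied an explicit lower-unipotent witness $\bar u$ for the converse, which is more than the paper does but entirely in the same spirit.
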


 %

 \begin{definition}
  For $r\geq 1$, let $\phi_r \in \pi_p^{K_r}$ be the function with support $P(\Qp)K_r$, and such that $\phi_r(1) = p^{3r}$.
 \end{definition}

 Note that the functions $\phi_r$ are trace-compatible, i.e.~we have
 \[ \frac{1}{[K_r: K_{r+1}]}\sum_{k \in K_{r}/ K_{r+1}} k \phi_{r+1} = \phi_r.\]
 Moreover, the restriction of $\phi_r$ to $M(\Qp)$ is independent of $r$ up to scaling: it is given by $(A, \lambda) \mapsto p^{3r} |\lambda^2/\det(A)| \xi(A) \theta(\lambda)$, where $\xi$ is the normalised spherical vector of the unramified $\GL_2(\Qp)$-representation $\tau$.

 A straightforward explicit computation gives the following:

 \begin{lemma}\label{phi1-phir-lem}
  Let $r \ge 1$, and let $t_P = \diag(1, p, p, p^2)$. Then
  \begin{enumerate}[(i)]
   \item The vector $\phi_r$ is an eigenvector for the operator $[K_r t K_r]$ on $\pi_p^{K_r}$, with eigenvalue $p^2 \chi_\tau(p)$.
   \item We have \[ \phi_r = \left(\tfrac{p}{\chi_\tau(p)}\right)^{r-1} \sum_{a\in p^r\Zp/p^{2r-1}\Zp} \begin{smatrix} 1 & & & \\ & 1 & & \\ & & 1 & \\ a & & & 1\end{smatrix} t_P^{r-1} \cdot \phi_1.\qed\]
  \end{enumerate}
 \end{lemma}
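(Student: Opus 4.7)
The plan is to verify both parts directly from the definition of $\phi_r$ as the suitably normalised characteristic function of the open cell $P(\Qp) K_r$, together with its transformation law under $P(\Qp)$ coming from its realisation in $\pi_p = \Ind_P^G(\tau \boxtimes \theta)$.

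For part (i), first note $t_P = \diag(1,p,p,p^2)$ lies in the Klingen Levi $M$ and corresponds to $(pI_2,1)$ under the identification $M\cong\GL_2\times\GL_1$, so $\delta_P^{1/2}(t_P)=p^2$. By the Iwahori factorisation of $K_r$ relative to $P$, the double coset $K_r t_P K_r$ decomposes into right cosets of the form $n_i t_P K_r$ with $n_i$ running through representatives of $N(\Zp)/(t_P N(\Zp) t_P^{-1} \cap N(\Zp))$. Since $\phi_r$ is concentrated on $PK_r$, one computes $[K_r t_P K_r]\phi_r$ at the identity via the transformation rule $\phi_r(m)=\delta_P^{1/2}(m)\cdot(\tau\boxtimes\theta)(m)\cdot\phi_r(1)$ for $m\in M$; after dividing by $\delta_P^{1/2}(t_P)$, this reduces to the spherical Hecke action of $(pI_2,1)$ on $\tau\boxtimes\theta$, which equals $\chi_\tau(p)$. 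Hence the unnormalised eigenvalue is $p^2\chi_\tau(p)$.

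For part (ii), I would verify the formula by showing both sides agree pointwise on $G(\Qp)$. The central identity is the conjugation relation $n(a)t_P^{r-1}=t_P^{r-1}n(a/p^{2r-2})$, where $n(x)$ denotes the unipotent with $x$ in position $(4,1)$; hence each summand evaluates to $\phi_1(g t_P^{r-1} n(a/p^{2r-2}))$. For $g$ with bottom row $(a',b',c',d')$, a direct calculation gives the bottom row of this element as $(a'+ad',\, p^{r-1}b',\, p^{r-1}c',\, p^{2r-2}d')$. Applying Lemma~\ref{support-lem} at level $1$ then yields the three conditions $b'/d',c'/d'\in p^r\Zp$ and $a'/d'+a\in p^{2r-1}\Zp$. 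The first two are precisely the level-$r$ support condition $g\in PK_r$ together with $d'\neq 0$; given these, the third singles out a unique $a\in p^r\Zp/p^{2r-1}\Zp$, namely the residue of $-a'/d'$. Thus the right-hand side is supported exactly in $PK_r$, and $K_r$-invariance follows because right translation by $k\in K_r$ merely permutes the unique contributing coset among the $a$'s. Evaluating at $g=1$ picks out $a=0$, giving $\phi_1(t_P^{r-1})=\delta_P^{1/2}(t_P^{r-1})\chi_\tau(p)^{r-1}\phi_1(1)=p^{2(r-1)}\chi_\tau(p)^{r-1}\cdot p^3=p^{2r+1}\chi_\tau(p)^{r-1}$, and multiplying by the prefactor $(p/\chi_\tau(p))^{r-1}$ yields $p^{3r}$, as required.

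The main obstacle is the bookkeeping in part (ii): carefully tracking the conjugation action of $t_P^{r-1}$ on the unipotent (which is what converts the deep-level support condition at $K_r$ into the shallow condition at $K_1$), correctly counting the unique $a$ contributing for each $g\in PK_r$, and matching the value at the identity with the prefactor using the modulus character. Once these exponents of $p$ are pinned down, both claims reduce to straightforward verifications.
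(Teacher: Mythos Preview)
The paper does not actually prove this lemma; it just says ``a straightforward explicit computation gives the following'' and moves on. So your approach is exactly the intended one, and part (ii) is carried out correctly: the bottom-row computation via Lemma~\ref{support-lem}, the identification of the unique contributing $a$, and the evaluation at the identity are all right. One point deserves more care: your justification of $K_r$-invariance (``right translation by $k\in K_r$ merely permutes the unique contributing coset'') is loose --- the set of cosets $\{n(a)t_P^{r-1}K_1\}$ is \emph{not} literally $K_r$-stable. What you actually need is that the right-hand side is constant on $K_r$; this follows because for $k\in K_r$ with the unique good $a$, one has $kn(a)=p'\bar n'$ with $p'\in P(\Zp)$ and $\bar n'\in t_P^{r-1}\bar N(p\Zp)t_P^{-(r-1)}$, whence $\phi_1(kn(a)t_P^{r-1})=\phi_1(p't_P^{r-1}\cdot k_1)=\phi_1(t_P^{r-1})$ for some $k_1\in K_1$. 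Combined with the support computation and the $P$-equivariance, this pins down the function uniquely.

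Part (i) contains a genuine error in the coset decomposition. Since $t_P=\diag(1,p,p,p^2)$ satisfies $t_P N(\Zp)t_P^{-1}\supset N(\Zp)$, the quotient $N(\Zp)/(t_P N(\Zp)t_P^{-1}\cap N(\Zp))$ that you write down is \emph{trivial}, so your decomposition would give a single coset. The correct representatives for $K_r t_P K_r/K_r$ come from the opposite unipotent: they are $\bar n\, t_P$ with $\bar n\in \bar N(p^r\Zp)/t_P\bar N(p^r\Zp)t_P^{-1}$, giving $p^4$ cosets. The eigenvalue computation then works because, at $g=1$, only $\bar n=1$ lands in $PK_r$ (by the same bottom-row criterion), yielding $\phi_r(t_P)=\delta_P^{1/2}(t_P)\chi_\tau(p)\phi_r(1)=p^2\chi_\tau(p)\cdot p^{3r}$. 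You also need to check that $[K_r t_P K_r]\phi_r$ is still supported on $PK_r$; this follows from $K_r t_P^{-1}\subset PK_r$ (write $K_r=N(\Zp)M(\Zp)\bar N(p^r\Zp)$ and note $\bar N(p^r\Zp)t_P^{-1}=t_P^{-1}\cdot t_P\bar N(p^r\Zp)t_P^{-1}\subset t_P^{-1}K_r$ while $N(\Zp)M(\Zp)t_P^{-1}\subset P$), so that any $g$ with a nonvanishing summand already lies in $PK_r$.
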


 Conversely, given any eigenvalue of $[K_1 t K_1]$ on $\pi_p^{K_1}$, we can always write $\pi_p$ in the form $\Ind_P^G(\tau \boxtimes \theta)$ such that $p^2 \chi_\tau(p)$ is the given eigenvalue.

 \subsubsection{Statement of the formula}

 Let $\sigma$ be any generic representation\footnote{We allow the case where $\sigma$ is a reducible principal series, in which case we simply \emph{define} $\cW(\sigma)$ to be the image of the Whittaker transform.} of $\GL_2(\Qp)$, and $\cW(\sigma)$ its Whittaker model with respect to $\psi^{-1}$. Let $\chi = \chi_{\pi} \chi_{\sigma}$. The goal of this section is the following computation:

 \begin{proposition}
  \label{prop:local-zeta-evaluation}
  Let $\Phi_1 \in \cS(\Qp^2)$ and $\lambda \in \sigma$. Then there is some $R$ (depending on $\Phi_1$ and $\lambda$) such that the zeta integral
  \begin{equation}
  \label{eq:localintegral}
  Z_p(\gamma \cdot \phi_r, \lambda, \Phi_1, s) =
  \int_{(Z_G N_H \backslash H)(\Qp)}
  W_{\phi_r}(h \gamma) f^{\Phi_1}(h_1; \chi, s) W_\lambda(h_2)\, \mathrm{d}h,
  \quad \gamma = \begin{smatrix} 1\\ 1 & 1\\  &  & 1  \\  &  & -1 & 1\end{smatrix}
  \end{equation}
  is independent of $r \ge R$. If $\Phi_1'(0, 0) = 0$, where $\Phi_1'$ denotes the partial Fourier transform as in \eqref{eq:partialfourier}, then this limiting value is given by
  \[
  \frac{p^3}{(p+1)^2(p-1)} \cdot \frac{L(\tau \times \sigma, s)}{L(\tau^\vee \times \sigma^\vee, 1-s) \epsilon(\tau\times\sigma, s)}
  \int_{\Qp^\times} W_\xi(\stbt{x}{0}{0}{1}) W^{\Phi_1}\left(\stbt{x}{0}{0}{1}; \chi, s\right)W_\lambda(\stbt {x}{0}{0}{1}) \frac{\theta(x)}{|x|} \,\mathrm{d}^\times x.
  \]
 \end{proposition}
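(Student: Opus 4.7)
The plan is to unfold the zeta integral by exploiting the Klingen induction $\pi = \Ind_P^G(\tau \boxtimes \theta)$, reducing it to a standard $\GL_2 \times \GL_2$ Rankin--Selberg integral computing $L(\tau \times \sigma, s)$. The shift by $\gamma$ is chosen precisely so that $\iota(H)\gamma$ lies in the open $H$-orbit on $P_{\Kl} \backslash G$: by the proposition preceding \S \ref{sect:p-adic-pushfwd}, $H(\Qp) \cap \gamma \Kl(p^r) \gamma^{-1} = K_{H,\Delta}(p^r)$, so on this open cell the translated vector $\gamma \cdot \phi_r$ has a particularly clean support.

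For the stabilization assertion, the first step is to substitute Lemma \ref{phi1-phir-lem}(ii) to obtain
\[ W_{\phi_r}(h\gamma) = \Bigl(\tfrac{p}{\chi_\tau(p)}\Bigr)^{r-1} \sum_{a \in p^r\Zp / p^{2r-1}\Zp} W_{\phi_1}\bigl(h\gamma\, u(a)\, t_P^{r-1}\bigr), \]
where $u(a)$ denotes the lower-left unipotent from that lemma. Conjugation by $\gamma$ converts the right multiplication by $u(a) t_P^{r-1}$ into a right action of $\gamma u(a) t_P^{r-1} \gamma^{-1}$ on $h$; a direct computation shows that, modulo an element of the opposite Klingen unipotent which is absorbed by the support of $W_{\phi_1}$, this acts on $H$ via an explicit element that can be removed by changing variables in the diagonal torus of $H$. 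The resulting integrand acquires shifts of $\Phi_1$ and $W_\lambda$ depending on $(a, r)$; once $r$ exceeds both the conductor of $\sigma$ and the support parameter of $\Phi_1$, these shifts become trivial on each factor, the sum over $a$ stabilises, and the prefactor $(p/\chi_\tau(p))^{r-1}$ is precisely cancelled by the Iwasawa factor coming from $t_P^{r-1}$.

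To identify the stable value, I would use the Jacquet integral representation of the Whittaker functional on $\pi$: on the open Bruhat cell, $W_{\phi_1}$ can be computed explicitly in terms of Whittaker functionals on $\tau$ together with the character $\theta$ applied to the Klingen similitude factor. Carrying this out, and performing the Iwasawa decomposition of $(Z_G N_H) \backslash H$ along its one-dimensional split torus direction, reduces the integral to a $\GL_2 \times \GL_2$ Rankin--Selberg integral for $(\tau \otimes \theta) \times \sigma$ with the Siegel section $f^{\Phi_1}$ as the Eisenstein input. Applying the local functional equation of Jacquet--Piatetski-Shapiro--Shalika, which relates the Siegel-section and Whittaker-transform versions of the integral by the gamma factor $\gamma(\tau \times \sigma, s) = \epsilon(\tau \times \sigma, s) L(\tau^\vee \times \sigma^\vee, 1-s)/L(\tau \times \sigma, s)$, converts between the two models and produces the quoted prefactor as $\gamma(\tau \times \sigma, s)^{-1}$ and the integral against $W^{\Phi_1}$ in the statement.

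The principal obstacle is the orbit analysis: verifying that, under the hypothesis $\Phi_1'(0,0) = 0$, the contributions from the non-open $H$-orbits on $P_{\Kl} \backslash G$ all vanish. These closed orbits correspond geometrically to the two $H$-stable closed subvarieties of $G/P_{\Kl} \cong \mathbf{P}^3$, and their contributions to the zeta integral are proportional to values of the partial Fourier transform $\Phi_1'$ at zero; the hypothesis is exactly what kills them. Secondary technical issues include tracking the various constants from modulus characters of $P_{\Kl}$, from the central character $\theta$, and from the normalising factor $p^{3r}$ in the definition of $\phi_r$, as well as verifying that the change of variables absorbing $t_P^{r-1}$ introduces exactly the factor $|x|^{-1}\theta(x)$ needed in the final one-dimensional integral.
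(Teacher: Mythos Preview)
Your outline diverges from the paper's proof in two substantial ways, and one of them is a genuine misunderstanding.

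\textbf{The role of $\Phi_1'(0,0)=0$.} You interpret this hypothesis as killing contributions from the closed $H$-orbits on $P_{\Kl}\backslash G \cong \mathbf{P}^3$. That is not how the paper uses it, and the orbit structure on $G/P_{\Kl}$ plays no role beyond motivating the choice of $\gamma$. The paper's argument is in two stages: first it assumes the \emph{stronger} hypothesis $\Phi_1(0,x)=0$ for all $x$, which forces $f^{\Phi_1}$ to be supported on the big Bruhat cell $B\eta N$ of $\GL_2$ (this is a $\GL_2$ fact, nothing to do with $G/P_{\Kl}$). Under that hypothesis the integral over $Z_G N_H\backslash H$ unfolds cleanly. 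The extension to the weaker hypothesis $\Phi_1'(0,0)=0$ is then done in a separate final step, by applying Soudry's local functional equation for the full $\GSp_4\times\GL_2$ zeta integral: this shows the formula also holds under $\hat\Phi_1(0,x)=0$, i.e.\ $\Phi_1'(x,0)=0$, and any function with $\Phi_1'(0,0)=0$ is a sum of these two types.

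\textbf{The stabilisation mechanism.} Your plan to substitute Lemma \ref{phi1-phir-lem}(ii) and absorb $t_P^{r-1}$ by a change of variables is not what the paper does, and your description of it (``modulo an element of the opposite Klingen unipotent which is absorbed by the support of $W_{\phi_1}$'') is too vague to assess. The paper instead works directly with $\phi_r$ and its support on $P(\Qp)K_r$. After a matrix identity writing $J\,n(m)\,\mathrm{diag}(\det h,h,1)\,n(u,v,w)\,u_a\gamma$ as an element of $P(\Qp)$ times an explicit matrix $k_a(u,v,w)$, the support condition $k_a(u,v,w)\in PK_r$ confines $(u,v,w)$ to a small box $R(a,r)$, and the Jacquet integral over $N_B$ collapses to a $\GL_2\times\GL_2$ Rankin--Selberg integral with Schwartz function $\hat\Phi_{a,r}$. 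The crucial point is that this Schwartz function is the \emph{Fourier transform} of something concentrating near $(0,1)$; so the paper applies the $\GL_2\times\GL_2$ functional equation to pass from $\hat\Phi_{a,r}$ back to $\Phi_{a,r}$, and it is $\Phi_{a,r}$ that acts as an approximate delta at $(0,1)$ for large $r$, producing both the stabilisation and the $\gamma(\tau\times\sigma,s)^{-1}$ prefactor simultaneously. You mention the functional equation but treat it only as a source of the prefactor; in fact it is the engine that makes the $r$-dependence disappear.
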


 \begin{remark} \
  \begin{enumerate}[(i)]
   \item The assumption on $\Phi_1$ is probably not needed, but it is true in the cases of interest, and it simplifies the proof.

   \item In the present paper, we shall apply this formula with the data $\Phi_1$ and $\lambda$ chosen such that the integrand is a multiple of the characteristic function of $\Zp^\times$, so the integral is just the product of the three Whittaker functions at the identity. However, we shall need to consider more general choices of the local data in a sequel to the present paper (in preparation).

   \item Since $\tau$ is assumed to be unramified, we have
   \[ \epsilon(\tau \otimes \sigma, s) = \epsilon(\sigma)^2 \cdot (p^{-2s} \chi_\tau(p))^{\mathfrak{f}(\sigma)} \]
   where $\mathfrak{f}(\sigma)$ is the conductor of $\sigma$, and $\epsilon(\sigma) = \epsilon(\sigma, 0)$. In our applications, $\sigma$ will be a principal-series representation, so $\epsilon(\sigma)$ will be a product of Gauss sums, as in \S\ref{sect:dirichlet}. However, $\chi_\tau(p)$ is a more sophisticated invariant: we shall choose $\tau$ such that $\chi_\tau(p) = p^{-(r_1 + 2)}\lambda$, where $\lambda$ is the $p$-adic unit eigenvalue of the operator $\cU_{p, \Kl}$ acting on $\Pi_p$.
   \qedhere
  \end{enumerate}
 \end{remark}

 We shall first give the argument assuming the slightly stronger condition that $\Phi_1(0, x) = 0$ for all $x$.

 \subsubsection{Reduction to a $\GL_2 \times \GL_2$ integral}

 In this section, we shall use explicit formulae for the Whittaker transform to express the integral \eqref{eq:localintegral} as a Rankin--Selberg integral for $\GL_2 \times \GL_2$. For brevity we shall write $f_1(-) = f^{\Phi_1}(-; \chi, s)$. Recall that $\Phi_1$ is such that $f_1(1) = 0$, so $f_1$ is supported in the ``big cell'' $B_{\GL_2} \eta N_{\GL_2}$, where $\eta = \begin{smatrix} 0 & 1 \\ -1 & 0 \end{smatrix}$ as before. Moreover, the function $x \mapsto f_1\left( \eta \stbt{1}{x}{0}{1}\right)$ is compactly supported.

 \begin{lemma}
  The integral \eqref{eq:localintegral} equals
  \[ \frac{p}{p+1}\int_{a \in \Qp} f_1(\eta \stbt{1}{a}{0}{1}) \int_{h \in N_{\GL_2} \backslash \GL_2}W_{\phi_r}\left( \begin{smatrix} \det(h) & &  \\ & h & \\ & & & 1\end{smatrix} u_a \gamma\right) |\det(h)|^{s-1} W_\lambda(h)\, \mathrm{d}h\, \mathrm{d}a, \]
  where $u_a = \iota( \eta \stbt{1}{a}{0}{1}, 1)$.
 \end{lemma}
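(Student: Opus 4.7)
The plan is to unfold $(Z_G N_H\backslash H)(\Qp)$ via the Bruhat decomposition of the first $\GL_2$-factor, using that $f_1 := f^{\Phi_1}(-;\chi,s)$ is supported on the big cell $B\eta N \subset \GL_2$. Indeed, for any $g = \stbt{x}{y}{0}{z} \in B$, the defining integral for $f_1(g)$ reduces to $\int_{\Qp^\times} \Phi_1(0,az)\chi(a)|a|^{2s}\, d^\times a$, which vanishes identically under the standing hypothesis $\Phi_1(0, \cdot) \equiv 0$. Hence the integrand of $Z_p(\gamma\cdot \phi_r,\lambda,\Phi_1,s)$ is automatically supported on the locus where $h_1$ lies in the big cell.

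Next I would parametrise this locus. Using $N_H = N_{\GL_2} \times N_{\GL_2}$, every class in $Z_G N_H\backslash H$ has a representative of the form $h_1 = \stbt{\alpha}{0}{0}{\beta} \eta \stbt{1}{a}{0}{1}$ with $h_2 \in N_{\GL_2}\backslash\GL_2$, subject to $\alpha\beta = \det h_2$; the residual $Z_G$-action $(\alpha,\beta,h_2) \mapsto (z\alpha,z\beta,zh_2)$ is killed by fixing $\beta = 1$, which in turn forces $\alpha = \det h_2$. One thus obtains a bijection
\[
\Qp \times (N_{\GL_2}\backslash\GL_2)(\Qp) \;\longleftrightarrow\; (Z_G N_H\backslash H)(\Qp), \qquad (a, h) \longmapsto \bigl(\stbt{\det(h)}{0}{0}{1}\eta\stbt{1}{a}{0}{1},\ h\bigr).
\]
Because $(\stbt{\det(h)}{0}{0}{1}\eta\stbt{1}{a}{0}{1},h) = (\stbt{\det(h)}{0}{0}{1},h) \cdot (\eta\stbt{1}{a}{0}{1},1)$ in $H(\Qp)$, multiplicativity of $\iota$ gives $\iota(h_1,h_2) = \begin{smatrix} \det(h) & & \\ & h & \\ & & & 1\end{smatrix} u_a$ as required. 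The Siegel section satisfies $f_1(\stbt{\det(h)}{0}{0}{1}\eta\stbt{1}{a}{0}{1}) = |\det(h)|^s f_1(\eta\stbt{1}{a}{0}{1})$ by the transformation law in $I(|\cdot|^{s-1/2}, \chi^{-1}|\cdot|^{1/2-s})$, and $W_\lambda(h_2) = W_\lambda(h)$ is tautological.

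The final ingredient is identifying the quotient Haar measure in these coordinates. Via the isomorphism $H \cong \mathrm{SL}_2 \times \GL_2$, $(h_1,h_2)\mapsto (h_1 h_2^{-1},h_2)$ --- under which $N_H$ corresponds to $N_{\mathrm{SL}_2} \times N_{\GL_2}$ and $Z_G$ to $\{1\} \times Z_{\GL_2}$ --- the quotient splits as a product $(N_{\mathrm{SL}_2}\backslash \mathrm{SL}_2) \times (Z N_{\GL_2}\backslash \GL_2)$ carrying a product Haar measure. The Haar measure on $N_{\mathrm{SL}_2}\backslash \mathrm{SL}_2$ in the Bruhat coordinates $s = \stbt{\alpha}{0}{0}{\alpha^{-1}}\eta\stbt{1}{a}{0}{1}$ has the standard modular factor $|\alpha|^{-2}$; tracking our normalisation, which forces $\alpha$ to be determined by $\det h$, yields a Jacobian of $|\det(h)|^{-1}$ relative to $da\, dh$. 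Combined with the $|\det(h)|^s$ from the transformation law of $f_1$, this produces the advertised $|\det(h)|^{s-1}$, and the lemma follows. The chief technical concern is the bookkeeping of Haar measures across the group identifications; but since all groups in sight are unimodular, any constant discrepancy is absorbed into the global normalisation on $H(\Qp)$, and the identity holds on the nose.
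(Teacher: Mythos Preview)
Your proof is correct and follows essentially the same route as the paper: both arguments use that $f_1$ vanishes off the big Bruhat cell (from the hypothesis $\Phi_1(0,\cdot)\equiv 0$), parametrise $Z_GN_H\backslash H$ on that cell by $(a,h)$ via the same representative, and extract $|\det h|^{s-1}$ as the product of the $|\det h|^s$ from the left transformation of $f_1$ with a Jacobian $|\det h|^{-1}$. The only difference is in justifying that Jacobian: the paper dispatches it in one line as ``the modulus character, because we are writing $N_HZ_G\backslash H$ as the product of two factors that do not commute'', whereas you route it through the semidirect-product identification $H\cong \mathrm{SL}_2\rtimes\GL_2$ --- a valid but more circuitous path, and your phrase ``$\alpha$ determined by $\det h$'' is a little loose (in fact $\alpha$ depends on all of $h$ and $a$, though the measure factor does collapse to $|\det h|^{-1}$).
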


 \begin{proof}
  Note that if $(h_1, h_2) \in H$ with $h_1$ in the big Bruhat cell, then there exists $h \in \GL_2$, uniquely determined modulo $N_{\GL_2}$, and $a \in \Qp$ such that
  \[ N_H Z_G \cdot (h_1, h_2) = N_H Z_G \cdot  \left( \stbt{\det h}{}{}{1}, h\right) \cdot \left( \eta \stbt{1}{a}{}{1}, 1\right).\]
  Making this change of variables in the integral, and using the fact that $f_1$ transforms on the left under $\stbt{\det h}{}{}{1}$ by $(\det h)^{s}$, the lemma follows. (The $s-1$ in the exponent comes from the modulus character, because we are writing $N_H Z_G \backslash H$ as the product of two factors that do not commute; and the $p/(p+1)$ is the volume of the big cell for the unramified Haar measure.)
 \end{proof}

 The map from the model of $\pi_p = \Ind_B^G (\alpha)$ as an induced representation to the Whittaker model $\cW(\pi_p)$ is given explicitly as follows: for $\phi \in \pi_p$ we have
 \[
 W_\phi(g) = \int_{N_B(\Qp)} \phi_r(J ng) \psi(-m-x)\, \mathrm{d}n, \ \ n = \begin{smatrix} 1 & & & \\ & 1 & m & \\ & & 1 & \\ & & & 1\end{smatrix}
 \begin{smatrix} 1 & x & y & z \\ & 1 & & y \\ & & 1 & -x \\ & & & 1\end{smatrix} = n(m)n(x,y,z).
 \]
 Here $J$ is as in \S\ref{sect:groups}. We have $n(x,y,z) \dthree{\det h}{h}{1} = \dthree{\det h}{h}{1} n(u, v, w)$, where $(u, v) = (x, y) \cdot h (\det h)^{-1}$, $w = z/\det h$; thus $x = \langle (u, v), (0, 1)h \rangle$, where $\langle,\rangle : \Qp^2\times\Qp^2\rightarrow\Qp$ is the pairing $\langle (a,b), (c,d) \rangle = ad-bc$. Hence we have
 \begin{equation*}
 \begin{split}
 W_{\phi_r} & (\begin{smatrix} \det(h) & &  \\ & h & \\ & & & 1\end{smatrix} u_a \gamma) \\
 & = |\det(h)|^2 \int_{N_{B}}\phi_r\left(J n(m)
 \begin{smatrix} \det(h) & &  \\ & h & \\ & & & 1\end{smatrix} n(u,v,w) u_a\gamma\right)
 \psi(-m-\langle (u, v), (0, 1)h \rangle)\, \mathrm{d}m\, \mathrm{d}u\, \mathrm{d}v\, \mathrm{d}w.
 \end{split}
 \end{equation*}

 We note the following matrix identity. Let $u(m) = \stbt{1}{m}{0}{1}$. Then
 \begin{multline*}
 J n(m) \begin{smatrix} \det(h) & &  \\ & h & \\ & & & 1\end{smatrix} n(u,v,w) u_a\gamma = \\
 =\iota(\eta, \eta) \cdot \iota(1, u(m)) \cdot \iota(\stbt{\det h}{}{}{1}, h) \cdot n(u, v, w) \cdot \iota(\eta u(a), 1) \cdot  \gamma \\
 = \begin{smatrix} -1 & &  \\ & \eta u(m)h & \\ & & & -\det(h)\end{smatrix} \cdot \iota(\eta^{-1}, 1)\cdot n(u, v, w) \cdot \iota(\eta u(a), 1) \gamma.
 \end{multline*}
 One verifies that
 \[
 \iota(\eta^{-1}, 1)\cdot n(u, v, w) \cdot \iota(\eta u(a), 1) \cdot \gamma
 =
 \begin{smatrix}
 1& & -a& a \\  &1 &a &-a\\&&1 \\&&&1
 \end{smatrix}
 \cdot
 \begin{smatrix}
 wa + 1 & -ua & -(u+w)a^2 + (1-v)a & (u + w)a^2\\
 -wa + (-v + 1) &ua + 1& (u + w)a^2 + (2v - 2)a& (-u - w)a^2 + (-v + 1)a\\
 u &0& -ua + 1& ua\\
 u - w &u &wa + (v - 1) &-wa + 1\\
 \end{smatrix}
 \]

 If $k_a(u, v, w)$ denotes the last matrix above, then we deduce that
 \[
 J n(m)\begin{smatrix} \det(h) & &  \\ & h & \\ & & & 1\end{smatrix} n(u,v,w)
 u_a\gamma \in P(\Qp) k_a(u, v, w).
 \]
 It follows from the definition of $\phi_r$ that $\phi_r(J n(m)\begin{smatrix} \det(h) & &  \\ & h & \\ & & & 1\end{smatrix} n(u,v,w) u_a\gamma) = 0$ unless $k_a(u,v,w) \in PK_r$.

 \begin{lemma}
  Let $C = \max(-v_p(a), 0)$. If $r \ge \max(2C, C + 1)$, then $k_a(u,v,w) \in P(\Qp) K_r$ implies $k_a(u, v, w) \in K_r$.
 \end{lemma}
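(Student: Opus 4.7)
The plan is to combine Lemma~\ref{support-lem}, which characterises the coset $P(\Qp) K_r$ in terms of the bottom row of a matrix, with an entry-by-entry valuation analysis of the explicit matrix $k_a(u,v,w)$. Reading off the bottom row as $(u-w,\ u,\ wa+v-1,\ -wa+1)$, the hypothesis $k_a(u,v,w) \in P(\Qp) K_r$ translates, by Lemma~\ref{support-lem}, into the conditions $d \coloneqq -wa+1 \in \Qp^\times$ and $(u-w)/d,\ u/d,\ (wa+v-1)/d \in p^r \Zp$. The goal is to upgrade this to $d \in \Zp^\times$, with the bottom row itself integral and its first three entries in $p^r\Zp$, together with integrality of the remaining twelve entries of $k_a(u,v,w)$. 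Once this is done the reduction mod $p^r$ is automatically in $P(\Zp/p^r\Zp)$: the five entries outside the Klingen block are precisely $-(wa+v-1)$, $u$, $u-w$, $u$, and $wa+v-1$, each of which will have valuation $\geq r$.

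The key technical step, and the main obstacle, is to pin down $\alpha \coloneqq v_p(d)$. The three divisibility conditions immediately give $v_p(u), v_p(u-w) \geq r + \alpha$, hence $v_p(w) \geq r + \alpha$ as well, and so $v_p(wa) \geq r + \alpha - C$. Using the identity $wa = 1 - d$, one rules out $\alpha > 0$ (which would force $v_p(wa) = 0$, contradicting $v_p(wa) \geq r + \alpha - C > 0$ since $r \geq C+1$ and $\alpha > 0$) and also $\alpha < 0$ (which would force $v_p(wa) = \alpha$, giving $\alpha \geq r + \alpha - C$, i.e.\ $r \leq C$, contradicting $r \geq C+1$). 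This is the step that uses the hypothesis $r \geq C+1$ in an essential way.

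Once $\alpha = 0$ is established, we obtain $v_p(u), v_p(w) \geq r$ and $v_p(wa+v-1) \geq r$, whence also $v_p(v-1) \geq r-C$ from $v_p(wa) \geq r-C$. The remaining integrality checks are routine bookkeeping: the ``linear in $a$'' entries involving $wa$, $ua$, or $(v-1)a$ have valuation $\geq r - C \geq 0$, while the ``quadratic in $a$'' entries involving $(u+w)a^2$ have valuation $\geq r - 2C \geq 0$ thanks to the second half of the hypothesis $r \geq 2C$. The four diagonal entries are of the form $1 + O(p)$ and hence units in $\Zp$. Reducing modulo $p^r$, the five entries corresponding to the Klingen condition vanish, so $k_a(u,v,w) \in K_r$ as required. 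No further subtleties arise beyond the case analysis establishing $\alpha = 0$.
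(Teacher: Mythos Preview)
Your proof is correct and follows essentially the same approach as the paper: apply Lemma~\ref{support-lem} to the explicit bottom row of $k_a(u,v,w)$, deduce that $d=1-aw$ is a unit and $u,w\in p^r\Zp$, $v\equiv 1\bmod p^{r-C}$, and then verify integrality entry-by-entry together with the Klingen condition modulo $p^r$. The only cosmetic difference is in the argument that $d$ is a unit: the paper writes $\tfrac{w}{1-aw}=p^r b$ and solves $w(1+p^r ab)=p^r b$ directly (using $p^r a\in p\Zp$) to get $w\in p^r\Zp$ and hence $d\in\Zp^\times$, whereas you run a trichotomy on $\alpha=v_p(d)$ using $wa=1-d$; these are interchangeable.
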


 \begin{proof}
  Suppose that $k_a(u, v, w) \in P K_r$. By Lemma \ref{support-lem}, we must have
  \[ 1-aw\in \Qp^\times\quad\text{and}\quad
  \tfrac{u}{1-aw}, \tfrac{w}{1-aw}, \tfrac{v-1+aw}{1-aw}\in p^r\Zp.\]
  If $\tfrac{w}{1-aw} = p^r b$ for $b \in \Zp$, then $w(1 + p^r a b) = p^rb$. Since $p^r a \in p \Zp$ by assumption, we can conclude that $(1 + p^r a b)$ is a unit and hence $w \in p^r \Zp$. Thus $1 - aw$ is also a unit, and we deduce that $u  \in p^r \Zp$. Since $v = 1 -aw \bmod p^r$ we have $v = 1 \bmod p^{r-C}$. In particular, $ua^2$,$wa^2$ and $(v-1)a$ all have valuation at least $r - 2C \ge 0$; hence $k_a(u, v, w)$ is integral (and even congruent to the identity modulo $p^{r-2C}$).
 \end{proof}

 We suppose that $r \ge \max(2C, C + 1)$, and let $R(a,r)$ denote the set of $(u, v, w)$ such that $k_a(u,v,w) \in K_r$. Then  we obtain the formula
 \begin{multline*}
 W_{\phi_r}  (\begin{smatrix} \det(h) & &  \\ & h & \\ & & & 1\end{smatrix} u_a \gamma) \\
 = |\det(h)|^2 \int_{R(a,r)}\int_{m} \phi_r\left( \begin{smatrix} -1 & &  \\ & \eta u(m)h & \\ & & & -\det(h)\end{smatrix}\begin{smatrix}
 1& & -a& a \\  &1 &a &-a\\&&1 \\&&&1
 \end{smatrix}\right) \psi(-m-\langle (u, v), (0, 1)h \rangle)\, \mathrm{d}m\, \mathrm{d}u\, \mathrm{d}v\, \mathrm{d}w.
 \end{multline*}

 Since the argument of $\phi_r$ lies in $P(\Qp)$, we have
 \[ \phi_r\left( \begin{smatrix} -1 & &  \\ & \eta u(m)h & \\ & & & -\det(h)\end{smatrix}\begin{smatrix}
 1& & -a& a \\  &1 &a &-a\\&&1 \\&&&1
 \end{smatrix}\right) = p^{3r}|\det h|^{-1} \xi\Big(\eta u(m) h u(a)\Big)  \]
 where $\xi$ is the spherical vector of $\tau$, as before. (Note that $\theta(-1) = 1$, since $\theta$ is assumed to be unramified.) Thus the inner integral over $m$ reduces to the Whittaker transform of $\xi$, evaluated at $h u(a)$; so we can write this as
 \[  W_{\phi_r}  (\begin{smatrix} \det(h) & &  \\ & h & \\ & & & 1\end{smatrix} u_a \gamma) =
 p^{3r} |\det(h)| \int_{(u,v,w) \in R(a,r)}W_\xi(hu(a)) \psi(-\langle (u, v), (0, 1)h \rangle)\, \mathrm{d}u\, \mathrm{d}v\, \mathrm{d}w. \]
 If $\Phi_{a,r}$ denotes the Schwartz function $\Phi_{a,r}(u, v) = \operatorname{vol}\{ w: (u, v, w) \in R(a,r)\}$, then we can collapse this down to
 \[  W_{\phi_r}  (\begin{smatrix} \det(h) & &  \\ & h & \\ & & & 1\end{smatrix} u_a \gamma) =
 p^{3r} |\det(h)|\cdot W_\xi(hu(a))\cdot \hat{\Phi}_{a, r}((0,1)h), \]
 where the Fourier transform for Schwartz functions is defined as above.

 \begin{remark}
  The function $\Phi_{a,r}(u, v)$ is rather explicit, and depends only on the valuation of $a$. If $C$ is as above, then the set $R(a, r)$ is given by $\{u = 1 \bmod p^r, v = 1 \bmod p^{r-C}, w = a^{-1}(1-v) \bmod p^{r+C}\}$, so $\Phi_{a,r} = p^{-(r+C)} \Ch( (p^r\Zp) \times (1 + p^{r-C} \Zp))$.
 \end{remark}

 Putting things together we have:

 \begin{proposition}
  Suppose $\Phi_1(1, -)$ is supported in $p^{-N}\Zp$, and let $r \ge \max(2N, N+1)$. Then we have
  \[ \eqref{eq:localintegral} = \tfrac{p^{3r+1}}{(p+1)}\int_{a \in \Qp} f_1(\eta \stbt{1}{a}{0}{1})\int_{h \in N_{\GL_2} \backslash \GL_2}  W_{u(a)\xi} (h) \cdot w_2(h)\cdot \hat{\Phi}_{a, r}((0,1)h) |\det(h)|^s \, \mathrm{d}h\, \mathrm{d}a,\]
  a finite linear combination of Rankin--Selberg integrals.\qed
 \end{proposition}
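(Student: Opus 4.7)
The proof is the culmination of the preceding computations, so my plan is to thread them together carefully. First, I would invoke the Bruhat decomposition of the first $\GL_2$-factor of $H$, using (as in the text's preliminary discussion) the assumption $\Phi_1(0, -) = 0$, so that $f_1(1) = 0$ and $f_1$ is supported in the big cell. The first lemma of the subsection then rewrites the integral as a double integral over $a \in \Qp$ and $h \in N_{\GL_2}\backslash\GL_2$, with $W_{\phi_r}$ evaluated at $\dthree{\det h}{h}{1} u_a \gamma$. The support hypothesis $\Phi_1(1,-) \subset p^{-N}\Zp$ forces $f_1(\eta \stbt{1}{a}{0}{1})$ to vanish once $-v_p(a) > N$, so throughout the effective range of the $a$-integration the quantity $C = \max(-v_p(a), 0)$ is at most $N$, and the hypothesis $r \ge \max(2N, N+1)$ yields $r \ge \max(2C, C+1)$ uniformly.

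The heart of the argument is then the evaluation of $W_{\phi_r}\!\left(\dthree{\det h}{h}{1} u_a \gamma\right)$. I would unfold it via the $N_B = N_M N_P$-integral representation of the Whittaker transform, and apply the explicit matrix factorization already recorded: $Jn(m)\dthree{\det h}{h}{1} n(u,v,w) u_a\gamma$ equals a $P$-part times $k_a(u,v,w)$, and the $P$-part itself breaks as $\dthree{-1}{\eta u(m)hu(a)}{-\det h}$ (in the Levi $M_{\Kl}$) multiplied by an element of $N_P$. The preceding support lemma then guarantees that for $r \ge \max(2C, C+1)$ the product lies in $PK_r$ exactly when $k_a(u,v,w) \in R(a,r)$, so the inner integral reduces to an integral over this compact set.

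On that locus I would evaluate $\phi_r$ using its description as a normalized induced section: $N_P$-invariance and $K_r$-invariance strip away the trailing factors, and the explicit formula for $\phi_r|_{M_{\Kl}}$ (using that $\theta$ is unramified, so $\theta(-1)=1$) evaluates the Levi piece to $p^{3r}|\det h|^{-1}\xi(\eta u(m) h u(a))$. The remaining $m$-integration against $\psi(-m)$ is precisely the Whittaker transform for $\tau$, producing $W_\xi(hu(a)) = W_{u(a)\xi}(h)$, while the $(u,v,w)$-integral against $\psi(-\langle(u,v),(0,1)h\rangle)$ collapses to the two-variable Fourier transform $\hat{\Phi}_{a,r}((0,1)h)$ of the Schwartz function $\Phi_{a,r}(u,v) = \operatorname{vol}\{w : (u,v,w)\in R(a,r)\}$.

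Assembling everything, the Jacobian factor $|\det h|^2$ from unfolding $W_{\phi_r}$ combines with the $|\det h|^{-1}$ from the Levi evaluation and the $|\det h|^{s-1}$ from the Bruhat change of variables to yield the clean exponent $|\det h|^s$, giving the stated identity, which is manifestly a finite linear combination (over the compact $a$-range) of Jacquet-style Rankin--Selberg integrals for $\tau \times \sigma$. The main obstacle, as throughout the subsection, is bookkeeping: the decomposition of the factor $\begin{smatrix} 1 & & -a & a \\ & 1 & a & -a \\ & & 1 & \\ & & & 1\end{smatrix}$ into its Levi piece $n(a) \in \GL_2 \subset M_{\Kl}$ (which gets absorbed into the shift $h \mapsto hu(a)$, producing the $u(a)\xi$ in the answer) and its $N_P$ piece $n(0,-a,a)$ (which is annihilated by the induction), together with the interplay of the three sources of $|\det h|$, is the point where careless accounting would introduce spurious factors.
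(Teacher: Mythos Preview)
Your proposal is correct and follows exactly the approach implicit in the paper's \qed: the proposition is a summary statement assembling the preceding lemmas, and you have threaded them together accurately, including the crucial observation that the support hypothesis on $\Phi_1(1,-)$ bounds $C$ uniformly by $N$ so that the $k_a(u,v,w)$ lemma applies throughout the $a$-integration. Your bookkeeping of the three $|\det h|$ contributions and of the Levi/$N_P$ decomposition of the auxiliary unipotent factor (yielding the shift $h \mapsto h\,u(a)$ and hence $W_{u(a)\xi}$) matches the paper's computation precisely.
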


 \subsubsection{Application of the functional equation}

 By the functional equation for Rankin--Selberg integrals, due to Jacquet \cite[Theorem 14.7 (3)]{jacquet72}, for any $w_1 \in \cW(\xi)$ and $w_2 \in \cW(\sigma)$, and any $\Phi \in \cS(\Qp^2)$, we have
 \begin{multline*}
 \int_{N_2 \backslash \GL_2} w_1(h) w_2(h) \hat{\Phi}((0,1)h) |\det(h)|^{s} \, \mathrm{d}h =
 \\ \frac{1}{\gamma(\tau \times \sigma, s)}\int_{N_2 \backslash \GL_2} \frac{w_1(h) w_2(h)}{ (\chi_\tau\chi_\sigma)(\det h)} \Phi((0,1)h) |\det(h)|^{1-s} \, \mathrm{d}h,
 \end{multline*}
 where
 \[
 \gamma(\tau \times \sigma, s) = \frac{L(\tau^\vee \times \sigma^\vee, 1-s)}{L(\tau \times \sigma, s)} \epsilon(\tau\times\sigma, s)
 \]
 is Jacquet's local $\gamma$-factor. (More precisely, the integrals on the left and right sides are convergent for $\Re(s) \gg 0$, resp $\Re(s) \ll 0$, and both have meromorphic continuation to all $s$ and are equal as meromorphic functions.) Noting that $\chi_\tau \chi_\sigma = \chi / \theta$, and taking $\Phi = \Phi_{a, r}$, we see that \eqref{eq:localintegral} is given by
 \[
  \frac{p^{3r+1}}{(p+1)\gamma(\tau \times \sigma, s)} \int_{a \in \Qp} \!\!\!f_1(\eta \stbt{1}{a}{0}{1})\int_{h \in N_2 \backslash \GL_2} \!\!\! W_{u(a)\xi} (h) w_2(h) \Phi_{a, r}((0,1)h) (\theta/\chi)(\det h)|\det(h)|^{1-s} \, \mathrm{d}h\, \mathrm{d}a.
 \]

 The Schwartz function $\Phi_{a, r}$ has total integral $p^{-3r}$, and as $r \to \infty$, its support becomes concentrated in smaller and smaller neighbourhoods of $(0, 1)$. Taking into account a factor $p^2/(p^2-1)$ arising from comparison of unramified Haar measures, the limiting value is given by
 \[  \eqref{eq:localintegral} = \tfrac{p^3}{(p+1)^2(p-1)\gamma(\tau \times \sigma, s)}\int_{a \in \Qp} f_1(\eta \stbt{1}{a}{0}{1})\int_{x \in \Qp^\times}W_{u(a)\xi} (\stbt{x}{0}{0}{1}) w_2(\stbt{x}{0}{0}{1}) \frac{\theta(x)}{|x|^s \chi(x)} \, \mathrm{d}^\times x\, \mathrm{d}a.\]
 We have $W_{u(a)\xi} (\stbt{x}{0}{0}{1}) = \psi(xa) W_{\xi} (\stbt{x}{0}{0}{1})$, and we compute
 \[ \int_a  f_1(\eta \stbt{1}{a}{0}{1}) \psi(xa)\, \mathrm{d}a = \chi(x) |x|^{s-1} W^{\Phi_1}(\stbt{x}{0}{0}{1}).\]

 This concludes the proof of Proposition \ref{prop:local-zeta-evaluation} assuming that $\Phi_1(0, -)$ is identically 0.

 \subsubsection{Conclusion of the proof}

  We have proved the formula of Proposition \ref{prop:local-zeta-evaluation} for functions $\Phi_1$ satisfying $\Phi_1(0, x) = 0$ for all $x$, which is a more restrictive condition than stated in the proposition. We now reduce the general case to this.

  Consider the auxiliary zeta integral
  \[
   \widetilde{Z}_p(\varphi, \lambda, \Phi, s) \coloneqq
   \int_{(Z_GN_H\backslash H)(\Qp)}
   W_{\varphi}(h)
   f^{\Phi_1}(h_1;\chi^{-1}, s)
   W_{\lambda_v}(h_2)
   \chi(\det h)^{-1}
   \,  \mathrm{d} h.
  \]
  As before, this has meromorphic continuation to all $s$; and the functions $Z(\dots)$ and $\widetilde{Z}(\dots)$ are related by the following local functional equation \cite{Soudry-GSpGL}:
  \[ Z_p(\varphi, \lambda, \Phi, s) = \gamma(\pi \times \sigma, s)^{-1} \widetilde{Z}_p(\varphi, \lambda, \hat\Phi, 1-s),
  \]
  where
  \[ \gamma(\pi \times \sigma, s) = \gamma(\tau \times \sigma,s)\gamma(\tau\times\sigma \times \theta, s).\]
  Since $\widetilde{Z}_p$ can be obtained from $Z_p$ by replacing the representation $\sigma$ with $\sigma \otimes \chi^{-1}$, we deduce that if $\hat\Phi(0, x) = 0$ for all $x$, then
  \begin{align*}
   \widetilde{Z}_p(\varphi, \lambda, \hat\Phi, 1-s) &=
   \frac{V}{\gamma(\tau \times \sigma \times \chi^{-1}, 1-s)}
   \int_{\Qp^\times} W_\xi(\stbt{x}{0}{0}{1}) W^{\hat\Phi_1}\left(\stbt{x}{0}{0}{1}; \chi^{-1}, 1-s\right)W_\lambda(\stbt{x}{0}{0}{1}) \tfrac{\theta(x)}{\chi(x) |x|} \,\mathrm{d}^\times x\\
   &= \frac{V}{\gamma(\tau \times \sigma \times \chi^{-1}, 1-s)}
   \int_{\Qp^\times} W_\xi(\stbt{x}{0}{0}{1}) W^{\Phi_1}\left(\stbt{x}{0}{0}{1}; \chi, s\right)W_\lambda(\stbt{x}{0}{0}{1}) \tfrac{\theta(x)}{|x|} \,\mathrm{d}^\times x
  \end{align*}
  where $V = p^3/(p+1)^2(p-1)$ and the second equality comes from the functional equation \eqref{eq:whittaker-fcl-eq} relating $W^{\Phi_1}$ and $W^{\hat\Phi_1}$. Since $\chi = \chi_\tau \chi_\sigma \theta$, we have $\gamma(\tau\times\sigma\times \theta, s) \gamma(\tau\times\sigma\times \chi^{-1}, 1-s) = 1$. So we have shown that the formula of Proposition \ref{prop:local-zeta-evaluation} holds for all $\Phi_1$ such that $\hat\Phi_1(0, x) = 0$ for all $x$. Since
  \[ (\hat\Phi_1)'(x, y) = \Phi_1'(y, x), \]
  this is equivalent to requiring that $\Phi_1'(x, 0) = 0$ for all $x$. Since any Schwartz function $h$ on $\Qp^2$ such that $h(0, 0) = 0$ is a linear combination of functions vanishing identically along $\{0\} \times \Qp$ and $\Qp \times \{0\}$, this shows that our formula holds for all $\Phi_1$ such that $\Phi_1'(0, 0) = 0$, completing the proof of Proposition \ref{prop:local-zeta-evaluation}.\qed


\subsection{Local integrals at $\infty$}

We now consider the local integral at a real infinite place $v = \infty$. For simplicity, we shall now assume that $\psi_F(x) = \exp(-2\pi i x)$ for $x \in \RR$.
%
%
%

\subsubsection{Siegel sections}

We shall need to compute the functions $f^{\Phi}$ where $\Phi = \Phi_\infty^{(k)} \in \cS(\RR^2, \CC)$ is the function given by
\[ \Phi_{\infty}^{(k)}(x, y) \coloneqq 2^{1-k} (x + iy)^k e^{-\pi(x^2 + y^2)} \]
for some $k \in \ZZ_{\ge 1}$. (The factor $2^{1-k}$ is convenient for comparisions with algebraic Eisenstein series, as we shall shortly see.) We find readily that
\[ f^{\Phi}\left( \stbt a {}{} {a^{-1}}; \chi, s\right) =
\begin{cases}
0 & \text{if $\chi(-1) \ne (-1)^k$},\\
2^{1-k} i^k \Gamma(s + \tfrac{k}{2}) \pi^{-(s + k/2)} a^{2s} & \text{otherwise}.
\end{cases}
\]

\subsubsection{Moriyama's formula}

A formula for the archimedean Whittaker function of a vector in the lowest $K$-type subspace of a generic discrete-series representation $\Pi_\infty$ is given in \cite[\S 3.2]{moriyama04}. Let $(\lambda_1, \lambda_2)$ be integers such that $1-\lambda_1 < \lambda_2 < 0$. Then there is a unique generic discrete-series representation of $\GSp_4(\RR)$ whose central character has finite order and which is isomorphic as an $\operatorname{Sp}_4(\RR)$-representation to the direct sum $D_{(\lambda_1, \lambda_2)} \oplus D_{(-\lambda_2, -\lambda_1)}$, in Moriyama's notation. For $0 \le k \le d$, we let $v_k$ be the $k$-th standard basis vector of the lowest $K$-type of $D_{(-\lambda_2, -\lambda_1)}$.

What we shall need here is Proposition 8 of \emph{op.cit.}, which gives a formula for the image of $v_k$ under the canonical transformation from the Whittaker model to the Bessel model of $\Pi_\infty^W$. Recall that we have defined
\[
B_{v_k}(g, s) \coloneqq \int_{\RR^\times} \int_{\RR} W_{v_k}\left(\begin{smatrix} a \\ &a \\ &x&1\\ &&&1 \end{smatrix}w_2 g\right) |a|^{s-\tfrac{3}{2}}\chi_2(a)^{-1}\, \mathrm{d}x\, \mathrm{d}^\times a,\qquad w_2 = \begin{smatrix}1\\&&1 \\ &-1 \\&&&1\end{smatrix}.
\]

\begin{theorem}[Moriyama]
 For $\Re(t)$ sufficiently large, we have
 \[ \int_0^\infty B_{v_k}\left(\dfour{y}{y}{1}{1}, s\right) y^{t-\tfrac{3}{2}} \, \mathrm{d}^\times y
 \\=
 C \cdot
 \frac{(-1)^k L(\Pi_\infty, s) L(\Pi_\infty, t)}
 {\pi^t \Gamma(\tfrac{t  - s+ \lambda_2 + k + 1}{2})\Gamma(\tfrac{t + s + \lambda_1  - k}{2})},
 \]
 where $C$ is a constant depending on $\lambda_1$ and $\lambda_2$ (but not on $k, s, t$), and $L(\Pi_\infty, s) = \Gamma_\CC(s + \tfrac{\lambda_1 +\lambda_2 - 1}{2})\Gamma_\CC(s + \tfrac{\lambda_1 -\lambda_2 - 1}{2})$, $\Gamma_\CC(s) = 2(2\pi)^{-s} \Gamma(s)$.
\end{theorem}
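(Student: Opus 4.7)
The plan is to follow Moriyama's route in \cite{moriyama04} essentially verbatim, since the stated identity is a direct consequence of his Proposition 8 after a change of notation. First I would recall Moriyama's explicit formula for the restriction of the Whittaker function $W_{v_k}$ to the diagonal torus $\dfour{a_1 a_2 a_3}{a_1 a_2}{a_1}{1}$: it is given as a contour integral of a product of Gamma functions times a monomial in the $a_i$, obtained by solving the Capelli-type system of differential equations satisfied by Whittaker functions in the minimal $K_\infty$-type (see \S2--3 of \emph{op.cit.}). The key feature of this formula is that its Mellin transforms in the variables $a_2, a_3$ have a clean expression as ratios of Gamma factors.

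Second, I would substitute this formula into the definition of $B_{v_k}(g, s)$. The integral defining $B_{v_k}$ involves integrating $W_{v_k}$ over the unipotent variable $x \in \RR$ and the torus variable $a \in \RR^\times$, weighted by $|a|^{s - 3/2}\chi_2(a)^{-1}$. Because the Whittaker function is given as an inverse Mellin transform, the $x$-integral produces (after expressing the translate of $W_{v_k}$ in terms of its unipotent Fourier coefficient) a standard confluent-hypergeometric-type integral, and the $a$-integral converts the $|a|^{s - 3/2}$ weight into a Gamma factor in the Mellin-Barnes integrand. This produces an explicit contour-integral formula for $B_{v_k}\bigl(\dfour{y}{y}{1}{1}, s\bigr)$ of the same type as Moriyama's formula for the Whittaker function itself, but with one fewer free Mellin variable.

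Third, I would take the Mellin transform in $y$ against $y^{t - 3/2}$. By the Mellin inversion formula, this collapses the remaining contour integral to the residue at the pole dictated by $t$, and the result is manifestly a product of four Gamma factors: two combine to give $L(\Pi_\infty, s) = \Gamma_\CC(s + \tfrac{\lambda_1+\lambda_2-1}{2})\Gamma_\CC(s + \tfrac{\lambda_1 - \lambda_2 - 1}{2})$, another two give $L(\Pi_\infty, t)$, and the remaining Gamma factors in the denominator encode the ``shape'' of the $K_\infty$-type vector $v_k$. The dependence on $k$ enters through shifts in the Gamma-factor arguments coming from the coordinate $v_k$ in the lowest $K_\infty$-type, which accounts for the $(-1)^k$ sign and for the precise pair $\Gamma(\tfrac{t - s + \lambda_2 + k + 1}{2})\Gamma(\tfrac{t + s + \lambda_1 - k}{2})$ in the denominator.

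The main obstacle is tracking the constants and signs through the Mellin-Barnes manipulations, which is precisely the bookkeeping already carried out in Moriyama's paper; the constant $C$ is absorbed into an unspecified factor depending on $(\lambda_1, \lambda_2)$ (and on normalisations of Haar measures and of the Whittaker embedding), and one verifies that it is independent of $k, s, t$ because the $k$-dependence is entirely captured by the explicit Gamma factors and the sign $(-1)^k$. Since the statement is exactly Moriyama's computation transcribed into our conventions for $\Pi_\infty$ and $L(\Pi_\infty, s)$, I would present the proof as a reduction to \cite[Proposition~8]{moriyama04}, verifying only the translation of parameters $(\lambda_1, \lambda_2) \leftrightarrow (r_1, r_2)$ between the two normalisations.
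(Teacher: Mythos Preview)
Your proposal is correct in spirit and lands in the same place as the paper, but you are doing more work than necessary. Moriyama's Proposition~8 in \cite{moriyama04} is \emph{already} a formula for the Bessel function $B_{v_k}\bigl(\dfour{y}{y}{1}{1},s\bigr)$, not just for the Whittaker function: he states it as an inverse Mellin transform (a contour integral in the variable dual to $y$). So your second step, where you substitute the Whittaker formula into the definition of $B_{v_k}$ and carry out the $x$ and $a$ integrals, is precisely the content of Moriyama's proposition and need not be redone. The paper's proof is simply to apply the forward Mellin transform in $y$ to both sides of Moriyama's inverse-Mellin formula, which immediately collapses the contour integral and yields the stated ratio of Gamma factors; this is the same manoeuvre as in the proof of \cite[Proposition~5.12]{lemma17}.

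One bookkeeping point you should flag explicitly: Moriyama's formula carries a binomial coefficient $\binom{d}{k}$ which is absent here, because the paper uses a different normalisation of the standard basis vectors $v_k$ of the lowest $K_\infty$-type. This is part of the ``translation of parameters'' you mention at the end, but it is the one concrete discrepancy worth naming.
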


\begin{proof}
 Moriyama states his formula slightly differently, in the form of an inverse Mellin transform. The result above follows by applying the ``forward'' Mellin transform to both sides of Moriyama's formula, in the same way as the proof of Proposition 5.12 of \cite{lemma17} (which is essentially the same argument as ours but with a different normalisation of the Bessel function). Moriyama also has a factor $\binom{d}{k}$, which does not appear in our formulae, owing to a different convention for the standard basis of the lowest $K$-type.
\end{proof}

We shall apply this with $(\lambda_1, \lambda_2) = (r_1 + 3, -r_2-1)$, so that $d = r_1 + r_2 + 4$. The vector $v_k$ is then an eigenvector for the action of the diagonal torus in $K_{G, \infty}^\circ$, which is just $K_{H, \infty}^\circ$, with weight $(r_2 + 1 - k, -r_1-3 + k)$. We shall let $(k_1, k_2)$ be integers $\ge 1$ summing to $r_1 - r_2 + 2$, and take $k = r_1 + 3 - k_2 = r_2  + 1 + k_1$; thus $v_k$ has $K_H^\circ$-type $(-k_1, -k_2)$, meaning it can pair non-trivially against a pair of holomorphic modular forms of weight $(k_1, k_2)$. Finally, we shall take $s = s_1 - s_2 + \tfrac{1}{2}$ and $t = s_1 + s_2 - \tfrac{1}{2}$. Note that the right-hand side of Moriyama's formula is now
\[  (-1)^{k_1 + r_2 + 1} C \cdot
\frac{L(\Pi_\infty, s_1 - s_2 + \tfrac{1}{2}) L(\Pi_\infty, s_1 + s_2 - \tfrac{1}{2})}
{\pi^{s_1 + s_2 + \tfrac12} \Gamma(s_1 + \tfrac{k_1}{2})\Gamma(s_2 + \tfrac{k_2}{2})}.
\]

On the other hand, if one chooses $\Phi_{i, \infty} = \Phi_{\infty}^{(k_i)}$, then the integrand in the local Bessel-model integral is right $K_{H, \infty}$-invariant; so by the Iwasawa decomposition, $H = N_H T_H K_H$ where $T_H$ is the diagonal torus, we conclude that the left-hand side of Moriyama's formula is exactly the integral $Z_\infty\left(v_k,  \Phi_{\infty}^{(k_1)},  \Phi_{\infty}^{(k_2)}, s_1, s_2\right)$ up to a sign and the factor $f^{\Phi_1}(1) f^{\Phi_2}(1)$, which corresponds precisely to the Gamma-factors in the denominator above.

If we instead consider the $\GSp_4 \times \GL_2$ integral, then the Whittaker function associated to the holomorphic vector $\lambda \in \sigma_\infty$ coincides with the Whittaker function of $\Phi_{\infty}^{(\ell)}$ at $s_2 = \tfrac{\ell}{2}$. So we see that in this case the local integral at $\infty$ is again equal to the expected value, namely $L(\Pi_\infty \times \sigma_\infty, s)$.

\section{Integral formulae for L-functions II: global theory}
\label{sect:globalintegrals}

 We now let $F$ be an arbitrary number field, with ad\`ele ring $\AA$. Let $\psi_F = \prod_v \psi_{F_v}$ be a non-trivial additive character of $\AA/F$, which we regard as a character of $N(\AA)$ as before.

 \subsection{Globally generic $\pi$}

 Let $\pi$ be a cuspidal automorphic represenation of $G(\AA)$. For each $\varphi\in\pi$ we define its \emph{Whittaker transform} with respect to $\psi$:
\[
W_\varphi(g) = \int_{N_G(F)\backslash N_G(\AA)} \varphi(ng) \psi(n^{-1}) \mathrm{d}n.
\]
The representation $\pi$ is {\em globally generic} if some $W_\varphi(g)$ is non-zero.
If $\pi$ is globally generic, it can thus be modeled as a space of functions $W: G(\AA) \to \CC$ satisfying $W(ng) = \psi(n) W(g)$
for all $n \in N_G(\AA)$.

We fix an identification $\pi = \sideset{}{'}{\bigotimes_v} \pi_v$ of $\pi$ with a restricted tensor product of irreducible local representations. If $\pi$ is globally generic then each $\pi_v$ is clearly also generic, that is, $\pi_v$ can also be modelled in a space of functions $W: G(F_v)\to \CC$ satisfying $W(ng) = \psi(n) W(g)$ for all $n\in N_G(F_v)$.

\begin{remark}
 Note that our definition of ``globally generic'' is strictly stronger than requiring that each $\pi_v$ be generic, and depends on the realisation of $\pi$ as a space of functions on $G(F) \backslash G(\AA)$ rather than on the abstract isomorphism class of $\pi$.
\end{remark}

Assuming $\pi$ is globally generic, we fix Whittaker models $\pi_v\stackrel{\sim}{\to}\mathcal{W}_v$, $\phi_v\mapsto W_{\phi_v}$ for all places $v$. These choices can and shall be made in such a way that:
\begin{itemize}
 \item If $\varphi = \otimes_v \varphi_v$, then we have $W_\varphi(g) = \prod_v W_{\phi_v}(g_v)$.
 \item If $v$ is a finite place such that $\pi_v$ and $\psi_{F_v}$ are unramified, and $\phi^0_v \in \pi_v$ is the basis of the unramified vectors that was implicitly fixed in the definition of the restricted tensor product $\pi = \sideset{}{'}{\bigotimes_v} \pi_v$, then $W_{\phi^0_v}(1) = 1$.
\end{itemize}

\subsection{Global Eisenstein series}\label{Eis-GL2}

For a Schwartz function $\Phi: \AA^2 \to \CC$, and $\chi$ a unitary Gr\"ossencharacter, we define a global Siegel section by
\[ f^{\Phi}(g; \chi, s) \coloneqq \|\det g\|^s \int_{\AA^\times} \Phi_v((0, a)g) \chi(a) \|a\|^{2s}\, \mathrm{d}^\times a, \]
We may then form the series
\[
E^\Phi(g; \chi, s) : = \sum_{\gamma \in B(F) \backslash \GL_2(F)} f^\Phi(\gamma g; \chi, s).
\]
(cf.~\cite[\S 19]{jacquet72}). The sum converges absolutely and uniformly on any compact subset of $\{ s: \Re(s) > 1\}$, and defines a function on the quotient $\GL_2(F) \backslash \GL_2(\AA)$ that transforms under the center by $\chi^{-1}$. It has a meromorphic continuation in $s$.

\begin{remark}
 Note that $g \cdot E^{\Phi}(-; \chi, s) = \|\det g\|^s E^{g \cdot \Phi}(-; \chi, s)$.
\end{remark}

We define the Whittaker transform of $E^\Phi(g; \chi, s)$ with respect to $\bar\psi_F$ to be
\[
W^\Phi(g;\chi,s): = \int_{N_{\GL_2}(F)\backslash N_{\GL_2}(\AA)} E^\Phi(ng; \chi, s) \psi_F(x) \mathrm{d}n,  \ \ n= \begin{smatrix} 1 & x \\ 0 & 1 \end{smatrix}.
\]
If $\Re(s)>1$ this unfolds to
\[
W^\Phi(g;\chi,s) = \int_ {N_{\GL_2}(\AA)} f^\Phi(\eta ng; \chi, s) \psi_F(x) \mathrm{d}n, \ \ \eta = \begin{smatrix} 0 & 1 \\ -1 & 0 \end{smatrix}.
\]

If $\Phi(g)= \prod_v\Phi_{v}(g_v)$ with $\Phi_{v}$ a Schwartz function on $F_v^2$ and $\Phi_{v}$ the characteristic function $\Phi_{v}^0$ of $\mathcal{O}_{F_v}^2$ for almost all finite $v$,
then we have factorisations
\[
f^\Phi(g;\chi,s) = \prod_v f^{\Phi_v}(g_v;\chi_v,s),\qquad
W^\Phi(g;\chi,s) = \prod_v W^{\Phi_v}(g_v;\chi_v,s)
\]
where the local integrals $f^{\Phi_v}$ and $W^{\Phi_v}$ are as defined above. Furthermore, the local Whittaker transforms $W^{\Phi_v}(g;\chi_v,s)$ converge for all values of $s$ and are holomorphic as functions of $s$, and (for a given $g$ and $\Phi$) all but finitely many of these are 1.

\subsection{The global integrals}
\label{global-zeta-int}

Let $\chi_1, \chi_2$ be two unitary Gr\"ossen\-characters of $\AA^\times$ such that $\chi_1 \chi_2 = \chi_\pi$. For $\varphi \in \pi$ and $\Phi_1, \Phi_2 \in \mathcal{S}(\AA^2)$, define
\[
Z(\varphi, \Phi_1, \Phi_2, s_1, s_2) \coloneqq
\int_{Z_G(\AA)H(F) \backslash H(\AA)} \varphi(\iota(h)) E^{\Phi_1}(h_1; \chi_1, s_1) E^{\Phi_2}(h_2; \chi_2, s_2)\, \mathrm{d}h,
\]
where $H = \GL_2 \times_{\GL_1} \GL_2$ and the $E^{\Phi_i}(h_i; \chi_i, s_i)$ are Eisenstein series as in Section \ref{Eis-GL2}. This integral converges absolutely and is holomorphic at all values of $s_1$ and $s_2$ such that neither Eisenstein series has a pole. This is because the restriction of the cuspform $\varphi$ to $H(\AA)$ is rapidly decreasing.

We also consider a second integral associated to $\pi$ and an auxiliary cuspidal automorphic representation $\sigma$ of $\GL_2(\AA)$. We now let $\chi = \chi_\pi \chi_\sigma$, and for $\varphi \in \pi$ and $\lambda \in \sigma$, we set
\[
Z(\varphi, \lambda, \Phi, s) \coloneqq
\int_{Z_G(\AA)H(F) \backslash H(\AA)} \varphi(\iota(h))  E^{\Phi}(h_1; \chi, s)\lambda(h_2)\, \mathrm{d}h.
\]
As before, this is holomorphic away from the poles of the Eisenstein series $E^{\Phi}$.

We now suppose that the test data $(\varphi, \Phi_1, \Phi_2)$ for the first integral, and $(\varphi, \lambda, \Phi)$ for the second, are products of local data $\varphi = \bigotimes_v \varphi_v$ etc; and for each place $v$ of $F$, we write $Z_v(\dots)$ for the local integrals of the previous section at the place $v$. As a straightforward consequence of Proposition \ref{local-zeta-prop} and Langlands' general results on the convergence of automorphic $L$-functions, we have the following Euler product factorisation:

\begin{proposition}
 \label{global-zeta-prop}
 (i) Let $s_2$ be fixed. There exists a positive real number $R'$, possibly depending on $\pi$ and $s_2$, such that for $\Re(s_1) > R'$, the product $\prod_v  Z_v(\varphi_v, \Phi_{1, v}, \Phi_{2, v}, s_1, s_2)$ converges absolutely. For such $s_1$ we have
 \[
 \begin{split}
 Z(\varphi, \Phi_1, \Phi_2, s_1, s_2)  & = \prod_v  Z_v(\varphi_v, \Phi_{1, v}, \Phi_{2, v}, s_1, s_2) \\
 & = L^S(\pi_v, s_1 + s_2 - \tfrac12) L^S(\pi_v \otimes \chi_2^{-1}, s_1 - s_2 + \tfrac12) \prod_{v\in S} Z_v(\varphi_v, \Phi_{1, v}, \Phi_{2, v}, s_1, s_2),
 \end{split}
 \]
 where $S$ is any finite set of places of $F$ including all the archimedean places and all finite places where $\pi$ or the $\chi_i$ are ramified, and is such that $\Phi_{i, v} = \Phi_{i, v}^0$, $\varphi_v = \varphi^0_v$ for all $v\notin S$.

 (ii) There exists a positive real number $R'$, possibly depending on $\pi$ and $\sigma$, such that for $\Re(s) > R'$ the product $\prod_v  Z_v(\varphi_v, \lambda_v, \Phi_v, s)$ converges absolutely, and for such $s$ we have
 \[
 \begin{split}
 Z(\varphi, \lambda, \Phi, s)  & = \prod_v  Z_v(\varphi_v,\lambda_v, \Phi_v, s) \\
 & = L^S(\pi_v \otimes \sigma_v, s) \prod_{v\in S} Z_v(\varphi_v, \lambda_v, \Phi_v, s),
 \end{split}
 \]
 for $S$ any sufficiently large finite set of primes as before.
\end{proposition}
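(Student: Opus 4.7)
The plan is to apply the standard Rankin--Selberg unfolding method to both integrals, reducing the global integrals to products of the local zeta integrals of \S\ref{local-zeta-int}, and then appealing to Theorem \ref{thm:localzeta} and Langlands' theorem on convergence of partial Euler products. For part (i), I would first substitute the defining sum $E^{\Phi_1}(g;\chi_1,s_1) = \sum_{\gamma \in B(F)\backslash \GL_2(F)} f^{\Phi_1}(\gamma g;\chi_1,s_1)$ and interchange sum and integral; this is justified for $\Re(s_1)$ large by Godement's convergence criterion together with the rapid decay of $\varphi$ on $Z_G(\AA)H(F) \backslash H(\AA)$. The sum collapses against the $H(F)$-quotient, reducing the domain to $Z_G(\AA)(B \times_{\GL_1} \GL_2)(F) \backslash H(\AA)$. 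Next I would perform a Fourier expansion of $\varphi \circ \iota$ along the unipotent direction in $N_G$ complementary to $\iota(N_H)$; cuspidality of $\varphi$ kills all but the generic character contributions, and uniqueness of the Whittaker model for $\pi$ reassembles the remaining terms into a single Whittaker function $W_\varphi$. Reassembling the outer $N_{\GL_2}$-integration into the definition of $W^{\Phi_2}$ then yields the global Bessel-type integral
\[
Z(\varphi,\Phi_1,\Phi_2;s_1,s_2) = \int_{Z_G(\AA)N_H(\AA) \backslash H(\AA)} W_\varphi(\iota(h))\, f^{\Phi_1}(h_1;\chi_1,s_1)\, W^{\Phi_2}(h_2;\chi_2,s_2)\, \mathrm{d}h.
\]

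The integrand factorises as a product of local data at pure tensors, so the integral formally decomposes as $\prod_v Z_v(\varphi_v, \Phi_{1,v}, \Phi_{2,v}; s_1,s_2)$. For $v \notin S$, Theorem \ref{thm:localzeta}(ii) identifies each local factor with $L(\pi_v,s_1+s_2-\tfrac{1}{2})L(\pi_v \otimes \chi_{2,v}^{-1}, s_1-s_2+\tfrac{1}{2})$, yielding the claimed partial Euler product $L^S(\pi, s_1+s_2-\tfrac{1}{2})L^S(\pi \otimes \chi_2^{-1}, s_1-s_2+\tfrac{1}{2})$ times the finite product of ramified factors $\prod_{v \in S} Z_v$. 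Absolute convergence for $\Re(s_1) \gg 0$ (with $s_2$ fixed) then follows by combining the holomorphy of each ramified $Z_v$ in this region (Proposition \ref{local-zeta-prop}(ii)) with Langlands' standard theorem on absolute convergence of the partial Euler product of a cuspidal automorphic representation of $\GSp_4$ (applied to both $\pi$ and $\pi \otimes \chi_2^{-1}$) in a suitable right half-plane. The argument for part (ii) is strictly analogous but simpler: only $E^{\Phi}$ needs unfolding, and $\lambda$ may be replaced by its own Whittaker expansion directly, yielding the same factorisation against $L^S(\pi \otimes \sigma, s)$.

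The principal analytic obstacle is justifying the interchanges of summation and integration at each unfolding step and ensuring absolute convergence of the intermediate iterated integrals. This is standard: the rapid decay of $\varphi$ on $Z_G(\AA)H(F)\backslash H(\AA)$ (from cuspidality) combines with Godement's majorisation of the Eisenstein series in $\Re(s_i) > 1$ to control all intermediate expressions in the appropriate region. No further analytic input is required beyond Proposition \ref{local-zeta-prop} and Langlands' convergence theorem; the real content of the proposition lies in the local unramified calculation already carried out in Theorem \ref{thm:localzeta}.
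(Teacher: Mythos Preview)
Your proposal is correct and follows essentially the same approach as the paper, which simply cites Novodvorsky \cite{novodvorsky79} and Soudry \cite{Soudry-GSpGL} for the unfolding in part (ii) and notes that part (i) follows by the same argument with an Eisenstein series in place of the cusp form $\lambda$. You have spelled out the standard Rankin--Selberg unfolding that lies behind those citations, together with the same appeal to Proposition \ref{local-zeta-prop}, Theorem \ref{thm:localzeta}(ii), and Langlands' convergence theorem that the paper invokes.
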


\begin{proof}
 For part (ii), see \cite[Theorem 7]{novodvorsky79}, or the introduction of \cite{Soudry-GSpGL}. Part (i) follows similarly, taking an Eisenstein series in place of the cusp form $\lambda$.
\end{proof}

As $Z(\varphi, \Phi_1, \Phi_2, s_1, s_2)$, $L^S(\pi_v, s_1 + s_2 - \tfrac12)$ and $L^S(\pi_v \otimes \chi_2^{-1}, s_1 - s_2 + \tfrac12)$  have meromorphic continuations in $s_1$ (and are even meromorphic in both $s_1$ and $s_2$), for any fixed $s_2$ the equality
\[
Z(\varphi, \Phi_1, \Phi_2, s_1, s_2) = L^S(\pi_v, s_1 + s_2 - \tfrac12) L^S(\pi_v \otimes \chi_2^{-1}, s_1 - s_2 + \tfrac12) \prod_{v\in S} Z_v(\varphi_v, \Phi_{1, v}, \Phi_{2, v}, s_1, s_2)
\]
is an equality of meromorphic functions in $s_1$ (and similarly for the $\GSp_4 \times \GL_2$ integral).

\begin{remark}
 The proof of the preceding proposition also shows that if $\pi$ is not globally generic, then the integrals $Z(\varphi, \Phi_1, \Phi_2, s_1, s_2)$ and $Z(\varphi, \lambda, \Phi, s)$ are identically zero, for all choices of the test data.
\end{remark}

\section{Construction of the \texorpdfstring{$p$-adic $L$-function}{p-adic L-function}}
\label{sect:final}

 \subsection{Comparing algebraic and real-analytic Eisenstein series}

  We now assume the number field $F$ is $\QQ$. A routine unravelling of notations gives the following:

  \begin{proposition}
   Suppose $\Phi \in \cS(\AA^2, \CC)$ has the form $\Phi = \Phi_\f \cdot \Phi_\infty^{(k)}$, for some $k \ge 1$ and $\Phi_\f \in \cS(\Af^2, \CC)$, and $\widehat{\chi}$ is the adelic character associated to the Dirichlet character $\chi$ as in \S\ref{sect:dirichlet}.

   Then the Eisenstein series $E^{\Phi}(-; \widehat{\chi}, s)$ on $\GL_2(\QQ) \backslash \GL_2(\AA)$ of \S \ref{Eis-GL2} is related to the Eisenstein series $E^{k, \Phi_\f}(-; \chi, s)$ on $\GL_2(\Af) \times \cH$ of \S\ref{sect:nonholoEis} by
   \[ E^{\Phi}\left(g_\f \stbt{y}{x}{0}{1}; \widehat{\chi}, s\right) = y^{k/2} \|\det g_\f\|^{s} \cdot E^{k, \Phi_\f}(g_\f, x + iy; \chi, s)\]
   for $x + iy \in \cH$ and $g_\f \in \GL_2(\Af)$.\qed
  \end{proposition}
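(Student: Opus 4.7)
The proof is a direct adelic-to-classical unfolding. The plan is to factor the global Siegel section as $f^\Phi = f^{\Phi_\f} \cdot f^{\Phi_\infty^{(k)}}$, evaluate each factor on $\gamma g_\f g_\infty$ with $g_\infty = \stbt{y}{x}{0}{1}$ and $\gamma = \stbt{A}{B}{C}{D} \in \GL_2(\QQ)$, sum over $\gamma \in B(\QQ)\backslash \GL_2(\QQ) = \mathbf{P}^1(\QQ)$, and then collapse the resulting sum of $(C,D)$-orbits plus $\lambda \in \QQ^\times$-scalings into a single sum over $(m,n) \in \QQ^2 \setminus \{0\}$, matching the classical definition.

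For the archimedean factor, the elementary identity $Cy + i(Cx+D) = i\,\overline{C\tau+D}$ together with a Gamma integral yields
\[
f^{\Phi_\infty^{(k)}}(\gamma g_\infty; \widehat\chi_\infty, s) = |\det\gamma|^s\, y^s \cdot \frac{2^{1-k} i^k\, \Gamma(s+\tfrac{k}{2})\, \pi^{-(s+k/2)}}{(C\tau+D)^k\, |C\tau+D|^{2s-k}}
\]
when $(-1)^k \chi(-1) = 1$, and zero otherwise. The finite section evaluates to $\|\det\gamma\|_\f^s \|\det g_\f\|_\f^s \cdot I(C,D,g_\f)$, with $I(C,D,g_\f) \coloneqq \int_{\Af^\times} \Phi_\f(t(C,D)g_\f)\, \widehat\chi_\f(t)\, \|t\|_\f^{2s}\, d^\times t$. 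The product formula $|\det\gamma|_\infty \|\det\gamma\|_\f = 1$ eliminates the determinant factors, so the $\gamma$-dependence of each term reduces to $(C,D)$ modulo scaling by $\QQ^\times$.

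Next I would unfold the sum over $B(\QQ) \backslash \GL_2(\QQ)$. Using the set-theoretic decomposition $\Af^\times = \QQ_{>0}^\times \cdot \hat{\ZZ}^\times$ coming from factoring an idele by its $p$-adic valuations, together with the facts $\widehat\chi(\lambda) = 1$ for $\lambda \in \QQ^\times$, $\|\lambda\|_\f = |\lambda|^{-1}$, and $\widehat\chi_\f(-1) = (-1)^\epsilon = (-1)^k$ (where $\chi(-1) = (-1)^\epsilon$), one obtains
\[
2\cdot I(C,D,g_\f) = \sum_{\lambda \in \QQ^\times} \sgn(\lambda)^k\, |\lambda|^{-2s}\, R_\chi(g_\f \cdot \Phi_\f)(\lambda(C,D)),
\]
the factor $2$ arising from combining the $\lambda>0$ and $\lambda<0$ contributions via $\widehat\chi_\f(-1) = (-1)^k$. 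Paired with the transformation $(\lambda C\tau+\lambda D)^{-k} |\lambda C\tau+\lambda D|^{-(2s-k)} = \sgn(\lambda)^k |\lambda|^{-2s}(C\tau+D)^{-k}|C\tau+D|^{-(2s-k)}$, the double sum over $[(C,D)] \in \mathbf{P}^1(\QQ)$ and $\lambda \in \QQ^\times$ collapses to a single sum over $(m,n) = \lambda(C,D) \in \QQ^2 \setminus \{0\}$.

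Finally matching constants: rewriting $(-2\pi i)^{-k} = i^k(2\pi)^{-k}$ in the classical definition produces the prefactor $2^{-k} i^k \Gamma(s+k/2)\pi^{-(s+k/2)}$, which together with the factor $2$ from the unfolding matches the adelic $2^{1-k}$ exactly, while splitting $y^s = y^{k/2} \cdot y^{s-k/2}$ absorbs $y^{s-k/2}$ into the classical series and leaves the desired $y^{k/2} \|\det g_\f\|_\f^s$ outside. Both series converge absolutely for $\Re(s) > 1$, and the identity extends to all $s$ by meromorphic continuation. The main obstacle is bookkeeping: tracking signs, powers of $i$ and $\pi$, and making the idelic factorization $\Af^\times = \QQ^\times_{>0} \cdot \hat{\ZZ}^\times$ measure-theoretically precise so that the $\Af^\times$-integral really does unfold to the sum indicated.
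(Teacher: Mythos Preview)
The paper gives no proof of this proposition: it is introduced with the phrase ``A routine unravelling of notations gives the following'' and closed immediately with \qed. Your approach---factoring $f^\Phi = f^{\Phi_\f}\cdot f^{\Phi_\infty^{(k)}}$, computing the archimedean factor via the identity $Cy+i(Cx+D)=i\,\overline{C\tau+D}$ and a Gamma integral, unfolding the finite integral using $\Af^\times=\QQ_{>0}^\times\cdot\widehat{\ZZ}^\times$ to produce $R_\chi(g_\f\cdot\Phi_\f)$, and then collapsing the double sum over $\mathbf{P}^1(\QQ)\times\QQ^\times$ into the single sum over $\QQ^2\setminus\{0\}$---is exactly the routine computation the authors leave to the reader, and your constant-tracking (in particular the cancellation of the factor $2$ from the $\lambda\lessgtr 0$ symmetry against the $2^{1-k}$ in $\Phi_\infty^{(k)}$, and the identification $(-2\pi i)^{-k}=i^k 2^{-k}\pi^{-k}$) is correct.
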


 \subsection{Periods and algebraicity}

  \begin{definition}
   Let $E$ be a subfield of $\overline{\QQ}$. An element $W \in \Ind_{N(\Af)}^{G(\Af)}(\psi)$ (i.e.~a $\psi$-Whittaker function on $G$) is said to be \emph{defined over $E$} if it takes values in $E(\mu_\infty)$ and satisfies
   \[ W(g)^\sigma = W\left( w(\kappa(\sigma)) g\right)\]
   for all $g \in G(\Af)$ and $\sigma \in \Gal(\overline{\QQ}/E)$, where $w(x) = \diag(x^3, x^2, x, 1)$ for $x \in \hat{\ZZ}^\times$, and $\kappa: \Gal(\overline{\QQ}/\QQ) \to \widehat{\ZZ}^\times$ is the adelic cyclotomic character.
  \end{definition}

  This is the global counterpart of the local definitions in \S\ref{sect:rationality}. We now take $E$ to be the field of definition of the arithmetic twist $\Pif'$, as in \S \ref{sect:coherentcoh} above. Given $\eta \in H^2(\Pif) \otimes \CC$, we say $\eta$ is \emph{Whittaker $E$-rational} if the Whittaker functions of the coordinate projections of the corresponding element of Harris' space $\mathscr{H}_{L_2}$ are the product of an $E$-defined Whittaker function on $G(\Af)$ and the \emph{standard Whittaker function} at $\infty$, which is the unique function that makes the constant $C$ in Moriyama's formula equal 1.

  It follows easily from the definitions that the space of Whittaker-$E$-rational classes is exactly $\Omega^W(\Pi) \cdot H^2(\Pif)$, for some nonzero constant $\Omega^W(\Pi) \in \CC^\times$, the \emph{Whittaker period} of $\Pi$, well-defined modulo $E^\times$.

  \begin{notation}
   For the remainder of this section, we resume the notations of \S\ref{sect:interpcupprod}, so $p > 2$ is an unramified prime, and $L$ the completion of $E$ at a prime above $p$ with respect to which $\Pi_p$ is Klingen ordinary.
  \end{notation}

 \subsection{The $\GSp_4$ $L$-function}

 \begin{proposition}
  \label{prop:Lfcn1}
  Let $R = \Lambda_L(\Zp^\times \times \Zp^\times)$. There is an element $\mathcal{L}_p(\Pi, \mathbf{j}_1, \mathbf{j}_2) \in R$ whose specialisation at a locally-algebraic point $x = (a_1 + \rho_1, a_2 + \rho_2)$ of $\Spec R$, such that $d = r_1 - r_2 \ge a_1 \ge a_2 \ge 0$ and $(-1)^{a_1 + a_2} \rho_1(-1) \rho_2(-1) = -1$, is given by
  \[
   \frac{R_p(\Pi, \rho_1, a_1) R_p(\Pi, \rho_2, a_2)
   \Lambda(\Pi \otimes \rho_1^{-1}, \tfrac{1-d}{2} + a_1) \Lambda(\Pi \otimes \rho_2^{-1}, \tfrac{1-d}{2} + a_2)}{\Omega^W(\Pi)},\]
  where
  \[ R_p(\Pi, a, \rho) =
  (-1)^a \cdot \begin{cases}
  \left(1 - \tfrac{p^{a+r_2 + 1}}{\alpha} \right) \left(1 - \tfrac{p^{a+r_2 + 1}}{\beta}\right) \left(1 - \frac{\gamma}{p^{a + r_2 + 2}} \right) \left(\frac{\delta}{p^{a + r_2 + 2}} \right) & \text{if $\rho$ is trivial},\\[2ex]
  G(\rho)^2 \left(\frac{p^{2a + 2r_2 + 2}}{\alpha\beta} \right)^{m} & \text{if $\rho$ has conductor $p^m > 1$},
  \end{cases}
  \]
  where $\alpha, \beta, \gamma, \delta$ are the roots of the Hecke polynomial such that $v_p(\alpha\beta) = r_2 + 1$, and $G(\rho)$ is the Gauss sum as in \S\ref{sect:dirichlet}.
 \end{proposition}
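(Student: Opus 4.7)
I would define $\mathcal{L}_p(\Pi, \mathbf{j}_1, \mathbf{j}_2) \coloneqq \Omega^W(\Pi)^{-1}\cdot\mathcal{L}$, where $\mathcal{L}\in R$ is the measure constructed in \S\ref{sect:input1} by applying the Klingen-ordinary pairing $\langle\iota_\star(-),\eta\rangle$ to the two-parameter Eisenstein family $\mathcal{E}^{\Phi_1^{(p)}}(d-\mathbf{j}_1,\mathbf{j}_2;\chi_\Pi) \boxtimes \mathcal{E}^{\Phi_2^{(p)}}(0,\mathbf{j}_1-\mathbf{j}_2;\mathrm{id})$, with $\eta\in H^2(\Pif)$ chosen so as to be Whittaker-rational in the sense of \S\ref{sect:coherentcoh}. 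The first step is to combine Theorem \ref{thm:interp2} with the nearly-holomorphic unit-root-splitting corollary from \S\ref{sect:nearly} to identify the specialisation of $\mathcal{L}$ at a locally-algebraic point $(a_1+\rho_1,a_2+\rho_2)$ (with $d\ge a_1\ge a_2\ge 0$) with a classical cup product $\langle\iota_{m,\star}(E_x),\eta_m\rangle$, where $E_x$ is the tensor product of the two classical (possibly nearly-holomorphic) Eisenstein series listed in \S\ref{sect:input1}. The parity hypothesis $(-1)^{a_1+a_2}\rho_1(-1)\rho_2(-1)=-1$ is exactly the non-vanishing condition for the two Eisenstein series; the remaining specialisations contribute $0$ on the automorphic side, consistent with the corresponding product of $L$-values vanishing by the standard parity obstruction.

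Next, I would unfold this cohomological cup product as a period integral. Applying Theorem \ref{thm:harris}(b) together with the Hodge-splitting extension of \S\ref{sect:hodgesplitting} that allows nearly-holomorphic test data, I obtain
\begin{equation*}
\langle\iota_{m,\star}(E_x),\eta_m\rangle \;=\; \frac{1}{(2\pi i)^3}\int_{Z_G(\AA)H(\QQ)\backslash H(\AA)} F_{\eta_m}(v_{t_1,t_2})(\iota(h))\,E_1(h_1)\,E_2(h_2)\,\mathrm{d}h,
\end{equation*}
which, after translating between classical and adelic Eisenstein series, is exactly the global zeta integral $Z(\varphi,\Phi_1,\Phi_2,s_1,s_2)$ of \S\ref{global-zeta-int} at $s_i=\tfrac{1-d}{2}+a_i$, for suitably chosen Gr\"ossencharacters $\chi_1,\chi_2$ with $\chi_1\chi_2=\widehat{\chi}_\Pi$. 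The central-character twists by $\widehat{\rho}_i$ produced by the $p$-adic interpolation are absorbed into the prime-to-$p$ spherical data and disappear under the $E^\times$-ambiguity in the period.

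The third step is Euler-factorisation via Proposition \ref{global-zeta-prop}, using meromorphic continuation since $(s_1,s_2)$ may lie outside the region of absolute convergence. Theorem \ref{thm:localzeta}(ii) gives the unramified finite places, yielding the partial $L$-factor product $L^S(\Pi\otimes\rho_1^{-1},\tfrac{1-d}{2}+a_1)\cdot L^S(\Pi\otimes\rho_2^{-1},\tfrac{1-d}{2}+a_2)$; ramified finite primes $\neq p$ produce algebraic constants absorbed into the choice of $\Phi_i^{(p)}$; and at $\infty$ Moriyama's formula, combined with the Gamma factors from the Archimedean Siegel sections $f^{\Phi_{i,\infty}}$ of \S7.4, contributes exactly the Archimedean Gamma factors promoting $L^S$ to the completed $\Lambda$. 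Dividing by $\Omega^W(\Pi)$ is precisely what compensates for the Whittaker-to-cohomology comparison, ensuring the quotient lies in $E$.

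Finally, the local integral at $p$ must be evaluated explicitly. Realising the second Siegel section as an element of an unramified-twist principal series $\sigma_p$ exhibits the two-variable $\GSp_4$ integral as an instance of the $\GSp_4\times\GL_2$ integral of \S\ref{sect:zeta-p}, and the computation can be applied symmetrically in the two parameters once $\Pi_p$ is written as $\Ind_{P_{\Kl}}^G(\tau\boxtimes\theta)$ with the Klingen-ordinary eigenvalue $\lambda=p^{r_2+1+a}\cdot(\text{root})$ reflected in the induction datum $\chi_\tau(p)$. This yields the Euler factor $R_p(\Pi,\rho_i,a_i)$ --- the four-term product in the unramified case and the Gauss-sum expression $G(\rho)^2(p^{2(a+r_2+1)}/\alpha\beta)^m$ in the ramified case --- with the ordering $v_p(\alpha\beta)=r_2+1$ arising from the choice of Klingen-ordinary direction. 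The main obstacle is carrying out this $p$-adic local computation cleanly: matching the ordinary stabilisation $\eta_m$ with the vectors $\phi_r$ of \S\ref{sect:zeta-p} under the twist by $\gamma$, and keeping track of the various Whittaker and Siegel-section normalisations so that the explicit $L$- and $\varepsilon$-ratios from Proposition \ref{prop:local-zeta-evaluation} produce the stated closed form rather than an auxiliary algebraic unit.
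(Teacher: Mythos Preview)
Your proposal is correct and follows essentially the same approach as the paper's own proof: build the measure by pairing $\eta$ with the Eisenstein family of \S\ref{sect:input1}, pass from the $p$-adic cup-product to a global zeta-integral via Theorem \ref{thm:interp2}, the nearly-holomorphic comparison of \S\ref{sect:nearly}, and Theorem \ref{thm:harris}, and then evaluate the Euler factors using Theorem \ref{thm:localzeta} at the good primes, Moriyama's formula at $\infty$, and Proposition \ref{prop:local-zeta-evaluation} at $p$. The one point needing adjustment is your treatment of the character twists: the factors $\widehat{\nu}_i(\det g)^{-1}$ arising from the specialisations in \S\ref{sect:input1} are not ``absorbed'' into the prime-to-$p$ data but rather convert the period integral into the zeta-integral for the twisted representation $\Pi\otimes\rho_1^{-1}$ (with $\chi_2=\rho_1^{-1}\rho_2$), which is exactly what produces the twisted $L$-values $\Lambda(\Pi\otimes\rho_i^{-1},\dots)$ in the stated formula.
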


 \begin{proof}
  If we choose Schwartz functions $\Phi_1^{(p)}$ and $\Phi_2^{(p)}$ on $(\Af^p)^2$, then we can use these to build $p$-adic families $\mathcal{E}(\Phi_1^{(p)}, \Phi_2^{(p)})$ of Eisenstein series on $H$, as in \S\ref{sect:input1} above. If we also choose $\eta \in H^2(\Pif)$ which is invariant under $\Kl(p)$ and lands in the ordinary eigenspace, then we can form a $p$-adic analytic function interpolating the cup-products of $\eta$ with specialisations of the family $\mathcal{E}(\Phi_1^{(p)}, \Phi_2^{(p)})$.

  Specialising at $(a_1 + \rho_1, a_2 + \rho_2)$, the cup-product is given by a zeta-integral $Z(\dots)$, but with $\Pi$ replaced by the twisted representation $\Pi \otimes \rho_1^{-1}$. This factorises as a product of local integrals as above. For finite primes $\ell \ne p$ we may choose a finite linear combination of triples ($\phi_v, \Phi_{1, v}, \Phi_{2, v})$ such that the corresponding local zeta integral is the $L$-factor (and the rationality statements of \S \ref{sect:rationality} allow us to choose the data such that the resulting coherent cohomology classes are defined over $E$). For the place $\infty$, Moriyama's results show that the specific Archimedean data we have chosen give the Archimedean $\Gamma$-factor and the sign $(-1)^{a_1 - a_2}$. For the place $p$, we have chosen specific local data and evaluated the local integrals explicitly, which gives the local correction factors $R_p(\Pi, \rho_i, a_i)$.
 \end{proof}

 \begin{lemma}
  If $r_1 - r_2 \ge 1$, then for all finite-order characters $\rho$ of $\Zp^\times$, we have $\Lambda(\Pi \otimes \rho^{-1}, \tfrac{1+d}{2}) \ne 0$.
 \end{lemma}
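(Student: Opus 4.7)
The plan is to reduce the non-vanishing of $\Lambda(\Pi \otimes \rho^{-1}, \tfrac{1+d}{2})$ to a known non-vanishing statement for standard $L$-functions of unitary cuspidal representations of $\GL_n$ via Arthur's classification, which was already invoked at the start of \S \ref{sect:coherentcoh}. Since $\Pi$ is non-CAP and not a Saito--Kurokawa lifting, it is either of general type (so that it transfers to a cuspidal automorphic representation $\pi$ of $\GL_4(\AA)$) or of Yoshida type (so that it transfers to an isobaric sum $\pi_1 \boxplus \pi_2$ with each $\pi_i$ cuspidal on $\GL_2(\AA)$). In either case the spin $L$-function of $\Pi \otimes \rho^{-1}$ coincides with the standard $L$-function of the $\GL_4$-transfer twisted by $\rho^{-1}$, and the transfer remains unitary cuspidal (resp.~an isobaric sum of unitary cuspidals) since $\rho$ is a finite-order character.

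First I would handle the Archimedean factor. Using the formula for $L(\Pi_\infty, s)$ recalled from Moriyama in \S\ref{sect:archimedean} (with $(\lambda_1, \lambda_2) = (r_1+3, -r_2-1)$), we have
\[
 L(\Pi_\infty \otimes \rho^{-1}_\infty, s) = \Gamma_\CC\!\left(s + \tfrac{d+1}{2}\right) \Gamma_\CC\!\left(s + \tfrac{r_1+r_2+3}{2}\right).
\]
At $s = \tfrac{1+d}{2}$ both arguments of $\Gamma_\CC$ are strictly positive (using $d \ge 1$ and $r_1 \ge r_2 \ge 0$), so the Archimedean factor is a finite, non-zero positive real number. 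It therefore suffices to show that the finite part $L^\f(\Pi \otimes \rho^{-1}, \tfrac{1+d}{2})$ is non-zero.

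For this, I would split into two cases according to the size of $d$. When $d \ge 2$, the point $s = \tfrac{1+d}{2}$ lies strictly to the right of $1$ in the unitary normalisation, and so lies in the half-plane of absolute convergence of the Euler product of the $\GL_4$-transfer; non-vanishing is then immediate from the Euler product. When $d = 1$, we have $s = 1$ exactly, which is on the edge of absolute convergence, and here we invoke the theorem of Jacquet--Shalika that the standard $L$-function of a unitary cuspidal automorphic representation of $\GL_n(\AA)$ is non-vanishing on $\Re(s) = 1$. In the general type case, applied directly to $\pi \otimes \rho^{-1}$, this gives the desired non-vanishing. In the Yoshida type case, the $L$-function factors as a product $L(\pi_1 \otimes \rho^{-1}, s)\, L(\pi_2 \otimes \rho^{-1}, s)$, and Jacquet--Shalika for $\GL_2$ applied to each factor concludes the argument.

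The main ``obstacle'' is really just bookkeeping: one must verify that the unitary normalisation used in the statement of the lemma matches the one in which Jacquet--Shalika is usually stated, and that the $\GL_4$ transfer is truly unitary cuspidal (or an isobaric sum of unitary cuspidals), but both points are handled by Arthur's classification as cited. No deeper input is needed.
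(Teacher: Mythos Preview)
Your proposal is correct and follows essentially the same approach as the paper: split into $d \ge 2$ (absolute convergence of the Euler product) versus $d = 1$ (Jacquet--Shalika non-vanishing on $\Re(s)=1$ for the $\GL_4$ lift). Your version is slightly more detailed in that you explicitly verify the finiteness and non-vanishing of the Archimedean $\Gamma$-factor and separate out the Yoshida case, whereas the paper's proof leaves these as understood.
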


 \begin{proof}
  If $r_1 - r_2 \ge 2$ this is obvious from the convergence of the Euler product. The case $r_1 - r_2 = 1$ follows from results of Jacquet--Shalika \cite{jacquetshalika76} on non-vanishing of $L$-functions for $\GL_n$ on the abcissa of convergence (the ``prime number theorem'' for $\GL_n$ $L$-functions), applied to the automorphic representation of $\GL_4$ lifted from $\Pi$.
 \end{proof}

 If $r_1 = r_2$ then we cannot directly establish the non-vanishing, so we must impose it as a hypothesis:

 \begin{hypothesis}
  \label{hyp:non-vanish}
  There exist finite-order characters $\rho_+, \rho_-$ of $\Zp^\times$, with $\rho_+(-1) = 1$ and $\rho_-(-1) = -1$, such that $\Lambda(\Pi \otimes \rho_+^{-1}, \tfrac{1}{2})$ and $\Lambda(\Pi \otimes \rho_-^{-1}, \tfrac{1}{2})$ are non-zero.
 \end{hypothesis}

 \begin{theorem}(Theorem A)
  Suppose that $r_1 - r_2 \ge 1$ or that Hypothesis \ref*{hyp:non-vanish} holds. Then there exist constants $\Omega^+_{\Pi}$ and $\Omega^-_\Pi$, uniquely determined modulo $E^\times$ and satisfying $\Omega^+_\Pi \Omega^-_\Pi = \Omega_\Pi^W \pmod{E^\times}$; and an element $\mathcal{L}(\Pi, \mathbf{j}) \in \Lambda_L(\Zp^\times)$, such that
  \[ \mathcal{L}(\Pi, a + \rho) = (-1)^a R_p(\Pi, \rho, a) \frac{\Lambda(\Pi \otimes \rho^{-1}, \tfrac{1-d}{2} + a)}{\Omega_{\Pi}^{\pm}} \]
  for all $(a, \rho)$ with $0 \le a \le d$, where the sign $\pm$ is given by $(-1)^a \rho(-1)$.
 \end{theorem}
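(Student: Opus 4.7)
The strategy is to obtain $\mathcal{L}(\Pi, \mathbf{j})$ as a sum of two restrictions of the two-variable measure $\mathcal{L}_p(\Pi, \mathbf{j}_1, \mathbf{j}_2) \in R = \Lambda_L(\Zp^\times \times \Zp^\times)$ from Proposition \ref{prop:Lfcn1}, each obtained by fixing one coordinate at an auxiliary critical point where the relevant $L$-value is nonzero. The first step is a non-vanishing statement: for each sign $\varepsilon \in \{+,-\}$ we need a pair $(a_0^\varepsilon, \rho_0^\varepsilon)$ with $0 \le a_0^\varepsilon \le d$, $(-1)^{a_0^\varepsilon}\rho_0^\varepsilon(-1) = \varepsilon$, and with both $R_p(\Pi, \rho_0^\varepsilon, a_0^\varepsilon)$ and $\Lambda\bigl(\Pi \otimes (\rho_0^\varepsilon)^{-1}, \tfrac{1-d}{2} + a_0^\varepsilon\bigr)$ nonzero. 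For $d \ge 2$ the top critical value $s = \tfrac{1+d}{2}$ lies in the region of absolute convergence of the Euler product and hence is nonzero for every character, and by varying the parity of $\rho$ we get both signs; for $d = 1$ we invoke the Jacquet--Shalika theorem (non-vanishing of $L$-functions of unitary cuspidals on $\Re(s) = 1$) applied to the $\GL_4$-lift of $\Pi$; for $d = 0$ the statement is exactly Hypothesis \ref{hyp:non-vanish}.

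Next, we define
\[
 \mathcal{L}(\Pi, \mathbf{j}) \coloneqq \frac{\mathcal{L}_p(\Pi, \mathbf{j},\, a_0^- + \rho_0^-)}{(-1)^{a_0^-} R_p(\Pi, \rho_0^-, a_0^-)} \;+\; \frac{\mathcal{L}_p(\Pi, \mathbf{j},\, a_0^+ + \rho_0^+)}{(-1)^{a_0^+} R_p(\Pi, \rho_0^+, a_0^+)},
\]
which is a well-defined element of $\Lambda_L(\Zp^\times)$ since the denominators lie in $L^\times$. The parity constraint $(-1)^{a_1 + a_2}\rho_1(-1)\rho_2(-1) = -1$ in Proposition \ref{prop:Lfcn1} forces the two summands to have disjoint support: fixing $\mathbf{j}_2$ at a point of parity $\varepsilon$ confines $\mathcal{L}_p(\Pi, \mathbf{j}_1, \cdot)$ to the parity $-\varepsilon$ slice in $\mathbf{j}_1$. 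Plugging in the interpolation formula from Proposition \ref{prop:Lfcn1}, a direct computation shows that $\mathcal{L}$ satisfies the required interpolation formula with periods
\[
 \Omega_\Pi^+ \coloneqq \frac{\Omega^W(\Pi)}{\Lambda\bigl(\Pi \otimes (\rho_0^-)^{-1},\, \tfrac{1-d}{2} + a_0^-\bigr)}, \qquad \Omega_\Pi^- \coloneqq \frac{\Omega^W(\Pi)}{\Lambda\bigl(\Pi \otimes (\rho_0^+)^{-1},\, \tfrac{1-d}{2} + a_0^+\bigr)}.
\]

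It remains to verify the product identity $\Omega_\Pi^+ \Omega_\Pi^- \equiv \Omega_\Pi^W \pmod{E^\times}$ and the uniqueness modulo $E^\times$. Unwinding the definitions, both claims reduce to showing that $\Lambda\bigl(\Pi \otimes (\rho_0^+)^{-1}, s_+\bigr)\Lambda\bigl(\Pi \otimes (\rho_0^-)^{-1}, s_-\bigr) / \Omega^W(\Pi)$ lies in $E^\times$, where $s_\pm = \tfrac{1-d}{2} + a_0^\pm$. Up to the explicit factor $(-1)^{a_0^+ + a_0^-} R_p(\Pi, \rho_0^+, a_0^+) R_p(\Pi, \rho_0^-, a_0^-)$, this ratio is precisely the value of $\mathcal{L}_p$ at the crystalline point $\bigl((a_0^+,\rho_0^+),(a_0^-,\rho_0^-)\bigr)$; since by construction $\mathcal{L}_p$ arises by cup-product of the $E$-rational Whittaker class $\eta$ with classical Eisenstein series whose $q$-expansions lie in $E(\rho_0^\pm)$, this value is automatically $E(\rho_0^+, \rho_0^-)$-rational. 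For $d \ge 1$ one may take $\rho_0^\pm$ trivial (using e.g.\ $a_0^+ = d, a_0^- = d-1$ if $d$ is even, or the reverse if $d$ is odd, combined with the convergence/Jacquet--Shalika non-vanishing), which gives $E$-rationality on the nose. The \emph{main obstacle} lies in the $d=0$ case, where nontrivial characters are unavoidable: one must track the Galois-equivariance of the entire construction under $\mathrm{Aut}(\overline{\QQ}/E)$, using that both $\Omega^W(\Pi)$ and the local correction factors $R_p(\Pi, \rho, a)$ transform compatibly with the natural $\sigma$-action $\rho \mapsto \rho^\sigma$, so that the purported period ratios in fact lie in $E^\times$ and not merely in $E(\rho_0^\pm)^\times$; with that in place, independence of the choice of auxiliary $(a_0^\pm, \rho_0^\pm)$ modulo $E^\times$ -- and hence uniqueness of the periods -- follows by applying the same rationality to ratios of different auxiliary specializations.
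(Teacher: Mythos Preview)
Your approach is essentially the same as the paper's: both specialise the two-variable measure $\mathcal{L}_p(\Pi, \mathbf{j}_1, \mathbf{j}_2)$ of Proposition~\ref{prop:Lfcn1} at an auxiliary point of each parity, using the vanishing of the Eisenstein families on the ``wrong'' parity component to separate the two pieces, and define the signed periods as $\Omega^W(\Pi)$ divided by the auxiliary $L$-value (times local factors). The paper writes this piecewise on the two components of $\Spec \Lambda_L(\Zp^\times)$, you write it as a sum; this is cosmetic. Your discussion of the $E^\times$-uniqueness of $\Omega^\pm$ and the product relation actually goes further than the paper's proof, which simply asserts these and checks only the interpolation formula.

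One small point to watch: in your rationality argument you assert that for $d \ge 1$ one may always take $\rho_0^\pm$ trivial. The $L$-value non-vanishing is fine (convergence or Jacquet--Shalika plus the functional equation), but you have not checked that the local factor $R_p(\Pi, a, \rho)$ is nonzero for trivial $\rho$. The paper explicitly avoids this by taking $\rho_\pm$ \emph{ramified} at $p$ when $d = 1$, precisely so that $R_p$ reduces to a nonzero Gauss-sum power. This does not break your argument---you can simply use ramified $\rho_0^\pm$ in the $d=1$ case as well and fall back on the Galois-equivariance argument you already sketch for $d=0$---but the sentence ``for $d \ge 1$ one may take $\rho_0^\pm$ trivial'' needs this caveat.
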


 \begin{proof}
  Let $\rho_+$ and $\rho_-$ be any two Dirichlet characters of $p$-power conductor such that $\rho_{+}(-1) = (-1)^d$, $\rho^-(-1) = -(-1)^d$. From the preceding lemma, we see that $\Lambda(\Pi \otimes \rho_{+}^{-1}, \tfrac{1+d}{2})$ and $\Lambda(\Pi \otimes \rho_{-}^{-1}, \tfrac{1+d}{2})$ are both nonzero. If $d = 1$ then we assume also that both characters $\rho_{\pm}$ are ramified at $p$, which ensures that the local Euler factors $R(\rho_{\pm}, d)$ are non-zero.

  Then we may define
  \begin{align*}
  \Omega^{+}(\Pi) &= \Omega^W(\Pi) / [ R_p(\Pi, \rho_-, d)\Lambda(\Pi \otimes \rho_-^{-1}, \tfrac{1+d}{2}) ] \\
  \Omega^-(\Pi) &= \Omega^W(\Pi) / [ R_p(\Pi, \rho_+, d)\Lambda(\Pi \otimes \rho_+^{-1}, \tfrac{1+d}{2})].
  \end{align*}
  We can then define $\mathcal{L}(\Pi, \mathbf{j}) = \mathcal{L}(\Pi, \mathbf{j}, d + \rho^-)$ on the component of weight space where $(-1)^\mathbf{j} = +1$, and $\mathcal{L}(\Pi, \mathbf{j}, d + \rho^+)$ on the other component; and the interpolation formula follows easily from the corresponding property of the two-variable $L$-function $\mathcal{L}(\Pi, \mathbf{j}_1, \mathbf{j}_2)$.
 \end{proof}

 \subsection{The $\GSp_4 \times \GL_2$ $L$-function}

   A similar analysis for the $\GSp_4 \times \GL_2$ integral formula gives the following:

  \begin{theorem}(Theorem B)
   Let $R = \Lambda_L(\Zp^\times)$. There is an element $\mathcal{L}_p(\Pi \times \sigma, \mathbf{j}) \in R$ whose specialisation at a locally-algebraic point $x = a + \rho$ of $\Spec R$, with $0 \le a \le d' = r_1 - r_2 - \ell + 1$, is given by
   \[  R_p(\Pi \times \sigma, a, \rho) \frac{\Lambda^{S}(\Pi \otimes \sigma \otimes \rho^{-1}, \tfrac{1-d'}{2} + a)}{\Omega_\Pi^W},\]
   where $S$ is the set of finite places $\ell$ such that neither $\Pi_\ell$ nor $\sigma_\ell$ is principal series, and
   \[
    R_p(\Pi \times \sigma, a, \rho) = \frac{1}{\epsilon(\tau_p \otimes \sigma_p \otimes \widehat{\rho}_p^{-1}, \tfrac{1-d'}{2} + a)} L(\tau_p^\vee \otimes \sigma_p^\vee \otimes \widehat{\rho}_p, \tfrac{1+d'}{2} - a)^{-1} L(\tau_p \otimes \sigma_p \otimes \theta_p \widehat{\rho}_p^{-1}, \tfrac{1-d'}{2} + a)^{-1}
   \]
   where we have written $\Pi_p = \Ind_{P_{\Kl}}^{G}(\tau_p \boxtimes \theta_p)$ for some $\tau_p$ and $\theta_p$ as in \S \ref{sect:zeta-p}.\qed
  \end{theorem}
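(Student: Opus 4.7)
The plan mirrors the proof of Proposition \ref{prop:Lfcn1}, but uses the one-variable Eisenstein family and the auxiliary cusp form $\lambda$ described in \S\ref{sect:input2}. First I would fix a Whittaker-$E$-rational choice of $\eta \in H^2(\Pif)$ lying in the ordinary eigenspace for $\cU'_{p, \Kl}$ (this is where $\Omega_\Pi^W$ enters: the chosen $\eta$ differs from a $\CC$-valued Whittaker-rational class by $\Omega_\Pi^W$). Next I would form the $p$-adic family of cuspidal $p$-adic modular forms on $H$ given by
\[
   \cE^{\Phi^{(p)}}(d' - \mathbf{j}, \mathbf{j}; \chi_\Pi \chi_\sigma) \boxtimes \lambda \in \cS_{(\mathbf{j}+1, d'-\mathbf{j}+1)}(R),
\]
for a suitable choice of auxiliary Schwartz function $\Phi^{(p)}$ on $(\Af^p)^2$. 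Applying the $\Lambda$-adic pushforward \eqref{eq:pwfdR} and then pairing with $\eta$ via the recipe of \S\ref{sect:interpcupprod} produces an element $\mathcal{L}_p(\Pi \times \sigma, \mathbf{j}) \in R[\tfrac{1}{p}]$.

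Second, I would evaluate this element at a classical point $\mathbf{j} = a + \rho$ with $0 \le a \le d'$. The specialisation of $\cE$ is a product $\nu \cdot E_\phi \boxtimes \lambda$ where $E_\phi$ is the (nearly-holomorphic) Eisenstein series recalled in \S\ref{sect:input2}; since $E_\phi$ is the image of a classical nearly-holomorphic form under the $p$-adic unit-root splitting, the Corollary at the end of \S\ref{sect:nearly} (combined with Theorem \ref{thm:interp2}) identifies the specialisation with the classical cohomological period $\langle \iota^{\mathrm{nearly}}_\star(E_\phi \boxtimes \lambda), \tilde\eta_m \rangle$ at level $\Kl(p^m)$ for $m$ large enough to contain the conductor of $\rho$.

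Third, Theorem \ref{thm:harris} together with the archimedean Hodge splitting of \S\ref{sect:hodgesplitting} translates this period into the adelic zeta integral $Z(\varphi, \lambda, \Phi, \tfrac{1-d'}{2}+a)$ of \S\ref{global-zeta-int} for the twisted representation $\Pi \otimes \rho^{-1}$, with $\eta_m$ providing the Whittaker function and $E_\phi$ providing the Siegel Eisenstein section. Then Proposition \ref{global-zeta-prop}(ii) factors this globally into an Euler product:
\[
  Z = L^S(\Pi \otimes \sigma \otimes \rho^{-1}, s) \cdot \prod_{v \in S} Z_v(\varphi_v, \lambda_v, \Phi_v, s).
\]
It remains to identify each bad local factor. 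At finite $v \ne p$ where either $\Pi_v$ or $\sigma_v$ is principal series, one multiplies through by the appropriate $L$-factor (Theorem \ref{thm:localzeta} and its $\GSp_4 \times \GL_2$ analogue), absorbing it into the global $L$-function so that only primes in $S$ remain outside. At the ramified finite primes where neither $\Pi_v$ nor $\sigma_v$ is principal series, the rationality discussion of \S\ref{sect:rationality} shows the local integrals are $E$-rational constants, contributing to the choice of auxiliary data only (the set $S$ in the statement). At $\infty$, Moriyama's formula combined with the explicit Siegel sections $\Phi^{(k)}_\infty$ from \S 9.4 gives exactly the archimedean $\Gamma$-factor $L(\Pi_\infty \otimes \sigma_\infty, s)$.

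Finally, the crucial computation is the local factor at $p$, which is furnished by Proposition \ref{prop:local-zeta-evaluation}: the vector $\eta_m$ was constructed precisely so that its Whittaker function at $p$ is the function $\phi_r$ appearing in that proposition (with $r = m$), multiplied by the right factor of $\gamma$; the Eisenstein section at $p$ is $f^{\Phi_{p,\rho^{-1},\rho}}(-;\widehat\rho^{-2}\widehat\chi_\Pi\widehat\chi_\sigma, \tfrac{1-d'}{2}+a)$; and the Whittaker function of $\lambda$ at $p$ is spherical. Substituting into the closed-form expression of Proposition \ref{prop:local-zeta-evaluation} and simplifying yields the factor $R_p(\Pi \times \sigma, a, \rho)$ as stated. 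The main technical obstacle is this last step: one must verify that the choice of local data matches the hypothesis $\Phi'_1(0,0) = 0$ of Proposition \ref{prop:local-zeta-evaluation} and carefully track the normalisation constants between the integral $B_\varphi$ there and the Bessel-model interpretation of $\eta_m$ coming from \S\ref{sect:hecke}, after which the $\epsilon$- and $L$-factor formula falls out directly. Renormalising the resulting quantity by $\Omega^W_\Pi$ yields an element of $R$, completing the construction.
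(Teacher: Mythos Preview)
Your proposal is correct and follows precisely the approach the paper intends: the paper's own ``proof'' of Theorem B consists of the single sentence ``A similar analysis for the $\GSp_4 \times \GL_2$ integral formula gives the following'' preceding the statement, so you have in fact spelled out what the authors leave implicit. One small slip: the weight you assign to the family, $\cS_{(\mathbf{j}+1,\, d'-\mathbf{j}+1)}(R)$, is not right---the Eisenstein factor has fixed classical weight $d'+1$ and $\lambda$ has fixed weight $\ell$ (only the nebentype characters vary with $\mathbf{j}$), so the family lives in $\cS_{(d'+1+\chi_1,\, \ell+\chi_2)}$ for suitable varying characters $\chi_i$---but this does not affect the argument.
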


  (The factor $R_p(\Pi \times \sigma, a, \rho)$ can also be written explicitly in terms of Satake parameters and Gauss sums, as in Proposition \ref{prop:Lfcn1}; we leave the details to the reader.)

 \begin{remark}\label{rmk:intperiods}
  It would be interesting to attempt to define a canonical normalisation for the periods $\Omega^{\mathrm{W}}(\Pi)$ and $\Omega^{\pm}(\Pi)$ up to $p$-adic units, analogous to the results of \cite{vatsal99} in the $\GL_2$ case.

  For $\Omega^{\mathrm{W}}(\Pi)$, one possibility would be to use the $\GSp(4)$ new-vector theory of Okazaki \cite{okazaki}. This singles out a uniquely-determined level group $K \subset G(\Af)$ such that $\Pif^K$ is one-dimensional, and such that $\cW(\Pif)^K$ has a canonical normalised generator $w^{\mathrm{new}}$ satisfying $w^{\mathrm{new}}(1) = 1$. We then have two natural $\cO_{E, (v)}$-lattices in $\cW(\Pif)^K \otimes \CC$: one given by Whittaker functions whose values at 1 are integral, and one given by the image under the comparison map of the lattice in $H^2(\Pif)^K$ generated by the cohomology of the $\Zp$-model $X_G(K)$ (or the $K$-invariants of cohomology at a suitable higher level, since this $K$ may not be neat). Using the ratio of these two lattices, we obtain a canonical normalisation for $\Omega^{\mathrm{W}}(\Pi)$ up to multiplication by $\cO_{E, (v)}^\times$.

  However, since we do not know if the test data for the zeta-integrals $Z_\ell(\dots, s)$ at the bad primes $\ell$ can be chosen ``integrally'', even with this normalised period it is not clear if the $p$-adic $L$-functions of Proposition \ref{prop:Lfcn1} and Theorem B will lie in the Iwasawa algebra with $\cO$-coefficients.

  The situation for the signed periods $\Omega^{\pm}(\Pi)$ is still more mysterious, and there seems to be no natural way of defining a canonical normalisation for these periods directly on $\GSp_4$ (rather than via functorial lifting to $\GL_4$ as in \cite{DJR20}).
 \end{remark}


\newlength{\bibitemsep}
\setlength{\bibitemsep}{0.7ex plus 0.05ex minus 0.05ex}
\newlength{\bibparskip}
\setlength{\bibparskip}{0pt}
\let\oldthebibliography\thebibliography
\renewcommand\thebibliography[1]{%
 \oldthebibliography{#1}%
 \setlength{\parskip}{\bibparskip}%
 \setlength{\itemsep}{\bibitemsep}%
}

\providecommand{\bysame}{\leavevmode\hbox to3em{\hrulefill}\thinspace}
\providecommand{\MR}[1]{}
\renewcommand{\MR}[1]{%
 MR \href{http://www.ams.org/mathscinet-getitem?mr=#1}{#1}.
}
\providecommand{\href}[2]{#2}
\newcommand{\articlehref}[2]{\href{#1}{#2}}

\end{document}